\setlist[enumerate]{leftmargin=2.5em}
\setlist[itemize]{leftmargin=2.5em}
\newcounter{qcounter}
\newcommand\define{\newcommand}
\define\rk{\mathrm{rk}}
\define\isoto{\xrightarrow{\sim}}
\define\onto{\twoheadrightarrow}
\DeclareMathOperator{\Spec}{Spec}
\newcommand{\ttmat}[4]{\left( \begin{array}{cc}
#1 & #2 \\
#3 & #4
\end{array}
\right)}
\newcommand{\Z}{\mathbb{Z}}
\newcommand{\Q}{\mathbb{Q}}
\newcommand{\F}{\mathbb{F}}
\newcommand{\fH}{\mathfrak{H}}
\newcommand{\m}{\mathfrak{m}}
\newcommand{\frt}{\mathfrak{t}}
\newcommand{\varep}{\varepsilon}
\newcommand{\Hom}{\mathrm{Hom}}
\newcommand{\Gal}{\mathrm{Gal}}
\newcommand{\Ext}{\mathrm{Ext}}
\newcommand{\End}{\mathrm{End}}
\newcommand{\Fr}{\mathrm{Fr}}
\newcommand{\lb}{{[\![}}
\newcommand{\rb}{{]\!]}}
\newcommand{\lp}{{(\!(}}
\newcommand{\rp}{{)\!)}}
\newcommand{\red}{\mathrm{red}}
\newcommand{\irr}{\mathrm{irr}}
\define\GL{{\mathrm{GL}}}
\define\kcyc{\kappa_{\mathrm{cyc}}}
\define{\Fitt}{\mathrm{Fitt}}
\define{\Ann}{\mathrm{Ann}}
\define\RGamma{{\mathrm{R}\Gamma}}
\newtheorem{thm}{Theorem}[subsection] 
\newtheorem{cor}[thm]{Corollary}
\newtheorem{prop}[thm]{Proposition}
\newtheorem{lem}[thm]{Lemma}
\newtheorem{sublem}[thm]{Sublemma}
\newtheorem{conj}[thm]{Conjecture}
\theoremstyle{definition}
\newtheorem{defn}[thm]{Definition}
\newtheorem{eg}[thm]{Example}
\newtheorem{assump}[thm]{Assumption}
\theoremstyle{remark}
\newtheorem{rem}[thm]{Remark}
\newcommand{\lra}{\longrightarrow}
\newcommand{\rinj}{\hookrightarrow}
\newcommand{\rsurj}{\twoheadrightarrow}
\DeclareMathOperator{\ad}{ad}
\newcommand{\bF}{\mathbb{F}}
\newcommand{\bT}{\mathbb{T}}
\newcommand{\Db}{{\bar D}}
\newcommand{\Ad}{{\mathrm{Ad}}}
\newcommand{\cG}{{\mathcal{G}}}
\newcommand{\barQ}{{\overline{\Q}}}
\newcommand{\Jm}{{J^{\min{}}}}
\newcommand{\fl}{{\mathrm{flat}}}
\newcommand{\Tr}{\mathrm{Tr}}
\newcommand{\bro}{{\bar\rho}}
\newcommand{\sm}[4]{\ensuremath{\big(\begin{smallmatrix}#1 & #2 \\ #3 & #4\end{smallmatrix}\big)}}
\newcommand{\univ}{\mathrm{univ}}
\newcommand{\up}[1]{^{(#1)}}
\newcommand{\upp}[1]{^{(#1)'}}
\newcommand{\oQ}{\overline{\Q}}
\newcommand{\Eis}{\mathrm{Eis}}
\newcommand{\ep}{\epsilon}
\newcommand{\US}{\mathrm{US}}
\let\c@equation\c@thm
\numberwithin{equation}{subsection}
\title[$R=\bT$ via rank bounds I]{Explicit non-Gorenstein $R=\bT$ via rank bounds I: Deformation theory}
\author{Catherine Hsu}
\address{Department of Mathematics \& Statistics, Swarthmore College,  Swarthmore, PA 19081, USA}
\email{chsu2@swarthmore.edu}
\author{Preston Wake}
\address{Department of Mathematics, Michigan State University, East Lansing, MI 48824, USA}
\email{wakepres@msu.edu}
\author{Carl Wang-Erickson}
\address{Department of Mathematics, University of Pittsburgh, Pittsburgh, PA 15260, USA}
\email{carl.wang-erickson@pitt.edu}
\begin{document}

\subjclass{11F80, 11F33}
\keywords{Galois representations, modular forms, non-optimal level, $R = \bT$ theorem}

\begin{abstract}
Ribet has proven remarkable results about non-optimal levels of residually reducible Galois representations. We focus on a non-optimal level $N$ that is the product of two distinct primes and where the Galois deformation ring is not expected to be Gorenstein. We prove a Galois-theoretic criterion for the deformation ring to be as small as possible---that is, for there to be a \emph{unique} newform of level $N$ with reducible residual representation. When this criterion is satisfied, we deduce an $R=\bT$ theorem.
\end{abstract}

\maketitle

\tableofcontents

\section{Introduction}

\subsection{Summary}
\label{subsec:intro summary}
We prove, under some hypotheses, an integral $R=\bT$ theorem for the mod-$p$ Galois representation $\bro = 1 \oplus \omega$. Here $\bT$ is the Hecke algebra acting on modular forms of weight $2$ and level $N=\ell_0\ell_1$, $p \geq 5$ is a prime number, $\omega$ is the mod-$p$ cyclotomic character, and $R$ is a level $N$ universal Galois pseudodeformation ring for $\bro$. We adopt the following conditions on $N$: 
\begin{enumerate}
    \item $\ell_0$ is a prime number with $\ell_0 \equiv 1 \pmod{p}$, and
    \item $\ell_1$ is a prime number with $\ell_1 \not \equiv 0, \pm 1 \pmod{p}$, such that $\ell_1$ is a $p$th power modulo $\ell_0$.
    \item there is a unique cuspform $f$ of level $\ell_0$ that is congruent to the Eisenstein series modulo $p$.
\end{enumerate}
By a theorem of Ribet \cite{ribet2010,ribet2015,yoo2019}, restated as Theorem \ref{thm: ribet} below, conditions (1) and (2)  imply that there is a newform of level $N$ with reducible residual Galois representation $\bro$, and condition (3) ensures that the space of oldforms is as small as possible.\footnote{Mazur's theorem \cite{mazur1978} implies that there is at least one such cuspform.}  
Moreover, under these conditions on $N$, the algebra $\bT$ is expected to be non-Gorenstein (and this is borne out computationally), and so we focus on this case because it is the simplest situation we can find that exhibits this non-Gorenstein behaviour. We intend that the methods developed here might serve as a prototype for more general residually reducible contexts.

Now, condition (3) is equivalent to the non-vanishing of an easily-computed numerical invariant called \emph{Merel's number}, due to a deep theorem of Merel \cite{merel1996} (see Remark \ref{P1: rem: Merel}). Our main result can be thought of as an analog of Merel's theorem at level $N$. Indeed, the standard techniques that are used to prove that the surjection $R \onto \bT$ is an isomorphism do not apply in our setting because $\bT$ is not a local complete intersection. Instead, we prove $R=\bT$ using rank bounds. The $\Z_p$-rank of $\bT$ is at least $3$: there is the Eisenstein series, the unique cuspform of level $\ell_0$, and at least one newform of level $N$. We define invariants $a\up1(\Fr_{\ell_1})$ and $\alpha^2 + \beta$ in $\F_p$, discussed more in Section \ref{subsub:dim le 3} below, which play the role of Merel's number in that they control whether or not the newform of level $N$ is unique.

\begin{thm}[{Theorem \ref{P1: thm: main with a1}}]
    \label{P1: thm: main initial}
    Let $p \geq 5$ and assume that the level $N$ satisfies conditions  \emph{(1)-(3)}. The $\F_p$-dimension of $R/pR$ is greater than $3$ if and only if both
    \begin{enumerate}[label=(\roman*)]
        \item $a\up1(\Fr_{\ell_1}) = 0$ and
        \item $\alpha^2+\beta=0$.
    \end{enumerate}
    Moreover if one of (i) or (ii) fails, then the map $R \onto \bT$ is an isomorphism, the $\Z_p$-rank of $\bT$ is $3$, and there is a unique newform of level $N$ that is congruent to the Eisenstein series modulo $p$.
\end{thm}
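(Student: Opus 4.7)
The plan is to reduce the entire statement to proving the upper bound $\dim_{\F_p} R/pR \leq 3$ under the hypothesis that at least one of (i), (ii) fails. The matching lower bound is free: the surjection $R \onto \bT$ induces $R/pR \onto \bT/p\bT$, and since $\bT$ is $\Z_p$-flat of rank at least three (witnessed by the Eisenstein series, the cuspform $f$ of level $\ell_0$, and a newform of level $N$), we have $\dim_{\F_p} \bT/p\bT \geq 3$. Once equality $\dim_{\F_p} R/pR = 3$ is in hand, Nakayama forces $\rk_{\Z_p} R \leq 3$, the sandwich yields $R \isoto \bT$ with $\rk_{\Z_p} \bT = 3$, and the uniqueness of the level-$N$ newform follows because a second eigensystem congruent to Eisenstein would strictly enlarge the $\Z_p$-rank of $\bT$.

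For the upper bound on $\dim_{\F_p} R/pR$, I would work inside the generalized matrix algebra (or Cayley--Hamilton algebra) attached to the universal pseudodeformation modulo $p$, following the framework set up earlier in the paper. In this presentation $R/pR$ is generated by trace coordinates at a small finite set of Frobenius elements together with the ``off-diagonal'' coordinates measuring the two Ext-directions between the characters of $\bro = 1 \oplus \omega$. Three of these generators should account for the three known Hecke eigensystems, and the task is to show that any prospective fourth generator is killed by a relation forced by the local deformation conditions at $\ell_0$, $\ell_1$, and $p$. I expect the invariants $a\up1(\Fr_{\ell_1})$ and $\alpha^2 + \beta$ to appear as the leading coefficients of these relations, the first \emph{linear} and the second \emph{quadratic} in an appropriate filtration by powers of the reducibility ideal. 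When either is nonzero, the relation is nondegenerate and collapses the dimension to $3$; when both vanish, one produces an honest fourth deformation, for instance to a suitable small Artinian $\F_p$-algebra, by verifying that the obstruction to such a lift vanishes.

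I expect the main obstacle to be the explicit matching of the abstract obstruction classes with the concrete numerical invariants, particularly the quadratic invariant $\alpha^2 + \beta$. Linear tangent-space bookkeeping suffices for the first invariant, but the second demands a careful second-order analysis of the pseudodeformation, tracking $R/pR$ modulo the square of the reducibility ideal and invoking the reciprocity hypothesis in condition (2)---that $\ell_1$ is a $p$-th power modulo $\ell_0$---to produce a nontrivial quadratic coupling between the two bad-prime contributions. The invariant $\alpha^2 + \beta$ should emerge as the discriminant of this coupling, and pinning down the implicit choices in the parametrization of the reducible locus (so that one really has the obstruction \emph{on the nose}, and not merely up to a nonzero scalar whose vanishing criterion is the same) is where the genuine work lies.
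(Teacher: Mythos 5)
Your high-level strategy matches the paper's: reduce to the $\F_p$-dimension bound, use Nakayama and the surjection $R\onto\bT$ to conclude the isomorphism, and work in the GMA framework at second order. The sandwich argument for $R\cong\bT$ and the uniqueness of the newform are exactly right. But several of your intuitions about the internal structure are off in ways that would cost you real time if you tried to execute the plan.

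First, the framing in terms of a ``prospective fourth generator'' inverts the actual picture: the cotangent space of $R$ is exactly $2$-dimensional (Proposition \ref{prop: R-cotangent}, via Bella\"iche's theory and the key inputs $b_1\cup c_0=0$, $b_0\cup c_0\neq 0$), so $R/pR$ needs only two ring generators $x,y$; the number $3$ arises as $1+\dim\frt_R$, and the question is whether $\bar\m^2=0$. Second, your ``first linear, second quadratic in powers of the reducibility ideal'' dichotomy does not describe what happens: both invariants are second-order obstructions. Condition (i) is the obstruction (coming from the differential equation $-db\up2=a\up1\smile b\up1+b\up1\smile d\up1$) to the \emph{existence} of any second-order deformation $\rho_2$ of the irreducible first-order vector $\rho_1$, while condition (ii) is the obstruction, once such a $\rho_2$ exists with the finite-flat normalization at $p$, to its satisfying $\US_{\ell_0}$. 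Third, you invoke condition (2) (the $p$-th power hypothesis) but never mention condition (3) (Merel's condition on level $\ell_0$), which is equally load-bearing: it forces $a_0\cup c_0\neq 0$, which makes the $R$-module $C$ torsion (Proposition \ref{prop: C is torsion}) and yields the crucial constraint $(J^\red_S)^2=\m_SJ^\red_S$ (Proposition \ref{prop: y2 span}); without this, you cannot pin down $S=R/(p,\m^3)$ to dimension $\le 4$ with the explicit shape $\F_p\lb X,Y\rb/(X^2-\mu Y^2, XY, Y^3)$. Finally, the converse direction (both conditions vanish implies $\dim>3$) is not a matter of ``verifying that the obstruction to such a lift vanishes'' in one step: the paper must work through the $\F_p[\ep_2]$-GMA deformation $\rho_2$ against the finite-flat condition at $p$ (Lemmas \ref{lem: flat rho2 step 1}--\ref{lem: flat rho2 step 4}, using the local and global Kummer-theoretic description of the flat subspace) to show $\beta$ is well-defined before the $\US_{\ell_0}$ check can even be posed. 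These are the places where your outline would need substantial new content to become a proof.
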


This method for proving $R=\bT$ is novel. It differs significantly from the method of the paper \cite{WWE5} in which $R$ is formulated, the surjection $R \rsurj \bT$ is established, and similar $R=\bT$ results for the representation $\bro$ and certain squarefree levels $N$ are proven. In \cite{WWE5}, the theorems rely on conditions designed to force the rings $R$ and $\bT$ to be local complete intersections. Then, the crux of the method of \cite{WWE5} is to verify Wiles's numerical criterion \cite[Appendix]{wiles1995}, which relies on the complete intersection property and only uses information about $R$ that corresponds to \emph{first-order} deformations of pseudorepresentations. First-order calculations are also used to give a Galois-deformation-theoretic proof of Merel's theorem in \cite{WWE3}.

To prove Theorem \ref{P1: thm: main initial}, first order deformations are no longer sufficient: they can be used to prove that $\dim_{\F_p} R/pR \geq 3$, but cannot give an upper bound. Instead, we show that $\dim_{\F_p} R/pR > 3$ if and only if certain \emph{second-order} deformations exist. We set up technology that links the existence of {second-order} deformations of pseudorepresentations to the vanishing of cup products and triple Massey products in Galois cohomology, deploying the framework of tangent and obstruction theory for pseudorepresentations developed by the third-named author in \cite{A-inf}. We extract from these products the numerical invariants appearing in Theorem \ref{P1: thm: main initial}. 
To establish the theorem, we prove that these invariants are the only obstructions to constructing the required second-order deformations.
In this sense, we carry out a fine-grained computation of $R/pR$ modulo the cube of the maximal ideal.

In the second paper \cite{part2} in this series, we interpret the vanishing of $a\up1(\Fr_{\ell_1})$ and $\alpha^2+\beta$ in terms of algebraic number theory and use this description to develop algorithms that determine whether or not these invariants vanish.
Specifically, we show that the condition $\alpha^2+\beta=0$ can be detected by the splitting behaviour of primes in an explicit three-step solvable extension of $\Q(\zeta_p)$ that has degree $p^4$.
We give an implementation of this algorithm in Sage \cite{SAGE} and compile data from computer experiments showing that the rank of $R$ is exactly $3$ whenever the rank of $\bT$ is. We regard the second paper as an important proof of feasibility and applicability of the framework for computing with $R$ that is developed in this paper.

\subsubsection{Toward $R=\bT$ beyond rank $3$}
One potential drawback of our main theorem is that it only establishes $R=\bT$ when the rank of $\bT$ is $3$. We believe that $R=\bT$ regardless of the rank of $\bT$, but proving this will require new results on both the Hecke and Galois sides. To see why, it is instructive to revisit the prime level case.

In the prime level case, Mazur \cite{mazur1978} originally raised the question about the arithmetic significance of the rank of the analogous Hecke algebra. Merel used modular symbols---that is, Hecke-theoretic techniques---to prove his criterion for when the rank is one. More recently, Lecouturier \cite{lecouturier2021} gave a Hecke-theoretic interpretation of the rank in general. Calegari and Emerton \cite{CE2005} used deformation theory---that is, Galois-theoretic techniques---to give a criterion for when the rank is one, and the work of the second and third-named authors \cite{WWE3} gave a Galois-theoretic interpretation of the rank in general. The fact that the two methods arrive at the same answer is closely related to a case of the equivariant main conjecture of Iwasawa theory (see \cite{wake2020eisenstein}).

In this paper, the starting point is a Hecke-side result: Ribet's proof that the rank of $\bT$ is at least $3$. We expect that there is an Hecke-side formula for the rank of $\bT$ in general, along the lines of \cite{lecouturier2021}. The techniques of this paper could be used to give a Galois-side formula for the dimension of $R/pR$ more generally. The fact that these formulas should give the same answer is an Iwasawa-theory-type phenomenon, but it is not part of any conjectural framework (as far as we are aware). We hope and expect that is part of a rich theory that has yet to be discovered.

\subsection{Setup}
Let $p \ge 5$ be a prime and let $\bro$ be the 2-dimensional pseudorepresentation induced by $\omega \oplus 1$, where $\omega:G_\Q \to \F_p^\times$ is the mod-$p$ cyclotomic character. For an integer $M$, we say that $\bro$ is \emph{modular of level $M$} if there is a (cuspidal) newform $f$ of weight $2$ and level $\Gamma_0(M)$ such that the residual pseudorepresentation of $f$ is $\bro$. For an irreducible residual representation, the question of which levels it is modular for (if any) is the subject of Serre's conjecture \cite{serre1987}, proven by Khare--Wintenberger \cite{KW2009-inv1} and of level-raising and level-lowering results of Ribet \cite{ribet1984, ribet1990}. For reducible residual representations, like $\bro$, the situation is much different. For example, in Mazur's landmark paper on the Eisenstein ideal \cite{mazur1978}, he proves that, for any prime $\ell$, $\bro$ is modular of level $\ell$ if and only if $\ell \equiv 1 \pmod{p}$. In particular, since $\bro$ is not modular of level $1$, there is no ``optimal level" for $\bro$ that is an absolute minimum with respect to divisibility.

Ribet \cite{ribet2010} (see also \cite{yoo2019}) initiated the study of level raising for $\bro$. He observed that, here too, the results are qualitatively very different from the residually irreducible case, as witnessed by the following result (which is a special case of what Ribet proved).
\begin{thm}[Ribet]
\label{thm: ribet}
    If $\ell_0$ is a prime such that $\ell_0 \equiv 1 \pmod{p}$ and $\ell_1 \not \equiv \pm 1$ is another prime, then $\bro$ is modular of level $\ell_0\ell_1$ if and only of $\ell_1$ is a $p$th power modulo $\ell_0$. 
\end{thm}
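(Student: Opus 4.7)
The plan is to treat this as a level-raising statement for the residually reducible pseudorepresentation $\bro = 1 \oplus \omega$. Since $\ell_0 \equiv 1 \pmod p$, Mazur's theorem already yields a cuspidal newform $f_0$ of level $\ell_0$ whose residual pseudorepresentation is $\bro$; the question is whether $f_0$ can be raised in level by $\ell_1$.

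For the \emph{only if} direction, suppose a newform $f$ of level $\ell_0\ell_1$ has residual pseudorepresentation $\bro$. Being new at $\ell_1$ forces $a_{\ell_1}(f) = \pm 1$, and the Eisenstein congruence forces $a_{\ell_1}(f) \equiv 1 \pmod p$, so $a_{\ell_1}(f) = 1$. Hence $\rho_f|_{G_{\Q_{\ell_1}}}$ is of Steinberg type with unramified quotient of Frobenius eigenvalue $1$; the non-trivial bit of content is that, because $\ell_1 \not\equiv \pm 1 \pmod p$, the two residual Frobenius eigenvalues $1$ and $\ell_1$ are distinct, so this Steinberg shape survives mod $p$ in a non-split way only at the level of higher-order deformation data. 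Applying Ribet's lemma to a suitable lattice in $\rho_f$ extracts a non-zero extension class $c \in H^1(G_{\Q, S}, \omega^{-1})$, where $S = \{\ell_0, \ell_1, p, \infty\}$, that is of Steinberg type at $\ell_0$ and $\ell_1$ and finite-flat at $p$. Via Kummer theory $c$ corresponds to an element of $\Q(\zeta_p)^\times / \bigl(\Q(\zeta_p)^\times\bigr)^p$; the local conditions at $p$ and at the auxiliary primes pin down this class, up to scalar, as the Kummer class of $\ell_0$, and the Steinberg condition at $\ell_1$ becomes the statement that $\Fr_{\ell_1}$ acts trivially on $\Q(\zeta_p, \sqrt[p]{\ell_0})/\Q(\zeta_p)$. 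By class field theory this is equivalent to $\ell_1$ being a $p$-th power modulo $\ell_0$.

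For the \emph{if} direction, one reverses the construction. Under the hypothesis that $\ell_1$ is a $p$-th power mod $\ell_0$, an explicit Selmer group computation produces a class $c$ with the stated local conditions. This class gives a non-trivial first-order pseudodeformation of $\bro$ ramified at $\ell_0, \ell_1, p$ with the required local shapes. Modularity of this deformation then follows either from Ribet's original geometric argument, using the $\ell_1$-new quotient of $J_0(\ell_0\ell_1)$ and the Eisenstein part of its component group at $\ell_1$ together with Ihara's lemma (equivalently, a Cerednik-Drinfeld-style uniformization of the $\ell_1$-new quotient), or from a residually reducible modularity lifting theorem bootstrapping off of $f_0$ at level $\ell_0$.

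The main obstacle is the local analysis at $\ell_1$. Because $\ell_1 \not\equiv \pm 1 \pmod p$, the Steinberg-type deformation must produce genuine ramification at $\ell_1$ without disturbing the semisimple residual shape $1 \oplus \omega$; identifying the corresponding local pseudodeformation condition precisely enough to match the global Selmer group with the arithmetic $p$-th power criterion modulo $\ell_0$ is where the geometric input (Ihara's lemma, or equivalently the analysis of the component group at $\ell_1$) is essential, and it is also what distinguishes the residually reducible case from the classical Ribet level-raising picture.
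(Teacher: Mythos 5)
The paper does not supply a proof of this theorem; it is stated and attributed to Ribet, with citations to \cite{ribet2010} and \cite{yoo2019}, so there is no in-paper proof to compare against. Evaluating your sketch on its own terms: the overall architecture — Ribet's lemma plus a Selmer-theoretic reformulation for the \emph{only if} direction, and a geometric argument on the $\ell_1$-new quotient of $J_0(\ell_0\ell_1)$ (component groups / Ihara's lemma) for the \emph{if} direction — does match the shape of Ribet's actual argument.

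However, the Kummer-theoretic translation in your \emph{only if} step is wrong in a way that changes the answer. You place the extension class $c$ in $H^1(G_{\Q,Np}, \omega^{-1}) = H^1(\cdot, \F_p(-1))$ and then identify it with the Kummer class of $\ell_0$, which lives in $H^1(\cdot, \mu_p) = H^1(\cdot, \F_p(1))$; the coefficients do not match. More importantly, the criterion you then extract — that $\Fr_{\ell_1}$ act trivially on $\Q(\zeta_p, \sqrt[p]{\ell_0})/\Q(\zeta_p)$, i.e.\ that the Kummer class of $\ell_0$ vanish upon restriction at $\ell_1$ — holds automatically under the running hypotheses. Since $\ell_1 \not\equiv 0, \pm 1 \pmod p$, Frobenius at $\ell_1$ acts on $\mu_p$ by $\ell_1 \not\equiv 1$, so the unramified part of $H^1(\Q_{\ell_1}, \mu_p)$ is zero; and $\ell_0$ is a unit at $\ell_1$, so its Kummer class restricts to an unramified class there. (Equivalently: $\F_{\ell_1}^\times$ has order prime to $p$, so every element of it is a $p$-th power in any finite extension.) Your argument would therefore conclude that $\bro$ is modular of level $\ell_0\ell_1$ for \emph{every} such $\ell_1$, which is false. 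The genuine obstruction has $\ell_0$ and $\ell_1$ in the opposite roles from what you wrote: it is the vanishing of the Kummer class of $\ell_1$ upon restriction to a decomposition group at $\ell_0$ — equivalently, the vanishing of the cup product of that Kummer class (in $H^1(\Z[1/Np], \F_p(1))$) with the canonical $\ell_0$-ramified class $c_0 \in H^1(\Z[1/Np], \F_p(-1))$, or, in elementary terms, that $\ell_1$ be a $p$-th power in $\F_{\ell_0}^\times$. This circle of equivalences is precisely Lemma \ref{lem: log ell1 is zero}; here the hypothesis $\ell_0 \equiv 1 \pmod p$ is what makes the local condition at $\ell_0$ non-vacuous, while your version would impose a local condition at $\ell_1$ where there is nothing to impose.
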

The key thing to note about this result is that, unlike in the residually irreducible case \cite{ribet1984}, the level-raising condition on the prime $\ell_1$ depends not just on $\ell_1$ and $\bro$, but also on $\ell_0$.

\begin{assump}
\label{P1: assump: main} 
Now, and for the rest of the paper, we specialize to level $N=\ell_0\ell_1$, where $\ell_0$ and $\ell_1$ are primes such that 
\begin{enumerate}
    \item $\ell_0 \equiv 1 \pmod{p}$,
    \item  $\ell_1 \not \equiv 0, \pm 1 \pmod{p}$ and $\ell_1$ is a $p$th power modulo $\ell_0$, and
    \item \label{item: ell_0 rank 1}there is a unique cusp form $f$ of level $\ell_0$ that is congruent to the Eisenstein series modulo $p$.
\end{enumerate}
By Ribet's Theorem \ref{thm: ribet}, (1) and (2) imply that $\bro$ is modular of level $N$.
\end{assump}

\begin{rem}
    \label{P1: rem: Merel}
The number of cusp forms of level $\ell_0$ that are congruent to the Eisenstein series is well understood \cite{merel1996,CE2005,lecouturier2021, WWE3} and there is a numerical equivalent to assumption \eqref{item: ell_0 rank 1} as follows. 
Let $\log_{\ell_0}\colon \F_{\ell_0}^\times \to \F_p$ be a surjective homomorphism (that is, a discrete logarithm). Then \emph{Merel's number} is the quantity
\[
\sum_{i=1}^{\frac{\ell_0-1}{2}} i\log_{\ell_0}(i) \in \F_p.
\]
By Merel's Theorem \cite[Th\'eor\`eme 2]{merel1996}, the assumption \eqref{item: ell_0 rank 1} is equivalent to Merel's number being non-zero.
\end{rem}

\subsubsection{The Hecke algebra and congruence with Eisenstein series}
\label{sssec: intro HA and ES}
Note that the trace of $\bro(\Fr_\ell)$, where $\Fr_\ell$ is an arithmetic Frobenius element $\Fr_\ell \in G_\Q$ at a prime $\ell \neq p$, equals $\ell+1 \in \F_p$, which is the reduction modulo $p$ of the eigenvalue $\ell+1$ of the $\ell$th Hecke operator $T_\ell$ on the Eisenstein series $E_2$ of weight 2 and level 1. (Although the form $E_2$ is non-holomorphic, it has a holomorphic stabilization to any level $M$ with $M>1$.) Hence, for an integer $M>1$, $\bro$ is modular of level $M$ if there is a newform $f$ of level $M$ such that for all $n$ prime to $M$, $a_n(f)$ is congruent to $a_n(E_2)$ modulo a prime above $p$. In particular, if $M$ is squarefree with $t\geq1$ prime divisors, there are $2^t-1$ Eisenstein series of level $M$, all of which are stabilizations of $E_2$. As such, when $M$ is not prime, we need to specify the eigenvalues of Hecke operators at primes dividing $M$, thereby selecting a single Eisenstein series of level $M$, before setting up a bijection between eigenforms and pseudorepresentations.

Now let $M=N=\ell_0\ell_1$. There is a 3-dimensional space of Eisenstein series of weight $2$ and level $\Gamma_0(N)$, all having $T_\ell$-eigenvalue $\ell+1$ for $\ell \nmid N$. As in the paper \cite{WWE5}, we choose a basis of eigenforms for the Atkin--Lehner involutions $w_{\ell_0}$ and $w_{\ell_1}$. The possible pairs of eigenvalues of the Eisenstein series under the Atkin--Lehner operators $(w_{\ell_0}, w_{\ell_1})$ are $(-1,-1), (-1,1), (1,-1)$. However, it is known that a level $\Gamma_0(N)$ newform that is congruent to an Eisenstein series must have $(w_{\ell_0}, w_{\ell_1})$-eigenvalues $(-1,-1)$; we therefore select that particular Eisenstein series, calling it $E_{2,N}$. 

Let $\bT'$ be the $\Z_p$-algebra acting on modular forms of weight 2 and level $\Gamma_0(N)$ with coefficients in $\Z_p$ that is generated by the operators $T_\ell$ for $\ell \nmid N$ along with the Atkin--Lehner involutions $w_{\ell_0}$ and $w_{\ell_1}$. Let $\bT$ be the completion of $\bT'$ at the maximal ideal generated by $p$ and the annihilator of $E_{2,N}$, and let $\bT^0$ be the largest quotient of $\bT$ that acts faithfully on cusp forms. By Ribet's Theorem \ref{thm: ribet}, the $\Z_p$-rank of $\bT$ is at least 3, accounting for the contributions of the eigensystems of 
\begin{itemize}
    \item the Eisenstein series $E_{2,N}$, valued in $\Z_p$,
    \item the unique stabilization to level $N$ of the $\Z_p$-valued cusp form of level $\ell_0$, specified in \eqref{item: ell_0 rank 1} above, that has Atkin--Lehner eigenvalues $(-1,-1)$,
    \item the newform of level $N$ arising from Ribet's theorem, which has $\Z_p$-rank at least $1$. 
\end{itemize}

\subsubsection{Residually reducible modularity lifting and imposing conditions at $N$} 
\label{sssec: intro level and US discussion}

By Ribet's Theorem \ref{thm: ribet}, we know that $\bro$ is modular of level $N$, so we can ask about modularity lifting. Let $R^\univ$ denote the universal pseudodeformation ring of $\bro$ ramified only at $Np$. Considering the Galois representations associated to modular forms, it is not too difficult to show that there is a surjective homomorphism
$R^\mathrm{univ} \onto \bT$ (see \cite[\S4.1]{WWE5}).

To formulate a modularity lifting theorem, we must then define a \emph{level $N$ quotient} $R_N$ of $R^\mathrm{univ}$ that parameterizes pseudodeformations that ``look modular of level $N$.'' We also write $R$ for $R_N$ because the level $N$ is fixed throughout the paper. The putative theorem is that the induced map
\[
R \onto \bT
\]
is an isomorphism.

For a deformation $\rho$ to ``look modular of level $N$,'' we want it to satisfy the following conditions. Such $\rho$ are exactly those parameterized by $R$. 
\begin{enumerate}
    \item $\det(\rho)=\kcyc$, the $p$-adic cyclotomic character \hfill (weight $2$)
    \item $\rho$ is finite-flat at $p$ \hfill (geometricity)
    \item $\rho$ is unramified or Steinberg at $\ell_0$ and $\ell_1$ \hfill (level $\Gamma_0(N)$)
\end{enumerate}
Condition (1) is easy to formulate for pseudorepresentations, but (2) and (3) are more involved. For condition (2), which is cohomological in nature, a robust theory was developed in \cite{WWE4}. Condition (3) is even more complex. Roughly, this is for two reasons: because the Steinberg representation is reducible but indecomposible, and because it involves $p$-integrally interpolating between two conditions, unramified and Steinberg, that do not overlap in characteristic 0. 

In \cite{WWE5}, a candidate definition of (3), called \emph{unramified-or-Steinberg}, is made. The rough idea of this definition is as follows. A two-dimensional representation $\rho$ is Steinberg at $\ell$ if there is an isomorphism 
\[
\rho|_\ell \sim \ttmat{\kcyc}{*}{0}{1}
\]
on the restriction $\rho|_\ell$ to a decomposition group at $\ell$.
This implies that, for all $\sigma$ and $\tau$ in the decomposition group, the expression
\begin{equation}
\label{eq:steinberg def}
    (\rho(\sigma)-\kcyc(\sigma))(\rho(\tau)-1)
\end{equation}
is zero. Indeed, the form of the Steinberg representation implies that \eqref{eq:steinberg def} is conjugate to a matrix product of the form 
\[
\ttmat{0}{*}{0}{*} \cdot \ttmat{*}{*}{0}{0} = \ttmat{0}{0}{0}{0}.
\]
On the other hand, if $\rho$ is unramified at $\ell$, then the expression \eqref{eq:steinberg def} may not be zero because the Frobenius eigenvalues of $\rho$ need not be $\ell$ and $1$. However, if $\sigma$ is in the inertia group at $\ell$, then \eqref{eq:steinberg def} is zero, simply because the term $\rho(\sigma)-\kcyc(\sigma)$ is zero. Similarly, if $\tau$ is in the inertia group, then $\rho(\tau)-1$ is zero, so $\eqref{eq:steinberg def}$ is zero. Hence, if $\rho$ is either unramified or Steinberg at $\ell$, then the expression $\eqref{eq:steinberg def}$ is zero for all pairs $(\sigma,\tau)$ in the decomposition group with at least one of $\sigma$ and $\tau$ in the inertia group. A pseudorepresentation is defined to be \emph{unramified-or-Steinberg at $\ell$} if its determinant character is unramified and it comes from a Cayley-Hamilton representation $\rho$ satisfying \eqref{eq:steinberg def} for all such pairs $(\sigma,\tau)$; this corrects an error in the definition of unramified-or-Steinberg in the second- and third-named authors' previous paper \cite[\S3.4]{WWE5}. See \S\ref{sssec: US} for more details of this correction.

As an initial check that the definition of unramified-or-Steinberg is reasonable, it is shown in \cite{WWE3} that there is a surjective homomorphism $R_M \rsurj \bT_M$; that is, Galois representations arising from modular forms of level $M$ are unramified-or-Steinberg at the primes dividing $M$. Moreover, several theorems in \cite{WWE5} establish that, in many cases at many squarefree levels $M$, this is the right definition of (3), in that $R_M \cong \bT_M$. However, in all of the cases of $R_M \cong \bT_M$ proved in \cite{WWE5}, the rings $R_M$ and $\bT_M$ are local complete intersection. One of the motivations for this paper is to provide evidence that the definition of unramified-or-Steinberg given in \cite{WWE5} and clarified in \S\ref{sssec: US} is the right one, even in more pathological cases.

\subsection{Main results: bounding the rank of $R$} 

Our main result shows that $R \cong \bT$ under certain, numerically verifiable conditions, thereby supplying evidence that $R \cong \bT$ in general.

Since $\bT$ is not a local complete intersection ring in general (in fact, we expect it never is, outside the cases discussed in \cite{WWE5}), we cannot use Wiles's numerical criterion \cite{wiles1995} to prove that $R \onto \bT$ is an isomorphism. Instead, we use a new strategy: we prove that
\[
\dim_{\F_p} R /p R \le \mathrm{rank}_{\Z_p} \bT.
\]
Because $R$ is $p$-adically separated, a separated version of Nakayama's lemma then implies that $R \onto \bT$ is an isomorphism.  As discussed above, we have made assumptions that ensure that $\rk_{\Z_p} \bT \geq 3$. Hence our goal is to find conditions under which $\dim_{\F_p} R /p R \le 3$, for this will imply that $R \cong \bT$.  

\subsubsection{Conditions for $\dim_{\mathbb{F}_p} R/pR \le 3$}
\label{subsub:dim le 3}
The papers \cite{CE2005,WWE3} also bound the dimension of a (pseudo)deformation ring in terms of number-theoretic data. However, the situation there is greatly simplified by the fact that the tangent space of the deformation ring is one-dimensional, so computing the dimension amounts to determining the degree to which the tangent vector deforms. 

To bound the dimension of $R/pR$, we follow the same basic strategy of \cite{CE2005,WWE3}, but we have to deal with the fact that the tangent space of $R/pR$ is two-dimensional. Roughly speaking, we find a basis of the tangent space consisting of an ``old reducible vector'' (coming from level $\ell_0$) and a ``new irreducible vector.'' Under condition (3) in Assumption \ref{P1: assump: main}, we show that the dimension of $R /p R$ is greater than $3$ if and only if the new vector deforms to second order.

To determine when the new vector deforms to second order, we start by explicitly describing it: as a pseudorepresentation with values in $\F_p[\epsilon]/(\epsilon^2)$, it is given by 
\[
D_1=\omega+1+\epsilon( b\up1 c\up1 + (\omega-1)a\up1),
\]
where 
\begin{itemize}
    \item $b\up1 \in Z^1(G_{\Q,Np},\F_p(1))$ is the Kummer cocycle associated to $\ell_1$
    \item the cocycle $c\up1 \in Z^1(G_{\Q,Np},\F_p(-1))$ is ramified only at $\ell_0$
    \item the cochain $a\up1 : G_{\Q,Np} \to \F_p$ satisfies $-d a\up1 = b\up1 \smile c\up1$. 
\end{itemize}
To make sense of this (and to explain the notation), we think of $D_1$ as the trace of a \emph{generalized matrix algebra} representation
\begin{equation}
\label{eq: rho1 intro}
\rho_1 = \ttmat{\omega(1+a\up1 \epsilon)}{b\up1}{\omega c\up1}{1+d\up1 \epsilon},
\end{equation}
where $d\up1=b\up1 c\up1 - a\up1$, and where the generalized matrix multiplication is given by usual matrix multiplication but where the product of the off-diagonal co-ordinates is multiplied by $\epsilon$ (see \S\ref{subsec: 1-reducible} below for a formal discussion of these generalized matrix algebras). To determine if $\rho_1$ deforms to second order, we write down a putative deformation
\begin{equation}
\label{P1:eq:rho2}
    \rho_2 = \ttmat{\omega(1+a\up1 \epsilon+a\up2 \epsilon^2)}{b\up1+b\up2\epsilon}{\omega (c\up1+c\up2 \epsilon)}{1+d\up1 \epsilon+d\up2 \epsilon^2}
\end{equation}
with $\epsilon^3=0$ and write down the conditions that the new cochains $a\up2, b\up2,c\up2, d\up2$ must satisfy for $\rho_2$ to define a map $R \to \F_p[\epsilon]/(\epsilon^3)$. We find that, in order for $\rho_2$ to exist as a generalized matrix algebra representation, we must have
\begin{itemize}
    \item $a\up1 |_{\ell_1} =0$
\end{itemize}
and that if any deformation $\rho_2$ exists, it can be arranged to satisfy
\begin{itemize}
    \item $a\up1|_{\ell_0} = \alpha c\up1|_{\ell_0}$ for some $\alpha \in \F_p$,
    \item $b\up2|_{\ell_0} = \beta c\up1|_{\ell_0}$ for some $\beta \in \F_p$,
\end{itemize}
where ``$(-)|_{\ell}$'' indicates restriction to the decomposition group at $\ell$. In addition, for $\rho_2$ to be unramified-or-Steinberg at $\ell_0$, we must also have
\begin{itemize}
    \item $\alpha^2+\beta=0$.
\end{itemize}
Although this construction depends on many choices, we show that the conditions $a\up1|_{\ell_1}=0$ and $\alpha^2+\beta=0$ are independent of the choices. Actually, in \S\ref{sec: invariant is canonical}, we show more: $\alpha^2+\beta$ arises from a canonical element of the 1-dimensional $\F_p$-vector space $\mu_p \otimes \mu_p$, where $\mu_p \subset \oQ^\times$ are the $p$th roots of unity.

\subsubsection{Main results} 
The proof of this paper's main theorem relies on showing that there exists $\rho_2$ as in \eqref{P1:eq:rho2} if and only if the square of the maximal ideal of $R/pR$ is non-zero. Since the maximal ideal can be generated by two elements, if it is square-zero, then we have $R/pR \simeq \F_p[x,y]/(x^2,xy,y^2)$, with $\F_p$-dimension $3$. The main result is Theorem \ref{P1: thm: main initial}, which we restate here for convenience.

\begin{thm}[{Theorem \ref{P1: thm: main with a1}}]
    \label{P1: thm: main intro}
    Let $p \geq 5$. The $\F_p$-dimension of $R/pR$ is greater than $3$ if and only if both
    \begin{enumerate}[label=(\roman*)]
        \item $a\up1\vert_{\ell_1}(\Fr_{\ell_1})= 0$ and
        \item $\alpha^2+\beta=0$,
    \end{enumerate}
    where $a\up1$ and $\alpha^2+\beta$ are as defined in Section \ref{subsub:dim le 3}. Moreover, if $\dim_{\F_p} R/pR = 3$, then $R$ is a free $\Z_p$-module of rank $3$ and the natural map
    \[
    R \onto \bT
    \]
    is an isomorphism.
\end{thm}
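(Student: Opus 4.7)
The plan is to control $\dim_{\F_p} R/pR$ through the filtration by powers of the maximal ideal. An earlier first-order calculation in the paper identifies the tangent space $\m_R/(\m_R^2 + pR)$ as two-dimensional, with basis the old reducible vector (coming from the level-$\ell_0$ quotient of $R$) and the new irreducible vector $\rho_1$. Consequently $R/pR$ is a quotient of $\F_p\lb x,y\rb$, and $\dim_{\F_p} R/pR = 3$ if and only if $\m_{R/pR}^2 = 0$. By hypothesis \eqref{item: ell_0 rank 1}, the level-$\ell_0$ quotient of $R$ is $\Z_p$-free of rank $2$, so its maximal ideal mod $p$ is square-zero; combining this with a short reducibility argument reduces the question ``is $\m_{R/pR}^2$ non-zero?'' to ``does $\rho_1$ admit a lift to a pseudodeformation $\rho_2 \colon G_{\Q,Np} \to \F_p[\epsilon]/(\epsilon^3)$ of the shape \eqref{P1:eq:rho2} (with $\det=\kcyc$, finite-flat at $p$, and unramified-or-Steinberg at $\ell_0, \ell_1$) whose $\epsilon^2$-component is non-zero?''

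Next I would compute explicitly the obstructions to producing such a $\rho_2$. Writing out the GMA multiplicativity relations yields cocycle equations for $a^{(2)}, b^{(2)}, c^{(2)}, d^{(2)}$ that couple them to $a^{(1)}, b^{(1)}, c^{(1)}, d^{(1)}$. The local condition at $\ell_1$ (where the GMA/Steinberg structure must extend) forces $a^{(1)}|_{\ell_1}(\Fr_{\ell_1})=0$, and the unramified-or-Steinberg condition at $\ell_0$, combined with the normalisations $a^{(1)}|_{\ell_0} = \alpha c^{(1)}|_{\ell_0}$ and $b^{(2)}|_{\ell_0} = \beta c^{(1)}|_{\ell_0}$, forces $\alpha^2+\beta=0$. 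Conversely, when both invariants vanish, the remaining cocycle and coboundary equations are solvable: this relies on the Galois-cohomological vanishings (of the relevant local $H^2$ at $\ell_0,\ell_1,p$ and of certain cup products/triple Massey products, in the $A_\infty$ framework of \cite{A-inf}) set up in earlier sections, and produces an actual $\rho_2$ with non-trivial $\epsilon^2$-part, witnessing $\m_{R/pR}^2 \neq 0$.

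Finally, the ``moreover'' assertion follows by separated Nakayama: since $R$ is complete local Noetherian and $\dim_{\F_p} R/pR = 3$, lifting an $\F_p$-basis presents $R$ as generated by at most $3$ elements over $\Z_p$; combined with $R \twoheadrightarrow \bT$, the $\Z_p$-freeness of $\bT$, and the lower bound $\rk_{\Z_p}\bT \geq 3$ from Ribet's theorem under Assumption \ref{P1: assump: main}, this forces $R$ to be $\Z_p$-free of rank exactly $3$ and the map $R \twoheadrightarrow \bT$ to be an isomorphism. The main obstacle is showing that $a^{(1)}|_{\ell_1}(\Fr_{\ell_1})$ and $\alpha^2+\beta$ are a \emph{complete}---not merely necessary---obstruction, and that $\alpha^2+\beta$ is an intrinsic invariant of $R$ rather than an artefact of the cochain representatives (the canonicity promised in \S\ref{sec: invariant is canonical}); sufficiency requires solving the GMA lifting equations globally, which is where the Galois-cohomological inputs at $\ell_0$, $\ell_1$, and $p$ are most delicately deployed.
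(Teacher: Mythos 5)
Your outline matches the paper's own proof in both architecture and ingredients: the $2$-dimensional tangent space (Proposition~\ref{prop: R-cotangent}), the reduction of ``$\dim_{\F_p}R/pR > 3$'' to the existence of a second-order $\US_N$-lift $\rho_2$ of $\rho_1$, the computation of $a\up1\vert_{\ell_1}$ and $\alpha^2+\beta$ as the complete obstruction, and separated Nakayama together with Ribet's bound $\rk_{\Z_p}\bT\geq 3$ for the ``moreover'' clause. The one place you may be underselling the work is the ``short reducibility argument'': the paper's proof that $\dim_{\F_p}\bigl(R/(p,\m^3)\bigr)\leq 4$ (Corollary~\ref{cor: constraint on S}) is not immediate from the level-$\ell_0$ quotient being square-zero but runs through the torsion-ness of the GMA coordinate module $C$ (Proposition~\ref{prop: C is torsion}, using $a_0\cup c_0\neq 0$ from Assumption~\ref{P1: assump: main}(3)), and the necessity direction must additionally show $\mu=0$ in the presentation of Lemma~\ref{lem: S coordinates} in order to actually extract a $1$-reducible $\rho_2$ from $S$; also, $a\up1\vert_{\ell_1}=0$ is forced by the coboundary equation for $b\up2$ (i.e.\ the homomorphism property of $\rho_2$, detected at $\ell_1$ by Tate duality) rather than by the $\US_{\ell_1}$ condition itself, which the paper shows holds automatically for any constant-determinant finite-flat lift.
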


The conditions $(i)$ and $(ii)$ may at first appear to be unusual enough that this theorem is of no use whatsoever. However, in the sequel to this paper \cite{part2}, we translate the conditions $(i)$ and $(ii)$ into explicit statements about splitting behaviors of primes in certain nilpotent extensions of $\Q$. Moreover, we develop algorithms to effectively compute $(i)$ and $(ii)$ using algebraic number theory. We have executed these algorithms for small values of $p$, establishing the following 

\begin{thm}  
\label{thm:main2}
Let $p=5$ and $\ell_0=11$. Then for 
\[
\ell_1 =23,67,263,307,373,397,593,857,967,1013,
\] 
condition $(i)$ of Theorem \ref{P1: thm: main intro} holds, but condition $(ii)$ does not. In particular, for these values of $\ell_1$, the $\F_p$-dimension of $R/pR$ equals $3$ and $R\cong\bT$. 

For $\ell_1 =43,197,683,727$, conditions $(i)$ and $(ii)$ of Theorem \ref{P1: thm: main intro} both hold. Consequently, the $\F_p$-dimension of $R/pR$ exceeds $3$ for these values of $\ell_1$. 
\end{thm}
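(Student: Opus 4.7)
The plan is to deduce Theorem~\ref{thm:main2} as a direct corollary of Theorem~\ref{P1: thm: main intro} by explicitly evaluating conditions (i) and (ii) for each listed $\ell_1$. Since the main theorem reduces the question ``is $\dim_{\F_p}R/pR=3$?'' to the conjunction of a vanishing of $a\up1\vert_{\ell_1}(\Fr_{\ell_1}) \in \F_p$ and a vanishing of $\alpha^2+\beta \in \F_p$, the task is purely computational: exhibit, for each of the listed $\ell_1$ with $p=5$ and $\ell_0=11$, the value of these two invariants in $\F_5$.

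The first step is to verify that Assumption~\ref{P1: assump: main} holds in each case. Conditions (1)-(2) on $(\ell_0,\ell_1)$ are immediate to check: one confirms $11 \equiv 1 \pmod 5$, that each listed $\ell_1$ satisfies $\ell_1 \not\equiv 0,\pm 1 \pmod 5$, and that $\ell_1$ is a fifth power modulo $11$. Condition (3), the uniqueness of the cuspform of level $\ell_0=11$ congruent to the Eisenstein series, is the content of Merel's theorem (Remark~\ref{P1: rem: Merel}); the discrete logarithm sum $\sum_{i=1}^{5} i\log_{11}(i) \in \F_5$ is nonzero, as is classically known. With these in hand, Theorem~\ref{P1: thm: main intro} applies to the level $N = 11\ell_1$ in every listed case.

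The heart of the proof is the evaluation of conditions (i) and (ii), for which we invoke the algorithmic machinery of the sequel paper \cite{part2}. There, the vanishing of $a\up1\vert_{\ell_1}(\Fr_{\ell_1})$ is reinterpreted via splitting data in an explicit nilpotent extension of $\Q(\zeta_p)$, and $\alpha^2+\beta$ is similarly reinterpreted via the splitting behaviour of primes in the three-step solvable extension $M''/\Q(\zeta_p)$ of degree $p^4$. For each $\ell_1$ in the statement, we run the Sage implementation described in \cite{part2}: it constructs $M''$, computes the Frobenius at $\ell_1$, and returns the values of the two invariants in $\F_5$. For $\ell_1 \in \{23,67,263,307,373,397,593,857,967,1013\}$ the output is that (i) holds but (ii) fails, so Theorem~\ref{P1: thm: main intro} forces $\dim_{\F_5}R/5R=3$ and the identification $R \cong \bT$. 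For $\ell_1 \in \{43,197,683,727\}$ the output is that both (i) and (ii) hold, so Theorem~\ref{P1: thm: main intro} gives $\dim_{\F_5}R/5R > 3$.

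The main obstacle is not mathematical but computational: one must trust (and certify) the algorithm of \cite{part2}, which in turn depends on a careful number-theoretic translation of the Galois-cohomological invariants $a\up1\vert_{\ell_1}(\Fr_{\ell_1})$ and $\alpha^2+\beta$ into splitting data in an explicit tower of $p$-extensions. Once that translation and its implementation are in place, the present theorem is a table of algorithm outputs; the role of this paper is only to collect those outputs and feed them into Theorem~\ref{P1: thm: main intro}. The data itself, together with a brief audit of the Sage run (choice of auxiliary prime for computing Frobenius, precision of the class-field-theoretic constructions, etc.), forms the substance of the proof.
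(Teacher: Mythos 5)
Your proposal is correct and is essentially the same approach the paper takes: the paper gives no standalone proof of this theorem, but simply states it as the output of the algorithms developed and executed in the companion paper \cite{part2}, combined with an application of Theorem \ref{P1: thm: main intro}. Your audit of Assumption \ref{P1: assump: main} (congruences mod $5$ and mod $11$, nonvanishing of Merel's number for $\ell_0=11$) and your reduction to a table of algorithm outputs faithfully reflect the intended logic.
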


\begin{rem}
For the values of $p$ and $N$ where we found $\dim_{\F_p}R/pR > 3$, we also computed $\dim_{\F_p}\bT/p\bT>3$. This is consistent with the expectation that $R\cong\bT$. 
\end{rem}

To summarize Theorem \ref{thm:main2}, in all of the examples we computed, we find that one of the following cases occurs, witnessing the main Theorem \ref{P1: thm: main intro}. 
\begin{itemize}
    \item We compute in number field extensions and determine that both $(i)$ and $(ii)$ of Theorem \ref{P1: thm: main intro} are true. In addition, we \emph{independently} compute with modular symbols and determine that $\mathrm{rank}_{\Z_p}(\bT)\geq 4$. 
    \item We compute in number field extensions and determine that, of the conditions of Theorem \ref{P1: thm: main intro}, $(i)$ is true but $(ii)$ is false. In addition, we \emph{independently} compute that $\rk_{\Z_p} \bT = 3$. Theorem \ref{P1: thm: main intro} tells us that $R \cong \bT$ in this case. 
\end{itemize}

Both of these cases are consistent with the hypothesis that $R \cong \bT$ in general, even when $\dim_{\F_p} R/pR > 3$. This leads us to a broader 

\begin{conj}
For any prime $p$ and squarefree level $M$ as in Section \ref{sssec: intro level and US discussion}, we have $R_M \cong \bT_M$. 
\end{conj}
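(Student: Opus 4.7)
The strategy would be to generalize the rank-matching argument of this paper from rank $3$ to arbitrary rank, so that in every case one obtains the bound $\dim_{\F_p} R_M/pR_M \leq \rk_{\Z_p} \bT_M$, after which the separated Nakayama argument used here yields $R_M \cong \bT_M$. This requires two complementary advances: a systematic way to describe the Galois deformation ring $R_M$ modulo arbitrarily high powers of its maximal ideal $\m$, and a Hecke-side formula for $\rk_{\Z_p} \bT_M$, with the two answers matched at the end.

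On the Galois side, the plan is to iterate the obstruction-theoretic analysis of this paper. Having described $R/\m^3$ via the second-order generalized matrix algebra $\rho_2$ in \eqref{P1:eq:rho2}, one continues inductively: given an $r$-th order generalized matrix algebra deformation $\rho_r$ over $\F_p[\epsilon]/(\epsilon^{r+1})$, write down a putative lift $\rho_{r+1}$ over $\F_p[\epsilon]/(\epsilon^{r+2})$ with new cochain entries $a\up{r+1}, b\up{r+1}, c\up{r+1}, d\up{r+1}$, and identify the obstruction to its existence with a combination of length-$(r+1)$ Massey products in Galois cohomology and local conditions at $\ell_0$ and $\ell_1$ that generalize the invariants $a\up1|_{\ell_1}(\Fr_{\ell_1})$ and $\alpha^2+\beta$ appearing in Theorem \ref{P1: thm: main intro}. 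As in the sequel \cite{part2}, one would then try to repackage these higher obstructions as splitting conditions for primes in a tower of solvable extensions of $\Q(\zeta_p)$ extending the degree $p^4$ extension $M''$ constructed there. Organizing the bookkeeping of off-diagonal cocycles and accompanying cochains so that the local unramified-or-Steinberg conditions at $\ell_0$ and $\ell_1$ remain tractable after many iterations looks like the most delicate part of the deformation-theoretic side.

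On the Hecke side, one would need an explicit arithmetic formula for $\rk_{\Z_p} \bT_M$ analogous to the one Lecouturier \cite{lecouturier2021} established at prime level. A plausible starting point is an analysis of modular symbols at level $\Gamma_0(N)$ with the Atkin--Lehner eigenvalues $(-1,-1)$ fixed as in Section \ref{sssec: intro HA and ES}, refined to track the interaction between the old and new subspaces and between the two primes $\ell_0$ and $\ell_1$. One should then extract invariants that depend on $\ell_0$, $\ell_1$, and $p$ in a way parallel to the Galois-side splitting conditions.

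The hard part will be showing that the two formulas agree, for this is essentially the assertion that Galois-theoretic and modular-symbol-theoretic measures of Eisenstein congruences coincide -- an Iwasawa-theoretic coincidence of the kind proved, in the prime-level case, in \cite{wake2020eisenstein}. In our setting there is no known main conjecture to invoke, and formulating and proving such an equivariant main conjecture at composite level $M$ is the principal conceptual obstacle. A pragmatic fall-back is to try to bypass the Hecke-side formula by explicitly constructing enough modular pseudodeformations to exhaust every tangent direction of $R_M$; but since the number of newforms contributing to $\bT_M$ is itself controlled by the same mystery, this only defers the difficulty. The computational instances recorded in Theorem \ref{thm:main2} and the forthcoming data of \cite{part2} provide both motivation and a concrete set of test cases against which any proposed framework could be checked.
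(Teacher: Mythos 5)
The statement you were asked to prove is stated in the paper as a \emph{conjecture}, not a theorem; the authors offer no proof, and in fact devote the subsection ``Toward $R=\bT$ beyond rank $3$'' (\S\ref{subsec:intro summary}) to explaining why they believe a proof is currently out of reach. So there is no argument in the paper against which your attempt can be measured. Your write-up is a research program, not a proof, and to your credit you say so explicitly: you identify an inductive Massey-product analysis of $R_M$ modulo high powers of $\m$ on the Galois side, a Lecouturier-style modular-symbols formula for $\rk_{\Z_p}\bT_M$ on the Hecke side, and then flag that the comparison of the two is ``the principal conceptual obstacle'' for which no equivariant main conjecture at composite level is known.

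What you propose tracks almost verbatim the authors' own speculative discussion (higher-order obstruction theory on the $R$-side; a Hecke-side rank formula \`a la Lecouturier; an Iwasawa-theoretic coincidence linking the two, as in \cite{wake2020eisenstein} at prime level). That is a reasonable reconstruction of their intent, but it does not close the gap. Concretely: (i) the Galois-side induction you sketch would need to show not just \emph{when} $\rho_r$ lifts to $\rho_{r+1}$, but produce a closed-form value of $\dim_{\F_p}R_M/pR_M$, which your sketch does not do; (ii) no Hecke-side rank formula at level $\ell_0\ell_1$ with fixed Atkin--Lehner signature is established or even conjectured here; and (iii) even granting both, the agreement of the two formulas is exactly the missing theorem and is not reduced to anything known. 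Since you acknowledge all three points, the write-up is an honest heuristic, but it is not an argument that could be accepted as a proof of the conjecture, and it should not be presented as one.
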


In other words, we conjecture that the unramified-or-Steinberg condition developed in \cite[\S3]{WWE5} fully captures the ``modular of level $M$" condition on Galois pseudorepresentations. More precisely, the conjecture decomposes into ``$R_M^\varep = \bT_M^\varep$'' as $\varep$ varies over sets of Atkin--Lehner eigenvalues, as in \cite[\S1.9.1]{WWE5}.  

\subsection{Organization of the paper}
In order to organize non-canonical choices in one place, the notion of \emph{pinning data} is set up in Definition \ref{P1: defn: pinning}. Section \ref{P1: sec: recollection of WWE5} consists of recollections from the antecedent paper \cite{WWE5} regarding the fundamental concepts described in the introduction above. All notation and definitions are present in this section in order to make it reasonably self-contained, while details and proofs are left to \cite{WWE5}. Section \ref{sec: additional arithmetic} continues with several lemmas and definitions in arithmetic and Galois representations that extend the content of Section \ref{P1: sec: recollection of WWE5}, going beyond what appears in \cite{WWE5}. Section \ref{sec: explicit first-order} sets up the first-order deformation $\rho_1$ of \eqref{eq: rho1 intro} above. Section \ref{sec: computation of R} produces an explicit formula for $R/pR$ up to second order, and Section \ref{sec: R and Galois} applies this in order to prove the ``only if'' direction of the main Theorem \ref{P1: thm: main intro}. Section \ref{sec: proof of main theorem} proves the other logical direction by constructing by hand a level $N$ deformation $\rho_2$ of $\rho_1$ as in \eqref{P1:eq:rho2}. Section \ref{sec: invariant is canonical} proves that the invariant $\alpha^2+\beta$ is canonical by showing that the pinning data does not affect it. 

\subsection{Acknowledgements}

The first-named author would like to thank the University of Bristol and the Heilbronn Institute for Mathematical Research for its partial support of this project. The second-named author was supported in part by NSF grant DMS-1901867 and NSF CAREER grant DMS-2337830. The third-named author was supported in part by Simons Foundation award 846912 and NSF grant DMS-2401384, and would like to thank the Department of Mathematics of Imperial College London for its partial support of this project from its Mathematics Platform Grant. We also thank John Cremona for several helpful conversations about the computational aspects of this project. This research was supported in part by the University of Pittsburgh Center for Research Computing and Swarthmore College through the computing resources provided. Specifically, this work used the H2P cluster at the University of Pittsburgh, which is supported by NSF award number OAC-2117681.

\subsection{Notation and conventions}
\label{subsec: notation conventions}
For a group $G$, write $C^\bullet(G,-)$ for the complex of continuous, inhomogeneous $G$-cochains, and $H^i(G,-)$, $Z^i(G,-)$ and $B^i(G,-)$ for its cohomology, cocycles and coboundaries. Let $\RGamma(G,-)$ denote the class of $C^\bullet(G,-)$ in the derived category. Let $x \mapsto [x]$ denote the quotient map $Z^i(G,-) \to H^i(G,-)$. Let $\smile$ denote the cup product on $C^\bullet(G,-)$ and $\cup$ for the induced map on $H^*(G,-)$.

When $R=\Z[1/Np]$ or $R=\Q_q$ for a prime $q$, we use $C^\bullet(R,-)$ as an abbreviation for $C^\bullet(G,-)$ where $G$ is the \'etale fundamental group of $\Spec(R)$, and similarly for
$H^i(R,-)$, $Z^i(R,-)$, $B^i(R,-)$, and $\RGamma(R,-)$.

We fix an algebraic closure $\oQ$ of $\Q$. We work with the maximal subextension $\oQ \supset \Q_S \supset \Q$ that is ramified only at the places dividing $S=Np\infty$, and let $G_{\Q,Np} := \Gal(\Q_S/\Q)$.

For each prime number $q$, let $\oQ_q/\Q_q$ be an algebraic closure and let $G_q := \Gal(\oQ_q/\Q_q)$. Let $I_q \subset G_q$ be the inertia group and let $I_q^\mathrm{tame}$ be the tame quotient. When $q \neq p$, let $\gamma_q \in I_q$ denote a lift along $I_q \rsurj I_q^\mathrm{tame}$ of a topological generator of $I_q^\mathrm{tame}$. 

Let $\mu_p \subset \oQ^\times$ denote the subgroup of $p$th roots of unity, and let $\omega: G_{\Q,Np} \to \F_p^\times$ denote the mod-$p$ cyclotomic character. For $n \in \Z$, let $\F_p(n)$ denote the $\F_p[G_{\Q,Np}]$-module $\F_p$ with $G_{\Q,Np}$ acting by $\omega^n$.

Several of our constructions will depend in subtle ways on additional choices we call \emph{pinning data}. In the end (\S\ref{sec: invariant is canonical}), we will show that the invariant $\alpha^2 + \beta$ of Theorem \ref{P1: thm: main intro} is independent of the pinning data.

\begin{defn}
    \label{P1: defn: pinning} 
    The following choices constitute \emph{pinning data}:
    \begin{itemize}
        \item for each $q \in \{\ell_0, \ell_1, p\}$, an embedding $\oQ \rinj \oQ_q$,
        \item a primitive $p$th root of unity $\zeta_p \in \oQ$,
        \item for $i=0,1$, a $p$th root $\ell_i^{1/p} \in \oQ$ of $\ell_i$, such that, if possible, the image of $\ell_1^{1/p}$ in $\oQ_p$, under the fixed embedding, is in $\Q_p$. (See Lemma \ref{P1: lem: ram of ell1 at p} for a discussion of when this is possible.)
    \end{itemize}
\end{defn}

Notice that the choice of pinning data naturally induce the following further choices of 
\begin{itemize}
    \item for each prime $q$ dividing $Np$, a decomposition subgroup of $q$ in $G_{\Q,Np}$ and an isomorphism between this subgroup and $G_q$, and
    \item for each $n \in \Z$, isomorphisms $\F_p(n) \isoto \mu_p^{\otimes n}$.
\end{itemize}
We use these data to identify $\F_p(n)$ with $\mu_p^{\otimes n}$ and, for each prime $q$ dividing $Np$, $G_q$ as a subgroup of $G_{\Q,Np}$ and we let 
\[
C^i(\Z[1/Np],-) \xrightarrow{|_q} C^i(\Q_q,-), \quad f \mapsto f|_q
\]
denote the restriction map. We use the same notation $|_q$ for the induced map on cohomology, cocycles, and coboundaries.

\section{Recollection of pseudodeformation theory}
\label{P1: sec: recollection of WWE5} 

Throughout this manuscript, we retain the conventions and terminology of the preceding work \cite{WWE5} of the second-named and third-named authors. In this section, we summarize these items for the convenience of the reader, specializing them to the particular level $N=\ell_0\ell_1$ and Atkin--Lehner eigenvalues $\varep = (-1,-1)$ specified in \S\ref{sssec: intro HA and ES}. Note that since we fix this choice of Atkin--Lehner signs throughout this paper, we omit the superscript $\varep$ found in the notation throughout \cite{WWE5}.  

Nothing new is proven in this section. Those readers who have some familiarity with the ideas of \cite{WWE5} can safely skip this section on first reading, and refer back when necessary.

\subsection{Modular forms}
\label{P1: subsec: modular forms}

As in \cite[\S2.1]{WWE5}, we recall the following Hecke algebras and modular forms of weight 2. 

Let $\fH_N$ denote the Hecke algebra generated (over $\Z$) by the action of the Hecke operators
\begin{align}
\label{eq: Hecke operators}
\begin{split}
& T_q, \text{ for } q \nmid N \text{ prime, and}\\
& w_\ell, \text{ for } \ell \mid N \text{ prime},
\end{split}
\end{align}
on modular forms of weight 2 and level $\Gamma_0(N)$. Here $T_q$ is the standard unramified Hecke operator, while $w_\ell$ is the Atkin--Lehner involution at $\ell$. It is well known that $\fH_N$ is commutative, reduced, and free of finite rank as a $\Z$-module. 

As remarked in \S\ref{sssec: intro HA and ES}, the space $\Eis_2(\Gamma_0(N))$ of Eisenstein series of weight 2 and level $\Gamma_0(N)$ is 3-dimensional, and our choice of $(w_{\ell_0}, w_{\ell_1})$-eigenvalues $\varep = (-1,-1)$ specifies a unique normalized Hecke eigenform $E_{2,N}$. It has $T_q$-eigenvalue $q+1$ for all primes $q \nmid N$, and the constant term of its $q$-expansion at infinity is 
\[
a_0(E_{2,N}) =\frac{1}{2}\zeta(-1) \prod_{\ell \mid N} (\ell -1)= -\frac{1}{24}\prod_{\ell \mid N} (\ell -1).
\]

Now we define the Hecke algebras and Eisenstein ideals that are our primary object of study, measuring congruences of Hecke eigenvalues between $E_{2,N}$ and cusp forms. 

\begin{itemize}
    \item Let $\bT$ denote the completion of $\fH_N$ at its maximal ideal $(p, \Ann_{\fH_N}(E_{2,N}))$. Its residue field is $\F_p$, because $\fH_N/\Ann_{\fH_N}(E_{2,N}) \cong \Z$. 
    \item Let $\bT^0$ be the cuspidal quotient of $\bT$. 
    \item Let $I:= \Ann_{\fH_N}(E_{2,N}) \cdot \bT$, which we call the \emph{Eisenstein ideal}. We have $\bT/I \cong \Z_p$. 
    \item Let $I^0$ denote the image of $I$ in $\bT^0$. 
    \item Ohta \cite[Theorem 3.1.3]{ohta2014} has proved that
\[
\bT^0/I^0 \cong \Z_p/a_0(E_{2,N})\Z_p. 
\]
\end{itemize}
We call (the $p$-part of) $a_0(E_{2,N})$ \emph{the congruence number} for congruences (modulo $p$) of Hecke eigenvalues between $E_{2,N}$ and cusp forms. Our assumption that $\ell_0 \equiv 1 \pmod{p}$ implies that $\bT^0/I^0 \neq 0$, which is equivalent to $\bT^0 \neq 0$. 
    
    Let $M_2(N; \Z_p)_\Eis$ denote the module of modular forms of weight 2 and level $\Gamma_0(N)$ with coefficients in $\Z_p$, subject to the condition that their Hecke eigensystem under the Hecke operators of \eqref{eq: Hecke operators} are congruent modulo $p$ to that of the Eisenstein series $E_{2,N}$. 
    Let $S_2(N; \Z_p)_\Eis$ denote the submodule of $M_2(N; \Z_p)_\Eis$ consisting of cusp forms.
    We have perfect pairings 
    \begin{equation}
    \label{eq: Hecke pairings} 
    M_2(N; \Z_p)_\Eis \times \bT \to \Z_p, \qquad S_2(N; \Z_p)_\Eis \times \bT_N^{0} \to \Z_p.
    \end{equation}
    Under the usual Fourier expansion of a modular form $f(z) = \sum_{n \geq 0} a_n(f)q^n \in M_2(N; \Z_p)_\Eis$, the pairing is given by $(f,T) \mapsto a_1(Tf)$. 

    In particular, these pairings specialize to a bijection between normalized Hecke eigenforms in $M_2(N;\Z_p)_\Eis$ (resp.\ $S_2(N;\Z_p)_\Eis$) and homomorphisms $\bT\to\oQ_p$ (resp.\ $\bT^0 \to \oQ_p$) that encode their eigensystems. 

We will also require the Eisenstein-congruent Hecke algebras of weight 2 and level $\ell_0$ with Atkin--Lehner sign $-1$, denoted $\bT_{\ell_0}$, along with its cuspidal quotient $\bT_{\ell_0}^0$. This $\bT_{\ell_0}^0$ is the Hecke algebra studied by Mazur in \cite{mazur1978}. There are natural surjections $\bT \rsurj \bT_{\ell_0}$ and $\bT^0 \rsurj \bT_{\ell_0}^0$, because a choice of Atkin--Lehner signature at level $N$ designates a stabilization of level $\ell_0$ forms to level $N$. 

In light of \eqref{eq: Hecke pairings} and the fact that each of the spaces of modular forms has a basis of Hecke eigenvectors, we have the well known 
\begin{lem}
\label{lem: T are flat and reduced}
The Hecke algebras $\bT$, $\bT^0$, $\bT_{\ell_0}$, and $\bT_{\ell_0}^0$ are reduced and, as $\Z_p$-modules, finitely generated and flat. 
\end{lem}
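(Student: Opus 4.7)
The plan is to deduce both halves of the lemma from the two inputs emphasized just before the statement: the perfect pairings in \eqref{eq: Hecke pairings} and the existence of an eigenbasis for the Hecke action on modular forms over $\oQ_p$.

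For the finite generation and flatness assertion, I would first note that $M_2(N;\Z_p)_\Eis^\varep$ and $S_2(N;\Z_p)_\Eis^\varep$ are finitely generated torsion-free $\Z_p$-modules: they sit inside the $\Z_p$-module of $q$-expansions with bounded $q$-adic precision (equivalently, $M_2(N;\Z_p)_\Eis^\varep$ is a direct summand of $M_2(N;\Z_p)$, which is obtained from the complex vector space $M_2(N;\C)$ by a choice of integral structure). Over the PID $\Z_p$, torsion-free and finitely generated means free, so the $\Z_p$-dual is again finitely generated and free. Through the perfect pairing in \eqref{eq: Hecke pairings}, $\bT$ is isomorphic as a $\Z_p$-module to this dual, hence finitely generated and flat. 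The same argument with $S_2(N;\Z_p)_\Eis^\varep$ gives the result for $\bT^0$, and the analogous pairings at level $\ell_0$ handle $\bT_{\ell_0}$ and $\bT_{\ell_0}^0$.

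For reducedness, I would exploit faithfulness of the Hecke action, which is a direct consequence of the perfect pairing. Since the modular forms space tensored with $\oQ_p$ has a basis of common Hecke eigenvectors, the commutative action of $\bT \otimes_{\Z_p} \oQ_p$ on this space is simultaneously diagonalizable. Faithfulness together with simultaneous diagonalization yields an embedding
\[
\bT \otimes_{\Z_p} \oQ_p \hookrightarrow \oQ_p^{\,r},
\]
where $r$ is the $\oQ_p$-dimension of the modular forms space; hence $\bT \otimes_{\Z_p} \oQ_p$ is reduced. Combining this with the flatness established above gives injections $\bT \hookrightarrow \bT \otimes_{\Z_p} \Q_p \hookrightarrow \bT \otimes_{\Z_p} \oQ_p$, so $\bT$ is itself reduced. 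The same reasoning, applied with $S_2(N;\Z_p)_\Eis^\varep$, $M_2(\ell_0;\Z_p)_\Eis^\varep$, and $S_2(\ell_0;\Z_p)_\Eis^\varep$ in turn, gives reducedness of $\bT^0$, $\bT_{\ell_0}$, and $\bT_{\ell_0}^0$.

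There is no real obstacle here, since this is a well-known consequence of the cited structural facts; the only thing to be careful about is the completion step. The full Hecke algebra $\fH_N$ is finite over $\Z$, so $\fH_N \otimes_\Z \Z_p$ is a semilocal finite $\Z_p$-algebra that decomposes as a product of its completions at its maximal ideals, one of which is $\bT$; in particular, all the $\Z_p$-module and ring-theoretic properties (finite free, reduced) pass directly from $\fH_N \otimes_\Z \Z_p$ to its idempotent factor $\bT$, so no additional work is needed.
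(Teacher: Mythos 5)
Your proposal is correct and follows the same approach the paper intends: the paper in fact gives no proof at all, simply asserting the lemma as ``well known'' in light of the perfect pairings \eqref{eq: Hecke pairings} and the existence of an eigenbasis, and your argument is exactly the standard way of filling in those details (pairing with a finite free $\Z_p$-module gives finite flatness; simultaneous diagonalizability over $\oQ_p$ plus flatness gives reducedness). The closing remark about passing from $\fH_N \otimes_\Z \Z_p$ to its idempotent factor $\bT$ is a fine sanity check, though it is not strictly needed since the pairings are already stated at the level of $\bT$ and $\bT^0$.
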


\subsection{Galois deformation theory}
\label{subsec: Galois def theory}

The main technical feature of \cite{WWE5} was the development of theory of Galois representations adequate to characterize the Galois representations associated to $M_2(N;\Z_p)_\Eis$. In particular, while $\bT$ interpolates the Hecke eigensystems, interpolating the associated Galois representations presents technical issues addressed in \cite[\S3]{WWE5}. 

The key new notion presented there is the  \emph{unramified-or-Steinberg} condition on 2-dimensional  pseudorepresentations of $G_\ell$, which combine over all $\ell \mid N$ to a global unramified-or-Steinberg condition. Because we view this paper as a test of these notions in a more difficult setting (where $\bT$ is not Gorenstein), we carefully recall this notion. Also, since the global unramified-or-Steinberg condition involves the \emph{finite-flat} geometricity condition on representations of $G_p$, we recall that theory as well.

\subsubsection{Background on pseudodeformations}

We will presume that the reader is familiar with the theory of pseudorepresentations, as developed by Chenevier \cite{chen2014}. This is summarized in \cite[\S3.1]{WWE5}, and we recall fundamental notions here. All of our pseudorepresentations are 2-dimensional. 

Let $A$ be a commutative ring. We write $D : E \to A$ for a pseudorepresentation, which includes the implication that $E$ is an $A$-algebra (not necessarily commutative). The data represented by this notation consists of functions
\[
D_B : E \otimes_A B \to B
\]
associated functorially to commutative $A$-algebras $B$. 

When $H$ is a group, we write $D : H \to A$ as shorthand for a pseudorepresentation $D: A[H] \to A$. A pseudorepresentation $D : E \to A$ is characterized by its induced characteristic polynomial functions, which in the present 2-dimensional case are the two functions
\[
\Tr_D : E \to A \quad \text{ and } \quad {\det}_D : E \to A.
\]

When the source and target of a pseudorepresentation $D$ have a topology, $D$ is considered continuous when $\Tr_D$ and ${\det}_D$ are continuous. When $H$ is a profinite group and $A$ is a profinite ring, we will presume that a pseudorepresentation $D : H \to A$ is continuous from $A[H]$ to $A$ without further comment. 

\subsubsection{Cayley--Hamilton representations and GMA representations}
\label{sssec: CH reps and GMA reps}

While a pseudorepresentation $D : G \to A$ may not arise from a 2-dimensional representation of $G$ over $A$, it is well-understood how to broaden the category of representations to remedy this. This broader category consists of \emph{Cayley--Hamilton representations} of $G$. It is fibered over the category of pseudorepresentations and has universal objects. In this section, we overview the theory of Cayley--Hamilton representations, referring to \cite[\S3]{WWE5} for details. We also point out that the Cayley--Hamilton representations we work with in this paper admit the structure of generalized matrix algebras (``GMAs''). 

Let $A$ denote a commutative ring.
\begin{itemize}
    \item We say that a pseudorepresentation $D : E \to A$ is \emph{Cayley--Hamilton} if, for every commutative $A$-algebra $B$ and every element $\gamma \in E \otimes_A B$, $\gamma$ satisfies its $B$-valued characteristic polynomial $X^2 - \Tr_D(\gamma) X + \det_D(\gamma) \in B[X]$. 
    \item A \emph{Cayley--Hamilton algebra over $A$} is a pair $(E, D : E \to A)$, where $D$ is a Cayley--Hamilton pseudorepresentation. 
    \item An \emph{$A$-valued Cayley--Hamilton representation of $G$} is a tuple $(\rho: G \to E^\times, E, D: E \to A)$, where $(E, D)$ is a Cayley--Hamilton algebra over $A$ and $\rho$ is a group homomorphism. 
    \item The \emph{induced pseudorepresentation} of a Cayley--Hamilton representation 
    \[
    (\rho, E, D : E\to A)
    \]
    of $G$, written $\psi(\rho)$, is the $A$-valued pseudorepresentation of $G$ determined by the composition $D \circ \rho$. 
\end{itemize}

A \emph{generalized matrix algebra over $A$}, or ``$A$-GMA'' for short, is a Cayley--Hamilton algebra over $A$ with extra data. We confine our discussion to 2-by-2 GMAs. 

\begin{itemize}
    \item The data for a ($2\times 2$)-GMA over $A$ consists of two $A$-modules $B$ and $C$ together with an $A$-module map $m: B \otimes_A C \to A$ such that the two maps 
\[
B \otimes_A C \otimes_A B \to B \otimes_A A \to B \quad \text{and} \quad B \otimes_A C \otimes_A B \to A \otimes_A B \to B
\]
coincide, and similarly the two maps $C \otimes_A B \otimes_A C \to C$ coincide. We make an $A$-algebra $\ttmat{A}{B}{C}{A}$ using the rule for $2\times 2$-matrix multiplication. 
\item We think of a \emph{GMA structure} on a Cayley--Hamilton algebra as the idempotents $\sm{1}{0}{0}{0}$ and $\sm{0}{0}{0}{1}$ in the above decomposition. 
\item When $A$ is a Henselian local ring and a Cayley--Hamilton algebra $E$ over $A$ is finitely generated as an $A$-module (which will always be true in our applications, and is actually equivalent to being finitely generated as an $A$-algebra), its $A$-GMA structures are inner-isomorphic \cite[Lem.\ 5.6.8]{WWE1}. 
\item When a Cayley--Hamilton representation $(\rho, E, D : E\to A)$ of $G$ has its Cayley--Hamilton algebra $E$ equipped with the structure of an $A$-GMA, it is known as a \emph{GMA representation}. 
\end{itemize}

\subsubsection{Deformation theory of pseudorepresentations} 
\label{sssec: dt of ps} 

The functorial basis for deformation theory of pseudorepresentations is rather straightforward in \cite{chen2014}.  What is less straightforward is the approach to applying representation-theoretic conditions on pseudorepresentations that are most naturally formatted for representations. The main idea for this, developed systematically in \cite{WWE4}, is to say that a pseudorepresentation satisfies a condition $\mathcal{C}$ when some Cayley--Hamilton representation inducing it satisfies $\mathcal{C}$. In this section, we overview these deformation-theoretic concepts, first specializing to the particular pseudorepresentation that we will deform. 

\begin{itemize}
    \item Let $\omega : G_\Q \to \F_p^\times$ denote the mod-$p$ cyclotomic character, which factors through $G_{\Q,Np}$. It is the reduction modulo $p$ of the $p$-adic cyclotomic character that we denote by $\kappa : G_\Q \to \Z_p^\times$. 
    \item Let $\Db : G_{\Q,Np} \to \F_p$ denote the pseudorepresentation $\psi(\omega \oplus 1)$ of $G_{\Q,Np}$. 
    \item When $A$ is a commutative local ring with residue field $\F_p$, we say that $D : G_{\Q,Np} \to A$ \emph{deforms} $\Db$ if the composite pseudorepresentation $G_{\Q,Np} \to A \rsurj \F_p$ equals $\Db$. 
    \item Let $R_\Db$ denote the universal pseudodeformation ring of $\Db$. By \cite[Proposition E]{chen2014}, $R_\Db$ is Noetherian, which means that there is a universal pseudodeformation $D^u_\Db: G_{\Q,Np} \to R_\Db$. 
\end{itemize}

Now we bring Cayley--Hamilton representations into the deformation theory of pseudorepresentations. 

\begin{itemize}
    \item When $A$ is local with residue field $\F$ and $\Db : G \to \F$ is a pseudorepresentation, we say that an $A$-valued Cayley--Hamilton representation $(\rho, E, D)$ of $G$ is \emph{over $\Db$} when the pseudorepresentation $D \circ \rho: G \to A$ deforms $\Db$. 
    \item There exists a universal Cayley--Hamilton representation of $G_{\Q,Np}$ over $\Db$, valued in the universal pseudodeformation ring $R_\Db$. It is written
    \[
    (\rho^u : G_{\Q,Np} \to (E_\Db^u)^\times, E_\Db^u, D_{E_\Db}^u : E_\Db^u \to R_\Db^u).
    \]
    \item Because $\Db$ is multiplicity-free---that is, its associated semi-simple representation $\omega \oplus 1$ over $\F_p$ has non-isomorphic simple summands---it is known that any Cayley--Hamilton representation of $G_{\Q,Np}$ over $\Db$ admits the structure of a GMA representation. (See \cite[Theorem 3.2.2]{WWE4} for more details.)
\end{itemize}

\subsubsection{The unramified-or-Steinberg condition, correcting an error in \cite{WWE5}}
\label{sssec: US}

We now review and correct the \emph{unramified-or-Steinberg} condition that was introduced in \cite[\S3]{WWE5}. In \cite[\S3]{WWE5}, this condition is called the ``unramified-or-$\varep$-Steinberg condition'' or ``$\US_N^\varep$ condition'', to allow for arbitrary choice of Atkin--Lehner signs $\varep = (\varep_\ell)_{\ell \mid N}$ indexed by the prime divisors of $N$. In this paper, we only consider negative Atkin--Lehner signs, so we suppress the $\varep$ from our notation outside of \S\ref{sssec: US}. 

However, the second- and third-named authors appreciate the opportunity to correct an error in \cite{WWE5} in the formulation of $\US_N^\varep$ for general $\varep$, which we do in Definition \ref{defn: US local}. See Remark \ref{rem: WWE5 results same} for justification that the results of \cite{WWE5} still hold with precisely this correction to the definition. 

The definition of $\US_N^\varep$ is motivated by the forms of Galois representations of modular forms at decomposition groups, as we now recall. When $\ell \neq p$, it is known that Galois representations $\rho_f : G_{\Q,Np} \to \GL_2(\oQ_p)$ arising from a Hecke eigenform (for the Hecke operators of \eqref{eq: Hecke operators}) in $M_2(\Gamma_0(N))$ have the following form after restriction to a decomposition group:
\begin{itemize}
    \item $\rho_f \vert_{I_\ell}$ is non-trivial if and only if $f$ is new at $\ell$. In other words, $\rho_f\vert_{\ell}$ is unramified if and only if either $f$ is old at $\ell$ or $\ell\,\nmid\,N$. 
    \item If $f$ is new at $\ell$ and its $w_\ell$-eigenvalue is $\varep_\ell$, then $\rho_f \vert_{\ell} : G_\ell \to \GL_2(\oQ_p)$ has the form
    \begin{equation}
    \label{eq: Galois Steinberg form}
    \rho_f \vert_{\ell} \simeq \lambda(-\varep_\ell) \otimes 
     \ttmat{\kappa}{\tilde b_\ell}{0}{1},
    \end{equation}
    where $\lambda(\nu)$ is the unramified character of $G_\ell$ sending $\Fr_\ell \mapsto \nu$ and $\tilde b_\ell : G_\ell \to \oQ_p(1)$ is an element of $Z^1(\Q_\ell, \oQ_p(1))$ inducing a non-trivial cohomology class in $H^1(\Q_\ell, \oQ_p(1))$. By Kummer theory, this cohomology class is unique up to scalar, and consequently the $\rho_f\vert_\ell$ is uniquely prescribed up to isomorphism. 
\end{itemize}

In either case, if $f$ has $w_\ell$-eigenvalue $\varepsilon_\ell$, then the expression
\[
(\rho_f(\sigma) - \lambda(-\varep_\ell)\kappa(\sigma)) \cdot 
    (\rho_f(\tau) - \lambda(-\varep_\ell)(\tau))
\]
is zero for all $\sigma, \tau \in G_\ell$ with at least one of $\sigma$ and $\tau \in I_\ell$. Indeed, if $f$ is old at $\ell$, then $\rho_f$ is unramified and one of the two factors in the expression is zero. Otherwise, $\rho_f|_\ell$ has the form \eqref{eq: Galois Steinberg form}, and the expression is of the form
\[
\ttmat{0}{\ast}{0}{\ast} \cdot \ttmat{\ast}{\ast}{0}{0},
\]
and any such product is zero. This motivates the following definition.
\begin{defn}[Correction of {\cite[Defn.\ 3.4.1]{WWE5}}]
    \label{defn: US local}
    A Cayley--Hamilton representation $(\rho : G_{\ell} \to E, E, D_E : E \to A)$ over $\Db\vert_{\ell}$ is \emph{unramified-or-$\varep_\ell$-Steinberg at $\ell$} (or $\US_{\ell}^{\varep_\ell}$) provided that
    \begin{enumerate}
        \item the determinant of $\rho$, $\det \rho := {\det}_{D_E} \circ \rho : G_\ell \to A^\times$, is unramified
        \item the following identity 
        \begin{equation}
    \label{eq: US local ep}
        (\rho(\sigma) - \lambda(-\varep_\ell)\kappa(\sigma)) \cdot 
    (\rho(\tau) - \lambda(-\varep_\ell)(\tau)) = 0
    \end{equation}
    holds for all $(\sigma, \tau) \in G_{\ell} \times I_{\ell} \cup I_{\ell} \times G_{\ell}$. 
    \end{enumerate}
\end{defn}

\begin{rem}
    For the rest of this paper, we will only use the case that $\varep_\ell = -1$. So we will treat the identity \eqref{eq: US local ep} as 
    \begin{equation}
    \label{eq: US local}
        (\rho(\sigma) - \kappa(\sigma)) \cdot 
    (\rho(\tau) - 1) = 0
    \end{equation}
    for all $(\sigma, \tau) \in G_{\ell} \times I_{\ell} \cup I_{\ell} \times G_{\ell}$.
\end{rem}

\begin{rem}
    \label{rem: WWE5 results same}
    The definition of unramified-or-Steinberg in [WWE21, Defn. 3.4.1] assumes only condition (2) and there is an incorrect lemma [WWE21, Lem. 3.4.4] that claims that (2) implies (1). This difference does not affect the results of [WWE21] because the definition is only applied there to pseudorepresentations that satisfy (1) anyway. It is not true in general that (2) implies (1), as the following example shows. The mistake in the proof of \cite[Lem.\ 3.4.4]{WWE5} is the appeal to \cite[Lem.\ 2.7(iv)]{chen2014} when $A$ is a general ring, when in fact the reference requires $A$ to be a field. 
\end{rem}

\begin{eg}
    \label{eg: counterexample}
    Let $A = \F_p[\ep_1] := \F_p[\ep]/\ep^2$, assume $p \mid (\ell-1)$, and let $\rho = \lambda(-\varep_\ell) \otimes (\chi \oplus \chi)$ be the 2-dimensional diagonal representation, where $\chi : G_\ell \to A^\times$ satisfies $\chi(\Fr_\ell) = 1$ and $\chi(\gamma_\ell) = 1+\ep$ (here $\gamma_\ell \in I_\ell$ projects to a pro-generator of the tame inertia quotient). Then, because $\rho(\gamma_\ell)$ is unipotent---that is, $\rho(\gamma_\ell)$ satisfies $(\rho(\gamma_\ell)-1)^2 = 0$---it follows that $\rho$ satisfies $\US_\ell^{\varep_\ell}$. But its pseudorepresentation $\psi(\rho)$ is non-trivial on $I_\ell$; moreover, each of characteristic polynomial coefficients $\Tr \rho$ and $\det \rho$ comprising $\psi(\rho)$ are non-trivial on $I_\ell$. 
\end{eg}

The following lemma is a correction of  \cite[Lem.\ 3.4.4]{WWE5}.

\begin{lem}
    Let $\ell \neq p$ and let $(\rho : G_{\ell} \to E, E, D_E : E \to A)$ be a Cayley--Hamilton representation over $\Db$ satisfying $\mathrm{US}_\ell^{\varep_\ell}$ as in Definition \ref{defn: US local}. Assume that $2 \in A^\times$. Then $\psi(\rho)\vert_{I_\ell} = \psi(1 \oplus 1)$. 
\end{lem}

\begin{proof}
    Let $\tau \in I_\ell$. Definition \ref{defn: US local} implies that $\det(\rho(\tau))=1$ and $(\rho(\tau)-1)^2=0$, so $\rho(\tau)^2=2\rho(\tau)-1$. It remains to show that $\Tr(\rho(\tau))=2$. By the pseudorepresentation identity in \cite[Lem.\ 1.9(b)]{chen2014},
    \[
    2\det(\rho(\tau)) = \Tr(\rho(\tau))^2-\Tr(\rho(\tau)^2).
    \]
    Since $\det(\rho(\tau))=1$ and $\rho(\tau)^2=2\rho(\tau)-1$, this implies
    \[
    2=\Tr(\rho(\tau))^2-2\Tr(\rho(\tau))+\Tr(1).
    \]
    But $\Tr(1)=2$, so
    \[
    \Tr(\rho(\tau))(\Tr(\rho(\tau))-2) = 0.
    \]
    The reduction of $\Tr(\rho(\tau))$ modulo the maximal ideal of $A$ is $\Tr_\Db(\tau)=2$, so $\Tr(\rho(\tau))$ is a unit in $A$, and the previous equation implies $\Tr(\rho(\tau))=2$, as desired. 
\end{proof}

\subsubsection{The finite-flat condition}\label{P1: subsubsec: finite-flat}

Since the modular forms we work with have weight 2 and no level at $p$, the corresponding $p$-adic representations of $G_p$ should satisfy the finite-flat condition. 
\begin{defn}
    We say that an action of $G_p$ on a finite cardinality $\Z_p$-module $M$ is \emph{finite-flat} provided that there exists a finite-flat group scheme $\cG/\Z_p$ and an isomorphism of $\Z_p[G_p]$-modules $M \simeq \cG(\oQ_p)$. 
\end{defn}

Ramakrishna \cite{ramakrishna1993} determined how to apply the finite-flat condition to deformations of Galois representations. The crucial formal property that the finite-flat condition satisfies is that it is \emph{stable}, meaning that when $M$ is a finite-flat $\Z_p[G_p]$-module, then all of its subquotients are also finite-flat; and that if a finite number of $\Z_p[G_p]$-modules $M_i$ are finite-flat, then so is the direct sum $\bigoplus_i M_i$. 

Because not all pseudorepresentations arise from Galois representations as characteristic polynomials, it is non-trivial to impose the finite-flat condition on pseudorepresentations. This problem has been addressed in \cite{WWE4}, using a formalism that works for any stable condition. It relies on the fact that every pseudorepresentation arises from a Cayley--Hamilton representation. 
\begin{defn}
\label{defn: finite-flat}
We call a Cayley--Hamilton representation $\rho : G_p \to E$ \emph{finite-flat} if the $\Z_p[G]$-module $E$, where the action of $G_p$ on $E$ is given by $\rho$ composed with the left regular action of $E$ on $E$, is an inverse limit of finite-flat $\Z_p[G]$-modules. We call a pseudorepresentation $D : G_p \to A$ \emph{finite-flat} if it arises as the induced pseudorepresentation $\psi(\rho)$ of a Cayley--Hamilton representation $\rho$ that is finite-flat. 
\end{defn}

In \cite{WWE4}, it is proved that any stable condition cuts out a universal Cayley--Hamilton representation over any residual pseudorepresentation $\Db : G_p \to \F$, and that the coefficient ring of this Cayley--Hamilton representation is the universal finite-flat pseudodeformation ring of $\Db$. In particular, this result includes the implication that the finite-flat condition on pseudorepresentation cuts out a quotient $R_\Db \rsurj R_\Db^\mathrm{flat}$ of the universal pseudodeformation ring; in other words, the finite-flat condition is a Zariski-closed condition on pseudorepresentations. 

We have the following result about finite-flat representations over the residual pseudorepresentation $\Db\vert_p$. 
\begin{prop}
\label{prop: finite-flat implies upper tri}
For any finite-flat Cayley--Hamilton representation $\rho$ of $G_p$ over $\Db\vert_p : G_p \to \F_p$, with coefficient ring $A$, there exist unique characters $\theta_i : G_p \to A^\times$, $i=1,2$, and a GMA structure with respect to which it has the form 
\begin{equation}
\label{eq: generic finite-flat extension}
\rho \simeq \ttmat{\kappa \theta_1}{*}{0}{\theta_2}. 
\end{equation}
The characters $\theta_i$ are residually trivial and unramified. 
\end{prop}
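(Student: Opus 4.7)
My approach would be to reduce to a universal finite-flat Cayley--Hamilton representation and then invoke the classical analysis of finite-flat deformations of $\omega \oplus 1$ at $p$.

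First I would note that, since $p \geq 5$, the residual pseudorepresentation $\Db\vert_p = \psi(\omega\vert_p \oplus 1)$ is multiplicity-free (as $\omega\vert_p \ne 1$). By the fact recalled in \S\ref{sssec: CH reps and GMA reps}, any Cayley--Hamilton representation over $\Db\vert_p$ admits a GMA structure, which is unique up to inner isomorphism. Moreover, the two diagonal idempotents are distinguished by their residues---one reduces to the projection onto the $\kappa$-eigenspace, the other to the projection onto the trivial-character eigenspace---so there is a canonical ordering placing the character reducing to $\kappa$ in position $(1,1)$. The diagonal characters $a, d : G_p \to A^\times$ read off this GMA structure are then canonically determined (up to inner isomorphism), which will give the uniqueness of $\theta_1 = a/\kappa$ and $\theta_2 = d$ once the claimed form is established.

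Next I would invoke the universal finite-flat Cayley--Hamilton representation. By the stability of the finite-flat condition and the framework of \cite{WWE4} recalled in \S\ref{P1: subsubsec: finite-flat}, there is a universal finite-flat Cayley--Hamilton representation over $\Db\vert_p$, with coefficient ring $R_{\Db\vert_p}^{\mathrm{flat}}$. Any finite-flat Cayley--Hamilton representation with coefficient ring $A$ arises by pushforward along a map $R_{\Db\vert_p}^{\mathrm{flat}} \to A$, and both the triangular shape and the unramified/residually-trivial conditions on the diagonal characters are preserved under such pushforwards. Hence it suffices to prove the statement for the universal object.

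The heart of the argument is then to show that the universal finite-flat Cayley--Hamilton representation has the claimed ordinary form. I would do this by comparing it with the classical ordinary deformation theory of $\omega \oplus 1$ at $p$ due to Mazur and Ramakrishna \cite{ramakrishna1993}: for this residual representation, finite-flat is equivalent to ordinary, so genuine representation-valued finite-flat deformations have the shape $\ttmat{\kappa\eta_1}{*}{0}{\eta_2}$ with $\eta_1,\eta_2$ residually trivial unramified characters. The remaining work is to transfer this classification to the Cayley--Hamilton/GMA setting, showing that the $C$-entry of the GMA vanishes. I would argue this by applying the finite-flat condition (which is stable under subquotients) to the characters cut out by each GMA idempotent, forcing $d = \theta_2$ to be finite-flat as a character---hence unramified and residually trivial---and symmetrically $a/\kappa = \theta_1$ to be unramified and residually trivial; and then by analyzing the universal finite-flat extension class to conclude that the $(2,1)$-entry $C$ must vanish, since an ordinary filtration in the opposite direction produces no new finite-flat structure beyond the split one.

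The main obstacle is the last step: translating the Raynaud/Fontaine-type classification of finite-flat extensions into the language of GMAs to force $C = 0$. The subtlety is that the finite-flat condition is imposed on the ambient Cayley--Hamilton algebra rather than on a single extension class, so one must show that no GMA representation with nonzero $C$ can be finite-flat. I expect this follows by first reducing mod the maximal ideal of the coefficient ring and ruling out the non-split finite-flat extensions of $\kappa$ by $1$, then propagating the vanishing inductively via the functoriality of the universal finite-flat object.
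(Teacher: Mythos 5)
The paper's own ``proof'' is just a citation to \cite[\S3.5]{WWE5}, so there is no in-text argument to compare against; I'll assess your sketch on its merits.

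Your high-level plan is sound: use multiplicity-freeness to put a canonical GMA structure on the Cayley--Hamilton algebra, reduce to the universal finite-flat object, and invoke the Fontaine/Raynaud classification of finite flat group schemes over $\Z_p$ to kill the lower-left entry. But you have explicitly left the mathematical heart of the argument as a gap, and there are two further points worth tightening.

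\textbf{The unfilled gap.} You say ``I expect this follows by first reducing mod the maximal ideal \dots then propagating the vanishing inductively.'' This is precisely the content of the proposition, so it cannot be left as an expectation. The concrete fact you need is that the finite-flat subspace $\Ext^1_{\F_p[G_p]}(\F_p(1), \F_p)^{\mathrm{flat}}$ vanishes: an extension $0 \to \Z/p \to \cG \to \mu_p \to 0$ of finite flat group schemes over $\Z_p$ must split, because the connected--\'etale sequence of $\cG$ splits over the Henselian base $\Z_p$ and the connected part of $\cG$ must surject onto $\mu_p$. (This is the analogue, in the opposite direction, of Lemma \ref{P1: lem: local Kummer finite-flat}.) You then need to show that this residual vanishing forces the $C$-coordinate of the GMA of the universal finite-flat Cayley--Hamilton algebra to vanish; the natural way is via a $C/\m C \hookrightarrow \Ext^1(\F_p(1),\F_p)^{\mathrm{flat}} = 0$ argument and Nakayama. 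Without this the sketch does not close.

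\textbf{Circularity in the treatment of $\theta_1, \theta_2$.} You want to apply the stability of finite-flatness to the ``characters cut out by each GMA idempotent'' before you have established that $\rho$ is upper-triangular. But $a(g) := e_1\rho(g)e_1$ is only a genuine character of $G_p$ once the product of off-diagonal entries vanishes -- in general $a(gh) = a(g)a(h) + b(g)c(h)$, so $a$ is a character only modulo the reducibility ideal. You should establish the vanishing of the $C$-coordinate first, after which $\kappa\theta_1$ and $\theta_2$ are subquotients of the $G_p$-module $E$ and stability applies cleanly.

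\textbf{Minor overstatement.} Finite-flat is strictly stronger than ordinary for this residual representation: ordinary constrains only the shape and the unramifiedness of one diagonal character, whereas finite-flat additionally forces both diagonal characters to be unramified and constrains the extension class $\ast$ to the finite-flat subspace (as in Lemma \ref{P1: lem: local Kummer finite-flat}). Since you only use the implication ``finite-flat $\Rightarrow$ ordinary shape,'' this does not break the argument, but the claimed equivalence is false and could mislead.

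In short, the outline is correct but the proposal as written leaves exactly the nontrivial step unproved, and the ordering of claims about $\theta_1,\theta_2$ needs to be rearranged to avoid circularity.
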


\begin{proof}
See \cite[\S3.5]{WWE5}. 
\end{proof}

However, the finite-flat condition is more strict than merely having this form: in addition to the unramified condition on $\theta_i$, there is a restriction on the extension denoted ``$\ast$'', cutting out an $A$-submodule 
\[
\Ext_{A[G_p]}^1(\theta_2, \theta_1(1))^\mathrm{flat} \subset\Ext_{A[G_p]}^1(\theta_2, \theta_1(1))
\]
consisting of finite-flat extensions of $\theta_2$ by $\kappa \theta_1$. We will especially be interested in the case where $A = \F_p$ and the $\theta_i$ are trivial. In that case, since $\omega = (\kappa \mod p)$ lifts to $G_{\Q,Np}$, we construct 
\[
\Ext_{\F_p[G_{\Q,Np}]}^1(\F_p, \F_p(1))^\mathrm{flat} \subset\Ext_{\F_p[G_{\Q,Np}]}^1(\F_p, \F_p(1))
\]
consisting of those $G_{\Q,Np}$-extensions of $\F_p$ by $\F_p(1)$ that are finite-flat when restricted to $G_p$. 

Later we will have use for the determination of this finite-flat subspace more generally, over $\Q_{p^i}$, which denotes the unique degree $i$ unramified extension of $\Q_p$. Let $H_i := \Gal(\oQ_p/\Q_{p^i})$, so $H_1 = G_p$. Let $\Z_{p^i}$ denote the ring of integers of $\Q_{p^i}$. 

\begin{lem}[Local Kummer theory]
    \label{P1: lem: local Kummer finite-flat}
    Under the canonical isomorphism 
    \[
    \Ext_{\F_p[H_i]}^1(\F_p, \mu_p) \cong H^1(\Q_{p^i}, \mu_p) \cong \Q_{p^i}^\times/(\Q_{p^i}^\times)^p
    \]
    and the decomposition 
    \[
    \Q_{p^i}^\times/(\Q_{p^i}^\times)^p \cong \langle p \rangle \oplus \Z_{p^i}^\times/(\Z_{p^i}^\times)^p,
    \]
    the flat subspace $\Ext_{\F_p[H_i]}^1(\F_p, \mu_p)^\mathrm{flat}$ maps to $\Z_{p^i}^\times/(\Z_{p^i}^\times)^p$. In particular, when $i=1$, we have the $\F_p$-basis $\{p, 1+p\}$ of $\Q_p^\times/(\Q_p^\times)^p$, and the subspace $\Ext_{\F_p[G_p]}^1(\F_p, \mu_p)^\mathrm{flat}$ corresponds with the subspace $\langle 1+p\rangle$. 
\end{lem}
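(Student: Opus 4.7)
The strategy is to identify the finite-flat subspace of $\Ext^1_{\F_p[H_i]}(\F_p, \mu_p)$ with the image of the fppf cohomology $H^1_{\mathrm{fppf}}(\Z_{p^i}, \mu_p)$ under restriction to the generic fiber, and then apply Kummer theory over $\Z_{p^i}$. The first identification is the main step, and relies on Raynaud's theorem on $p$-torsion finite-flat group schemes: because $p \ge 5$ and $\Z_{p^i}$ has absolute ramification index $e = 1 < p - 1$, the generic-fiber functor from $p$-torsion finite-flat commutative group schemes over $\Z_{p^i}$ to finite $G_{\Q_{p^i}}$-modules is fully faithful with image closed under subquotients. Thus a $G_{\Q_{p^i}}$-module extension $0 \to \mu_p \to M \to \F_p \to 0$ is finite-flat in the sense of Definition \ref{defn: finite-flat} if and only if it prolongs uniquely to an extension $0 \to \mu_p \to \cG \to \underline{\F_p} \to 0$ of finite-flat commutative group schemes over $\Z_{p^i}$, and the flat subspace is thereby in bijection with $\Ext^1(\underline{\F_p}, \mu_p)$ computed in fppf sheaves over $\Z_{p^i}$, which is canonically identified with $H^1_{\mathrm{fppf}}(\Z_{p^i}, \mu_p)$ (using that $\mu_p(\Z_{p^i}) = \{1\}$ since $p \nmid p^i - 1$).

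The second step is a routine application of the fppf Kummer sequence $1 \to \mu_p \to \G_m \xrightarrow{p} \G_m \to 1$ over $\Z_{p^i}$: since $\Z_{p^i}$ is a discrete valuation ring with trivial Picard group, the associated long exact sequence gives $H^1_{\mathrm{fppf}}(\Z_{p^i}, \mu_p) \cong \Z_{p^i}^\times/(\Z_{p^i}^\times)^p$. Restriction to the generic fiber corresponds to the inclusion $\Z_{p^i}^\times \hookrightarrow \Q_{p^i}^\times$ modulo $p$th powers, which is injective because any unit that becomes a $p$th power in $\Q_{p^i}^\times$ has $p$th roots of valuation zero, hence unit $p$th roots. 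Combined with the decomposition $\Q_{p^i}^\times/(\Q_{p^i}^\times)^p \cong \langle p \rangle \oplus \Z_{p^i}^\times/(\Z_{p^i}^\times)^p$ arising from the choice of uniformizer, this proves the first assertion.

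For the $i = 1$ case, I would use the structural decomposition $\Z_p^\times \cong \mu_{p-1} \times (1 + p\Z_p)$. The torsion factor $\mu_{p-1}$ has order coprime to $p$ and so vanishes modulo $p$th powers, while $(1 + p\Z_p) \cong \Z_p$ is a pro-$p$ group topologically generated by $1 + p$ (using that $p$ is odd), giving $(1 + p\Z_p)/(1 + p\Z_p)^p = \langle 1 + p \rangle$, a one-dimensional $\F_p$-space. Together with the class of the uniformizer this produces the $\F_p$-basis $\{p, 1+p\}$ of $\Q_p^\times/(\Q_p^\times)^p$, and the flat subspace is precisely $\langle 1 + p \rangle$, as claimed. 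The only nonroutine step in this whole argument is the invocation of Raynaud's theorem in paragraph one; the hypothesis $p \ge 5$ is exactly what is needed to ensure uniqueness (and existence) of the prolongation and thereby the clean identification of flat Galois extensions with fppf group-scheme extensions.
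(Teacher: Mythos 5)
Your proof is correct, but it is considerably more detailed than what the paper actually supplies: the paper's entire proof is the citation ``This is well known; see, for example, [Prop.\ 2.2] of Schoof,'' and gives no argument of its own. Your route---identify the flat subspace with the image of $H^1_{\mathrm{fppf}}(\Z_{p^i},\mu_p)$ via Raynaud's full-faithfulness theorem (uniqueness of prolongations of $p$-torsion finite flat group schemes when $e < p-1$), then compute that fppf cohomology group by the Kummer sequence using $\mathrm{Pic}(\Z_{p^i})=0$, and finally observe that restriction to the generic fiber is the injection $\Z_{p^i}^\times/(\Z_{p^i}^\times)^p \hookrightarrow \Q_{p^i}^\times/(\Q_{p^i}^\times)^p$---is the standard way to see this, and is essentially the argument in Schoof. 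So there is no conceptual divergence from the paper's intended source; you have simply filled in the proof the paper outsourced. One small correction to your closing remark: the hypothesis $p\ge 5$ is a global hypothesis of the paper, not ``exactly what is needed'' for this step; the Raynaud/Fontaine uniqueness and the splitting $\Z_p^\times \cong \mu_{p-1}\times(1+p\Z_p)$ with $1+p\Z_p$ procyclic require only $p$ odd, i.e.\ $p\ge3$ (it is $p=2$, where $e = 1 = p-1$ and $1+2\Z_2$ is not topologically cyclic, that would break the argument).
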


\begin{proof}
This is well known; see, for example, \cite[Prop.\ 2.2]{schoof2012}. 
\end{proof}

\begin{lem}[Global Kummer theory]
\label{lem: global Kummer theory}
\leavevmode
\begin{enumerate}
    \item The subspace 
\[
\Ext^1_{\F_p\lb G_{\Q,Np}\rb}(\F_p, \mu_p)^\mathrm{flat} \subset \Ext^1_{\F_p\lb G_{\Q,Np}\rb}(\F_p, \mu_p) 
\]
has basis $\{\ell_0,\ell_1\}$ under the canonical isomorphisms
\[
\Ext^1_{\F_p\lb G_{\Q,Np}\rb}(\F_p, \mu_p) \cong H^1(\Z[1/Np], \mu_p) \cong \Z[1/Np]^\times/(\Z[1/Np]^\times)^p.
\]
\item The natural map 
\[
\Ext^1_{\F_p\lb G_{\Q,Np}\rb}(\F_p, \F_p(1)) \lra \Ext^1_{\F_p\lb G_p\rb}(\F_p, \F_p(1))
\]
has image containing a complement of $\Ext^1_{\F_p\lb G_p\rb}(\F_p, \F_p(1))^\mathrm{flat}$. The image of the element $p \in \Z[1/Np]^\times/(\Z[1/Np]^\times)^p$ spans this complement. 
\end{enumerate}
\end{lem}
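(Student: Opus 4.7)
The plan is to reduce everything to explicit Kummer theory and then invoke the local computation already carried out in Lemma~\ref{P1: lem: local Kummer finite-flat}.

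\smallskip

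\textbf{Setting up the global Kummer picture.} First I would recall that since $p$ is odd and $\Z[1/Np]^\times = \langle -1\rangle \times \langle p\rangle \times \langle \ell_0\rangle \times \langle \ell_1\rangle$ (here $-1$ becomes trivial modulo $p$-th powers because $(-1)^p = -1$), we have
\[
H^1(\Z[1/Np],\mu_p) \;\cong\; \Z[1/Np]^\times/(\Z[1/Np]^\times)^p \;=\; \F_p\langle p\rangle \oplus \F_p\langle \ell_0\rangle \oplus \F_p\langle \ell_1\rangle,
\]
a three-dimensional $\F_p$-vector space. Under the isomorphism $\Ext^1_{\F_p\lb G_{\Q,Np}\rb}(\F_p,\mu_p) \cong H^1(\Z[1/Np],\mu_p)$, the flat subspace is by definition the preimage of the local flat subspace under the restriction map to $H^1(\Q_p,\mu_p)$.

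\smallskip

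\textbf{Proof of (1).} Next I would analyze the restriction map
\[
\Z[1/Np]^\times/(\Z[1/Np]^\times)^p \lra \Q_p^\times/(\Q_p^\times)^p \;=\; \langle p\rangle \oplus \Z_p^\times/(\Z_p^\times)^p
\]
on generators. The element $p$ maps to $p$ in the $\langle p\rangle$-component and to $1$ in the unit component; both $\ell_0$ and $\ell_1$ are units at $p$, so they land entirely inside $\Z_p^\times/(\Z_p^\times)^p$. By Lemma~\ref{P1: lem: local Kummer finite-flat}, the local flat subspace is exactly $\Z_p^\times/(\Z_p^\times)^p$. Hence the global flat subspace is the kernel of the projection to the $\langle p\rangle$-component, which is spanned by $\{\ell_0, \ell_1\}$.

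\smallskip

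\textbf{Proof of (2).} Finally, again by Lemma~\ref{P1: lem: local Kummer finite-flat}, the local $\Ext$ group $\Ext^1_{\F_p\lb G_p\rb}(\F_p,\F_p(1)) \cong \Q_p^\times/(\Q_p^\times)^p$ is two-dimensional with basis $\{p,1+p\}$, and its flat subspace is spanned by $1+p$. The image of the global class $p$ under restriction is the local class $p$, which is nonzero modulo $\langle 1+p\rangle$. Thus the image of restriction already contains $\F_p \cdot p$, which is a complement of the local flat subspace.

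\smallskip

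\textbf{Expected difficulty.} Both parts are essentially formal consequences of global and local Kummer theory once the right generators are written down; the only substantive input is the identification of the local flat subspace from Lemma~\ref{P1: lem: local Kummer finite-flat}. There is no serious obstacle here, though one must be mildly careful to note that $-1$ dies modulo $p$-th powers (since $p$ is odd) so that the three-dimensional count is correct, and that $\ell_0, \ell_1 \neq p$ so that they are indeed units at $p$.
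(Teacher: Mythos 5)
Your proof is correct and follows the same route as the paper's: the paper's proof simply cites Lemma \ref{P1: lem: local Kummer finite-flat} together with the fact that $\{p,\ell_0,\ell_1\}$ is a basis of $\Z[1/Np]^\times/(\Z[1/Np]^\times)^p$, which is exactly the computation you spell out (restriction of $\ell_0,\ell_1$ lands in $\Z_p^\times/(\Z_p^\times)^p$, restriction of $p$ spans the complement).
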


\begin{proof}
Parts (1) and (2) follow directly from Lemma \ref{P1: lem: local Kummer finite-flat} and the fact that $\{p, \ell_0, \ell_1\}$ is a basis for $\Z[1/Np]^\times/(\Z[1/Np]^\times)^p$. 
\end{proof}

Here is a method to verify finite-flatness of GMA-representations in practice.

\begin{lem}
\label{lem: GMA property} 
Let $\rho: G_p \to E$ be a Cayley--Hamilton representation with coefficient ring $A$. Suppose that $S \subset E$ be a subalgebra containing $\rho(G_p)$, and let $V$ be a faithful $S$-module. If the $G_p$-action on $V$ induced by $\rho$ is finite-flat, then $\rho$ is finite-flat.
\end{lem}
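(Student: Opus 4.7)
The plan is to transport the finite-flat property from $V$ up to $E$ by realizing $E$ as a $G_p$-quotient of a direct sum of copies of $S$, and $S$ in turn as a $G_p$-submodule of a module built from copies of $V$, then applying the stability of the finite-flat condition under sub-objects, quotients, finite direct sums, and inverse limits of finite $\Z_p[G_p]$-modules (see Section \ref{P1: subsubsec: finite-flat} and \cite{WWE4}).

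First, the faithfulness of $V$ as a left $S$-module gives a $\Z_p$-algebra embedding $S \hookrightarrow \End_{\Z_p}(V)$, sending $s$ to left multiplication by $s$. Equip $\End_{\Z_p}(V)$ with the $G_p$-action $g \cdot \varphi := \rho(g) \circ \varphi$. Under this action, the embedding $S \hookrightarrow \End_{\Z_p}(V)$ is $G_p$-equivariant, since the $G_p$-action on $S$ inherited from the left regular action of $E$ composed with $\rho$ is precisely left multiplication by $\rho(g)$. Upon choosing a $\Z_p$-basis of $V$, the $\Z_p[G_p]$-module $\End_{\Z_p}(V)$ becomes a (possibly infinite) direct sum of copies of $V$ on which $G_p$ acts diagonally via its action on $V$. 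Since $V$ is finite-flat by hypothesis, the module $\End_{\Z_p}(V)$ is finite-flat in the pro-finite sense of Definition \ref{defn: finite-flat}, and hence so is its $G_p$-submodule $S$.

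Next, because $E$ is a Cayley--Hamilton $A$-algebra with $A \subseteq S$ and is finitely generated as an $A$-module, it is \emph{a fortiori} finitely generated as a left $S$-module. Choosing generators $e_1,\ldots,e_n$, the map $S^n \twoheadrightarrow E$, $(s_i) \mapsto \sum s_i e_i$, is a $G_p$-equivariant surjection of left $S$-modules. Thus $E$ is a $G_p$-quotient of a finite direct sum of copies of the finite-flat module $S$, and stability of the finite-flat condition under finite direct sums and quotients forces $E$ itself to be finite-flat as a $\Z_p[G_p]$-module. By Definition \ref{defn: finite-flat}, this is exactly the assertion that $\rho$ is finite-flat.

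The principal subtlety lies in the first step: when $V$ is not finitely generated over $\Z_p$, one must set up the $G_p$-equivariant structure of $\End_{\Z_p}(V)$ and its identification with a product of copies of $V$ compatibly with the pro-system of finite-flat finite quotients of $V$, which takes some bookkeeping. In the intended applications of this lemma within the paper, however, all modules in play are of finite length, so this technicality never actually obstructs the argument.
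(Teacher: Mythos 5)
Your argument is correct in spirit and is the natural filling-in of what the paper merely cites (the proof in the paper is just a pointer to the second paragraph of \cite[Lem.\ 7.1.9]{WWE5}). The core chain you set up -- faithfulness gives a $G_p$-equivariant embedding $S \hookrightarrow \End_{\Z_p}(V)$; $\End_{\Z_p}(V)$ sits inside a finite direct sum of copies of $V$; stability under submodules and finite sums gives $S$ finite-flat; and then $E$ is a $G_p$-equivariant quotient of $S^n$ -- is exactly what one wants. Two points deserve tightening. First, a finite-cardinality $\Z_p$-module $V$ has no $\Z_p$-basis; you should pick a finite generating set $v_1,\dots,v_n$ and observe that $\varphi\mapsto(\varphi(v_1),\dots,\varphi(v_n))$ is a $G_p$-equivariant \emph{injection} $\End_{\Z_p}(V)\hookrightarrow V^n$, not an isomorphism onto $V^n$; this is all that is needed, since stability is under subquotients. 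Second, the step ``$E$ is finitely generated as a left $S$-module'' quietly uses two things not printed in the lemma's statement: that $E$ is finitely generated as an $A$-module (the paper says this is always the case in its applications, cf.\ the discussion in \S\ref{sssec: CH reps and GMA reps}), and that the structure map $A\to E$ lands inside $S$. The latter is what ``subalgebra'' should be read to mean here -- $A$-subalgebra -- and it does hold in all uses (e.g.\ the upper-triangular $U_n\subset E_n$ of Lemma \ref{lem: UT embedding} contains the scalars), but it is worth saying aloud since without it the finite generation of $E$ over $S$ does not follow. With those two clarifications your proof is complete and matches the intended argument.
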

\begin{proof}
This is a slight generalization of the argument of the second paragraph of the proof of \cite[Lem.\ 7.1.9]{WWE5}. 
\end{proof}

We will also require the delicate use of a few standard and fundamental facts about lifts of group representations and the unobstructedness of finite-flat lifts, which we collect in the following two statements. We state these in less than their maximal generality, fitting our purposes. 

\begin{lem}
\label{lem: basic def theory} 
Let $G$ be a profinite group, let $\eta: G \to \GL_2(\F_p)$ be a representation, and let $s : (A',\m_{A'}) \rsurj (A,\m_A)$ be a surjection of local Artinian $\F_p$-algebras such that $\m_{A'} \cdot \ker s = 0$. Let $\eta_A$ be a lift of $\eta$ over $A \rsurj A/\m_A = \F_p$. 
\begin{enumerate}
    \item If the set of lifts of $\eta_A$ over $A' \rsurj A$ is non-empty, then it is a torsor over the group 
\[
Z^1(G, \Ad(\eta)) \otimes_{\F_p} \ker s 
\]
under addition of coordinates.
\item If $A = \F_p$, then this torsor is canonically isomorphic to $Z^1(G, \Ad(\eta)) \otimes_{\F_p} \ker s$ due to the base point given by the trivial lift $\rho \otimes_{\F_p} A$ of $\rho$ to $A$. 
\item Conjugation of $\rho_{A'}$ by $x \in \ker (\GL_2(A') \rsurj \GL_2(A))$, which is canonically isomorphic to $C^0(G,\Ad(\rho)) \otimes_{\F_p} \ker s$, amounts to acting by coboundary $dx \in B^1(G, \Ad(\rho)) \otimes_{\F_p} \ker s$ on $\rho_{A'}$ (via the torsor structure of (1)). 
\item If $\eta_A$ has constant determinant (that is, $\det \eta_A = \det \eta$ under $\F_p^\times \rinj A^\times)$), then the set of constant determinant lifts of $\eta_A$ over $s$ is non-empty if and only if the set of (unrestricted) lifts is non-empty; and if it is non-empty, it is a torsor over the group
\[
Z^1(G, \Ad^0(\eta)) \otimes_{\F_p} \ker s
\]
under addition of coordinates. 
\end{enumerate}
\end{lem}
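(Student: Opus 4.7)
The plan is to execute the standard deformation-theoretic calculations, keyed on the observation that $\ker s \subset \m_{A'}$ (since $s^{-1}(\m_A) = \m_{A'}$) so the hypothesis $\m_{A'} \cdot \ker s = 0$ implies $(\ker s)^2 = 0$, making $\ker s$ an $\F_p$-vector space. Consequently, for $X, Y \in M_2(\ker s)$ we have $(1+X)(1+Y) = 1 + X + Y$, and $\ker(\GL_2(A') \onto \GL_2(A)) = 1 + M_2(\ker s)$ is canonically identified with $M_2(\ker s) \cong \Ad(\eta) \otimes_{\F_p} \ker s$ as an abelian group, with $G$-action coming from conjugation by $\eta$.

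For (1), given two lifts $\eta_{A'}, \eta_{A'}'$ of $\eta_A$, I would consider $c(g) := \eta_{A'}'(g)\eta_{A'}(g)^{-1} - 1 \in M_2(\ker s)$. A direct expansion of the homomorphism property of $\eta_{A'}'$ and $\eta_{A'}$, combined with $(\ker s)^2 = 0$, shows that $c$ satisfies the cocycle identity $c(gh) = c(g) + \eta(g)c(h)\eta(g)^{-1}$. Conversely, given any cocycle $c \in Z^1(G, \Ad(\eta)) \otimes \ker s$, the formula $g \mapsto (1 + c(g))\eta_{A'}(g)$ defines another lift. These operations are mutually inverse and compatible with the additive structure, giving the torsor. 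For (2), when $A = \F_p$ we have a canonical $\F_p$-algebra section $\F_p \hookrightarrow A'$, so $\eta \otimes_{\F_p} A'$ is a distinguished lift that pins the torsor.

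For (3), I compute $(1+x)\eta_{A'}(g)(1+x)^{-1} = (1+x)\eta_{A'}(g)(1-x)$, using $(1+x)^{-1} = 1 - x$ modulo $(\ker s)^2 = 0$. Expanding and subtracting $\eta_{A'}(g)$ yields the difference $\eta(g) \cdot x \cdot \eta(g)^{-1} - x \in \Ad(\eta) \otimes \ker s$, which is exactly the coboundary $(dx)(g)$ (up to the sign convention in the definition of $d$). For (4), I would first show that any lift $\eta_{A'}$ can be adjusted to have constant determinant: $\chi := \det(\eta_{A'}) / \det(\eta \otimes A')$ takes values in $1 + \ker s$, and $\chi - 1 \in Z^1(G, \ker s)$ with trivial $G$-action on $\ker s$. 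Since $p \neq 2$ and $(\ker s)^2 = 0$, the character $\chi' := 1 + \tfrac{1}{2}(\chi - 1)$ satisfies $(\chi')^2 = \chi$, so $(\chi')^{-1} \eta_{A'}$ has determinant $\det(\eta \otimes A')$. With the existence of a constant-determinant lift established, the torsor description of (1) restricts to cocycles landing in $\Ad^0(\eta) \otimes \ker s$, because the determinant of $(1 + c(g))\eta_{A'}(g)$ differs from that of $\eta_{A'}(g)$ by $1 + \Tr(c(g))$.

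The main obstacle is purely bookkeeping: tracking the $G$-equivariant identification of $M_2(\ker s)$ with $\Ad(\eta)\otimes\ker s$ so that the cocycle identity matches the standard one for $\Ad(\eta)$, and managing sign conventions in (3). Everything else reduces to multiplying $2\times 2$ matrices over a ring in which $\ker s$ squares to zero.
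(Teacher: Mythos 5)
The paper does not actually prove this lemma: it is introduced as one of ``a few standard and fundamental facts about lifts of group representations'' and stated without a proof environment, so there is no argument in the paper to compare against. Your proof supplies exactly the standard argument one would expect, and it is correct. The key observations — that $\ker s \subset \m_{A'}$ forces $(\ker s)^2 = 0$, that $1 + M_2(\ker s)$ is identified with $\Ad(\eta) \otimes_{\F_p} \ker s$, that $c(g) := \eta_{A'}'(g)\eta_{A'}(g)^{-1} - 1$ satisfies the inhomogeneous cocycle identity, that conjugation gives coboundaries, and that $\det(1+X) = 1 + \Tr(X)$ modulo $(\ker s)^2$ isolates $\Ad^0$ — are all in order. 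Two small remarks. First, your $\tfrac{1}{2}$-square-root trick in (4) requires odd characteristic; this is fine here since the paper fixes $p \geq 5$, and indeed the authors flag that they state the lemma in ``less than maximal generality.'' Second, the paper's phrase ``addition of coordinates'' refers not to adding the cocycle $c$ itself but the function $g \mapsto c(g)\eta(g)$, which is literally the same as your multiplicative twist $(1+c(g))\eta_{A'}(g)$; you correctly use the multiplicative form, and it is worth being explicit (as you partially are in your closing paragraph) that the two descriptions agree under this identification, since applying a raw cocycle additively does not yield a homomorphism.
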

Here ``addition of coordinates'' on $\rho_{A'}$ means that we add to the function $\rho_{A'} : G \to \GL_2(A')$ the function $G \to M_{d}(\ker s) \subset \GL_2(A')$ given by an element of $Z^1(G, \Ad(\eta)) \otimes_{\F_p} \ker s$.

\begin{prop}
\label{prop: smoothness}
Let $\eta : G \to \GL_2(\F_p)$ be a finite-flat representation. Let $s : (A',\m_{A'}) \rsurj (A,\m_A)$ be a surjection of local Artinian $\F_p$-algebras such that $\m_{A'} \cdot \ker s = 0$. Let $\eta_A$ be a finite-flat lift of $\eta$ over $A \rsurj A/\m_A = \F_p$. 
\begin{enumerate}
    \item The set of finite-flat lifts of $\eta_A$ over $s$ is \underline{non-empty}, and admits the structure of a torsor over the group
    \[
    Z^1(G, \Ad(\eta))^\mathrm{flat} \otimes_{\F_p} \ker s,
    \]
    where $Z^1(G,\Ad(\eta))^\mathrm{flat} \subset Z^1(G,\Ad(\eta))$ is a sub-vector space that contains $B^1(G,\Ad(\eta))$. 
    \item In particular, if $A = \F_p$, then this torsor is non-empty and canonically isomorphic to $Z^1(\Q_p, \Ad(\eta)) \otimes_{\F_p} \ker s$.
    \item The analogue of Lemma \ref{lem: basic def theory}(3) holds for finite-flat representations.
    \item The analogue of Lemma \ref{lem: basic def theory}(4) holds for finite-flat representations, with the addition that the set of constant determinant finite-flat lifts is non-empty. 
\end{enumerate}
\end{prop}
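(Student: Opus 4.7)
The plan is to deduce this from the preceding Lemma \ref{lem: basic def theory} by grafting on the smoothness (unobstructedness) of the finite-flat deformation functor. The fundamental input is Ramakrishna's theorem that finite-flat deformations of a finite-flat $\F_p[G_p]$-module are unobstructed \cite{ramakrishna1993}, as incorporated into the Cayley--Hamilton / stable-condition formalism in \cite{WWE4}.

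First I would establish non-emptiness in (1). Because $\eta$ is finite-flat and the finite-flat condition is a stable condition in the sense of \cite{WWE4}, the functor sending a local Artinian $\F_p$-algebra to its set of finite-flat deformations of $\eta$ is formally smooth; equivalently, the relevant obstruction group vanishes. Applied to the small surjection $s$ with $\m_{A'}\cdot \ker s=0$, this gives that the finite-flat lift $\eta_A$ admits at least one finite-flat lift over $A'$.

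Next I would describe the torsor structure. By Lemma \ref{lem: basic def theory}(1) the set of all (unrestricted) lifts of $\eta_A$ over $s$ is a torsor under $Z^1(G,\Ad(\eta))\otimes_{\F_p}\ker s$. The set of finite-flat lifts is a non-empty sub-torsor (by the previous step) under some subgroup $T \subseteq Z^1(G,\Ad(\eta))\otimes_{\F_p}\ker s$. To show $T$ has the form $Z^1(G,\Ad(\eta))^\mathrm{flat}\otimes_{\F_p}\ker s$ for an $\F_p$-linear subspace, first take $A=\F_p$ and $A'=\F_p[\epsilon]/\epsilon^2$, where the trivial lift $\eta\otimes_{\F_p}A'$ furnishes a canonical basepoint. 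Define $Z^1(G,\Ad(\eta))^\mathrm{flat}\subseteq Z^1(G,\Ad(\eta))$ to be the resulting subspace of cocycles whose associated first-order deformation of $\eta$ is finite-flat; linearity holds because $\F_p$-scaling of an $\epsilon$-deformation corresponds to base change along $\F_p[\epsilon]\to\F_p[\epsilon]$, $\epsilon\mapsto c\epsilon$, which preserves the stable condition. For general $A,A',s$, base change and direct sums of finite-flat modules are finite-flat, so tensoring with $\ker s$ recovers the full structure group $T$. This proves (1), and (2) is the special case $A=\F_p$ where the basepoint is canonical.

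Part (3) follows because conjugating a representation by a matrix in $\ker(\GL_2(A')\to\GL_2(A))$ produces an isomorphic $\Z_p[G_p]$-module, and finite-flatness depends only on the isomorphism class; hence the coboundary action preserves finite-flatness, and $B^1(G,\Ad(\eta))\subseteq Z^1(G,\Ad(\eta))^\mathrm{flat}$. For (4), impose constant determinant: the determinant of a finite-flat representation is a finite-flat character, and smoothness of the constant-determinant finite-flat deformation functor (the analogue of the stable-condition argument above for $\Ad^0$ in place of $\Ad$) yields the non-emptiness assertion, after which the torsor structure over $Z^1(G,\Ad^0(\eta))\otimes_{\F_p}\ker s$ follows as before.

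The only genuinely non-formal step is the unobstructedness of the finite-flat condition, i.e. the non-emptiness in (1) and its analogue in (4); everything else is a matter of transporting the torsor structure from Lemma \ref{lem: basic def theory} through the sub-functor. I would not reprove Ramakrishna's theorem but simply cite it in the form in which it appears in \cite{WWE4}.
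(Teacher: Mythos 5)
Your proposal is correct and takes essentially the same approach as the paper: the paper's own proof simply cites \cite[Lem.\ 2.4.1]{CHT2008} for non-emptiness and \cite[Prop.\ C.4.1]{WWE3} (together with Lemma \ref{lem: basic def theory}) for the structural claims, which is exactly your two ingredients --- Ramakrishna's unobstructedness for the finite-flat condition and transport of the torsor structure from the unrestricted case. You spell out the $\F_p$-linearity and base-change arguments rather than invoking the black-box reference, but the mathematical content is the same; one small slip is that in part (4) you wrote $Z^1(G,\Ad^0(\eta))\otimes_{\F_p}\ker s$ for the structure group where the finite-flat subspace $Z^1(G,\Ad^0(\eta))^{\mathrm{flat}}\otimes_{\F_p}\ker s$ is intended.
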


\begin{proof}
The non-emptiness of the set of finite-flat lifts can be found in \cite[Lem.\ 2.4.1]{CHT2008}, for example. The remaining claims can be deduced from Lemma \ref{lem: basic def theory} using \cite[Prop.\ C.4.1]{WWE3}. 
\end{proof}

\subsubsection{The global unramified-or-Steinberg condition}
\label{sssec: global US}

By combining the local conditions, we arrive at the global condition $\US_N$. 

\begin{defn}
    \label{defn: US global}
    Let $\rho$ be a Cayley--Hamilton representation over $\Db : G_{\Q,Np} \to \F_p$. We say that $\rho$ is \emph{unramified-or-Steinberg of level $N$}, or $\US_N$, when 
    \begin{enumerate}
        \item for all $\ell \mid N$, $\rho\vert_\ell$ is $\US_{\ell}$, and 
        \item $\rho\vert_p$ is \emph{finite-flat} in the sense of Definition \ref{defn: finite-flat}. 
    \end{enumerate}
    When $D : G_{\Q,Np} \to A$ is a deformation of $\Db : G_{\Q,Np} \to \F_p$, we say that $D$ is $\US_N$ if there exists some $A$-valued Cayley--Hamilton representation $\rho$ over $\Db$ such that $\rho$ is $\US_N$ and $D = \psi(\rho)$. 
\end{defn}

We fix notation for the universal objects satisfying $\US_N$, which were produced in \cite[\S3]{WWE5}. 

\begin{defn}
\label{defn: universal objects}
\hfill
\begin{itemize}
    \item Let $R$ denote the universal pseudodeformation ring of $\Db$ satisfying the $\US_N$ condition. It admits a natural surjection $R_\Db \rsurj R$. 
    
    \item Likewise, there exists a universal $\US_N$ Cayley--Hamilton representation of $G_{\Q,Np}$ over $\Db$, denoted
    \[
   (\rho_N : G_{\Q,Np} \to E^\times, E, D_E : E \to R)
    \]
    and inducing $D_N : G_{\Q,Np} \to R$, the universal $\US_N$ deformation of $\Db$. 
    \item We fix a $R^u_\Db$-GMA structure on the universal Cayley--Hamilton algebra $E^u_\Db$ over $\Db$, which induces a GMA structure on all of the Cayley--Hamilton algebras receiving a map from $E_\Db^u$ due to its universal property. In particular, we get a $R$-GMA structure on the universal $\US_N$ Cayley--Hamilton representation $(\rho_N, E, D_E)$ of $G_{\Q,Np}$ over $\Db$, and write its matrix coordinates as 
    \[
    E \cong \ttmat{R}{B}{C}{R}. 
    \]
    For $\gamma \in G_{\Q,Np}$, we write 
    \[
    \ttmat{a_\gamma}{b_\gamma}{c_\gamma}{d_\gamma} 
    \]
    for its image in $E$ under $\rho_N$. Letting $\m \subset R$ denote the maximal ideal, we may and do assume that the GMA structure on $E_\Db$ has been chosen such that 
    \[
    (a \mod{\m}) = \omega \quad \text{ and } \quad (d \mod{\m}) = 1
    \]
    as homomorphisms $G_{\Q,Np} \to \F_p^\times$. 
    \item We will also occasionally refer to $R_{\ell_0}$ as the universal pseudodeformation of $\Db$ satisfying the (global) $\US_{\ell_0}$ condition (with Atkin--Lehner sign $-1$ at $\ell_0$). There is a natural surjection $R \rsurj R_{\ell_0}$. 
\end{itemize}
\end{defn}

Having completed these constructions, the crucial application is that we can interpolate over $\bT$ the Galois pseudorepresentations induced by the representations $\rho_f : G_{\Q,Np} \to \GL_2(\oQ_p)$ associated to normalized Hecke eigenforms $f \in M_2(N; \Z_p)_\Eis$.  
\begin{prop}[{\cite[Prop.\ 4.1.1]{WWE5}}]
    \label{prop: R to T}
    We have a surjection $R \rsurj \bT$ characterized by sending traces of Frobenius elements $\Tr_{D_N}(\Fr_q) \in R$ for primes $q \nmid Np$ to the Hecke operator $T_q$. Similarly, we have $R_{\ell_0} \rsurj \bT_{\ell_0}$. 
\end{prop}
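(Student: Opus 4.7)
The plan is to use the Galois representations attached to Hecke eigenforms to construct an $R$-valued pseudorepresentation that factors through $\bT$.

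First, I would enumerate a basis of normalized eigenforms $f_1, \ldots, f_r$ for the space $M_2(N;\Z_p)_\Eis^\varep \otimes_{\Z_p} \bQp$, each defined over the ring of integers $\cO_{f_i}$ in some finite extension of $\Q_p$. For each $f_i$ one has an attached Galois representation $\rho_{f_i}: G_{\Q,Np} \to \GL_2(\Frac(\cO_{f_i}))$ whose trace of $\Fr_q$ (for $q\nmid Np$) equals the Hecke eigenvalue $a_q(f_i)$. Taking a Galois-stable lattice realizes $\rho_{f_i}$, together with its residual semisimplification, as a Cayley--Hamilton representation with coefficients in $\cO_{f_i}$; its induced pseudorepresentation $D_{f_i} = \psi(\rho_{f_i})$ deforms $\Db$ because $a_q(f_i) \equiv q+1 \pmod{p}$ and $\det(\rho_{f_i}) = \kappa$.

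The main task is to verify that each $\rho_{f_i}$ is a $\US_N$ Cayley--Hamilton representation. At $p$, this is the classical fact that a weight-$2$ newform with trivial level at $p$ has finite-flat Galois representation (Definition \ref{defn: finite-flat}). At a prime $\ell \mid N$, either $f_i$ is old at $\ell$, so $\rho_{f_i}|_\ell$ is unramified and the relation \eqref{eq: US local} holds trivially since $(\rho(\tau)-1) = 0$ for $\tau\in I_\ell$, or $f_i$ is new at $\ell$ with Atkin--Lehner sign $-1$, in which case $\rho_{f_i}|_\ell$ has the Steinberg form \eqref{eq: Galois Steinberg form}, and a direct matrix computation shows that $(\rho(\sigma)-\kappa(\sigma))(\rho(\tau)-1) = 0$ for $\sigma\in G_\ell$, $\tau\in I_\ell$ (and vice versa). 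Hence each $\rho_{f_i}$ is $\US_N$, so by the universal property of $(R, D_N)$ we obtain a unique local homomorphism $\varphi_i: R \to \cO_{f_i}$ with $\Tr_{D_N}(\Fr_q) \mapsto a_q(f_i)$.

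Assembling these yields $\varphi: R \to \prod_i \cO_{f_i}$ sending $\Tr_{D_N}(\Fr_q)$ to the tuple $(a_q(f_i))_i$. Via the Hecke pairing \eqref{eq: Hecke pairings}, $\bT \otimes_{\Z_p} \bQp$ embeds into $\prod_i \Frac(\cO_{f_i})$ as the subring generated by the $T_q$ for $q\nmid Np$, and $\bT$ is exactly the intersection of this with $\prod_i \cO_{f_i}$. Since the element $(a_q(f_i))_i \in \prod_i \cO_{f_i}$ is precisely the image of $T_q$ under this embedding, the image of $\varphi$ is contained in $\bT$, giving a local homomorphism $\varphi: R \to \bT$ with $\Tr_{D_N}(\Fr_q) \mapsto T_q$. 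Surjectivity follows since the $T_q$ with $q\nmid Np$ topologically generate $\bT$ (by Chebotarev/strong multiplicity-one, these span the Hecke algebra over $\bQp$, and since $R \to \bT$ is local with $\bT$ finitely generated over $\Z_p$, Nakayama closes the argument).

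The main obstacle is the verification of the local conditions---particularly combining the $\US_\ell$ matrix identity with the finite-flat condition into a single Cayley--Hamilton structure on the lattice model. Granting the standard $p$-adic Hodge-theoretic input that $\rho_{f_i}|_p$ is finite-flat, the rest is a bookkeeping exercise in the formalism of Cayley--Hamilton representations developed in \cite{WWE4}. The argument for $R_{\ell_0} \rsurj \bT_{\ell_0}$ is identical, with $N$ replaced by $\ell_0$ and only the Atkin--Lehner sign at $\ell_0$ fixed.
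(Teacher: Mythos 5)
The paper does not reprove this statement; it cites \cite[Prop.\ 4.1.1]{WWE5}. Your overall route---verifying $\US_N$ eigenform by eigenform, obtaining $R \to \cO_{f_i}$ from the universal property, and assembling into $\prod_i \cO_{f_i}$---is the natural one and is close in spirit to the cited proof. Your treatment of the local conditions is essentially right; the one point worth making explicit is that the Cayley--Hamilton algebra attached to a stable lattice $L\cong\cO_{f_i}^2$ embeds into $M_2(\Frac(\cO_{f_i}))$, and the identity \eqref{eq: US local} holds there because $\rho_{f_i}\vert_\ell$ has the Steinberg shape \eqref{eq: Galois Steinberg form} over the fraction field; injectivity of the embedding then gives the vanishing at the integral level, and finite-flatness of the algebra follows from that of $L$ since it is a $G_p$-submodule of $L^{\oplus 2}$ under the left regular action.

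There is, however, a genuine gap in the step that puts the image of $\varphi:R\to\prod_i\cO_{f_i}$ inside $\bT$. You assert that $\bT$ equals $(\bT\otimes_{\Z_p}\bQ_p)\cap\prod_i\cO_{f_i}$, but this is a saturation property that is not automatic and which you have not established: a Hecke algebra can be a proper subring of its $p$-saturation. (Toy example: the $\Z_p$-subalgebra of $\Z_p\times\Z_p$ generated by $(1,1+p)$ is $\{(a,b):a\equiv b\ (\mathrm{mod}\ p)\}$, strictly smaller than $\Z_p\times\Z_p$.) So knowing that $\varphi$ lands rationally in $\bT\otimes\bQ_p$ and integrally in $\prod_i\cO_{f_i}$ does not force the image into $\bT$. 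The fix does not need saturation: since $R$ is a compact Noetherian local ring and $\varphi$ is continuous, the image is a \emph{closed} $\Z_p$-subalgebra of $\prod_i\cO_{f_i}$; by Chebotarev, continuity of $\Tr_{D_N}$, and the fact that a two-dimensional pseudorepresentation over a $\Z[1/2]$-algebra is determined by its trace, $R$ is topologically generated over $\Z_p$ by the $\Tr_{D_N}(\Fr_q)$, $q\nmid Np$. Hence the image is exactly the closure of $\Z_p[T_q:q\nmid Np]$ in $\prod_i\cO_{f_i}$, i.e.\ $\bT$. This gives both containment and surjectivity at once, making the separate surjectivity argument at the end unnecessary.
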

Note that since $\bT$ is generated as a $\Z_p$-algebra by the $T_q$, the characterizing property of the map makes its surjectivity visible. The level $\ell_0$ map is known to be an isomorphism $R_{\ell_0} \cong \bT_{\ell_0}$ \cite{WWE3}. 

\begin{rem}
Our hypothesis is that the local $\US_\ell$ conditions furnish a robust interpolation of the Steinberg shape of Galois representations of \eqref{eq: Galois Steinberg form} into Cayley--Hamilton algebras. Since the global $\US_N$ condition simply puts together these local conditions, we view the putative isomorphism $R \isoto \bT$ as bearing out this hypothesis. 
\end{rem}

\subsection{Reducibility of pseudorepresentations}
\label{subsec: reducibility}

A 2-dimensional pseudorepresentation $D : G \to A$ is called \emph{reducible} when it has the form $\psi(\chi_1 \oplus \chi_2)$ for some characters $\chi_1, \chi_2 : G \to A^\times$. It is well understood that reducibility is a Zariski-closed condition, meaning that there is a \emph{reducibility ideal} $J^\red_\Db \subset R_\Db$ such that a pseudodeformation $D_A : G \to A$ of $\Db$ is reducible if and only if $J^\red_\Db$ vanishes under the corresponding homomorphism $R_\Db \to A$. And any $D_A$ becomes reducible modulo the image of $J^\red_\Db$ in $A$. 

When $D_A$ arises from a GMA-representation of $G$, there is an important expression for the reducibility ideal in terms of the GMA structure. We record the universal $\US_N$ case.
\begin{prop}
    \label{prop: red ideal generators}
    The reducibility ideal $J^\red \subset R$ is equal to the image of the multiplication map $m : B \otimes_{R} C \to R$. 
\end{prop}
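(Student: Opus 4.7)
We will establish the two inclusions $J^\red \subseteq \mathrm{im}(m)$ and $\mathrm{im}(m) \subseteq J^\red$ separately. The core computational input is the standard formula for GMA matrix multiplication, while the key structural input is the rigidity of GMA structures for residually multiplicity-free pseudorepresentations.

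\textbf{For $J^\red \subseteq \mathrm{im}(m)$:} Set $I := \mathrm{im}(m)$ and write
\[
\rho_N(\gamma) = \ttmat{a_\gamma}{b_\gamma}{c_\gamma}{d_\gamma}.
\]
The rule for GMA matrix multiplication yields the identity $a_{\gamma\delta} = a_\gamma a_\delta + m(b_\gamma \otimes c_\delta)$, and an analogous identity for $d$ involving the companion multiplication $C\otimes_R B \to R$ (whose image lies in $I$ by a parallel reducibility argument). Hence modulo $I$, the diagonal coordinates $a$ and $d$ are multiplicative, producing characters $\chi_1, \chi_2 \colon G_{\Q,Np} \to (R/I)^\times$ deforming $\omega$ and $1$ respectively. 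Using
\[
\Tr_{D_N}(\gamma) = a_\gamma + d_\gamma, \qquad \det_{D_N}(\gamma) = a_\gamma d_\gamma - m(b_\gamma \otimes c_\gamma),
\]
we see $D_N \pmod{I} = \psi(\chi_1 \oplus \chi_2)$ is reducible, giving $J^\red \subseteq I$.

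\textbf{For $\mathrm{im}(m) \subseteq J^\red$:} By definition, $D_N \pmod{J^\red}$ is reducible, hence equals $\psi(\chi_1 \oplus \chi_2)$ for unique characters $\chi_i \colon G_{\Q,Np} \to (R/J^\red)^\times$ with $\chi_1 \equiv \omega$ and $\chi_2 \equiv 1$ modulo the maximal ideal; the ordering is unambiguous because $\Db = \psi(\omega \oplus 1)$ is multiplicity-free. The diagonal representation $\chi_1 \oplus \chi_2$ gives a Cayley--Hamilton representation of $G_{\Q,Np}$ valued in the product $(R/J^\red) \times (R/J^\red)$, which carries a canonical GMA structure with vanishing off-diagonals $B = C = 0$ and induces the same pseudorepresentation as $\rho_N \pmod{J^\red}$. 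By the universal property of $(\rho_N, E, D_E)$ together with the inner-isomorphism rigidity of GMA structures in the residually multiplicity-free case (\cite[Lem.\ 5.6.8]{WWE1}, recalled in \S\ref{sssec: CH reps and GMA reps}), the resulting Cayley--Hamilton morphism $E \otimes_R (R/J^\red) \to (R/J^\red) \times (R/J^\red)$ can be arranged to be a GMA morphism: the residual idempotents lift uniquely and match the canonical idempotents of the target. Consequently the off-diagonal submodules $\overline{B}, \overline{C}$ of the source map into the zero off-diagonal components of the target, so the induced multiplication $\overline{B} \otimes_{R/J^\red} \overline{C} \to R/J^\red$ is zero, giving $\mathrm{im}(m) \subseteq J^\red$.

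\textbf{Main obstacle.} The first inclusion is essentially a direct computation with the GMA structure. The second inclusion is where the multiplicity-free hypothesis on $\Db$ does real work: without the rigidity of GMA structures, the reducible quotient could in principle retain a non-trivial off-diagonal multiplication. The rigidity forces the diagonal form and thereby the vanishing of $m$ modulo $J^\red$.
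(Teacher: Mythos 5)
The paper does not prove this proposition: it is recorded as a standard fact from the generalized matrix algebra literature (going back to Bella\"iche--Chenevier), so there is no proof in the paper to compare against. Your overall strategy is the standard one, and the two-inclusion outline is correct, but both halves have a loose end.

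In the first half, the inclusion $J^\red \subseteq \mathrm{im}(m)$ requires that the companion multiplication $m'\colon C\otimes_R B \to R$ have image contained in $I=\mathrm{im}(m)$, so that $d$ becomes multiplicative modulo $I$. You assert this ``by a parallel reducibility argument,'' but that phrase is circular: running the identical argument with $B$ and $C$ exchanged only shows $J^\red\subseteq\mathrm{im}(m')$, not $\mathrm{im}(m')\subseteq\mathrm{im}(m)$. The correct justification is the trace (or Cayley--Hamilton) identity: for $b\in B$, $c\in C$, the products $bc\in e_1Ee_1\cong R$ and $cb\in e_2Ee_2\cong R$ satisfy $\Tr_{D_E}(bc)=\Tr_{D_E}(cb)$, and since $\Tr_{D_E}(e_i)=1$ this gives $m(b\otimes c)=m'(c\otimes b)$. (Equivalently, the Cayley--Hamilton relation for $\sm{a}{b}{c}{d}$ forces the two diagonal ``$bc$'' entries to agree.) Hence $\mathrm{im}(m)=\mathrm{im}(m')$; you should say this explicitly. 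In the second half, you apply the universal property of $(\rho_N,E,D_E)$ to the split representation $\chi_1\oplus\chi_2$ valued in $(R/J^\red)^2$. But $E$ is the universal \emph{$\US_N$} Cayley--Hamilton representation, so its universal property only furnishes maps to $\US_N$ targets; that $(\chi_1\oplus\chi_2,(R/J^\red)^2)$ is $\US_N$ (in particular finite-flat at $p$) is true here but is not free --- it ultimately rests on the explicit description of $R^\red$ in Lemma~\ref{lem: reducible quotient}, which shows $\chi_2$ is unramified at $p$. A cleaner route that sidesteps this entirely is to run the second-inclusion argument for the \emph{unrestricted} universal Cayley--Hamilton algebra $E^u_{\Db}$ over $R_\Db$ (where no auxiliary condition is imposed), conclude $\mathrm{im}(m_{\Db})\subseteq J^\red_{\Db}$ there, and then observe that both $\mathrm{im}(m)$ and $J^\red$ in $R$ are the images of their $R_\Db$ counterparts. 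On the plus side, your observation that the idempotents of $(R/J^\red)^2$ are rigid (so the map is automatically a GMA morphism once idempotents are lifted) is exactly the right way to see the off-diagonals are killed.
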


Another canonical ideal of $R$ is the kernel $\Jm$ of the composition 
\[
\Jm := \ker(R \rsurj \bT \rsurj \Z_p), 
\] 
that arises from the Eisenstein series $E_{2,N}$. This is characterized by sending $\Tr_{D_N}(\Fr_q) \in R$ for primes $q \nmid Np$ to $q+1$, which is the eigenvalue of $T_q$ on $E_{2,N}$. There is an inclusion of ideals $J^\red \subset \Jm$ because the $\Z_p$-valued pseudorepresentation $\psi(\kappa \oplus 1)$ associated to $E_{2,N}$ is reducible. 

In the following lemma, we compute the quotient of $R$ by the reducibility ideal, which we write as $R^\red := R/J^\red$. Here we write $\gamma_0 \in I_{\ell_0}$ for then chosen lift of the topological generator of the tame quotient of $I_{\ell_0}$, denoted $\gamma_{\ell_0}$ in \S\ref{subsec: notation conventions}. 

\begin{lem}
\label{lem: reducible quotient}
There is an isomorphism
\[
R^\red \cong  \frac{\Z_p[Y]}{(Y^2, (\ell_0-1)Y)},
\]
where $Y$ may be taken to be $a_{\gamma_0}-1$, and $Y$ generates $\Jm/J^\red$. The corresponding pseudorepresentation induced by reduction modulo $p$, $R^\red \rsurj \F_p[Y]/(Y^2)$, equals 
\[
D^\red := \psi(\omega(1 + Y a_0) \oplus (1 - Y a_0)) : G_{\Q,Np} \to \F_p[Y]/(Y^2). 
\]
\end{lem}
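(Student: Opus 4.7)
My plan is to identify $R^\red = R/J^\red$ as the universal parameter ring for reducible $\US_N$-pseudodeformations of $\Db = \psi(\omega \oplus 1)$ and compute it by reducing the data to a single character. Because $\Db$ is multiplicity-free, any reducible pseudodeformation over a local Noetherian ring $A$ is uniquely of the form $\psi(\chi_1 \oplus \chi_2)$ with $\chi_1 \equiv \omega$ and $\chi_2 \equiv 1 \pmod{\m_A}$, and the weight-$2$ determinant condition $\det = \kappa$ built into $R$ forces $\chi_1 \chi_2 = \kappa$. The pair is therefore determined by the single character $\chi_2$, with $\chi_1 = \kappa \chi_2^{-1}$.

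Next, I translate the $\US_N$ conditions into constraints on $\chi_2$ by applying them to the diagonal Cayley--Hamilton lift $\chi_1 \oplus \chi_2$; since $BC = 0$ on $R^\red$, the off-diagonal contributions to the $\US_\ell$ matrix product vanish and the condition reduces entry-wise on any CH lift. Finite-flatness at $p$ for a residually trivial character amounts to being unramified, so $\chi_2$ is unramified at $p$. At $\ell_1$, because $\ell_1 \not\equiv 1 \pmod p$, one of the factors $\chi_1(\Fr_{\ell_1}) - 1 \equiv \ell_1 - 1$ or $\chi_2(\Fr_{\ell_1}) - \ell_1 \equiv 1 - \ell_1$ is a unit in $A$, forcing $\chi_2$ unramified at $\ell_1$. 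At $\ell_0$, writing $\chi_2(\tau_0) = 1 + y$ and $\chi_2(\Fr_{\ell_0}) = 1 + x$ with $\tau_0$ a topological generator of the pro-$p$ tame inertia, the three $\US_{\ell_0}$ identities for $(\sigma,\tau) = (\tau_0,\tau_0), (\tau_0,\Fr_{\ell_0}), (\Fr_{\ell_0},\tau_0)$ yield $y^2 = 0$, $xy = 0$, and $(x + 1 - \ell_0) y = 0$, respectively; combining these gives $(\ell_0 - 1) y = 0$.

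By global class field theory, $\chi_2$ factors through $\Gal(\Q(\zeta_{\ell_0})/\Q)^{(p)} \cong \Z/p^k\Z$ with $k = v_p(\ell_0-1)$, so it is determined by $\chi_2(\gamma_0) = 1 + y$ for a lift $\gamma_0 \in I_{\ell_0}$ of a generator, and the relations $y^2 = 0$ and $(\ell_0-1) y = 0$ present the universal character ring as $\Z_p[y]/(y^2, (\ell_0-1) y)$. To identify $Y$, note that on $R^\red$ the GMA entry $a_\gamma$ equals $\chi_1(\gamma) = \kappa(\gamma) \chi_2(\gamma)^{-1}$; since $\kappa(\gamma_0) = 1$ (as $\gamma_0 \in I_{\ell_0}$ and $\kappa$ is unramified at $\ell_0$), $a_{\gamma_0} = (1+y)^{-1} \equiv 1 - y \pmod{y^2}$, so $Y := a_{\gamma_0} - 1 = -y$ generates the maximal ideal with the stated relations, and generates $\Jm/J^\red = \ker(R^\red \onto \Z_p)$ (using the inclusion $J^\red \subset \Jm$ noted before the lemma). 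Reducing modulo $p$ and defining $a_0 \colon G_{\Q,Np} \to \F_p$ by $\chi_2 \equiv 1 - Y a_0 \pmod{p}$ (equivalently by $a_0(\gamma_0) = 1$) then gives $\chi_1 \equiv \omega(1 + Y a_0) \pmod{Y^2}$, yielding the claimed formula for $D^\red$. The main technical step is the $\US_{\ell_0}$ computation, where careful tracking of $\kappa$ on inertia versus on Frobenius produces exactly the two relations defining $R^\red$.
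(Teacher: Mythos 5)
Your proof is correct, and it takes a genuinely different route: the paper's proof is a one-line citation to \cite[Lem.~4.2.3]{WWE5}, whereas you give a self-contained computation. Your strategy---reducing to the single character $\chi_2$ via multiplicity-freeness and $\det=\kappa$, cutting the conductor of $\chi_2$ down to $\ell_0$ via finite-flatness at $p$ and $\US_{\ell_1}$, and reading off $y^2=0$ and $(\ell_0-1)y=0$ from the $\US_{\ell_0}$ identities for the pairs $(\gamma_0,\gamma_0)$, $(\gamma_0,\Fr_{\ell_0})$, $(\Fr_{\ell_0},\gamma_0)$ of generators---is essentially the argument underlying the cited lemma, and laying it out explicitly is worthwhile given how much the rest of the paper leans on $R^\red$. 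The identification $Y=a_{\gamma_0}-1=-y$ via $\kappa(\gamma_0)=1$, and the mod-$p$ reduction $D^\red=\psi(\omega(1+Ya_0)\oplus(1-Ya_0))$ with $a_0(\gamma_0)=1$, match the paper's pinned cocycle $a_0$ of Definition~\ref{P1: defn: pinned cocycles}.

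Two points should be tightened. First, you derive $y^2=0$ and $(\ell_0-1)y=0$ as necessary conditions but do not explicitly verify sufficiency, i.e.\ that the character over $\Z_p[y]/(y^2,(\ell_0-1)y)$ with $\chi_2(\gamma_0)=1+y$, together with $\chi_1=\kappa\chi_2^{-1}$, actually yields an $\US_N$ diagonal lift. This is what upgrades your surjection onto $R^\red$ to an isomorphism; it holds because $\chi_2(\sigma)-1\in(y)$ for every $\sigma\in G_{\ell_0}$ (the $p$-part of $\Gal(\Q(\zeta_{\ell_0})/\Q)$ is generated by the image of inertia) and $\kappa(\sigma)-1\in(\ell_0-1)\Z_p$, so every $\US_{\ell_0}$ product lands in $(y^2)+(\ell_0-1)(y)=0$. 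You should also note that the ray-class constraint $(1+y)^{p^k}=1$ reduces, modulo $y^2$, to $p^ky=0$, which is equivalent to $(\ell_0-1)y=0$, so it adds nothing. Second, the determinant condition $\det D=\kappa$ is not part of the definition of $R$ or of $\US_N$; it is a \emph{consequence} of $\US_N$ (\cite[Prop.~3.8.3]{WWE5}, invoked in \S\ref{sec: proof of main theorem}), so the step $\chi_1\chi_2=\kappa$ should be attributed there rather than described as ``built into $R$.''
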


In Proposition \ref{prop: R-cotangent}, we use the local homomorphism $\varphi_{D^\red} : R \to \F_p[\epsilon]/(\epsilon^2)$ induced by $D^\red$. 

\begin{proof}
The first statement is a direct application of the presentation for $R^\red$ provided in \cite[Lem.\ 4.2.3]{WWE5}, and the calculations needed for the second claim are included in its proof. 
\end{proof}

\subsection{Designated generators of the universal GMA}

We recall the definitions of some useful cohomology classes and their duals from \cite[\S3.10]{WWE5}. First we need notation for generators of the tame quotients of inertia groups as in \S \ref{subsec: notation conventions}. 
\begin{defn}
    \label{defn: gamma_i}
    For a prime $\ell_i$ not equal to $p$, let $\gamma_i \in I_{\ell_i}$ stand for the element $\gamma_{\ell_i} \in I_{\ell_i}$ chosen in \S\ref{subsec: notation conventions}, which is a lift over the tame quotient $I_q \rsurj I_q^\mathrm{tame}$ of a topological generator. 
\end{defn}

\begin{prop}
    \label{prop: BC bases}
    The elements $b_{\gamma_0}$ and $b_{\gamma_1}$ of $B$ generate it as a $R$-module, and $B$ is not cyclic as a $R$-module. Similarly, $c_{\gamma_0} \in C$ is a generator as a $R$-module. 
\end{prop}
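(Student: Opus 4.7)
By Nakayama's lemma applied to the finitely generated $R$-modules $B$ and $C$, it suffices to prove the analogous statements after reducing modulo the maximal ideal $\m \subset R$. Thus the plan is to show that $B/\m B$ is $2$-dimensional over $\F_p$ with basis $\{b_{\gamma_0} \bmod \m,\ b_{\gamma_1} \bmod \m\}$, and that $C/\m C$ is $1$-dimensional spanned by $c_{\gamma_0} \bmod \m$. The generation claims and the non-cyclicity of $B$ then follow immediately, the latter because $\dim_{\F_p}(B/\m B) = 2 > 1$.

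The central tool is the standard GMA-theoretic duality (as developed in \cite{WWE5}, following Bella\"iche--Chenevier) identifying the $\F_p$-linear duals of $B/\m B$ and $C/\m C$ with Selmer subspaces
\[
(B/\m B)^\vee \hookrightarrow H^1(\Z[1/Np], \F_p(1))^{\US_N}, \qquad (C/\m C)^\vee \hookrightarrow H^1(\Z[1/Np], \F_p(-1))^{\US_N},
\]
where the superscript $\US_N$ denotes the subspace cut out by the local $\US_\ell$ conditions at $\ell \in \{\ell_0,\ell_1\}$ and the finite-flat condition at $p$. The pairing sends $\varphi \in (B/\m B)^\vee$ to the cohomology class of the cocycle $\gamma \mapsto \varphi(b_\gamma \bmod \m)$; the pairing with $\gamma$ then \emph{is} the evaluation of the representing cocycle at $\gamma$, which is the crucial feature for the duality to be effective.

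For the $B$-side, a direct matrix computation on the upper-triangular shape $\ttmat{\kappa}{\tilde b}{0}{1}$ shows that the local condition $\US_\ell$ of Definition \ref{defn: US local} holds automatically at every $\ell \nmid p$. Thus the $\US_N$ subspace of $H^1(\Z[1/Np], \F_p(1))$ coincides with the finite-flat subspace, which by Lemma \ref{lem: global Kummer theory}(1) is $2$-dimensional with basis $\{\ell_0, \ell_1\}$ under Kummer theory. The Kummer class of $\ell_j$ is ramified at $\ell_i$ if and only if $i=j$, so its evaluation at the tame-inertia generator $\gamma_i$ of Definition \ref{defn: gamma_i} is nonzero iff $i=j$. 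The pairing matrix with $\{b_{\gamma_0}, b_{\gamma_1}\}$ is therefore diagonal with nonzero entries, identifying these as the dual basis.

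For the $C$-side, the plan is to show the Selmer subspace is $1$-dimensional and consists of classes ramified at $\ell_0$. At $\ell_1$, a standard inflation-restriction plus Euler-characteristic argument using $\ell_1 \not\equiv 0, \pm 1 \pmod p$ gives $H^1(\Q_{\ell_1}, \F_p(-1)) = 0$, so no constraint appears. At $\ell_0 \equiv 1 \pmod p$, a computation parallel to the $B$-side---now on the lower-triangular shape $\ttmat{\kappa}{0}{\tilde c}{1}$---shows that $\US_{\ell_0}$ forces $\tilde c$ to be unramified or to lie in the specific one-dimensional Steinberg line enabled by $\ell_0 \equiv 1 \pmod p$. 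Combining this with the local finite-flat condition at $p$ and a Greenberg--Wiles Euler characteristic computation (exactly the level-$\ell_0$ analysis carried out in \cite{WWE3}), one concludes that the Selmer space is $1$-dimensional and represented by a class ramified at $\ell_0$. Such a class is nonzero on $\gamma_0$, so $c_{\gamma_0}$ spans $C/\m C$.

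The main obstacle is the Selmer-dimension calculation on the $C$-side: one must correctly formulate the finite-flat condition for $\F_p(-1)$-coefficients at $p$ and then execute the Greenberg--Wiles bookkeeping, including ruling out any unexpected unramified contribution. This is precisely where the detailed analyses of \cite{WWE5} and \cite{WWE3} do the heavy lifting, beyond the local and global Kummer statements already recalled in the excerpt.
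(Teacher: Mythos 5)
Your proposal reconstructs exactly the argument behind the lemmas of \cite{WWE5} that the paper itself cites here (Lem.\ 3.9.4, 3.9.8, and 6.2.1): Nakayama, then the Bella\"iche--Chenevier-type duality between $B/\m B$, $C/\m C$ and Selmer subspaces of $H^1(\Z[1/Np],\F_p(\pm1))$, combined with the Kummer-theoretic dimension count and evaluation against the tame-inertia generators $\gamma_0,\gamma_1$. So the route is the right one. Two places where your details go astray.

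First, you record the duality only as an injection $(B/\m B)^\vee \hookrightarrow H^1(\Z[1/Np],\F_p(1))^{\US_N}$. That inclusion alone gives $\dim_{\F_p}(B/\m B)\le 2$ and does not give non-cyclicity of $B$. Your pairing-matrix step implicitly requires that the pairing of $\bar b_{\gamma_i}$ against the Kummer classes $b_0,b_1$ is even defined, i.e.\ that $b_0,b_1$ lie in the image of the duality map, equivalently that the map is an isomorphism onto the $\US_N$-Selmer subgroup. This surjectivity is the non-trivial content of the cited lemma (in \cite{WWE5} it is obtained by exhibiting, for each relevant cohomology class, an explicit finite-flat $\US_N$ Cayley--Hamilton representation whose $B$-coordinate realizes it). Absent that input, you have only an upper bound, not the claimed basis.

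Second, your local analysis at $\ell_0$ on the $C$-side is not accurate. Because $\omega\vert_{\ell_0}$ is trivial, for the first-order lower-triangular shape $\ttmat{\omega}{0}{\tilde c}{1}$ the product $(\rho(\sigma)-\omega(\sigma))(\rho(\tau)-1)$ vanishes identically, so $\US_{\ell_0}$ places no constraint whatsoever on the $C$-direction; there is no ``one-dimensional Steinberg line'' condition there. The one-dimensionality of the relevant Selmer group comes instead from $H^1(\Q_{\ell_1},\F_p(-1))=0$ (your point) together with the finite-flat condition at $p$, which by Lemma \ref{P1: lem: local Kummer finite-flat} (with $i=-1$ in spirit) forces $c_0\vert_p=0$, and a global dimension count; the class $c_0$ is then ramified precisely at $\ell_0$ because that is the only place left. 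You reach the right conclusion, but the mechanism you attribute to $\US_{\ell_0}$ is not the one doing the work.
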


\begin{proof}
    See \cite[Lem.\ 3.9.4 and 3.9.8]{WWE5} for the claims about generators, and see \cite[Lem.\ 6.2.1]{WWE5} for the claim that $B$ is not cyclic. 
\end{proof}

\begin{lem}
    \label{lem: Jm2 and Jred}
    We have an inclusion of ideals $\Jm^2 \subset J^\red \subset \Jm$. The element $b_{\gamma_0} \cdot c_{\gamma_0}$ of $J^\red$ lies within its submodule $\Jm^2$. 
\end{lem}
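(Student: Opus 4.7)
The plan splits into three nearly-independent pieces. The inclusion $J^\red \subset \Jm$ is essentially tautological: the Eisenstein quotient $R \rsurj \Z_p$ corresponding to $E_{2,N}$ classifies the reducible pseudorepresentation $\psi(\kappa \oplus 1)$, so by the universal property of $R^\red$ it factors through $R^\red = R/J^\red$, forcing $J^\red \subset \ker(R \rsurj \Z_p) = \Jm$.

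For $\Jm^2 \subset J^\red$, I would appeal directly to Lemma \ref{lem: reducible quotient}, which identifies $R^\red \cong \Z_p[Y]/(Y^2, (\ell_0-1)Y)$ with $Y$ generating $\Jm/J^\red$. Thus the image of $\Jm$ in $R^\red$ is the principal ideal $(Y)$, which is square-zero by construction; pulling back gives $\Jm^2 \subset J^\red$.

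The more interesting step is showing $b_{\gamma_0} c_{\gamma_0} \in \Jm^2$. The key idea is to exploit the $\US_{\ell_0}$ condition applied to the universal GMA. Since $\gamma_0 \in I_{\ell_0}$ and $\ell_0 \neq p$, the $p$-adic cyclotomic character is unramified at $\ell_0$, giving $\kappa(\gamma_0) = 1$. Applying Definition \ref{defn: US local} with $\sigma = \tau = \gamma_0$ then yields $(\rho_N(\gamma_0) - 1)^2 = 0$ inside the $R$-GMA $E$. Expanding the $(1,1)$-coordinate of this square using the GMA multiplication rule $m \colon B \otimes_R C \to R$ produces the equation
\[
(a_{\gamma_0} - 1)^2 + b_{\gamma_0} c_{\gamma_0} = 0
\]
in $R$. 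To finish, I need $a_{\gamma_0} - 1 \in \Jm$; this holds because under the Eisenstein quotient $R \rsurj \Z_p$ the universal character $a$ (which by Definition \ref{defn: universal objects} reduces modulo $\m$ to $\omega$) must specialize to the $\kappa$-summand of $\psi(\kappa \oplus 1)$, and $\kappa(\gamma_0) = 1$. Hence $(a_{\gamma_0}-1)^2 \in \Jm^2$, forcing $b_{\gamma_0} c_{\gamma_0} \in \Jm^2$ as well.

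I do not anticipate a substantive obstacle here. The one point that warrants a little care is justifying that $a_{\gamma_0}$ maps to $\kappa(\gamma_0)$ under the Eisenstein quotient; this is determined by the GMA normalization fixed in Definition \ref{defn: universal objects} together with the uniqueness (up to inner isomorphism) of the diagonal characters of a GMA structure over the multiplicity-free residual pseudorepresentation $\Db = \psi(\omega \oplus 1)$.
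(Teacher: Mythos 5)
Your proof is correct, and one part of it is genuinely more self-contained than what appears in the paper. The two ideal inclusions are handled the same way as in the paper: $J^\red \subset \Jm$ because the Eisenstein specialization is reducible, and $\Jm^2 \subset J^\red$ from the presentation $R^\red \cong \Z_p[Y]/(Y^2,(\ell_0-1)Y)$ of Lemma \ref{lem: reducible quotient}, whose kernel to $\Z_p$ is square-zero.

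Where you diverge is the final claim. The paper simply cites \cite[Lem.\ 5.2.5]{WWE5}, whereas you rederive it on the spot from the $\US_{\ell_0}$ condition: since $\gamma_0 \in I_{\ell_0}$ and $\kappa$ is unramified at $\ell_0$, plugging $\sigma=\tau=\gamma_0$ into Definition \ref{defn: US local} for the universal $\US_N$ Cayley--Hamilton representation $\rho_N$ gives $(\rho_N(\gamma_0)-1)^2 = 0$ in $E$, and the $(1,1)$-entry of that square is $(a_{\gamma_0}-1)^2 + b_{\gamma_0}c_{\gamma_0}$, whence $b_{\gamma_0}c_{\gamma_0} = -(a_{\gamma_0}-1)^2 \in \Jm^2$ once one knows $a_{\gamma_0}-1 \in \Jm$. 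This is a clean, self-contained derivation and is exactly the sort of computation the $\US_\ell$ definition is designed to make transparent; it likely mirrors the argument in the cited lemma of \cite{WWE5}, but it spares the reader a trip to the other paper. Two small quibbles in the closing step: $a$ is not a character on $R$ itself but only becomes one modulo $J^\red$ (the diagonal entries of a GMA representation multiply up to a $bc$-correction term), so phrasing it as ``the universal character $a$'' is a bit loose, though your reasoning is sound once you pass to $R/\Jm$ where $J^\red$ has been killed. More simply, you could bypass that discussion entirely: Lemma \ref{lem: reducible quotient} already states that $Y = a_{\gamma_0}-1$ generates $\Jm/J^\red$, which gives $a_{\gamma_0}-1 \in \Jm$ immediately.
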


\begin{proof}
    The inclusion of ideals follows from Lemma \ref{lem: reducible quotient} because the kernel of $R/J^\red = R^\red \rsurj \Z_p = R/\Jm$ is square-nilpotent (similar to the proof of \cite[Thm.\ 6.4.1]{WWE5}). The final claim is \cite[Lem.\ 5.2.5]{WWE5}. 
\end{proof}

\section{Additional arithmetic preliminaries}
\label{sec: additional arithmetic}

In this section, we continue assembling background much as in the previous section, with the distinction that the content of this section is not found in \cite{WWE5}. Our primary focus is a discussion of various implications from our choice of pinning data in Definition \ref{P1: defn: pinning} as well as the conditions in Assumption \ref{P1: assump: main}.

\subsection{Cocycles determined by the pinning data}
\label{subsec: choices}
We fix some notation for Galois cocycles determined by the pinning data of Definition \ref{P1: defn: pinning}.

Recall the canonical isomorphism 
\[
\Z[1/Np]^\times \otimes_\Z \F_p \isoto H^1(\Z[1/Np],\mu_p)
\]
of Kummer theory. It sends an element $n \in \Z[1/Np]^\times \otimes_\Z \F_p$ to the class of the cocycle 
$\sigma \mapsto \frac{\sigma n^{1/p}}{n^{1/p}}$ for a choice $n^{1/p} \in \oQ$ of $p$th root of $n$.
We call this element of $H^1(\Z[1/Np],\mu_p)$ the \emph{Kummer class of $n$} and call any cocycle in this class a \emph{Kummer cocycle of $n$}. Because $\mu_p \not\subset \Q^\times$, each Kummer cocycle of $n$ is given by $\sigma \mapsto \frac{\sigma n^{1/p}}{n^{1/p}}$ for a unique choice of $n^{1/p} \in \oQ$ of $p$th root of $n$. 
We use the isomorphism $\F_p(1) \cong \mu_p$ chosen in \S\ref{subsec: notation conventions} to value Kummer classes and cocycles in $\F_p(1)$.

\begin{defn} \hfill
    \label{P1: defn: pinned cocycles}
    \begin{itemize}

        \item Let
        \[
        b_0\up1, b_1\up1 \in Z^1(\Z[1/Np],\F_p(1))
        \]
        be the Kummer cocycles associated to $p$th roots $\ell_0^{1/p}$ and $\ell_1^{1/p}$ of $\ell_0$ and $\ell_1$, respectively, chosen in \S\ref{subsec: notation conventions}. Let $b\up1=b_1\up1$. 
        \item Denote the Kummer classes of $\ell_0$, $\ell_1$, and $p$, respectively, by
        \[
         b_0 , b_1, b_p \in H^1(\Z[1/Np], \F_p(1)). 
        \]
        Note that $b_0=[b_0\up{1}]$ and $b_1=[b_1\up1].$
        \item Let
        \[
        \gamma_0 \in I_{\ell_0}, \ \gamma_1 \in I_{\ell_1}
        \]
        be as in Definition \ref{defn: gamma_i} and fixed such that $b_i\up1(\gamma_i)~=~1$.
        \item The cohomology group $H^1_{(p)}(\Z[1/Np], \F_p(-1))$ has $\F_p$-dimension 1 by \cite[Lem.\ 3.10.2]{WWE5}. Let
        \[
        c_0 \in H^1(\Z[1/Np], \F_p(-1))
        \]
        denote the unique class in the image of $H^1_{(p)}(\Z[1/Np], \F_p(-1))$ such that $\tilde{c}_0(\gamma_0)=1$ for any cycle $\tilde{c}_0 \in Z^1(\Z[1/Np], \F_p(-1))$ representing $c_0$. 
        \item If $\tilde{c}_0 \in Z^1(\Z[1/Np], \F_p(-1))$ is a cocyle representing $c_0$, then every other such cocyle is of the form $\tilde{c}_0+dx$ for some $x \in \F_p(-1)$. Then
        \[
        (\tilde{c}_0+dx)(\Fr_{\ell_1})=\tilde{c}_0(\Fr_{\ell_1})+(\ell_1^{-1}-1)x.
        \]
        Then $x=-(\ell_1^{-1}-1)^{-1}\tilde{c}_0(\Fr_{\ell_1})$ is the unique choice such that $(\tilde{c}_0+dx)(\Fr_{\ell_1})=~0$. 
        Let 
        \[
        c\up1 \in Z^1(\Z[1/Np], \F_p(-1))
        \]
        be $\tilde{c}_0+dx$ for this choice of $x$. Then $c\up1$ is the the unique cocycle with cohomology class $c_0=[c\up1]$ such that $c\up1\vert_{\ell_1} = 0$. 
    \item Let 
    \[
    x_{c\up1} \in C^0(G_p,\F_p(-1))=\F_p(-1)
    \]
    be such that $c\up1|_p = dx_{c\up1}$. Concretely, for any $\tau \in G_p$ such that $\omega(\tau) \ne 1$, we can define $x_{c\up1}$ as $x_{c\up1}=(\omega(\tau)^{-1}-1)^{-1}c\up1(\tau).$
    \item Let 
    \[
    a_0, a_p \in Z^1(\Z[1/Np],\F_p)
    \]
    be non-zero homomorphisms ramified exactly at $\ell_0$ and at $p$, respectively, and such that $a_0(\gamma_0)=1$. This determines $a_0$ uniquely and determines $a_p$ up to $\F_p^\times$-scaling (which is sufficient for our purposes). 
    \end{itemize}
\end{defn}

\begin{rem}
    \label{lem: initial pinning dependence}
The following choices made in Definition \ref{P1: defn: pinned cocycles} depend only on the pinning data of Definition \ref{P1: defn: pinning}:
\begin{itemize}
    \item The cocycles $b\up1$, $c\up1$, and $a_0$.
    \item The images of $\gamma_i$ in $I_{\ell_i}^\mathrm{tame} \otimes_\Z \F_p$.
\end{itemize}
\end{rem}

\subsection{Cup products and congruence conditions}

The conditions in this paper's running assumption, Assumption \ref{P1: assump: main}, are presented in what we think is the most readable language. However, our methods require various implications of these conditions that are related to the the vanishing of certain cup products among the cohomology classes that we have just defined and/or the local vanishing of the cohomology classes themselves. The point of this section is to record those implications. 

We emphasize that we assume $p \geq 5$ throughout.

\begin{lem}[Conditions equivalent to (2) in Assumption \ref{P1: assump: main}]
\label{lem: log ell1 is zero}
Let $\ell_0, \ell_1$ be distinct primes such that $\ell_0 \equiv 1 \pmod{p}$ and $\ell_1 \not\equiv 0, \pm 1 \pmod{p}$. The following conditions (1)-(4) are equivalent. 
\begin{enumerate}
    \item $\ell_1$ is a $p$th power modulo $\ell_0$.
    \item $a_0 \vert_{\ell_1} = 0$ in $H^1(\Q_{\ell_1}, \F_p)$.
    \item $b_1\vert_{\ell_0} \in H^1(\Q_{\ell_0}, \F_p(1))$ vanishes.
    \item $b_1 \cup c_0 = 0$ in $H^2(\Z[1/Np], \F_p)$.
\end{enumerate}
\end{lem}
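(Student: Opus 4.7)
The plan is to prove the cyclic implications $(1)\Leftrightarrow(3)$, $(1)\Leftrightarrow(2)$, and $(3)\Leftrightarrow(4)$, the first two being relatively standard and the third the technical heart of the argument. For $(1)\Leftrightarrow(3)$, I will identify $b_1|_{\ell_0}$ via Kummer theory with the image of $\ell_1\in\Z_{\ell_0}^\times$ in $\Q_{\ell_0}^\times/(\Q_{\ell_0}^\times)^p$; since $p\ne\ell_0$, Hensel's lemma reduces $\ell_1\in(\Z_{\ell_0}^\times)^p$ to $\ell_1\bmod\ell_0\in(\F_{\ell_0}^\times)^p$. For $(1)\Leftrightarrow(2)$, I will use class field theory to identify $a_0$ with the unique $\Z/p\Z$-subextension $K_0\subset\Q(\zeta_{\ell_0})$; under $\Gal(K_0/\Q)\cong\F_{\ell_0}^\times\otimes\F_p$, the (unramified) Frobenius at $\ell_1$ maps to $\ell_1\bmod\ell_0$, so $a_0|_{\ell_1}(\Fr_{\ell_1})=0$ is equivalent to that image being a $p$-th power.

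For $(3)\Leftrightarrow(4)$, I proceed local-globally. The local cup products $b_1|_v\cup c_0|_v$ vanish at every $v\ne\ell_0$: at $v=p$, $c_0|_p=0$ in cohomology, so by the Leibniz rule $b\up1|_p\smile c\up1|_p = -d(b\up1|_p\smile x_{c\up1})$ is a coboundary; at $v=\ell_1$, we have $c\up1|_{\ell_1}=0$ at the cocycle level by the defining property of $c\up1$; and at $v=\infty$, $H^2(\R,\F_p)=0$ for odd $p$. At $v=\ell_0$, since $\ell_0\equiv1\pmod p$ both $H^1(\Q_{\ell_0},\F_p(1))$ and $H^1(\Q_{\ell_0},\F_p(-1))$ are $2$-dimensional with $1$-dimensional unramified subspaces that are each others' annihilators under the perfect local Tate pairing into $H^2(\Q_{\ell_0},\F_p)\cong\F_p$. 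Since $b_1|_{\ell_0}$ is unramified (as the Kummer class of the unit $\ell_1$) and $c_0|_{\ell_0}$ has nonzero image in the ramified quotient (because $c_0(\gamma_0)=1$), the induced pairing
\[
H^1_\mathrm{unr}(\Q_{\ell_0},\F_p(1))\otimes H^1(\Q_{\ell_0},\F_p(-1))/H^1_\mathrm{unr}(\Q_{\ell_0},\F_p(-1))\to\F_p
\]
is perfect, giving $b_1|_{\ell_0}\cup c_0|_{\ell_0}=0\Leftrightarrow b_1|_{\ell_0}=0$. Combined with the vanishing at the other places, local vanishing of $b_1\cup c_0$ is exactly $(3)$.

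To promote local vanishing everywhere to the global statement $(4)$, I will show the localization map $H^2(\Z[1/Np],\F_p)\to\bigoplus_v H^2(\Q_v,\F_p)$ is injective using the Poitou--Tate nine-term exact sequence with $M=\F_p$ and $M^*=\mu_p$. Its kernel equals the image of $H^1(\Z[1/Np],\mu_p)^\vee\to H^2(\Z[1/Np],\F_p)$, whose kernel in turn is the image of $\bigoplus_v H^1(\Q_v,\F_p)\to H^1(\Z[1/Np],\mu_p)^\vee$. A dimension count shows this last map is surjective: $\dim H^1(\Z[1/Np],\mu_p)=3$ with basis $\{p,\ell_0,\ell_1\}$ (using $-1=(-1)^p$ for odd $p$); $\dim\bigoplus_v H^1(\Q_v,\F_p)=2+2+1+0=5$ at $v=p,\ell_0,\ell_1,\infty$; and $\dim H^1(\Z[1/Np],\F_p)=2$ by class field theory, the two extensions being the first layer of the cyclotomic $\Z_p$-extension and the $\Z/p\Z$-subextension of $\Q(\zeta_{\ell_0})$ (note that $\ell_1\not\equiv1\pmod p$ precisely excludes a contribution from $\ell_1$). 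The main delicate point is exactly this dimension count together with the Grunwald--Wang-type injectivity $H^1(\Z[1/Np],\mu_p)\hookrightarrow\bigoplus_v H^1(\Q_v,\mu_p)$ that underlies the exactness at the relevant term; with those in hand, the kernel of localization vanishes and $(3)\Rightarrow(4)$ is complete.
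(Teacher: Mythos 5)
Your proof is correct, and on the parts $(1)\Leftrightarrow(2)$ and $(1)\Leftrightarrow(3)$ it matches the paper's argument essentially verbatim (splitting in $\Q(\zeta_{\ell_0}^{(p)})/\Q$ via class field theory, and local Kummer theory at $\ell_0$, respectively). For $(3)\Leftrightarrow(4)$ the underlying idea is also the same — reduce to the $\ell_0$-local cup product and use the ramified/unramified splitting of local Tate duality at $\ell_0$ — but you reach the global injectivity differently. The paper simply invokes its Hasse-principle lemma, that $H^2(\Z[1/Np],\F_p)\hookrightarrow H^2(\Q_{\ell_0},\F_p)\oplus H^2(\Q_{\ell_1},\F_p)$ (with the second summand vanishing because $\ell_1\not\equiv 1\pmod p$), so that $(4)$ is immediately equivalent to the $\ell_0$-local statement. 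You instead verify local vanishing at every $v\ne\ell_0$ and then prove the injectivity of $H^2(\Z[1/Np],\F_p)\to\bigoplus_v H^2(\Q_v,\F_p)$ from scratch via the Poitou--Tate nine-term sequence and a dimension count; this is more work but is self-contained and amounts to proving the cited lemma rather than citing it. Two small points worth noting: the local checks at $v=p$ and $v=\infty$ are actually unnecessary, since $H^2(\Q_p,\F_p)=0$ (as $\mu_p\not\subset\Q_p$) and $H^2(\R,\F_p)=0$ (as $p$ is odd), so those restriction targets are trivially zero; and the injectivity you need at the end of the snake-chase is of $H^1(\Z[1/Np],\F_p)\to\bigoplus_v H^1(\Q_v,\F_p)$ (i.e.\ $\Sha^1=0$ for trivial coefficients, which follows since $\Q$ has trivial class group), not of the $\mu_p$-twisted $H^1$ as your last sentence suggests — the two are dual statements, but it is worth stating the one actually used.
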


\begin{proof}
(1) $\iff$ (2). 
Let $F/\Q$ be the unique degree-$p$ subextension of $\Q(\zeta_{\ell_0})/\Q$. 
We see that (2) is true if and only if $\ell_1$ splits completely in $F/\Q$, which, in turn, is equivalent to a Frobenius element $\Fr_{\ell_1}$ for $\ell_1$ becoming trivial in $\Gal(F/\Q)$. Then the equivalence of (1) and (2) follows from the standard fact that $\Fr_{\ell_1} \mapsto \ell_1$ under the canonical isomorphism $\Gal(\Q(\zeta_{\ell_0})/\Q) \isoto \F_{\ell_0}^\times$. 

(1) $\iff$ (3). The Kummer theory isomorphism $H^1(\Q_{\ell_0}, \F_p(1)) \cong \Q_{\ell_0}^\times/(\Q_{\ell_0}^\times)^p$ sends $b_1$ to $\ell_1$. 

(3) $\iff$ (4). We will apply the injection $H^2(\Z[1/Np], \F_p) \rinj H^2(\Q_{\ell_0}, \F_p)$ of  \cite[Lem.\ 12.1.1]{WWE3} (recorded also in  Lemma \ref{lem: Hasse}, below), reducing the condition (4) to $b_1 \vert_{\ell_0} \cup c_0\vert_{\ell_0} = 0$ in $H^2(\Q_{\ell_0}, \F_p)$. Then (3) $\Rightarrow$ (4) is clear. The converse follows from the characterization of the $\ell_0$-local cup product of Lemma \ref{lem: tate duality at ell0}: because $c_0\vert_{\ell_0}$ is ramified, while $b_1\vert_{\ell_0}$ is non-trivial and unramified, their cup product is non-zero. 
\end{proof}

Next, the following lemma generalizes, to odd primes $p$, the pattern of ramification of the prime 2 in quadratic number fields. In particular, it establishes when $\ell_i^{1/p}\in\overline{\Q}$ in the pinning data of Definition \ref{P1: defn: pinning} can be chosen to have image in $\Q_p$ under the fixed embedding $\overline{\Q}\hookrightarrow\overline{\Q}_p$.
\begin{lem}
    \label{P1: lem: ram of ell1 at p}
    Let $\ell$ be a prime, $\ell \neq p$, and let $b_\ell \in H^1(\Q,\F_p(1))$ be the Kummer class of $\ell$. The following conditions are equivalent. 
    \begin{enumerate}
        \item $\ell^{p-1}-1$ is divisible by $p^2$
        \item $a_p\vert_{\ell} \in H^1(\Q_\ell, \F_p)$ is trivial
        \item $b_\ell\vert_p \in H^1(\Q_p, \F_p(1))$ is trivial 
        \item $b_\ell\vert_p \in H^1(\Q_p^\mathrm{ur}, \F_p(1))$ is trivial 
        \item $\Q(\ell^{1/p})/\Q$ is not totally ramified at $p$; or, what is the same, tamely ramified at $p$
        \item $p$ splits into two primes in $\Q(\ell^{1/p})/\Q$, one with ramification degree $p-1$ and one with ramification degree 1. 
    \end{enumerate}
    
\end{lem}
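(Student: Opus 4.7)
The plan is to take (1) as a numerical hub and prove three clusters of equivalences: (1)$\iff$(3)$\iff$(4), (3)$\iff$(5)$\iff$(6), and (1)$\iff$(2). The shared arithmetic ingredient is a short Teichm\"uller/binomial calculation showing that, for $\ell \in \Z_p^\times$, the three conditions ``$\ell \in (\Z_p^\times)^p$'', ``$\ell \equiv \omega(\ell) \pmod{p^2}$'', and (1) are all equivalent. This follows from the decomposition $\Z_p^\times = \mu_{p-1} \times (1+p\Z_p)$ (valid for $p$ odd), the bijectivity of the $p$-th power map on $\mu_{p-1}$, the equality $(1+p\Z_p)^p = 1+p^2\Z_p$, and the binomial expansion $(1+pt)^{p-1} \equiv 1-pt \pmod{p^2}$.

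For (3)$\iff$(1), Kummer theory identifies $H^1(\Q_p, \F_p(1)) \cong \Q_p^\times/(\Q_p^\times)^p$ and sends $b_\ell\vert_p$ to the class of $\ell$, so (3) amounts exactly to $\ell \in (\Z_p^\times)^p$, which by the central calculation is (1). For (3)$\iff$(4) I would argue formally: the restriction $H^1(\Q_p,\F_p(1)) \to H^1(I_p,\F_p(1))$ is injective by inflation--restriction, since its kernel $H^1(\hat{\Z},\mu_p^{I_p})$ vanishes---$I_p$ acts non-trivially on $\mu_p$ through the cyclotomic character (because $\Q_p(\zeta_p)/\Q_p$ is totally ramified), so $\mu_p^{I_p}=\{1\}$.

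For the cluster (3)$\iff$(5)$\iff$(6), I would analyze the factorization of $X^p-\ell$ over $\Q_p$. If (3) holds and $\ell=y^p$ with $y\in\Q_p$, then $X^p-\ell = (X-y)\,Q(X)$ where $Q(X)=\prod_{i=1}^{p-1}(X-\zeta_p^i y) \in \Q_p[X]$ is irreducible of degree $p-1$, since each of its roots generates $\Q_p(\zeta_p)$, a totally ramified extension of $\Q_p$ of degree $p-1$. These two factors correspond exactly to the two primes of (6). Conversely, if (3) fails, then $X^p-\ell$ is irreducible over $\Q_p$: a transitivity check on the $G_{\Q_p}$-action on the $p$ roots $\{\zeta_p^i \ell^{1/p}\}$ shows that any non-trivial $\Q_p$-factor would force $\zeta_p \in \Q_p$. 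Thus $\Q_p(\ell^{1/p})/\Q_p$ has degree $p$, and by the already-proven (1)$\iff$(4) it is not contained in $\Q_p^{\mathrm{ur}}$, so, $p$ being prime, it must be totally ramified, contradicting (5). This gives (3)$\Rightarrow$(6)$\Rightarrow$(5)$\Rightarrow$(3).

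Finally, for (1)$\iff$(2), I would use Kronecker--Weber to identify the unique $\Z/p$-extension $\Q^{(p)}/\Q$ unramified outside $p$ with $\Q(\zeta_{p^2})^{\mu_{p-1}}$, where $\mu_{p-1}$ is embedded in $(\Z/p^2\Z)^\times \cong \Gal(\Q(\zeta_{p^2})/\Q)$ via Teichm\"uller. The character $a_p$ (nonzero and unique up to scaling) factors through $\Gal(\Q^{(p)}/\Q)$, and since $a_p$ is unramified at $\ell\neq p$, the condition $a_p\vert_\ell=0$ is equivalent to $\Fr_\ell$ being trivial in this quotient, i.e.\ to the class of $\ell \pmod{p^2}$ in $(\Z/p^2\Z)^\times$ lying in $\mu_{p-1}$, i.e.\ to $\ell\equiv\omega(\ell)\pmod{p^2}$, which by the central calculation is (1). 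The only genuine obstacle I foresee is the subsidiary claim used in paragraph three---that a degree-$p$ extension of $\Q_p$ not contained in $\Q_p^{\mathrm{ur}}$ must be totally ramified---which is handled by $p$ being prime (ruling out any intermediate unramified subfield of degree strictly between $1$ and $p$) combined with the already-established equivalence (1)$\iff$(4).
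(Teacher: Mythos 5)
Your argument is correct and establishes the full cycle of equivalences, but the route through the local conditions (3)--(6) differs from the paper's. For (3) $\iff$ (4), the paper cites \cite[Lem.\ B.1.1]{WWE5}; you give a self-contained inflation--restriction argument from $\mu_p^{I_p}=1$, a reasonable substitute. More substantively, the paper proves (4) $\iff$ (5) by restricting $b_\ell$ to $\Gal(\oQ/\Q(\zeta_p))$ and testing ramification at $(1-\zeta_p)$, then handles (5) $\iff$ (6) by a prime decomposition exercise in the Galois closure $\Q(\ell^{1/p},\zeta_p)$; you instead factor $X^p-\ell$ over $\Q_p$ directly, which computes $\Q(\ell^{1/p})\otimes_\Q\Q_p$ and reads off (6), and then you close the cycle (5) $\Rightarrow$ (3) via irreducibility together with the already-proved (3) $\iff$ (4). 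Both approaches work; yours is more explicit and makes (6) a direct computation rather than an afterthought. One small imprecision to flag in that step: you say a nontrivial $\Q_p$-factor of $X^p-\ell$ ``would force $\zeta_p \in \Q_p$.'' The constant-term argument applied to a factor of degree $d$ with $1 \le d < p$ actually yields $\ell^d \in (\Q_p^\times)^p$, hence $\ell \in (\Q_p^\times)^p$ since $\gcd(d,p)=1$; that is exactly the contradiction you need, so the step goes through, but the stated intermediate claim about $\zeta_p$ is not what the computation delivers. Your arguments for (1) $\iff$ (2) (Kronecker--Weber, $\Q(\zeta_{p^2})^{\mu_{p-1}}$) and for (1) $\iff$ (3) (Teichm\"uller/binomial plus local Kummer) match what the paper outsources to the analogous proofs of Lemma \ref{lem: log ell1 is zero}.
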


\begin{proof}
(1) $\iff$ (2). Because this proof is very similar to the proof of (1) $\iff$ (2) in Lemma \ref{lem: log ell1 is zero}, we omit it. 

(1) $\iff$ (3). Likewise, see the proof of (1) $\iff$ (3) in Lemma \ref{lem: log ell1 is zero}. 

(3) $\iff$ (4). This is \cite[Lem.\ B.1.1]{WWE5}. 

(4) $\iff$ (5). Consider $b_\ell\vert_{\Q(\zeta_p)}$, which is a surjective homomorphism $\Gal(\oQ/\Q(\zeta_p)) \rsurj \F_p(1)$. We observe that both (4) and (5) are equivalent to $b_\ell\vert_{\Q(\zeta_p)}$ being unramified at the unique prime $(1-\zeta_p)$ of $\Q(\zeta_p)$ over $p$. 

(5) $\iff$ (6). The implication (6) $\implies$ (5) is clear. For the converse, note that the Galois closure of $\Q(\ell^{1/p})/\Q$ is $\Q(\ell^{1/p}, \zeta_p)/\Q$, and carry out a prime decomposition exercise. 
\end{proof}

We now shift to a discussion of local cup products related to item (1) in Assumption \ref{P1: assump: main}. Indeed, since we have assumed $\ell_0 \equiv 1 \pmod{p}$, our chosen primitive $p$th root of unity $\zeta_p \in \oQ$, along with the chosen embedding $\oQ \rinj \oQ_{\ell_0}$, induces an isomorphism 
\begin{equation}
\label{eq: Fp(i) isoms}
\F_p(i)\vert_{\ell_0} \isoto \F_p(j)\vert_{\ell_0}, \qquad x \mapsto x \otimes \zeta_p^{j-i}
\end{equation}
of representations of $G_{\ell_0}$ for any $i,j \in \Z$. We can also view this as a cup product in cohomology, because $\F_p(i) = H^0(\Q_{\ell_0}, \F_p(i))$. One may readily check that cup products with these cohomology classes result in isomorphisms
\[
H^0(\Q_{\ell_0}, \F_p(i)) \otimes_{\F_p} H^j(\Q_{\ell_0}, M) \isoto H^j(\Q_{\ell_0}, M(i))
\]
for any $\F_p[G_{\ell_0}]$-module $M$ and any $i,j \in \Z$. We will also use, in what follows, that the cup product is ``bilinear under multiplication (via the cup product) by elements of $H^0(\Q_{\ell_0}, \F_p(s))$, $s \in \Z$.'' A concise way to precisely state this fact is that the sum of the cup products on $H^1(\Q_{\ell_0}, -)$ applied to all of the $\F_p(i)$, namely, 
\begin{equation}
\label{eq: cup bilinear}
\bigoplus_{i \in \Z} H^1(\Q_{\ell_0}, \F_p(i)) \times \bigoplus_{j \in \Z} H^1(\Q_{\ell_0}, \F_p(j)) \to \bigoplus_{k \in \Z} H^2(\Q_{\ell_0}, \F_p(k)),
\end{equation}
is graded bilinear over the graded ring $\bigoplus_{s \in \Z} H^0(\Q_{\ell_0}, \F_p(s))$. 

We will be particularly interested in the cup product action of $\F_p(i)\vert_{\ell_0}$ on the local Tate duality pairing: for $i \in \Z$,
\begin{equation}
\label{eq: tate duality} 
H^1(\Q_{\ell_0}, \F_p(i)) \times H^1(\Q_{\ell_0}, \F_p(1-i)) \lra H^2(\Q_{\ell_0}, \F_p(1)) \cong \F_p. 
\end{equation}
We express all of the possible twists of this pairing in the following lemma. 
\begin{lem}
    \label{lem: tate duality at ell0}
    For any $i,j \in \Z$, we have a perfect pairing
    \begin{equation}
        \label{eq: twisted tate duality}
        H^1(\Q_{\ell_0}, \F_p(i)) \times H^1(\Q_{\ell_0}, \F_p(j)) \to H^2(\Q_{\ell_0}, \F_p(i+j)) \cong \F_p(1-i-j)
    \end{equation}
    under which 
    \begin{enumerate}
        \item the cup product of a ramified class with a non-trivial unramified class is non-zero and
        \item the cup product of any two unramified classes is zero. 
    \end{enumerate} 
    When $i=j$, the self-pairing \eqref{eq: twisted tate duality} is alternating. 
\end{lem}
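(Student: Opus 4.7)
The plan is to reduce the twisted pairing to ordinary local Tate duality with $\mu_p$-coefficients, and then read off (1) and (2) from the well-known structure of unramified subspaces under that duality. The alternating claim will follow formally from graded commutativity of cup product.

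First, since $\ell_0 \equiv 1 \pmod p$, the group $G_{\ell_0}$ acts trivially on every $\F_p(i)$, so the chosen primitive $p$th root of unity $\zeta_p$ furnishes $G_{\ell_0}$-equivariant isomorphisms $\F_p(i)|_{\ell_0} \isoto \F_p(j)|_{\ell_0}$ as in \eqref{eq: Fp(i) isoms}. Cup product is graded bilinear with respect to these $H^0$-classes by \eqref{eq: cup bilinear}, so cupping with powers of $\zeta_p$ identifies the pairing \eqref{eq: twisted tate duality} with the classical local Tate duality \eqref{eq: tate duality}, which is perfect. This also gives the identification $H^2(\Q_{\ell_0},\F_p(i+j)) \cong \F_p(1-i-j)$ via cupping with $\zeta_p^{i+j-1}$.

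For (1) and (2), I would invoke the fact that under local Tate duality $H^1(\Q_{\ell_0}, \F_p(i)) \times H^1(\Q_{\ell_0}, \F_p(1-i)) \to \F_p$, the unramified subspace $H^1_{\mathrm{ur}}(\Q_{\ell_0},\F_p(i))$ is exactly the annihilator of $H^1_{\mathrm{ur}}(\Q_{\ell_0},\F_p(1-i))$ (see e.g.\ Neukirch--Schmidt--Wingberg, or \cite[Lem.\ 12.1.1]{WWE3} reformulated locally). Since the Galois action is trivial, $\dim_{\F_p} H^0 = 1$, so $\dim H^1_{\mathrm{ur}} = 1$ and $\dim H^1 = 2$, and $H^1_{\mathrm{ur}}$ is its own annihilator. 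Transporting this statement through the twisting isomorphism of the previous paragraph, the cup product of two unramified classes is zero, proving (2). For (1), the perfectness of the pairing then forces the induced pairing between the 1-dimensional quotient $H^1/H^1_{\mathrm{ur}}$ of one side and the 1-dimensional subspace $H^1_{\mathrm{ur}}$ of the other to be perfect; hence any ramified class pairs non-trivially with any non-zero unramified class, proving (1).

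Finally, for the alternating property when $i=j$, I would appeal to the standard graded-commutativity relation on cohomology: for classes $[a],[b] \in H^1$, one has $[a] \cup [b] = -[b] \cup [a]$ in $H^2$. Setting $[b] = [a]$ yields $2[a]\cup [a] = 0$, and since $p \geq 5$ we conclude $[a]\cup[a] = 0$, so the self-pairing is alternating. I do not expect any serious obstacle here; the only care required is the bookkeeping of the twists in the first paragraph, but that is handled automatically by the bilinearity statement \eqref{eq: cup bilinear}.
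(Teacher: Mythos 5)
Your proof is correct, and for parts (1) and (2) it matches the paper's route exactly: both reduce to the untwisted case by cupping with powers of $\zeta_p$ via \eqref{eq: cup bilinear} and then invoke the standard fact from local class field theory/Tate duality that the unramified subspace is its own exact annihilator (the dimension count $\dim H^1_{\mathrm{ur}} = 1$, $\dim H^1 = 2$ is right, using $\ell_0 \equiv 1 \pmod p$ so the action is trivial and Euler characteristic zero at $\ell_0 \ne p$). For the alternating claim, you appeal directly to graded-commutativity of the cup product (valid here because the coefficient pairing $\F_p(i)\otimes\F_p(i)\to\F_p(2i)$ is symmetric), getting $2[a]\cup[a]=0$ and using $p\ne 2$; the paper instead deduces it from (1), (2), and "an extra application of duality." Your route is at least as clean and is a standard fact, so this is a cosmetic difference rather than a substantive one.
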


\begin{proof}
The claims (1) and (2) are straightforward for $i=0, j=1$ using class field theory. This holds true for all $i,j$ using graded bilinearity of \eqref{eq: cup bilinear}. The alternating property follows from (1), (2), and an extra application of duality. 
\end{proof}

We turn from local cup products to implications for global cup products, which we will frequently use. 
\begin{lem}[Hasse principle]
\label{lem: Hasse} 
For $i=-1,0,1$, the map 
\[
H^2(\Z[1/Np],\F_p(i)) \to H^2(\Q_{\ell_0},\F_p(i)) \oplus H^2(\Q_{\ell_1},\F_p(i)), \quad x \mapsto (x|_{\ell_0}, x|_{\ell_1})
\]
is injective. 
\end{lem}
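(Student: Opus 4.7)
The plan is to reduce the claim, via Poitou--Tate global duality, to vanishing statements for Tate--Shafarevich groups, with the three values of $i$ handled somewhat differently based on the local $H^2$ computations. Let $S := \{p, \ell_0, \ell_1, \infty\}$.

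First, I would record all local $H^2$'s using local Tate duality $H^2(\Q_v, \F_p(i))^\vee \cong H^0(\Q_v, \F_p(1-i))$. For $p$ odd, $H^2(\R, \F_p(i)) = 0$; the assumption $\ell_0 \equiv 1 \pmod{p}$ trivializes every twist and gives $H^2(\Q_{\ell_0}, \F_p(i)) \cong \F_p$ for each $i \in \{-1, 0, 1\}$; the assumption $\ell_1 \not\equiv 0, \pm 1 \pmod{p}$ makes both $\omega|_{G_{\ell_1}}$ and $\omega^2|_{G_{\ell_1}}$ nontrivial, so $H^2(\Q_{\ell_1}, \F_p(i)) = 0$ for $i \in \{0, -1\}$ and $\cong \F_p$ for $i = 1$; similarly, using $p \geq 5$, $H^2(\Q_p, \F_p(i)) = 0$ for $i \in \{0, -1\}$ and $\cong \F_p$ for $i = 1$.

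For $i = 1$, I would apply the Brauer--Hasse--Noether reciprocity. The identification $H^2(\Z[1/Np], \mu_p) = \mathrm{Br}(\Z[1/Np])[p] \hookrightarrow \mathrm{Br}(\Q)[p]$ realizes an element of the $(\ell_0, \ell_1)$-kernel as a Brauer class with vanishing local invariant at every $v \in S$ except possibly $v = p$ (the invariants outside $S$ vanish by unramification, and $\mathrm{inv}_\infty = 0$ for $p$ odd); the global relation $\sum_v \mathrm{inv}_v = 0$ then forces $\mathrm{inv}_p = 0$ also, and a Brauer class with all local invariants trivial is trivial. For $i \in \{0, -1\}$, the local computation shows that the target $H^2(\Q_{\ell_0}) \oplus H^2(\Q_{\ell_1})$ coincides with $\bigoplus_{v \in S} H^2(\Q_v, \F_p(i))$, so by the nine-term Poitou--Tate sequence the kernel is $\mathrm{III}^2(\F_p(i)) \cong \mathrm{III}^1(\F_p(1-i))^\vee$. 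For $i = 0$, Kummer theory presents $H^1(G_{\Q,Np}, \mu_p) \cong \Z[1/Np]^\times \otimes \F_p$ with basis $\{p, \ell_0, \ell_1\}$, and a direct case analysis on an element $p^a \ell_0^b \ell_1^c$ being locally a $p$-th power at each $v \in S$ forces $a = 0$ (from $v = p$), then $c = 0$ (from $v = \ell_1$, using $\gcd(p, \ell_1 - 1) = 1$), then $b = 0$ (from $v = \ell_0$, using that $\ell_1$ is a $p$-th power mod $\ell_0$). For $i = -1$, I need $\mathrm{III}^1(\F_p(2)) = 0$, which I would attack by inflation--restriction from $L = \Q(\zeta_p)^+$: the claim becomes vanishing of the $\omega^2$-isotypic part of the $S$-ramified maximal pro-$p$ abelian quotient of $G_L$ modulo the images of local decomposition subgroups, which I would establish following the strategy of \cite[Lem.\ 12.1.1]{WWE3}.

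The main obstacle is the $i = -1$ case. Unlike $i = 0$, it is not purely Kummer-theoretic: its vanishing reflects the eigenspace structure of the $p$-class group and $S$-units of $\Q(\zeta_p)^+$ under $\mathrm{Gal}(L/\Q)$, so I would carefully model the local-global argument on the analogous statement in \cite{WWE3}.
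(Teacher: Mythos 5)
Your structure is sound and the computations of the local $H^2$'s are all correct, so let me compare routes. For $i=1$ your argument is the paper's: both invoke the Brauer group and the reciprocity $\sum_v \mathrm{inv}_v = 0$. For $i=0$ and $i=-1$ your route is genuinely different: the paper cites precomputed results (\cite[Lem.\ B.1.2]{WWE5} for $i=0$, \cite[Lem.\ 12.1.1]{WWE3} for $i=-1$) and asserts the localization map is an isomorphism, whereas you go through the nine-term Poitou--Tate sequence, identify the kernel with $\mathrm{III}^2(\F_p(i)) \cong \mathrm{III}^1(\F_p(1-i))^\vee$, and then compute $\mathrm{III}^1$. For $i=0$ this buys you a short, self-contained Kummer-theoretic calculation instead of a citation, which is a real gain in transparency (one small quibble: once you have $a=c=0$ from $v=p$ and $v=\ell_1$, the vanishing of $b$ at $v=\ell_0$ just uses the $\ell_0$-adic valuation; you do not actually need the ``$\ell_1$ is a $p$-th power mod $\ell_0$'' hypothesis there).

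The weak point is the $i=-1$ case, which you flag honestly as the main obstacle. Your reduction to $\mathrm{III}^1(\F_p(2))=0$ is correct, and inflation--restriction from $\Q(\zeta_p)$ is the right move, but ``I would establish this following the strategy of \cite[Lem.\ 12.1.1]{WWE3}'' glosses over the fact that that lemma is stated for a \emph{prime} level, not $N = \ell_0\ell_1$. The paper supplies the missing bridge explicitly: since $\ell_1 \not\equiv 0,\pm1 \pmod p$ forces $H^j(\Q_{\ell_1}, \F_p(-1)) = 0$ for all $j$, the exact triangle $R\Gamma(\Z[1/\ell_0 p],\F_p(-1)) \to R\Gamma(\Z[1/Np],\F_p(-1)) \to R\Gamma(\Q_{\ell_1},\F_p(-1))$ degenerates, so $H^2(\Z[1/Np],\F_p(-1)) = H^2(\Z[1/\ell_0 p],\F_p(-1))$ and the prime-level result applies verbatim. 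In your dual picture the same observation is needed ($H^j(\Q_{\ell_1},\F_p(2)) = 0$ for all $j$, so adjoining $\ell_1$ to the ramification set does not change $\mathrm{III}^1(\F_p(2))$). Without this remark the reduction to the prime-level computation of WWE3 is not justified, and you should add it when filling in the sketch.
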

Our proof for $i=-1$ also uses (2) in Assumption \ref{P1: assump: main}, i.e., $\ell_1 \not\equiv 0, \pm 1 \pmod{p}$. 
\begin{proof}
    For $i=0,-1$, the map is an isomorphism. The case $i=0$ follows directly from \cite[Lem.\ B.1.2]{WWE5}. 
    The case $i=-1$ more-or-less follows from the argument for \cite[Lem.\ 12.1.1]{WWE3}, but that argument is written in the setting where ``$N$'' is a prime that is $1 \pmod{p}$. The same argument applies in our setting, where $N = \ell_0\ell_1$ with $\ell_0 \equiv 1 \pmod{p}$ and $\ell_1 \not\equiv \pm 1, 0 \pmod{p}$, because $H^j(\Q_{\ell_1}, \F_p(-1)) =0$ for all $j \in \Z_{\geq i}$, making the exact triangle
    \[
    R\Gamma(\Z[1/\ell_0 p], \F_p(-1)) \to  R\Gamma(\Z[1/N p], \F_p(-1)) \to R\Gamma(\Q_{\ell_1}, \F_p(-1))
    \]
    degenerate. 
    
    The case $i=1$ remains. Here the localization map 
    \[
    H^2(\Z[1/Np],\F_p(i)) \to H^2(\Q_{\ell_0},\F_p(i)) \oplus H^2(\Q_{\ell_1},\F_p(i)) \oplus H^2(\Q_p,\F_p(i))
    \]
    has cokernel of dimension 1, since $H^3_{(c)}(\Z[1/Np], \F_p(1)) \cong \F_p$ (the target of global duality pairings). By the theory of the Brauer group (see e.g.\ \cite[Thm.\ 1.5.36]{poonen2017}), we know that the map is injective with image consisting of the subspace summing to zero under the isomorphisms $H^2(\Q_q, \F_p(1)) \cong \F_p$ for $q = \ell_0, \ell_1, p$. Therefore its projection to any two summands, such as those in the lemma, is injective. 
\end{proof}

We conclude this section with several conditions that are equivalent to (3) in Assumption \ref{P1: assump: main}. This assumption states that the Hecke algebra $\bT_{\ell_0}$, which captures the Hecke eigensystems all of the weight 2 level $\Gamma_0(\ell_0)$ modular forms congruent to an Eisenstein series (see \S\ref{P1: subsec: modular forms}), is as small as possible given Mazur's result that there exists some cusp form congruent to an Eisenstein series. Note that this proposition is proven in {\cite[Thm.\ 1.2.1]{WWE3}}.

\begin{prop}[Conditions equivalent to item (3) in Assumption \ref{P1: assump: main}]
\label{prop: rk2 assumption}
Assume that $\ell_0 \equiv 1 \pmod{p}$ and $\ell_1 \not\equiv \pm 1 \pmod{p}$. The following are equivalent. 
\begin{enumerate}
    \item $\rk_{\Z_p} \bT_{\ell_0} = 2$
    \item $b_0 \cup c_0 \neq 0$ in $H^2(\Z[1/\ell_0p], \F_p)$
    \item $b_0 \cup c_0 \neq 0$ in $H^2(\Z[1/Np], \F_p)$
    \item $b_0\vert_{\ell_0} \cup c_0\vert_{\ell_0} \neq 0$ in $H^2(\Q_{\ell_0}, \F_p)$
    \item $a_0 \cup c_0 \neq 0$ in $H^2(\Z[1/\ell_0p], \F_p(-1))$
    \item $a_0 \cup c_0 \neq 0$ in $H^2(\Z[1/Np], \F_p(-1))$
    \item $a_0\vert_{\ell_0} \cup c_0\vert_{\ell_0} \neq 0$ in $H^2(\Q_{\ell_0}, \F_p(-1))$
    \item $\{a_0\vert_{\ell_0}, \zeta \cup c_0\vert_{\ell_0}\}$ is a basis for $H^1(\Q_{\ell_0}, \F_p)$, for any non-zero $\zeta \in H^0(\Q_{\ell_0}, \F_p(1))$. 
    \item $\{b_0\vert_{\ell_0}, \zeta' \cup c_0 \vert_{\ell_0}\}$ is a basis for $H^1(\Q_{\ell_0}, \F_p(1))$, for any non-zero $\zeta' \in H^0(\Q_{\ell_0}, \F_p(2))$. 
\end{enumerate}
\end{prop}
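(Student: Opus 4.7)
My plan is to establish the nine-way equivalence by reducing conditions (2)–(9) to a single local cup product condition at $\ell_0$, and then invoking the main theorem of \cite{WWE3} to connect that condition to the Hecke-algebra statement (1). Two facts are used throughout: first, $\ell_0 \equiv 1 \pmod p$ makes the $G_{\ell_0}$-action on each $\F_p(n)$ trivial, so the $\zeta_p$-induced trivializations $\F_p(n)|_{\ell_0} \isoto \F_p$ give canonical, cup-product-compatible isomorphisms $H^i(\Q_{\ell_0},\F_p(n)) \cong H^i(\Q_{\ell_0},\F_p)$; second, $c_0|_{\ell_1} = 0$ by property (iv) of $c_0$ in Definition \ref{P1: defn: pinned cocycles}.

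First I would pass from the global to the local cup product at $\ell_0$. By Lemma \ref{lem: Hasse} (and the variant for $\Z[1/\ell_0 p]$, where the global-to-local map is actually an isomorphism in degree $2$ for coefficients $\F_p$ and $\F_p(-1)$), we get (2) $\iff$ (4) and (5) $\iff$ (7) directly. For (3) $\iff$ (4) and (6) $\iff$ (7), the Hasse principle lands in the product $H^2(\Q_{\ell_0},-) \oplus H^2(\Q_{\ell_1},-)$, and the vanishing $c_0|_{\ell_1} = 0$ kills the $\ell_1$-contribution of any cup product with $c_0$.

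Next I would show (4) $\iff$ (7). After $\zeta_p$-trivialization both $b_0|_{\ell_0}$ and $a_0|_{\ell_0}$ become classes in $H^1(\Q_{\ell_0},\F_p)$. The former is the Kummer class of the uniformizer $\ell_0$, hence purely ramified with value $1$ on $\gamma_0$. For $a_0|_{\ell_0}$, I would observe that $a_0$ factors through the unique $\Z/p$-extension of $\Q$ ramified only at $\ell_0$ (the degree-$p$ subfield of $\Q(\zeta_{\ell_0})$), in which $\ell_0$ is totally ramified, so the decomposition group at $\ell_0$ equals the inertia group and $a_0|_{\ell_0}$ is also purely ramified with $a_0(\gamma_0)=1$. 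Thus the two classes coincide under the trivialization and (4), (7) become the same statement. The equivalences (7) $\iff$ (8) and (4) $\iff$ (9) then follow from the fact that the local Tate duality pairing on the $2$-dimensional $H^1(\Q_{\ell_0},\F_p(n))$ is a non-degenerate \emph{alternating} form (since $p$ is odd), so two elements form a basis iff their cup product is non-zero; cupping with the non-zero classes $\zeta, \zeta'$ is an isomorphism on cohomology, so it does not affect non-vanishing.

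Finally, (1) $\iff$ (5) (say) is precisely \cite[Thm.\ 1.2.1]{WWE3}, which reformulates Merel's theorem as the non-vanishing of $a_0 \cup c_0 \in H^2(\Z[1/\ell_0 p],\F_p(-1))$. I expect this step to be the main obstacle: it is the only piece of deep arithmetic input, whereas the remaining equivalences are formal consequences of local Tate duality, the Hasse principle, and the totally-ramified structure of $\Q(\zeta_{\ell_0})/\Q$ at $\ell_0$.
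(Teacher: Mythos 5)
Your proof follows the same overall route as the paper: cite \cite[Thm.\ 1.2.1]{WWE3} for the Hecke-side equivalence of (1) with the global cup-product conditions, use the Hasse principle (Lemma \ref{lem: Hasse}) together with vanishing of the $\ell_1$-contribution to move between $\Z[1/\ell_0 p]$, $\Z[1/Np]$, and $\Q_{\ell_0}$, and use the alternating Tate duality pairing at $\ell_0$ (Lemma \ref{lem: tate duality at ell0}) for the basis conditions (8), (9). Where you differ is in the step (4) $\iff$ (7): the paper simply cites \cite[Lem.\ 12.1.3]{WWE3} for the local identity $a_0\vert_{\ell_0} \cup \zeta = b_0\vert_{\ell_0}$, whereas you try to derive it directly --- and that derivation has a gap.

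The specific problem is the sentence in which you infer, from the fact that $\ell_0$ is totally ramified in the degree-$p$ extension cut out by $a_0$, that ``the decomposition group at $\ell_0$ equals the inertia group and $a_0|_{\ell_0}$ is also purely ramified.'' Decomposition-equals-inertia is indeed what total ramification means, but it does \emph{not} imply that the class vanishes on the uniformizer (i.e.\ that $\ell_0$ is a local norm), which is what you need in order to identify $a_0|_{\ell_0}$ with the Kummer class of $\ell_0$ up to scalar. Concretely, for any non-$p$-th-power unit $u \in \Z_{\ell_0}^\times$, the extension $\Q_{\ell_0}(\sqrt[p]{\ell_0 u})/\Q_{\ell_0}$ is totally (and tamely) ramified of degree $p$, yet its norm subgroup in $\Q_{\ell_0}^\times/(\Q_{\ell_0}^\times)^p$ is the line $\langle \ell_0 u \rangle$, which does not contain $\ell_0$; the associated local character is non-zero on the uniformizer. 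So total ramification alone is insufficient. What actually forces $a_0|_{\ell_0}(\ell_0)=0$ is a \emph{global} input: $a_0$ is unramified at every place other than $\ell_0$, so the product formula $\sum_v a_{0,v}(\ell_0) = 0$ combined with the vanishing of each $a_{0,v}(\ell_0)$ for $v \neq \ell_0$ (since $\ell_0$ is a $v$-unit and $a_{0,v}$ is unramified) gives $a_{0,\ell_0}(\ell_0)=0$. Equivalently, one can use that $a_0$ cuts out a subfield of $\Q(\zeta_{\ell_0})$ and that $\ell_0 = N_{\Q_{\ell_0}(\zeta_{\ell_0})/\Q_{\ell_0}}(1-\zeta_{\ell_0})$ is a norm from the local cyclotomic extension. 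Once this is supplied, your identification of $a_0|_{\ell_0}$ with $b_0|_{\ell_0}$ (up to $\zeta_p$-trivialization) goes through, and your argument closes; this is exactly the content the paper obtains by citing \cite[Lem.\ 12.1.3]{WWE3}.
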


\begin{proof}
The equivalence of (1), (2), and (5) is the content of \cite[Thm.\ 1.2.1]{WWE3}. Because $\ell_1 \not\equiv \pm 1 \pmod{p}$, for $i=-1,0$, we have $H^2(\Q_{\ell_1},\F_p(i))=0$. Then Lemma \ref{lem: Hasse} supplies the equivalences of (2) with (3) and (4), and (5) with (6) and (7). The equivalence of (7), (8), and (9) follows from Lemma \ref{lem: tate duality at ell0} and the fact, visible in \cite[Lem.\ 12.1.3]{WWE3}, that $a_0 \cup \zeta = b_0$ for some non-zero $\zeta \in H^0(\Q_{\ell_0}, \F_p(1))$. 
\end{proof}

\section{An explicit first-order deformation}
\label{sec: explicit first-order}

We construct an irreducible first-order pseudodeformation $D_1 : G_{\Q,Np} \to \F_p[\ep]/(\ep^2)$ of $\Db : G_{\Q,Np} \to \F_p$ that satisfies the unramified-or-Steinberg ($\US_N$) property. This is a precursor to the constructions at second order that will be needed to prove the main technical result (Proposition \ref{prop: key alpha beta equation}). 

\subsection{1-reducible GMAs and $n$-th order pseudodeformations}
\label{subsec: 1-reducible}

When $F$ is a field, write $F[\ep_n]$ for the $F$-algebra $F[\ep]/(\ep^{n+1})$. For $m < n$, we think of $F[\ep_m]$ as an $F[\ep_n]$-algebra via the natural quotient map $F[\ep_n] \onto F[\ep_m$]. Given some algebraic object $X$ over $F$, we call a deformation of $X$ to $F[\ep_n]$ an \emph{$n$-th order deformation of $X$}. 

\subsubsection{1-reducible GMAs}
\label{sssec: 1-reducible}
We introduce $1$-reducible GMAs as a way to model truncations of a DVR-valued representations in a way that is ``lattice-independent". To justify this, consider the following example.

\begin{eg}
\label{eg:why 1-reducible}
Let $F$ be a field, $G$ be a group, and $\rho: G \to \GL_2(F \lb x \rb)$ be a function that can be written as
\[
\rho(g) = \ttmat{a(g)}{xb(g)}{c(g)}{d(g)}
\]
for some functions $a,b,c,d: G \to F \lb x \rb$. Suppose we want to check that $\rho$ is a homomorphism. Equivalently, we can check this in stages labeled by natural numbers $n$: at each stage $n$, check that $\rho \pmod{x^n}$ is a homomorphism. This amounts to checking some equations involving the functions $a$, $b$, $c$, and $d$, for instance
\[
a(gg') \equiv a(g)a(g') + x b(g)c(g') \pmod{x^n}.
\]
Note that this equation and the related equations for $b(gg')$ and $d(gg')$ only involve $b$ and $c$ modulo $x^{n-1}$. At stage $n$, only the equation for $c(gg')$ involves $c$ modulo $x^n$. 

On the other hand, another way to check that $\rho$ is a homomorphism is to consider the conjugate $\rho' = \sm{x^{-1}}{0}{0}{1} \rho \sm{x}{0}{0}{1}$---that is,
\[
\rho'(g) = \ttmat{a(g)}{b(g)}{xc(g)}{d(g)}
\]
---and check that $\rho'$ is a homomorphism. Again we can check this in stages, and this will involve the very same set of equations as for $\rho$, but in a different order. For instance, at stage $n$ for $\rho'$, the equation for $b(gg')$ will involve $b$ modulo $x^n$.
\end{eg}

In the example, if $\rho$ is a homomorphism, then $\rho$ and $\rho'$ can be thought of as two different $F\lb x \rb$-lattices in the same $F\lp x \rp$-representation. One can think of $1$-reducible GMAs as a tool for studying this kind of problem in a way that does not favor one lattice over the other, and where one considers the minimal set of equations at each stage. This tool is especially well-suited to studying pseudorepresentations (note that $\rho$ and $\rho'$ have the same trace and determinant, and that the trace and determinant of $\rho$ modulo $x^n$ only involve $b$ and $c$ modulo $x^{n-1}$).

\begin{defn}
\label{defn: 1-reducible GMA}
The $1$-reducible GMA over $\F_p[\epsilon_n]$ is the GMA $E_n$ given by
\[
E_n = \ttmat{\F_p[\epsilon_n]}{\F_p[\epsilon_{n-1}]}{\F_p[\epsilon_{n-1}]}{\F_p[\epsilon_n]}
\]
with the multiplication map 
\[
m: \F_p[\epsilon_{n-1}] \otimes_{\F_p[\epsilon_n]} \F_p[\epsilon_{n-1}] \to \F_p[\epsilon_n]
\]
given by the composition
\[
\F_p[\epsilon_{n-1}] \otimes_{\F_p[\epsilon_n]} \F_p[\epsilon_{n-1}]  \xrightarrow{b \otimes c \mapsto bc} \F_p[\epsilon_{n-1}] \xrightarrow{x \mapsto \epsilon x} \F_p[\epsilon_n].
\]
\end{defn}

The image of the multiplication map $m$ is $\ep \F_p[\epsilon_n]$. In particular, if $\rho: G \to E_n^\times$ is a Cayley--Hamilton representation such that the induced map $\F_p\lb G \rb \to E_n$ is surjective, then the reducibility ideal of $\rho$ is $\ep \F_p[\epsilon_n]$. 

\begin{rem}
The following relationship between 1-reducible GMAs and their induced pseudorepresentations plays an especially important role in this paper: a representation of a group $G$ valued in $E_n$ induces a $\F_p[\ep_n]$-valued pseudorepresentation of $G$, and we need not concern ourselves over whether this pseudorepresentation comes from a representation of $G$ valued in $\GL_2(\F_p[\ep_n])$. Indeed, in some cases, it may not. 
\end{rem}

\begin{rem}
There is also a natural notion of \emph{$k$-reducible GMA} $E_{k,n}$ for $k=2,\dots,n$, where $\F_p[\epsilon_{n-1}]$ is replaced by $\F_p[\epsilon_{n-k}]$ and that map $x \mapsto \epsilon x$ is replaced by $x \mapsto \epsilon^k x$. In this case, the reducibility ideal of a surjective Cayley--Hamilton representation $\F_p\lb G \rb \to E_{k,n}$ is $\ep^k\F_p[\epsilon_n]$. This explains the naming convention---the `$k$' in $k$-reducible refers to the exponent of the uniformizer in the reducibility ideal. We will not need this notion in this paper.
\end{rem}

\begin{eg}
For example, when $n=1$, an element of the 1-reducible GMA $E_1$ over $\F_p[\ep_1]$ can be written uniquely as $\sm{\alpha_0+\epsilon\alpha_1}{\beta}{\gamma}{\delta_0+\epsilon\delta_1}$ for $\alpha_i, \beta, \gamma, \delta_i \in \F_p$, and the multiplication is
\begin{align*}
\ttmat{\alpha_0+\epsilon\alpha_1}{\beta}{\gamma}{\delta_0+\epsilon\delta_1} \ttmat{\alpha'_0+\epsilon\alpha'_1}{\beta'}{\gamma'}{\delta'_0+\epsilon\delta'_1} = \\ \ttmat{\alpha_0\alpha_0'+\epsilon(\alpha_1\alpha_0'+\alpha_0\alpha_1'+\beta \gamma')
}{\alpha_0\beta'+\beta\delta_0'}{\gamma\alpha_0' + \delta_0\gamma'}{\delta_0\delta_0'+\epsilon(\delta_1\delta_0' + \delta_0\delta_1' + \beta' \gamma)}
\end{align*}
\end{eg}

\begin{eg}
Let $\rho$ and $a$, $b$, $c$, $d$, be as in Example \ref{eg:why 1-reducible} with $F=\bF_p$ and with the variable $x$ replaced by $\ep$. Suppose that $\rho$ is a homomorphism. Then, for every $n>0$, the map
\[
g \mapsto \ttmat{a(g) \pmod{\ep^n}}{b(g) \pmod{\ep^{n-1}}}{c(g) \pmod{\ep^{n-1}}}{d(g) \pmod{\ep^{n}}}
\]
gives a homomorphism $G \to E_n^\times$.
\end{eg}

\subsubsection{Reduction of 1-reducible GMAs}

When $n \geq m$, the standard surjection $\F_p[\ep_n] \rsurj \F_p[\ep_m]$, $\ep\mapsto\ep$, extends naturally to 1-reducible GMAs. We have a \emph{reduction map}
\[
r_{n,m} : E_n \rsurj E_m,
\]
simply reducing each of the coordinates under the usual surjections $\F_p[\ep_n] \to \F_p[\ep_m]$ and $\F_p[\ep_{n-1}]\rsurj \F_p[\ep_{m-1}]$, which is a $\F_p[\ep_n]$-algebra homomorphism. We will especially apply the case
\begin{equation}
    \label{eq: r21}
    r_{2,1} : E_2 \rsurj E_1. 
\end{equation}

The reduction map $r_{n,m}$ is distinct from the tensor reduction map
\[
E_n \rsurj E_n \otimes_{\F_p[\ep_n]} \F_p[\ep_m], \quad x \mapsto x \otimes 1,
\]
which is also a ring homomorphism. As long as $n > m$, the latter has the form
\[
\ttmat{\F_p[\ep_n]}{\F_p[\ep_{n-1}]}{\F_p[\ep_{n-1}]}{\F_p[\ep_n]} \rsurj 
\ttmat{\F_p[\ep_m]}{\F_p[\ep_m]}{\F_p[\ep_m]}{\F_p[\ep_m]}, 
\]
where the target is a GMA with cross-diagonal multiplication $b \otimes c \mapsto \ep b c$. Later, we will apply the factorization of the reduction map $r_{2,1} : E_2 \rsurj E_1$ into
\begin{equation}
    \label{eq: r21 factorization}
    E_2 \rsurj \ttmat{\F_p[\ep_m]}{\F_p[\ep_m]}{\F_p[\ep_m]}{\F_p[\ep_m]} \rsurj E_1,
\end{equation}
where the leftmost map is the tensor reduction map for $(n,m) = (2,1)$, and the rightmost map is reduction modulo $\ep$ of the off-diagonal coordinates. 

\subsubsection{Convenient mappings from 1-reducible GMAs} 

We will have to work explicitly with the finite-flat property of Cayley--Hamilton representations of $G_p$ over $\Db$. We know from Proposition \ref{prop: finite-flat implies upper tri} that they must be upper-triangular, which makes it possible to apply the test of finite-flatness in Lemma \ref{lem: GMA property} in a straightforward way. Now we contextualize it to the 1-reducible GMA, $E_n$ over $\F_p[\ep_n]$, for $n \in \Z_{\geq 1}$. 

\begin{lem}
\label{lem: UT embedding}
There is an $\F_p[\ep_n]$-algebra embedding of the upper-triangular sub-$\F_p[\ep_n]$-GMA 
\[
U_n := \sm{\F_p[\ep_n]}{\F_p[\ep_{n-1}]}{0}{\F_p[\ep_n]} \subset E_n
\]
into $M_2(\F_p[\epsilon_n])$ given by
\[
\begin{pmatrix}
a & b \\
0 & d 
\end{pmatrix} 
\mapsto 
\begin{pmatrix}
a & \epsilon b \\
0 & d 
\end{pmatrix},
\]
where the map on the upper right coordinate denotes the natural multiplication-by-$\epsilon$ map, written $\cdot \epsilon: \F_p[\epsilon_{n-1}] \to \F_p[\epsilon_{n}]$. 
\end{lem}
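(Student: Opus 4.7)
The plan is to verify the three standard properties that make the claimed map an algebra embedding: well-definedness (as a function into $M_2(\F_p[\ep_n])$), preservation of the algebra structure, and injectivity. Before any of this, I would first check that the auxiliary map $\cdot \ep : \F_p[\ep_{n-1}] \to \F_p[\ep_n]$ is itself well-defined and injective. Well-definedness holds because any two lifts of $b \in \F_p[\ep_{n-1}]$ to $\F_p[\ep_n]$ differ by an element of $\ep^n \F_p[\ep_n]$, and $\ep \cdot \ep^n = \ep^{n+1} = 0$ in $\F_p[\ep_n]$. Injectivity holds because if a lift $\tilde b = b_0 + b_1 \ep + \cdots + b_{n-1}\ep^{n-1}$ satisfies $\ep\tilde b = 0$ in $\F_p[\ep_n]$, then $b_0 \ep + \cdots + b_{n-1}\ep^n = 0$, forcing each $b_i = 0$.

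Additivity of the map on $U_n$ is then immediate from the $\F_p$-linearity of $\cdot \ep$. The main computational step is multiplicativity. For $M = \sm{a}{b}{0}{d}$ and $M' = \sm{a'}{b'}{0}{d'}$ in $U_n$, the GMA product is $MM' = \sm{aa'}{ab' + bd'}{0}{dd'}$, with off-diagonal entry in $\F_p[\ep_{n-1}]$ (the action of $a, d' \in \F_p[\ep_n]$ on $\F_p[\ep_{n-1}]$ being via reduction). Applying the map sends this to $\sm{aa'}{\ep(ab' + bd')}{0}{dd'}$. The product of images in $M_2(\F_p[\ep_n])$ is $\sm{aa'}{a\cdot \ep b' + \ep b \cdot d'}{0}{dd'}$, and these agree because $a \cdot \ep b' + \ep b \cdot d' = \ep(ab' + bd')$ in $\F_p[\ep_n]$, using the fact that multiplication by $\ep$ washes out the indeterminacy in lifting $b$ and $b'$ from $\F_p[\ep_{n-1}]$ to $\F_p[\ep_n]$. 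The identity is clearly sent to the identity.

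Injectivity follows from injectivity of $\cdot \ep$: if $\sm{a}{\ep b}{0}{d} = \sm{a'}{\ep b'}{0}{d'}$ in $M_2(\F_p[\ep_n])$, then $a = a'$ and $d = d'$ in $\F_p[\ep_n]$, while $\ep b = \ep b'$ in $\F_p[\ep_n]$ forces $b = b'$ in $\F_p[\ep_{n-1}]$. No step is hard; the only thing to watch carefully is the distinction between the GMA multiplication (where off-diagonal entries live in $\F_p[\ep_{n-1}]$ and the $\F_p[\ep_n]$-action is via reduction) and the ordinary matrix multiplication in $M_2(\F_p[\ep_n])$. The point of the $\ep$-twist is exactly to compensate for this: multiplying an off-diagonal entry by $\ep$ both lands it in $\F_p[\ep_n]$ and makes the lifting ambiguity harmless, so that ordinary matrix multiplication reproduces the GMA product.
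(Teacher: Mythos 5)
Your proof is correct and follows the same route as the paper's, which simply observes that the map is an $\F_p[\ep_n]$-module homomorphism and that multiplicativity "is checked easily." You have carried out that easy check carefully, with the only subtle point — that multiplication by $\ep$ absorbs the ambiguity in lifting off-diagonal entries from $\F_p[\ep_{n-1}]$ to $\F_p[\ep_n]$ — handled correctly in both the well-definedness and the multiplicativity steps.
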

\begin{proof}
The map is clearly a morphism of $\F_p[\ep_n]$-modules, so it suffices to show that it respects the multiplication. This is checked easily.
\end{proof}

On the other hand, we can realize some (but not all) of $E_n$ within a matrix algebra by reducing modulo $\ep^n$ to $\F_p[\ep_{n-1}]$. 

\begin{lem}
\label{lem: matrix reduction}
There is an $\F_p[\ep_n]$-algebra homomorphism from $E_n$ to $M_2(\F_p[\ep_{n-1}])$ given by 
\[
\ttmat{a}{b}{c}{d} \mapsto \ttmat{\bar a}{b}{\ep c}{\bar d},
\]
where $\bar a, \bar d \in \F_p[\ep_{n-1}]$ indicates reduction modulo $\ep^n$.
\end{lem}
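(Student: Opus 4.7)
The plan is to verify directly that the assignment
\[
\phi\colon E_n \to M_2(\F_p[\ep_{n-1}]), \qquad \ttmat{a}{b}{c}{d} \mapsto \ttmat{\bar a}{b}{\ep c}{\bar d}
\]
is a ring homomorphism and is $\F_p[\ep_n]$-linear. Well-definedness and additivity are immediate since the formula is additive on each coordinate and sends $\sm{1}{0}{0}{1}$ to the identity matrix. Compatibility with the $\F_p[\ep_n]$-scalar action reduces to the observation that in both the source and the target, scalar multiplication by $\lambda \in \F_p[\ep_n]$ on an element of $\F_p[\ep_{n-1}]$ is simply multiplication by $\bar \lambda$: in the source this is the definition of the $\F_p[\ep_n]$-module structure on $B$ and $C$, and in the target it is the definition of the $\F_p[\ep_n]$-action on $M_2(\F_p[\ep_{n-1}])$ through the quotient $\F_p[\ep_n] \onto \F_p[\ep_{n-1}]$.

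The one substantive step is multiplicativity. For $X = \sm{a}{b}{c}{d}$ and $X' = \sm{a'}{b'}{c'}{d'}$ in $E_n$, the plan is to unwind the GMA product using the multiplication map $m(b \otimes c') = \ep \cdot b c'$ of Definition \ref{defn: 1-reducible GMA} (where $b c'$ is the ordinary product in $\F_p[\ep_{n-1}]$ and multiplication by $\ep$ lifts it into $\ep \F_p[\ep_n]$, which is well-defined because $\ep^{n+1} = 0$). This gives
\[
XX' = \ttmat{aa' + \ep bc'}{\bar a\,b' + b\,\bar{d'}}{c\,\bar{a'} + \bar d\,c'}{\ep cb' + dd'}.
\]
On the other side, $\phi(X)\phi(X')$ is computed by ordinary matrix multiplication in $M_2(\F_p[\ep_{n-1}])$, which produces a matrix whose entries differ from those of $\phi(XX')$ only in appearance. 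The plan is then to compare entry by entry: the top-right entries are literally identical; the bottom-left entries agree by distributivity of $\ep$; and the diagonal entries agree because reduction mod $\ep^n$ is a ring homomorphism and $\overline{\ep x} = \ep \bar x$ under the reduction $\F_p[\ep_n] \onto \F_p[\ep_{n-1}]$.

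The main (and very minor) obstacle is not any conceptual difficulty but the bookkeeping: there are three different places where $\ep$ or a reduction map appears in the source—the structural map $m$, the $\F_p[\ep_n]$-module structures on $B$ and $C$, and the embedding of $\F_p[\ep_{n-1}]$ into $\F_p[\ep_n]$ via $x \mapsto \ep x$—and one must match each of them with the corresponding appearance in the target's ordinary multiplication. Once the correspondence is set up, the verification is routine.
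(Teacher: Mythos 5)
The paper states this lemma without proof (it is treated as an immediate verification, like Lemma~\ref{lem: UT embedding} before it), so there is no written argument to compare against. Your direct verification is correct and is the natural way to establish the claim: your GMA product formula
\[
XX' = \ttmat{aa' + \ep bc'}{\bar a\,b' + b\,\bar{d'}}{c\,\bar{a'} + \bar d\,c'}{\ep cb' + dd'}
\]
correctly unwinds Definition~\ref{defn: 1-reducible GMA}, and the entry-by-entry match with the ordinary matrix product $\phi(X)\phi(X')$ in $M_2(\F_p[\ep_{n-1}])$ goes through. One small imprecision in the write-up: in the sentence about the diagonal entries, the identity ``$\overline{\ep x}=\ep\bar x$'' does not literally typecheck, since the element you need to reduce is $(\cdot\ep)(bc')\in\F_p[\ep_n]$ with $bc'\in\F_p[\ep_{n-1}]$, not an element of $\F_p[\ep_n]$ named $x$. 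What you actually use is that the composite $\F_p[\ep_{n-1}]\xrightarrow{\ \cdot\ep\ }\F_p[\ep_n]\onto\F_p[\ep_{n-1}]$ coincides with multiplication by $\ep$ inside $\F_p[\ep_{n-1}]$, which is a one-line check on coefficients. With that phrased precisely, the argument is complete.
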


\subsection{The cochain $a^{(1)}$}

\label{subsec: cochains a and d}
Our goal is to produce a first-order 1-reducible GMA representation $\rho_1 : G_{\Q,Np} \to E_1^\times$ deforming $\omega \oplus 1$. We start by defining a cochain $a\up1: G_{\Q,Np} \to \F_p$ that will be used in the definition of $\rho_1$.

Recall the cocycles $b\up1$, $c\up1$, $a_0$ and $a_p$, and the cochain $x_{c\up1}$ specified in Definition \ref{P1: defn: pinned cocycles}. The cohomology classes of $b\up1$ and $c\up1$ are $b_1$ and $c_0$, respectively.

\begin{lem}
\label{P1: lem: produce a1}
There is a unique cochain $a\up1 \in C^1(\Z[1/Np],\F_p)$ satisfying the following three properties:
\begin{enumerate}
\item $-da\up1 = b\up1 \smile c\up1$, 
\item $(a\up1 -b\up1 \smile x_{c\up1})|_{I_p} = 0$ in $H^1(\Q_p^\mathrm{nr}, \F_p)$, and
\item the class of $a\up1|_{\ell_0}$ in $H^1(\Q_{\ell_0}, \F_p)$ is on the line spanned by $\zeta \cup c_0|_{\ell_0}$ for any (equivalently, all) non-trivial $\zeta \in \mu_p(\Q_{\ell_0}) \cong H^0(\Q_{\ell_0}, \F_p(1))$. 
\end{enumerate}
Moreover, $a\up1\vert_{\ell_0}$ is a cocycle, $a\up1|_{\ell_1}$ is an unramified cocycle, and the definition of $a\up1$ depends only on the pinning data of Definition \ref{P1: defn: pinning}.
\end{lem}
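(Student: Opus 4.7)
The plan is to first establish existence of some cochain satisfying (1) via cohomological vanishing, then use conditions (2) and (3) to pin it down uniquely, and finally verify the claimed properties of the restrictions.

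For existence, Lemma \ref{lem: log ell1 is zero} shows that the hypothesis that $\ell_1$ is a $p$th power modulo $\ell_0$ is equivalent to $b_1 \cup c_0 = 0$ in $H^2(\Z[1/Np], \F_p)$. The cocycle $b^{(1)} \smile c^{(1)}$ represents this vanishing class, so it is a coboundary, producing some $a \in C^1(\Z[1/Np], \F_p)$ with $-da = b^{(1)} \smile c^{(1)}$. The set of such $a$ is a torsor over $Z^1(\Z[1/Np], \F_p)$; since $\F_p$ carries trivial $G$-action, coboundaries vanish and this torsor has underlying group $H^1(\Z[1/Np], \F_p)$, which is 2-dimensional with basis $\{a_0, a_p\}$ (as $\ell_1 \not\equiv 1 \pmod p$ forbids tame $\F_p$-characters ramified at $\ell_1$).

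For uniqueness, I would adjust $a$ by $\alpha a_0 + \beta a_p$. Condition (2) determines $\beta$ uniquely, since $a_0|_{I_p} = 0$ while $a_p|_{I_p}$ generates the one-dimensional $\Hom(I_p, \F_p)$. Condition (3) then determines $\alpha$: under the local class field theory decomposition $H^1(\Q_{\ell_0}, \F_p) = H^1_\mathrm{ur} \oplus H^1_\mathrm{ram}$, one verifies that $a_0|_{\ell_0}$ is not proportional to $\zeta \cup c_0|_{\ell_0}$, so requiring the class of $a|_{\ell_0}$ to lie on the line $\langle \zeta \cup c_0|_{\ell_0}\rangle$ yields a single linear condition cutting out $\alpha$. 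This linear independence can be verified via the relation $a_0 \cup \zeta = b_0$ in $H^1(\Q_{\ell_0}, \F_p(1))$ from \cite[Lem.\ 12.1.3]{WWE3} together with local Tate duality.

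For the restriction claims, $a^{(1)}|_{\ell_0}$ is a cocycle because $b^{(1)}|_{\ell_0}$ vanishes identically: Hensel's lemma applied to $x^p - \ell_1$ over $\Z_{\ell_0}$ gives $\ell_1 \in (\Q_{\ell_0}^\times)^p$, and since $\mu_p \subset \Q_{\ell_0}$ (from $\ell_0 \equiv 1 \pmod p$) every $p$th root of $\ell_1$ in $\oQ_{\ell_0}$ lies in $\Q_{\ell_0}$, so $G_{\ell_0}$ fixes the image of the pinned $\ell_1^{1/p}$ and $b^{(1)}|_{\ell_0} = 0$. For $\ell_1$, condition (iv) of the construction of $c^{(1)}$ gives $c^{(1)}|_{\ell_1} = 0$, hence $-da^{(1)}|_{\ell_1} = 0$ and $a^{(1)}|_{\ell_1}$ is a cocycle; since $\mu_p \not\subset \Q_{\ell_1}$, the group $H^1(\Q_{\ell_1}, \F_p)$ consists only of unramified classes, so $a^{(1)}|_{\ell_1}$ is automatically unramified. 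Pinning dependence follows because all ingredients ($b^{(1)}, c^{(1)}, x_{c^{(1)}}, a_0, a_p$) depend only on the pinning data by Remark \ref{lem: initial pinning dependence} and Definition \ref{P1: defn: pinned cocycles}. The main obstacle is the uniqueness step under condition (3): establishing the linear independence of $a_0|_{\ell_0}$ and $\zeta \cup c_0|_{\ell_0}$ in $H^1(\Q_{\ell_0}, \F_p)$ via the duality computation sketched above.
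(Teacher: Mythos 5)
Your proof follows the same essential approach as the paper: write a general solution of $-da = b^{(1)}\smile c^{(1)}$ modulo $Z^1(\Z[1/Np],\F_p) = \langle a_0, a_p\rangle$ (using $b_1\cup c_0=0$ from Lemma \ref{lem: log ell1 is zero}), pin down the $a_p$-component via condition (2), and pin down the $a_0$-component via condition (3). That said, a few points should be tightened.

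First, the claim that ``$a_p|_{I_p}$ generates the one-dimensional $\Hom(I_p,\F_p)$'' is incorrect as stated: $\Hom(I_p,\F_p) = H^1(\Q_p^{\mathrm{nr}},\F_p)$ is very large (wild inertia contributes). What is true, and what the argument needs, is that the \emph{image} of the restriction map $H^1(\Q_p,\F_p)\to H^1(\Q_p^{\mathrm{nr}},\F_p)$ is one-dimensional and spanned by $a_p|_{I_p}$ (the kernel being the unramified line, by inflation--restriction). Second, for condition (2) to be a well-posed vanishing condition in cohomology one must check that $(a - b^{(1)}\smile x_{c^{(1)}})|_p$ is a cocycle; the paper does this explicitly by computing $-da|_p = b^{(1)}|_p\smile c^{(1)}|_p = b^{(1)}|_p\smile dx_{c^{(1)}} = -d(b^{(1)}|_p\smile x_{c^{(1)}})$, a short step you omit. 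Third, the linear independence you flag as ``the main obstacle'' is exactly item (8) of Proposition \ref{prop: rk2 assumption}, already recorded in the paper; your sketch via $a_0\cup\zeta=b_0$ plus Tate duality needs the non-vanishing $b_0|_{\ell_0}\cup c_0|_{\ell_0}\neq 0$, which is precisely the content of Assumption \ref{P1: assump: main}(3) (equivalently, Merel's number non-zero), so that dependence should be made explicit rather than left implicit.
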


\begin{proof}
Since $b_1 \cup c_0=0$ by Lemma \ref{lem: log ell1 is zero}, we know there is a cochain $g \in C^1(\Z[1/Np],\F_p)$ such that $-dg=b\up1 \smile c\up1$. The set of such $g$ is a torsor for $Z^1(\Z[1/Np],\F_p)$, which is generated by $a_0$ and $a_p$.

For any $-g$ whose coboundary is $b\up1 \smile c\up1$, we have
\[
-d g|_p = b\up1|_p \smile c\up1|_p = b\up1|_p \smile dx_{c\up1} = - d(b\up1|_p \smile x_{c\up1})
\]
Hence $(g -b\up1|_p \smile x_{c\up1})|_p$ is a cocycle. Since $H^1(\Q_p,\F_p)$ is generated by its unramified subgroup $H^1_\mathrm{un}(\Q_p,\F_p)$ together with $a_p|_p$, we have 
\[
(g -b\up1|_p \smile x_{c\up1})|_p \equiv ya_p|_p \pmod{H^1_\mathrm{un}(\Q_p,\F_p)}
\]
for a unique $y \in \F_p$. Replacing $g$ by $g-ya_p$, we see that the set of $g$ satisfying (1) and (2) is a non-empty torsor for $H^1(\Z[1/N],\F_p)$ (which is spanned by $a_0$).

By Lemma \ref{lem: log ell1 is zero}, the homomorphism $b\up1|_{\ell_0} : G_{\ell_0} \to \F_p(1)$ vanishes. Hence for any $g$ satisfying (1) and (2), we have
\[
-dg|_{\ell_0} = b\up1|_{\ell_0} \smile c\up1|_{\ell_0} = 0,
\]
so $g|_{\ell_0}$ is a cocycle. Since we assume that the equivalent conditions of Proposition \ref{prop: rk2 assumption} are true, the set $\{a_0|_{\ell_0},c_0|_{\ell_0}\}$ is a basis for $H^1(\Q_{\ell_0},\F_p)$. Hence there is a unique $\gamma \in \F_p$ such that $(g-\gamma a_0)|_{\ell_0}$ is in the line spanned by $c_0|_{\ell_0}$, and we define $a\up1=g-\gamma a_0$ for this $\gamma$. 

Finally, since $c\up1|_{\ell_1}=0$, condition (1) implies that $a\up1|_{\ell_1}$ is a cocycle. In particular, $a\up1|_{\ell_1}$ is unramified: because $p \nmid \ell_1(\ell_1-1)$, by local class field theory, any homomorphism from $G_{\ell_1}$ to $\F_p$ is unramified. 
\end{proof}

Condition (3) in Lemma \ref{P1: lem: produce a1} provides the invariant $\alpha$, which we now define.

\begin{defn}
\label{P1: defn: alpha}
Let $\alpha \in \F_p(1)$ be the unique element such that
\[
[a\up1|_{\ell_0}] = \alpha \cup c_0|_{\ell_0}
\]
Observe that $\alpha$ depends only on the pinning data of Definition \ref{P1: defn: pinning}. 
\end{defn}

\subsection{An irreducible first-order deformation}
\label{subsec: construct rho1} 
We now produce a first-order 1-reducible GMA representation $\rho_1 : G_{\Q,Np} \to E_1^\times$ deforming $\omega \oplus 1$ and satisfying the unramified-or-Steinberg condition $\US_N$ of Definition \ref{defn: US global}. The construction uses the cocycles $b\up1$ and $c\up1$ fixed in Definition \ref{P1: defn: pinned cocycles} and the cochain $a\up1$ defined in Lemma \ref{P1: lem: produce a1}, together with the cochain $d\up1$ defined by
\[
d\up1 = b\up1c\up1-a\up1.
\]
Note that, since $-d(b\up1c\up1) = b\up1 \smile c\up1 + c\up1 \smile b\up1$, we have
\begin{equation}
    \label{eq: ddup1}
    -dd\up1 = c\up1 \smile b\up1.
\end{equation}

\begin{lem}
\label{lem: D1 construction}
Let $E_1$ be the 1-reducible GMA over $\F_p[\epsilon_1]$. Let $\rho_1:G_{\Q,S} \to E_1^\times$ be the function given in coordinates by 
\begin{equation}
\label{eq: define d1}
\rho_1 = \ttmat{\omega(1+a\up1 \ep)}{b\up1}{\omega c\up1}{1+d\up1\epsilon}.
\end{equation}
Then $\rho_1$ is a homomorphism that is $\mathrm{US}_N$. In particular, the associated pseudorepresentation 
\[
D_1:=\psi(\rho_1), \ \Tr_{D_1} = \omega+1 + \epsilon(b\up1 c\up1+(\omega-1)a\up1): G_{\Q,S} \to \F_p[\epsilon_1]
\]
is $\US_N$, and it induces a surjective homomorphism $\varphi_{D_1}:R \to \F_p[\epsilon_1]$.
\end{lem}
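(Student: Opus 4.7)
The plan is to verify three things in sequence: multiplicativity of $\rho_1$, the $\mathrm{US}_N$ property, and surjectivity of $\varphi_{D_1}$.

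For multiplicativity, compute $\rho_1(g)\rho_1(g')$ entry-by-entry in the $1$-reducible GMA $E_1$, remembering that the off-diagonal product is $m(b \otimes c) = \epsilon bc$. The $(1,2)$ and $(2,1)$ entries reduce directly to the cocycle conditions for $b\up1 \in Z^1(G_{\Q,S}, \F_p(1))$ and $c\up1 \in Z^1(G_{\Q,S}, \F_p(-1))$. The $(1,1)$ entry demands $-da\up1 = b\up1 \smile c\up1$, which is Lemma \ref{P1: lem: produce a1}(1); the $(2,2)$ entry demands $-dd\up1 = c\up1 \smile b\up1$, which follows from the defining relation $d\up1 = b\up1 c\up1 - a\up1$ via equation \eqref{eq: ddup1}.

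For $\mathrm{US}_N$, I would proceed by three local verifications. At $\ell_0$: Assumption \ref{P1: assump: main}(1) gives $\omega|_{G_{\ell_0}} = 1$, while Lemma \ref{lem: log ell1 is zero}(3) combined with the absence of nonzero $\F_p(1)$-coboundaries on $G_{\ell_0}$ forces $b\up1|_{G_{\ell_0}} = 0$ as a cocycle. Hence $\rho_1(\sigma) - 1$ has zero upper-right entry for every $\sigma \in G_{\ell_0}$, and $(\rho_1(\sigma)-1)(\rho_1(\tau)-1)$ vanishes entry-by-entry using $\epsilon^2 = 0$ and the collapse $\epsilon \cdot \F_p[\epsilon_0] = 0$. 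At $\ell_1$: $c\up1|_{\ell_1} = 0$ by construction, and $a\up1|_{\ell_1}$ is unramified by Lemma \ref{P1: lem: produce a1}, so $\rho_1(\sigma) - 1$ takes the strict Steinberg shape $\sm{0}{b\up1(\sigma)}{0}{0}$ for $\sigma \in I_{\ell_1}$, and $\mathrm{US}_{\ell_1}$ follows by the classical unipotent calculation. At $p$: conjugate $\rho_1|_p$ by $\sm{1}{0}{x_{c\up1}}{1}$ to kill the lower-left entry, using $c\up1|_p = dx_{c\up1}$. A direct check shows the new diagonal characters take the form $\omega\theta_1$ and $\theta_2$ with $\theta_i$ unramified, where unramifiedness is Lemma \ref{P1: lem: produce a1}(2) in disguise; the conjugated representation thus lies in the upper-triangular sub-GMA $U_1 \subset E_1$. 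Embedding $U_1$ into $M_2(\F_p[\epsilon_1])$ via Lemma \ref{lem: UT embedding} realizes this as a first-order lift of $\omega \oplus 1$ with extension class $\epsilon \cdot b\up1|_p$, and this class is finite-flat because $b\up1|_p$ is the Kummer class of $\ell_1 \in \Z_p^\times$, which lands in the flat subspace by Lemma \ref{P1: lem: local Kummer finite-flat}. Finite-flatness of the Cayley--Hamilton representation then follows from Lemma \ref{lem: GMA property}, taking $S = U_1$ and $V = \F_p[\epsilon_1]^2$.

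For surjectivity of $\varphi_{D_1}$, since $\det D_1 = \omega$ takes values in $\F_p$, it suffices to exhibit $\gamma \in G_{\Q,Np}$ with $\Tr_{D_1}(\gamma) \notin \F_p$. Take $\gamma = \gamma_1\gamma_0$: because $\omega(\gamma_i) = 1$, the cocycle identities give $b\up1(\gamma_1\gamma_0) = b\up1(\gamma_1) + b\up1(\gamma_0) = 1 + 0 = 1$ and $c\up1(\gamma_1\gamma_0) = c\up1(\gamma_1) + c\up1(\gamma_0) = 0 + 1 = 1$, so $\Tr_{D_1}(\gamma_1\gamma_0) = 2 + \epsilon$. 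Therefore $\epsilon$ lies in the image of $\varphi_{D_1}$, giving surjectivity. The main obstacle I anticipate is the finite-flat verification at $p$, which requires careful bookkeeping of the $\epsilon$-twists when moving between the GMA picture and the embedded matrix algebra, and pinning down precisely how Lemma \ref{P1: lem: produce a1}(2) produces the unramified diagonal characters that remain after the conjugation trick.
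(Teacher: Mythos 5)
Your proposal follows essentially the same architecture as the paper's proof: verify multiplicativity via the four coboundary identities, check $\US_N$ locally at $p$, $\ell_0$, $\ell_1$ in turn, and then establish surjectivity of $\varphi_{D_1}$. The multiplicativity check, the $\ell_0$ computation, and the finite-flat argument at $p$ match the paper's line of reasoning (conjugate to kill the $C$-coordinate, observe the remaining diagonal is unramified and the $B$-extension is flat by local Kummer theory, and invoke Lemmas \ref{lem: GMA property} and \ref{lem: UT embedding}). One cosmetic discrepancy: the paper conjugates by $\sm{1}{0}{-x_{c\up1}}{1}$, not $\sm{1}{0}{x_{c\up1}}{1}$; this is a sign-convention matter, not a substantive gap.

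Two substantive remarks. First, at $\ell_1$ the phrase ``classical unipotent calculation'' is slightly misleading. For $\sigma\in I_{\ell_1}$ you correctly reduce $\rho_1(\sigma)-1$ to the shape $\sm{0}{b\up1(\sigma)}{0}{0}$, but the other factor $\rho_1(\tau)-\omega(\tau)$ (or $\rho_1(\tau)-1$) for $\tau\in G_{\ell_1}$ does \emph{not} have the complementary shape $\sm{*}{*}{0}{0}$ needed for the textbook product $\sm{0}{*}{0}{*}\sm{*}{*}{0}{0}=0$: its $(2,2)$ entry $d\up1(\tau)\ep$ (resp.\ its $(1,1)$ entry $\omega(\tau)a\up1(\tau)\ep$) is generally nonzero. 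The actual vanishing of the product relies, exactly as you noted at $\ell_0$, on the $1$-reducible structure $\ep\cdot B = \ep\cdot C = 0$, which kills the stray $(1,2)$ entry $\omega(\sigma)a\up1(\sigma)\ep\cdot b\up1(\tau)$ (resp.\ $b\up1(\sigma)\cdot d\up1(\tau)\ep$). You should make this explicit rather than attributing it to the classical shape argument. Second, your surjectivity argument is a genuine (and pleasant) variant of the paper's: where the paper argues abstractly that $b\up1$ and $c\up1$ restricted to $G_{\Q(\zeta_p)}$ are independent homomorphisms, hence simultaneously non-vanishing somewhere, you exhibit the concrete element $\gamma_1\gamma_0$ and use the normalizations $b\up1(\gamma_1)=c\up1(\gamma_0)=1$, $b\up1(\gamma_0)=c\up1(\gamma_1)=0$ (together with $\omega(\gamma_i)=1$) to compute $\Tr_{D_1}(\gamma_1\gamma_0)=2+\ep$ directly. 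Both are correct; yours is more explicit and arguably cleaner.
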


\begin{proof} We check the conditions one by one, recalling that the $\US_N$ condition entails a condition upon restriction to the decomposition group at every prime dividing $Np$. 
\begin{description}
\item[Homomorphism] The homomorphism condition on $\rho_1$ can readily be checked to be equivalent to the following equalities of $2$-coboundaries: $db\up1=0$, $dc\up1=0$, $-da\up1=b\up1 \smile c\up1$ and $-dd\up1=c\up1 \smile b\up1$. The first three equations hold by definition, and the last by \eqref{eq: ddup1}.

\item[Finite-flat at $p$] Recall the element $x_{c\up1} \in \F_p(-1)$ of Definition \ref{P1: defn: pinned cocycles} that satisfies $dx_{c\up1}=c\up1|_p$. Conjugating $\rho_1$ by $\sm{1}{0}{-x_{c\up1}}{1}$ we find that 
\[
\ad(\sm{1}{0}{-x_{c\up1}}{1})\rho_1|_p = \ttmat{\omega(1+(a\up1|_p- b\up1|_p \smile x_{c\up1})\epsilon )}{b\up1|_p}{0}{1-(a\up1|_p- b_1|_p \smile x_{c\up1})\epsilon}
\]
Since $a\up1|_p- b\up1|_p \smile x_{c\up1}|_p$ is an unramified element of $Z^1(\Q_{p},\F_p)$ and $b\up1$ induces a finite-flat extension of $\F_p$ by $\F_p(1)$ by Lemma \ref{P1: lem: local Kummer finite-flat}, $\rho_1\vert_p$ is finite-flat by Lemmas \ref{lem: GMA property} and \ref{lem: UT embedding}. 

\item[Unramified-or-Steinberg at $\ell_0$] Let $\sigma,\tau \in G_{\ell_0}$. Using the facts that $\omega|_{\ell_0}=1$ and $b\up1|_{\ell_0} = 0$, it follows that
\[
(\rho_1(\sigma)-\omega(\sigma))(\rho_1(\tau)-1) = \ttmat{\epsilon a\up1(\sigma)}{0}{c\up1(\sigma)}{\epsilon d\up1(\sigma)} \cdot \ttmat{\epsilon a\up1(\tau)}{0}{c\up1(\tau)}{\epsilon d\up1(\tau)} =0
\]

\item[Unramified-or-Steinberg at $\ell_1$] Let $\sigma,\tau \in G_{\ell_1}$. Using the fact that $c\up1|_{\ell_1} = 0$, we find that $(\rho_1(\sigma)-\omega(\sigma))(\rho_1(\tau)-1)$ is equal to
\begin{align*}
&\ttmat{\epsilon \omega(\sigma)a\up1(\sigma)}{b\up1(\sigma)}{0}{1-\omega(\sigma)+\epsilon d\up1(\sigma)} \cdot \ttmat{\omega(\tau)-1+\epsilon \omega(\tau)a\up1(\tau)}{b\up1(\tau)}{0}{\epsilon d\up1(\tau)} \\
& =  \ttmat{\epsilon \omega(\sigma)a\up1(\sigma)(\omega(\tau)-1)}{0}{0}{\epsilon(1-\omega(\sigma)) d\up1(\tau)}.
\end{align*}
If $\sigma \in I_{\ell_1}$, then $a\up1(\sigma)=0$ and $\omega(\sigma)=1$, so this is zero. If, on the other hand, $\tau \in I_{\ell_1}$, then $d\up1(\tau)=0$ and $\omega(\tau)=1$, so this is zero.
\item[$\varphi_{D_1}$ is surjective] We have homomorphisms $b\up1, c\up1 : G_{\Q(\zeta_p)} \to \F_p$ that are not scalar multiples of each other. Therefore there exists $\sigma \in G_{\Q(\zeta_p)}$ such that $b\up1(\sigma) \neq 0$ and $c\up1(\sigma) \neq 0$. Then we observe that $\Tr_{D_1}(\sigma) - 2 = \ep b\up1(\sigma)c\up1(\sigma)$, so $\ep$ is in the image of $\varphi_{D_1}$. 
\qedhere
\end{description}
\end{proof}

Note that $\rho_1$, $D_1$ and the homomorphism $R \to \F_p[\ep_1]$ depend only on the pinning data of Definition \ref{P1: defn: pinning}. This is clear since $a\up1$, $b\up1$, $c\up1$, and $d\up1$ only depend on this data.

\subsection{Relation to the universal case}
Recall the universal $\US_N$ Cayley--Hamilton representation $(\rho_N, E, D_{E})$ from Definition \ref{defn: universal objects}. By the universal property, the representation $\rho_1$ of Lemma \ref{lem: D1 construction} induces a homomorphism 
\[
E \otimes_{R} \F_p[\ep_1] \to E_1,
\]
of Cayley--Hamilton $\F_p[\ep_1]$-algebras. We can assume the GMA structure on $E$ to be compatible with this homomorphism, in the following sense.

\begin{prop}
    \label{prop: GMA structure for E1} 
    There exists a choice of $R$-GMA structure on $E$ such that
    \begin{enumerate}
        \item $E \to E_1$ is a map of GMAs
        \item 
        The elements
        \[
        \ttmat{0}{b_{\gamma_0}}{0}{0}, \ttmat{0}{b_{\gamma_1}}{0}{0}, \ttmat{0}{0}{c_{\gamma_0}}{0}
        \]
        of $E$ with respect to this GMA structure (as in Definition \ref{defn: universal objects}) map to the elements
        \[
        \ttmat{0}{0}{0}{0}, \ttmat{0}{1}{0}{0}, \ttmat{0}{0}{1}{0}
        \]
        of $E_1$, respectively.
    \end{enumerate}
\end{prop}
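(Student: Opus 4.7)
The plan has two steps: arranging a compatible GMA structure on $E$ for property (1), and then reading off property (2) by substitution into the formula for $\rho_1$ from Lemma \ref{lem: D1 construction}.

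For property (1), apply the universal property of $(\rho_N, E, D_E)$ to the $\US_N$ Cayley--Hamilton representation $\rho_1$ to obtain a unique Cayley--Hamilton algebra morphism $\phi: E \otimes_R \F_p[\ep_1] \to E_1$ with $\phi \circ (\rho_N \otimes 1) = \rho_1$. Since $\bar\Db$ is multiplicity-free, Chenevier's theorem provides $R$-GMA structures on $E$, and by \cite[Lem.\ 5.6.8]{WWE1} (cited in Section \ref{sssec: CH reps and GMA reps}) any two such structures are inner-conjugate in $E^\times$; the analogous statement holds for GMA structures on $E_1$. Starting from any GMA on $E$ with idempotents $e_1, e_2$, the pushforward via $\phi$ yields idempotents $\phi(e_i)$ in $E_1$ forming a (possibly non-standard) GMA. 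Using inner-conjugacy, one can choose $u \in E_1^\times$ with $u\phi(e_i)u^{-1} = f_i$, and replacing $\rho_1$ by $u\rho_1 u^{-1}$---equivalently, inner-conjugating the GMA on $E$ by the corresponding element---realizes $\phi(e_i) = f_i$. Thus a GMA structure on $E$ may be chosen for which $\phi$ is a GMA map.

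Once $\phi$ is a GMA map, property (2) follows by evaluating $\rho_1$ on $\gamma_0$ and $\gamma_1$. At $\gamma_0 \in I_{\ell_0}$: using $\omega\vert_{G_{\ell_0}} = 1$ (since $\ell_0 \equiv 1 \pmod p$), the vanishing of $b\up1\vert_{\ell_0}$ as a cocycle (as in the proof of Lemma \ref{lem: D1 construction}, leveraging Lemma \ref{lem: log ell1 is zero}), and $c\up1(\gamma_0) = 1$ from Definition \ref{P1: defn: pinned cocycles}, one obtains
\[
\rho_1(\gamma_0) = \ttmat{1 + a\up1(\gamma_0)\ep}{0}{1}{1 + d\up1(\gamma_0)\ep},
\]
so $\phi(b_{\gamma_0}) = 0$ and $\phi(c_{\gamma_0}) = 1$. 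At $\gamma_1 \in I_{\ell_1}$: using $\omega\vert_{I_{\ell_1}} = 1$, $b\up1(\gamma_1) = 1$, and $c\up1\vert_{\ell_1} = 0$ (all from Definition \ref{P1: defn: pinned cocycles}), one obtains
\[
\rho_1(\gamma_1) = \ttmat{1 + a\up1(\gamma_1)\ep}{1}{0}{1 + d\up1(\gamma_1)\ep},
\]
so $\phi(b_{\gamma_1}) = 1$.

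The main obstacle is the inner-conjugation step of property (1): justifying that the choice of $u \in E_1^\times$ realizing the GMA change can be propagated back to an inner conjugation of the GMA on $E$. This is legitimate because inner conjugation of Cayley--Hamilton representations at the target $E_1$ corresponds to an inner conjugation at the source via the universal property, and equivalently, adjusting the fixed idempotents in the universal Cayley--Hamilton algebra $E^u_\Db$ (from which the GMA on $E$ is inherited per Definition \ref{defn: universal objects}) propagates through the Cayley--Hamilton morphisms so as to realize any prescribed GMA-compatible target structure on $E_1$.
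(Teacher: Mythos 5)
Your proposal is correct and takes essentially the same route as the paper, which dispatches the entire proposition with the one-line invocation of the idempotent lifting lemma of \cite[Lem.\ 5.6.8]{WWE1}. What you add, usefully, is an explicit unwinding of that citation (push forward the idempotents along $\phi$, inner-conjugate in $E_1$ to the standard pair, lift the conjugating unit back to $E$), together with the coordinate verification of property (2) that the paper leaves to the reader. The latter is genuinely the right check and your substitutions are correct: $\omega\vert_{I_{\ell_0}}=\omega\vert_{I_{\ell_1}}=1$, $b\up1\vert_{\ell_0}=0$ as a cocycle (which, as in the proof of Lemma \ref{lem: D1 construction}, follows from Lemma \ref{lem: log ell1 is zero} and the fact that $\mu_p\subset\Q_{\ell_0}$ forces the chosen $p$th root of $\ell_1$ to land in $\Q_{\ell_0}$), and the normalizations $b\up1(\gamma_1)=1$, $c\up1(\gamma_0)=1$ from Definition \ref{P1: defn: pinned cocycles}.

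Two small points worth tightening. First, the parenthetical ``replacing $\rho_1$ by $u\rho_1 u^{-1}$'' is misleading as written: $\rho_1$ is a fixed representation from Lemma \ref{lem: D1 construction} and must not be altered (doing so would shift $b\up1,c\up1$ by coboundaries and change the asserted values in (2)); the operative move, which your ``equivalently'' clause does go on to say, is to change only the GMA structure on $E$. Second, the unit-lifting step -- that the conjugating unit $u\in E_1^\times$ is the image of some $v\in E^\times$ -- is used but not justified. It does hold: $\phi$ is surjective (since $\rho_1$ generates $E_1$ over $\F_p[\ep_1]$) with kernel contained in the Jacobson radical, so units lift; but stating this is cleaner than the somewhat circular appeal in your last paragraph. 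The paper's route of lifting the idempotents $f_1,f_2$ of $E_1$ directly up to $E$ and then invoking inner-conjugacy to certify they give a GMA structure sidesteps unit-lifting and is marginally more economical, but the two arguments are mathematically the same.
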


\begin{proof}
    Apply the idempotent lifting lemma of \cite[Lem.\ 5.6.8]{WWE1}. 
\end{proof}

We choose the GMA structure on $E$ such that the conditions (1) and (2) are satisfied. Although there may be many such choices, any of them will suffice for our purposes. Note that the conditions (1) and (2) are determined by the pinning data (Definition \ref{P1: defn: pinning}). 

\section{The pseudodeformation ring $R/pR$ up to second order}
\label{sec: computation of R} 

Recall from Definition \ref{defn: universal objects} that $R$ denotes the pseudodeformation ring of $\omega \oplus 1$ with the $\US_N$ condition. Let $\bar \m \subset R/pR$ denote the maximal ideal. In this section, we prove that $\dim_{\F_p} R/(p,\bar\m^2) = 3$ while $\dim_{\F_p} R/(p,\bar\m)^3 \leq 4$, also identifying a generator of $\bar \m^2$. These results are summed up in Corollary \ref{cor: constraint on S}. In the sequel, we will use these results to establish a presentation of $R/(p,\bar \m^3)$ and to distinguish between the cases $\rk_{\Z_p} \bT = 3$ and $\rk_{\Z_p} \bT > 3$, keeping in mind that we have a surjection $R \rsurj \bT$ from Proposition \ref{prop: R to T}. As always, Assumption \ref{P1: assump: main} is in force. 

In addition to the notation, such as $J^\red$, $J^{\min}$, $B$ and $C$, set up in \S\ref{P1: sec: recollection of WWE5}, we use the following:
\begin{itemize}
    \item Let $\bar R := R/pR$, for convenience. 
    \item If $I \subset R$ is an ideal, let $\bar I \subset \bar R$ denotes its image in $\bar R$. We warn the reader that the natural surjection $I/pI \to \bar I$ may not be an isomorphism. 
    \item Let $\m = (\Jm,p) \subset R$ be the maximal ideal, which is consistent with $\bar \m \subset \bar R$ also being maximal. 
    \item For a Noetherian local $\Z_p$-algebra $(A, \mathfrak{n})$, let $\frt_A$, the (mod $p$) \emph{tangent space of $A$}, be the set of local ring homomorphisms $\Hom(A,\F_p[\epsilon]/(\epsilon^2))$, which is an $\F_p$-vector space. The dual vector space $\frt_A^*$ is identified with $\mathfrak{n}/(\mathfrak{n}^2,p)$, and called the (mod $p$) \emph{cotangent space} of $A$.
    It is naturally isomorphic to the cotangent space of $\bar A := A/pA$. 
\end{itemize}

\subsection{The tangent space of $R$} 

In this section, we compute the tangent space of $R$. In order to do this, we first recall Bella\"iche's computation of the tangent space of the unrestricted deformation ring $R_\Db$ \cite{bellaiche2012}. 

Let $J^\red_{\Db} \subset R_\Db$ denote the reducibility ideal and $R_\Db^\red = R_\Db/J^\red_\Db$, and let $E_\Db^u=\sm{R_\Db}{B_\Db}{C_\Db}{R_\Db}$ be the $R_\Db$-GMA structure on $E_\Db^u$. On the other hand, let $\frt_{R_\Db}^\irr$ be the cokernel of the natural map $\frt_{R_\Db^\red} \to \frt_{R_\Db}$; define $\frt_R^\irr$ analogously as the cokernel of $\frt_{R^\red} \to \frt_R$. We will address these tangent spaces mainly through their dual, which is the irreducible subspace of the cotangent space, 
\[
(\frt_R^\irr)^* \subset \frt_R^*, \qquad (\frt_{R_\Db}^\irr)^* \subset \frt_{R_\Db}^*. 
\]

We will access these irreducible subspaces as follows. According to Proposition \ref{prop: red ideal generators}, the GMA-multiplication map induces a surjective $R_\Db$-module homomorphism 
\[
B_\Db \otimes_{R_\Db} C_\Db \onto J_\Db^\red, \quad b \otimes c \mapsto b \cdot c.
\]
As a result, there is a composite surjection 
\begin{equation}
\label{eq:BC=J unobstructed}
    B_\Db/\m_\Db B_\Db \otimes_{\F_p} C_\Db/\m_\Db C_\Db \onto J^\red_\Db/\m_\Db J^\red_\Db \rsurj (\frt_{R_\Db}^\irr)^*
\end{equation}
of $\F_p$-vector spaces. Bella\"iche interprets this surjection in terms of cup products in Galois cohomology. 
\begin{prop}[{Bella\"iche \cite[Theorem A and \S4.1.1]{bellaiche2012}}]
\label{prop: Bellaiche sequence}
There is an exact sequence
\begin{equation}
\label{eq:Bellaiche sequence}
0 \to \frt_{R_\Db}^\irr \xrightarrow{\iota} H^1(\Z[1/Np], \F_p(1)) \otimes_{\F_p}  H^1(\Z[1/Np], \F_p(-1)) \buildrel{\cup}\over\lra 
H^2(\Z[1/Np], \F_p).
\end{equation}
where the final map is the cup product.
Moreover, under natural identifications 
\begin{equation}
    \label{eq: BC duals}
    B_\Db/\m_\Db B_\Db \cong (H^1(\Z[1/Np], \F_p(1)))^*, \quad C_\Db/\m_\Db C_\Db \cong (H^1(\Z[1/Np], \F_p(-1)))^*,
\end{equation}
the map $\iota$ is identified with the dual of \eqref{eq:BC=J unobstructed}. 
\end{prop}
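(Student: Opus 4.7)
The plan is to dualize the exact sequence and identify every term through the universal GMA structure. First, applying Lemma \ref{lem: Jred to t-irr} to $R_{\Db}$ gives $(\frt_{R_{\Db}}^{\irr})^* \cong \bar J_{\Db}^\red / \bar\m_{\Db} \bar J_{\Db}^\red$, and Proposition \ref{prop: red ideal generators} expresses $J_{\Db}^\red$ as the image of the GMA multiplication map $B_{\Db} \otimes_{R_{\Db}} C_{\Db} \to R_{\Db}$. Reducing modulo $\m_{\Db}$ produces the surjection \eqref{eq:BC=J unobstructed}. Since all vector spaces appearing in \eqref{eq:Bellaiche sequence} are finite-dimensional over $\F_p$, the task reduces to showing that the kernel of \eqref{eq:BC=J unobstructed} is, under the duality \eqref{eq: BC duals}, the annihilator of $\ker(\cup)$; equivalently, that the dual embedding $\iota$ has image exactly $\ker(\cup)$.

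For the identifications \eqref{eq: BC duals}, I would invoke multiplicity-freeness of $\Db = \psi(\omega \oplus 1)$. Because $\omega \neq 1$ as residual characters of $G_{\Q,Np}$, the Cayley--Hamilton algebra $E_{\Db}^u$ admits a GMA structure whose residual idempotents are the obvious ones. Evaluating a tangent vector $\phi : R_{\Db} \to \F_p[\epsilon_1]$ on the $b$-coordinate of the universal GMA produces an extension class in $\Ext^1_{\F_p[G_{\Q,Np}]}(\F_p, \F_p(1)) \cong H^1(\Z[1/Np], \F_p(1))$, and this pairing is non-degenerate by the universal property of the Cayley--Hamilton algebra in the multiplicity-free case. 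This yields the first isomorphism in \eqref{eq: BC duals}; a symmetric argument for lower-triangular data gives the second.

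For the kernel computation, I would follow the blueprint of the paper's own Lemma \ref{lem: D1 construction} and construct tangent vectors directly from cohomology classes with vanishing cup product. Given cocycles $b \in Z^1(G_{\Q,Np}, \F_p(1))$ and $c \in Z^1(G_{\Q,Np}, \F_p(-1))$ with $[b] \cup [c] = 0$, choose cochains $a, d \in C^1(G_{\Q,Np}, \F_p)$ such that $-da = b \smile c$ and $-dd = c \smile b$; then
\[
\rho_{(b,c)} = \ttmat{\omega(1 + a\epsilon)}{b}{\omega c}{1 + d\epsilon}
\]
is a GMA homomorphism into the $1$-reducible GMA $E_1$, whose induced pseudorepresentation defines a tangent vector of $R_{\Db}$ mapping to $([b], [c])$ in $H^1 \otimes H^1$. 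Changing $a$, $d$, or the cocycle representatives by coboundaries contributes only reducible tangent directions, so the class in $\frt_{R_{\Db}}^{\irr}$ depends only on $([b], [c])$. Conversely, any first-order Cayley--Hamilton deformation of $\Db$ admits a GMA structure by multiplicity-freeness and so arises in this form, and the cocycle identities for such a homomorphism force $[b] \cup [c] = 0$.

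The main obstacle is well-definedness and injectivity in the third step: the construction appears to depend on many non-canonical choices, and one must verify that the resulting map $\ker(\cup) \to \frt_{R_{\Db}}^{\irr}$ is bijective, not merely well-defined. This rests on a careful accounting of inner GMA-automorphisms (governed by diagonal matrices modulo $\m_{\Db}$ thanks to multiplicity-freeness) together with coboundary adjustments to $(a, d)$, showing that all such ambiguity is absorbed into the reducible quotient.
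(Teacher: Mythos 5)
The paper gives no proof of this proposition: it is attributed outright to Bella\"iche \cite[Theorem A and \S4.1.1]{bellaiche2012} and used as a black box, with the only follow-up being the application to the running hypotheses (Lemma \ref{lem:Bellaiche}). So your write-up is a reconstruction of Bella\"iche's argument rather than something to compare against a proof in the paper. With that understood, your outline does identify the correct ingredients — dualize via Lemma \ref{lem: Jred to t-irr} and Proposition \ref{prop: red ideal generators}, use the GMA formalism to identify $B_\Db/\m_\Db B_\Db$ and $C_\Db/\m_\Db C_\Db$ with cohomology duals, and manufacture first-order GMA deformations from cocycle pairs as in Lemma \ref{lem: D1 construction}.

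However, there are two genuine gaps beyond the one you flag at the end. First, the identifications \eqref{eq: BC duals} are not a formal consequence of ``the universal property of the Cayley--Hamilton algebra in the multiplicity-free case.'' What is free is a natural \emph{injection} $\Hom_{\F_p}(B_\Db/\m_\Db B_\Db, \F_p) \rinj \Ext^1_{\F_p[G_{\Q,Np}]}(\F_p, \F_p(1)) \cong H^1(\Z[1/Np], \F_p(1))$ arising from the GMA structure; surjectivity for the universal GMA is a theorem in its own right and is precisely part of what Bella\"iche proves, so you cannot shortcut it. Second, your converse assertion — that every first-order Cayley--Hamilton deformation of $\Db$ arises as your $\rho_{(b,c)}$ — cannot be correct as stated: if it were, the image of the linear map $\iota$ would consist entirely of decomposable tensors $[b]\otimes[c]$, but the image of a linear map is a subspace, and the set of decomposable tensors in $\ker(\cup)$ is not a subspace in general. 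Pushing the universal GMA forward along $\varphi: R_\Db \to \F_p[\ep_1]$ yields $B$- and $C$-coordinates whose $\F_p$-dimension can exceed one, so the extension data attached to a tangent vector is a linear functional on $H^1$, not a single class.

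What your construction does legitimately prove is that \emph{decomposable} elements of $\ker(\cup)$ lie in $\mathrm{im}(\iota)$; this is one half of the exactness at the middle and is fine. To complete the argument you still need the containment $\mathrm{im}(\iota)\subset\ker(\cup)$ (read off the cocycle constraint at the level of the universal GMA, not of a 1-reducible quotient), and a check that the decomposables you produce actually span $\ker(\cup)$ — true here because the target $H^2(\Z[1/Np],\F_p)$ is one-dimensional, but not a formal fact. You correctly name well-definedness and injectivity as the remaining obstacle; the cleanest repair is to stay on the dual side throughout and verify directly that a functional on $B_\Db/\m_\Db B_\Db\otimes C_\Db/\m_\Db C_\Db$ (equivalently, via \eqref{eq: BC duals}, on $H^1(\F_p(1))\otimes H^1(\F_p(-1))$) lies in the kernel of \eqref{eq:BC=J unobstructed} if and only if it annihilates $\ker(\cup)$.
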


Applying the proposition under our running assumptions $b_1 \cup c_0=0$ (see Lemma \ref{lem: log ell1 is zero}) and $b_0 \cup c_0 \ne 0$ (see Proposition \ref{prop: rk2 assumption}), we have the following

\begin{lem}
\label{lem:Bellaiche}
There is an element of $f \in B_\Db/\m_\Db B_\Db \otimes_{\F_p} C_\Db/\m_\Db C_\Db$ satisfying 
\begin{enumerate}[label=(\roman*)]
    \item under the dualities \eqref{eq: BC duals}, $f(b_0 \otimes c_0) \ne 0$ and $f(b_1 \otimes c_0)=0$ in $\F_p$, and
    \item $f$ maps to $0$ under \eqref{eq:BC=J unobstructed}.
\end{enumerate}
\end{lem}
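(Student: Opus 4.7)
The plan is to dualize Bella\"iche's exact sequence from Proposition \ref{prop: Bellaiche sequence} over $\F_p$ and reduce the existence of $f$ to a statement about linear functionals on $H^2(\Z[1/Np], \F_p)$. Because Proposition \ref{prop: Bellaiche sequence} identifies $\iota$ with the dual of the map \eqref{eq:BC=J unobstructed}, taking $\F_p$-duals of the injection $\iota$ yields an exact sequence
\[
H^2(\Z[1/Np], \F_p)^* \xrightarrow{\cup^*} B_\Db/\m_\Db B_\Db \otimes_{\F_p} C_\Db/\m_\Db C_\Db \rsurj (\frt_{R_\Db}^\irr)^*.
\]
The first step will be to observe that condition (ii), namely that $f$ maps to $0$ under \eqref{eq:BC=J unobstructed}, is equivalent by exactness to $f$ lying in the image of $\cup^*$; that is, to the existence of a linear functional $g \in H^2(\Z[1/Np], \F_p)^*$ with $f = g \circ \cup$.

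With this reformulation, condition (i) for such an $f = g \circ \cup$ translates to the pair of cup-product requirements $g(b_0 \cup c_0) \neq 0$ and $g(b_1 \cup c_0) = 0$. Next I would invoke Lemma \ref{lem: log ell1 is zero}, which gives $b_1 \cup c_0 = 0$ in $H^2(\Z[1/Np], \F_p)$, so that the second requirement is automatic for every choice of $g$. Then Proposition \ref{prop: rk2 assumption} supplies $b_0 \cup c_0 \neq 0$ in $H^2(\Z[1/Np], \F_p)$, so I can pick any linear functional $g$ that is nonzero on the class $b_0 \cup c_0$ and set $f := g \circ \cup$ to conclude.

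I do not expect a genuine obstacle: once the problem is recast via duality into a question about functionals on $H^2(\Z[1/Np], \F_p)$, the two running inputs $b_1 \cup c_0 = 0$ and $b_0 \cup c_0 \neq 0$ handle the lemma directly. The only care required is in aligning the dualizations---verifying that $\iota^*$ really does coincide with the map \eqref{eq:BC=J unobstructed}, which is precisely the content of Proposition \ref{prop: Bellaiche sequence}.
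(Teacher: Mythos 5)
Your proposal is correct and essentially reproduces the paper's argument: both proofs exploit the dual of Bella\"iche's exact sequence, using $b_1 \cup c_0 = 0$ (Lemma \ref{lem: log ell1 is zero}) and $b_0 \cup c_0 \neq 0$ (Proposition \ref{prop: rk2 assumption}) to locate the required functional. The paper phrases this as choosing a functional $\lambda$ annihilating $\mathrm{im}(\iota)=\ker(\cup)$ and nonvanishing on $b_0 \otimes c_0$, whereas you make the same thing explicit by writing $f = g\circ\cup$ for a suitable $g$ on $H^2$; the two formulations are interchangeable.
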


\begin{proof}
Since $b_0 \cup c_0 \ne 0$, it is not in the image of $\iota$, so there is an element $\lambda$ of the dual of $H^1(\Z[1/Np], \F_p(1)) \otimes_{\F_p}  H^1(\Z[1/Np], \F_p(-1))$ such that $\lambda(b_0 \otimes c_0) \ne 0$ and such that $\lambda$ is zero on the image of $\iota$. In particular, since $b_1 \cup c_0$ is zero, $b_1 \otimes c_0$ is in the image of $\iota$ and $\lambda(b_1 \otimes c_0)=0$. Our identifications give an isomorphism between the dual of $H^1(\Z[1/Np], \F_p(1)) \otimes_{\F_p}  H^1(\Z[1/Np], \F_p(-1))$ and $B_\Db/\m_\Db B_\Db \otimes_{\F_p} C_\Db/\m_\Db C_\Db$, and we can take $f$ to be the image of $\lambda$ under this identification.
\end{proof}

Now we apply the computations in $R_\Db$ above, under the surjection $R_\Db \rsurj R$, to calculate the irreducible subspace of the mod $p$ cotangent space of $R$. Along the way, we specify minimal sets of generators for $J^\red \subset R$ and $\bar J^\red \subset \bar R$.

\begin{prop}
\label{prop: barJ^red is principal}
The ideal $\bar J^\red \subset \bar R$ is principal, generated by the non-zero image of $b_{\gamma_1} \cdot c_{\gamma_0}$ under $R \rsurj \bar R$. In particular, the image of $b_{\gamma_1} c_{\gamma_0}$ in $\frt_R^*$ generates the 1-dimensional subspace $(\frt_R^\irr)^*$. In contrast, the ideal $J^\red \subset R$ is not principal, and is generated by $\{b_{\gamma_0} c_{\gamma_0}, b_{\gamma_1}c_{\gamma_0}\}$. 
\end{prop}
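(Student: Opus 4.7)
The plan is to analyze the two explicit generators of $J^\red$ by separating their first-order versus higher-order contributions to the cotangent space $\frt_R^*$, then apply Nakayama's lemma modulo $p$. First, by Proposition \ref{prop: red ideal generators}, $J^\red$ is the image of the GMA multiplication $m: B \otimes_R C \to R$; combined with Proposition \ref{prop: BC bases}, which states that $B$ is generated over $R$ by $\{b_{\gamma_0}, b_{\gamma_1}\}$ and $C$ by $\{c_{\gamma_0}\}$, this shows that $J^\red$ is generated as an ideal by the two elements $b_{\gamma_0}c_{\gamma_0}$ and $b_{\gamma_1}c_{\gamma_0}$, and similarly $\bar J^\red$ is generated by their images in $\bar R$.

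Next I compute the image of each generator in $\frt_R^* = \m/(\m^2 + pR)$. By Lemma \ref{lem: Jm2 and Jred}, $b_{\gamma_0}c_{\gamma_0} \in \Jm^2 \subset \m^2$, so its image in $\frt_R^*$ vanishes; this already establishes the final assertion of the proposition. To show that the image of $b_{\gamma_1}c_{\gamma_0}$ is non-zero, I pair it against the explicit tangent vector $\varphi_{D_1}: R \to \F_p[\ep_1]$ arising from the first-order deformation $\rho_1$ of Section \ref{sec: explicit first-order}, which is surjective by Lemma \ref{lem: D1 construction}. By the compatibility of GMA structures in Proposition \ref{prop: GMA structure for E1}, $b_{\gamma_1}$ and $c_{\gamma_0}$ map respectively to $\sm{0}{1}{0}{0}$ and $\sm{0}{0}{1}{0}$ in $E_1$, so their GMA-product is $\sm{\ep}{0}{0}{0}$, and hence $\varphi_{D_1}(b_{\gamma_1}c_{\gamma_0}) = \ep \neq 0$.

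The principality claims for $\bar J^\red$ then follow from these computations via Lemma \ref{lem: Jred to t-irr}, which identifies $\bar J^\red/\bar \m \bar J^\red$ injectively with $(\frt_R^\irr)^* \subset \frt_R^*$. Since this space is generated by the images of $b_{\gamma_0}c_{\gamma_0}$ and $b_{\gamma_1}c_{\gamma_0}$ and the former vanishes while the latter does not, $(\frt_R^\irr)^*$ is $1$-dimensional and generated by the image of $b_{\gamma_1}c_{\gamma_0}$. Nakayama's lemma applied in $\bar R$ then shows that $\bar J^\red$ itself is principal, generated by the non-zero image of $b_{\gamma_1}c_{\gamma_0}$.

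The remaining assertion that $J^\red$ is \emph{not} principal in $R$ is the main obstacle, as it cannot be read off from the mod-$p$ picture alone. I plan to deduce it from the non-cyclicity of $B$ over $R$ that is cited in Proposition \ref{prop: BC bases} (and proved in \cite[Lem.\ 6.2.1]{WWE5}). If $J^\red = (\eta)$ were principal, then combining with $C = Rc_{\gamma_0}$ I could write $\eta = b^*c_{\gamma_0}$, so that every $b \in B$ satisfies $bc_{\gamma_0} = r_b b^* c_{\gamma_0}$ for some $r_b \in R$; equivalently, $b - r_b b^*$ lies in the kernel of the multiplication-by-$c_{\gamma_0}$ map $B \to R$. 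Showing this kernel is contained in $\m B$, which is equivalent to the injectivity of $B/\m B \to J^\red/\m J^\red$, would force $B$ to become cyclic modulo $\m B$, contradicting the cited non-cyclicity. The hardest step is thus to control this kernel; I expect to do so via a fine analog of Proposition \ref{prop: Bellaiche sequence} adapted to the $\US_N$ setting, where under our cup-product hypotheses (notably $b_1 \cup c_0 = 0$ together with $b_0 \cup c_0 \neq 0$) the additional relations that would be needed to make $B/\m B \to J^\red/\m J^\red$ non-injective are not available.
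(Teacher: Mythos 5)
Your derivations of everything except the non-principality of $J^\red$ are correct, and in two places genuinely cleaner than the paper's argument: you use Lemma \ref{lem: Jm2 and Jred} ($b_{\gamma_0}c_{\gamma_0} \in \Jm^2 \subset \m^2$) to kill $b_{\gamma_0}c_{\gamma_0}$ in $\frt_R^*$ directly, whereas the paper deduces the same fact via the commutative square comparing $E^u_\Db$ to $E$ and the element $f$ of Lemma \ref{lem:Bellaiche}; and you detect the non-vanishing of $b_{\gamma_1}c_{\gamma_0}$ by an explicit GMA computation with $\varphi_{D_1}$, which is more concrete than the paper's appeal to the irreducibility of $D_1$ witnessing $\frt_R^\irr \neq 0$. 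Both of your shortcuts are valid given the supporting results already in the paper.

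However, the argument for $J^\red$ not being principal in $R$ is a genuine gap, and not a small one. Your plan hinges on showing that the kernel $K$ of the surjection $B \to J^\red$ (multiplication by $c_{\gamma_0}$) is contained in $\m B$; but this is equivalent to the injectivity of $B/\m B \to J^\red/\m J^\red$, which, since both sides have dimension at most $2$ and the map is onto, is equivalent to $\dim_{\F_p} J^\red/\m J^\red = 2$, which by Nakayama is precisely the statement that $J^\red$ is not principal. The step you flag as ``hardest'' is therefore circular: it is not a lemma on the way to the conclusion, it is the conclusion restated. Moreover, the tool you propose (a refined Bella\"iche-type sequence in the $\US_N$ setting) is inherently a mod-$p$ Galois-cohomology computation, and the phenomenon at play here is integral: $b_{\gamma_0}c_{\gamma_0}$ vanishes in $\bar J^\red/\bar\m\bar J^\red$ but, roughly, is a $p$-multiple of something rather than in $\m J^\red$, so no mod-$p$ cotangent calculation can distinguish ``$J^\red/\m J^\red$ has dimension $1$'' from ``dimension $2$''. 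The paper's argument is of a different flavor entirely: push everything through $R \rsurj R_{\ell_0}$. Since pseudorepresentations at level $\ell_0$ are unramified at $\ell_1$, the image of $b_{\gamma_1}$ in the $R_{\ell_0}$-GMA is zero, so if $J^\red = (b_{\gamma_1}c_{\gamma_0})$ were principal then the reducibility ideal of $R_{\ell_0}$ would be zero, forcing every $R_{\ell_0}$-pseudorepresentation to be reducible --- which contradicts the existence of the irreducible Galois representation attached to the level-$\ell_0$ cuspform $f$ of Assumption \ref{P1: assump: main}(3). You should replace your sketch with this arithmetic argument.
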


    \begin{proof}
    First, we claim that $b_{\gamma_0} c_{\gamma_0}$ and $b_{\gamma_1}c_{\gamma_0}$ generate $J^\red$. Due to Proposition \ref{prop: red ideal generators} (which applies to any generalized matrix algebra and its scalar ring), this follows from the fact that $\{b_{\gamma_0},b_{\gamma_1}\}$ generate $B$ and $\{c_{\gamma_0}\}$ generates $C$, as recorded in Proposition \ref{prop: BC bases}. 
    
    Next, we claim that $b_{\gamma_1}c_{\gamma_0}$ generates $\bar J^\red$. Due to the previous claim, it suffices to prove that $b_{\gamma_0} c_{\gamma_0}$ is a multiple of $b_{\gamma_1} c_{\gamma_0}$ in $\bar J^\red$. Just as in \eqref{eq:BC=J unobstructed}, there is a similar map for $J^\red$ fitting into a commutative diagram
    \begin{equation}
    \label{eq: red ideal diagram}
    \xymatrix{
     B_\Db/\m_\Db B_\Db \otimes_{\F_p} C_\Db/\m_\Db C_\Db \ar@{->>}[r] \ar@{->>}[d] & J^\red_\Db/\m_\Db J^\red_\Db \ar@{->>}[d]  \\
    B/\m B \otimes_{\F_p} C/\m C \ar@{->>}[r] & J^\red/\m J^\red 
    }
    \end{equation}
    Under the interpretation of $B_\Db/\m_\Db B_\Db$ and $C_\Db/\m_\Db C_\Db$ as dual vector spaces found in \eqref{eq: BC duals}, the left vertical arrow is the dual of the inclusion of subspaces of the Galois cohomology groups. By \cite[Lem.\ 3.10.3]{WWE5}, we can identify these subspaces: the basis $\{b_{\gamma_0},b_{\gamma_1}\}$ of $B/\m B$ is dual to the basis $\{b_0,b_1\}$ of Galois cohomology; and $\{c_{\gamma_0}\}$ is a basis of $C/\m C$, dual to $\{c_0\}$. 
    
    Now consider the element $f \in  B_\Db/\m_\Db B_\Db \otimes_{\F_p} C_\Db/\m_\Db C_\Db$ from Lemma \ref{lem:Bellaiche}. The image of $f$ in $B/\m B \otimes_{\F_p} C/\m C$ is of the form $(x b_{\gamma_0}+y b_{\gamma_1}) \otimes c_{\gamma_0}$ for some $x, y \in \F_p$. Since $f(b_0 \otimes c_0) \ne 0$ and $f(b_1 \otimes c_0)=0$ it follows that $x \ne 0$ and $y=0$. Then diagram \eqref{eq: red ideal diagram} and the fact that $f$ maps to $0$ in $(\frt_\Db^\irr)^* \subset \frt_\Db^*$ imply that 
    \[
    b_{\gamma_0} \cdot c_{\gamma_0}= -x^{-1}y b_{\gamma_1} \cdot c_{\gamma_0} =0
    \]
    in $\frt_R^*$, completing the claim that $b_{\gamma_1} \cdot c_{\gamma_0}$ generates $\bar J^\red$. 
    
    Next we prove that $\frt_R^\irr \cong (\bar J^\red/\bar \m \bar J^\red)^*$ is 1-dimensional. It remains to show that $\frt_R^\irr$ is not zero. This follows from the existence of the irreducible first-order pseudodeformation $D_1$ of $\psi(\omega \oplus 1)$ of $\F_p[\ep_1]$ constructed in Lemma \ref{lem: D1 construction}, because the lemma showed that $D_1$ satisfies $\US_N$. 
    
    It remains to show that $J^\red$ is not principal. If it were principal, then because the image of $b_{\gamma_1}c_{\gamma_0}$ in $\bar J^\red$ is a generator, $b_{\gamma_1}c_{\gamma_0} \in J^\red$ would be a generator. But $b_{\gamma_1}c_{\gamma_0}$ vanishes under $R \rsurj R_{\ell_0}$ because the Galois pseudorepresentations parameterized by $R_{\ell_0}$ are unramified at $\ell_1$ (hence $b_{\gamma_1}$ maps to zero in the global level $\ell_0$ $R_{\ell_0}$-GMA). This would imply that the pseudorepresentation supported by $R_{\ell_0}$ is reducible. But this implication is known to be false: the Galois representation supported by the level $\Gamma_0(\ell_0)$ cusp form $f$ of Assumption \ref{P1: assump: main}(3) is irreducible. 
    \end{proof}
    
    Now we can calculate the whole tangent space of $R$. 

\begin{prop}
\label{prop: R-cotangent} 
The $\F_p$-dimension of $\frt_R$ is 2, with a basis given by the two maps $\varphi_{D^\red}, \varphi_{D_1}: R \to \F_p[\ep]/(\ep^2)$ specified in Lemma \ref{lem: reducible quotient} and Lemma \ref{lem: D1 construction}, respectively. More precisely:
\begin{enumerate}
    \item The subspace $\frt_{R^\red} \subset \frt_{R}$ is 1-dimensional and spanned by $D^\red$.
    \item The space $\frt_R^\irr$ is one-dimensional and the element $D_1$ of $\frt_R$ maps to a generator of it under the natural surjection $\frt_R \rsurj \frt_R^\irr$. 
\end{enumerate}
\end{prop}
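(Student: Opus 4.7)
The strategy is to split the problem along the canonical short exact sequence
\[
0 \lra \frt_{R^\red} \lra \frt_R \lra \frt_R^\irr \lra 0,
\]
which is exact on the left because $R \rsurj R^\red$ is a surjection of local rings, and exact on the right by the defining property of $\frt_R^\irr$. The plan is to show each outer term is one-dimensional, and then to exhibit $D^\red$ and $D_1$ as lifts to $\frt_R$ of generators of the two ends.

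First I would handle $\frt_R^\irr$. By Proposition \ref{prop: barJ^red is principal}, the ideal $\bar J^\red \subset \bar R$ is generated by the non-zero image of $b_{\gamma_1} c_{\gamma_0}$, so $\bar J^\red/\bar\m\bar J^\red$ is one-dimensional. Via the identification of Lemma \ref{lem: Jred to t-irr}, this gives $\dim_{\F_p}(\frt_R^\irr)^* = 1$, hence $\dim_{\F_p}\frt_R^\irr = 1$. Next I would compute $\frt_{R^\red}$ from the explicit presentation in Lemma \ref{lem: reducible quotient}. Since $\ell_0 \equiv 1 \pmod p$, reducing mod $p$ kills the relation $(\ell_0-1)Y$, giving $R^\red/pR^\red \cong \F_p[Y]/(Y^2)$. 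Its cotangent space is the one-dimensional $\F_p$-space spanned by the class of $Y$, so $\dim_{\F_p}\frt_{R^\red} = 1$ and $D^\red$ is, up to scalar, the unique nonzero element. From the short exact sequence it then follows that $\dim_{\F_p}\frt_R = 2$.

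It remains to show that $D_1 \in \frt_R$ maps to a generator of $\frt_R^\irr$, equivalently that $D_1$ does not factor through $R^\red$, equivalently that $\varphi_{D_1}(b_{\gamma_1} c_{\gamma_0}) \neq 0$ since $b_{\gamma_1}c_{\gamma_0}$ generates $\bar J^\red$ modulo $\bar\m\bar J^\red$. Here I would use the compatible GMA structure of Proposition \ref{prop: GMA structure for E1}, under which $b_{\gamma_1}$ and $c_{\gamma_0}$ map to the off-diagonal matrix units in the 1-reducible GMA $E_1$. Their product in $E_1$, computed via the multiplication rule of Definition \ref{defn: 1-reducible GMA}, is $\sm{\epsilon}{0}{0}{0}$. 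Since the structure map $E \otimes_R \F_p[\ep_1] \to E_1$ is $R$-linear and $m(b_{\gamma_1} \otimes c_{\gamma_0}) \in R$ acts by scalar multiplication by $\varphi_{D_1}(m(b_{\gamma_1} \otimes c_{\gamma_0}))$ on the idempotent $\sm{1}{0}{0}{0}$, we conclude $\varphi_{D_1}(b_{\gamma_1} c_{\gamma_0}) = \epsilon$, which is non-zero. Thus $D_1$ has non-trivial image in $\frt_R^\irr$, hence generates this one-dimensional space, and together with $D^\red$ provides a basis for $\frt_R$.

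The principal technical care is in the last step: correctly tracking the interaction between the scalar embedding $R \hookrightarrow E$, the multiplication map $m: B \otimes_R C \to R$, and the reduction $E \otimes_R \F_p[\ep_1] \to E_1$, so that the computation $\varphi_{D_1}(b_{\gamma_1} c_{\gamma_0}) = \epsilon$ is unambiguous. Everything else is a formal consequence of the structure theorems recalled in the preceding sections.
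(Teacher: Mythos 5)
Your proof is correct and follows the same strategy as the paper: split along the exact sequence $0 \to \frt_{R^\red} \to \frt_R \to \frt_R^\irr \to 0$, compute $\dim \frt_{R^\red} = 1$ from Lemma~\ref{lem: reducible quotient} and $\dim \frt_R^\irr = 1$ from Proposition~\ref{prop: barJ^red is principal}, and identify $D_1$ as a lift of a generator of $\frt_R^\irr$. The only stylistic difference is in the last step: the paper disposes of it by invoking that $D_1$ is irreducible ``by construction,'' whereas you make this explicit by computing $\varphi_{D_1}(b_{\gamma_1}c_{\gamma_0}) = \ep \neq 0$ through the compatible GMA structure of Proposition~\ref{prop: GMA structure for E1} and the multiplication rule of the $1$-reducible GMA; this is a reasonable and slightly more transparent way to make the same point.
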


\begin{proof} 
Since there is an exact sequence
\[
0 \to \frt_{R^\red} \to \frt_R \to \frt_{R}^\irr \to 0
\] 
it is enough to show (1) and (2). Part (1) follows from the isomorphisms
\[
R^\red /pR^\red \cong \F_p[y]/(y^2)
\]
of Lemma \ref{lem: reducible quotient}.

Part (2) follows from Proposition \ref{prop: barJ^red is principal} (see the end of its proof) along with the fact that $D_1$ is irreducible, which is inherent to its construction in Lemma \ref{lem: D1 construction}. 
\end{proof}

\subsection{The $R$-module $C$ is torsion}
Having characterized $\bar R/\bar \m^2$, we begin toward calculating $\bar R/\bar \m^3$, ultimately showing in Corollary \ref{cor: constraint on S} that $\bar \m^2/\bar \m^3$ is at most 1-dimensional. The first step is the following proposition, which will be used to show that lifts of certain cotangent vectors to $\bar R/\bar \m^3$ must have product zero. The idea is that $C$ is a factor of the irreducible cotangent vector under \eqref{eq: red ideal diagram}, so it will be useful to show that $C$ is killed by the reducible cotangent vector.

In Proposition \ref{prop: BC bases}, we saw that $C$ is a cyclic $R$-module, generated by the element $c_{\gamma_0} \in C$. An important consequence of our running assumption $a_0 \cup c_0 \ne 0$ (see Proposition \ref{prop: rk2 assumption}) is that $C$ is not a free $R$-module. 

\begin{prop}
\label{prop: C is torsion}
The $R$-module $C$ is cyclic and not free. In fact, the annihilator of $C \otimes_{R,D^\red} \F_p[\ep_1]$ is $\epsilon$ (here the tensor product is with respect to the ring map $\varphi_{D^\red}: R \to \F_p[\ep_1]$ defined in Lemma \ref{lem: reducible quotient}).
\end{prop}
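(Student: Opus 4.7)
The plan is to use cyclicity of $C$ to reduce the statement to exhibiting one specific element in the annihilator with prescribed image under $D^\red$. By Proposition~\ref{prop: BC bases}, $C = R \cdot c_{\gamma_0}$, so setting $I := \Ann_R(c_{\gamma_0})$ gives
\[
C \otimes_{R,D^\red} \F_p[\ep_1] \cong \F_p[\ep_1]/\bar I,
\]
where $\bar I \subseteq \F_p[\ep_1]$ is the image of $I$. The class $c_{\gamma_0}$ generates the nonzero space $C/\m C$, so $I \subseteq \m$ and hence $\bar I \subseteq (\ep)$. The whole proof therefore reduces to producing one element of $I$ whose image under $D^\red$ is a nonzero multiple of $\ep$. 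Once that is done, the ``not free'' claim follows formally: a cyclic free $R$-module would tensor to $\F_p[\ep_1]$ itself, which has zero annihilator over itself.

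The desired relation will come from the $\US_{\ell_0}$ condition applied at the pair $(\Fr_{\ell_0}, \gamma_0) \in G_{\ell_0} \times I_{\ell_0}$, for $\Fr_{\ell_0}$ a Frobenius lift. The $(2,1)$-entry of the identity $(\rho_N(\Fr_{\ell_0}) - \kappa(\Fr_{\ell_0}))(\rho_N(\gamma_0) - 1) = 0$ in the universal GMA $E$ reads
\[
c_{\Fr_{\ell_0}}(a_{\gamma_0} - 1) + (d_{\Fr_{\ell_0}} - \kappa(\Fr_{\ell_0}))\,c_{\gamma_0} = 0 \quad \text{in } C.
\]
Using cyclicity, write $c_{\Fr_{\ell_0}} = \lambda\, c_{\gamma_0}$ for some $\lambda \in R$; commuting $R$-scalars past $c_{\gamma_0}$, this becomes $r \cdot c_{\gamma_0} = 0$ for
\[
r := \lambda(a_{\gamma_0} - 1) + (d_{\Fr_{\ell_0}} - \kappa(\Fr_{\ell_0})) \in I.
\]

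To compute $D^\red(r)$, I would use Lemma~\ref{lem: reducible quotient}, where $D^\red = \psi(\chi_1 \oplus \chi_2)$ with $\chi_1 = \omega(1+\ep a_0)$ and $\chi_2 = 1 - \ep a_0$. Our pinning ($a_\gamma \equiv \omega$, $d_\gamma \equiv 1$ modulo $\m$) forces $a_\gamma$ to be matched with $\chi_1$ and $d_\gamma$ with $\chi_2$ under $D^\red$, giving $D^\red(a_{\gamma_0}) = 1 + \ep$ and $D^\red(d_{\Fr_{\ell_0}}) = 1 - \nu\ep$ with $\nu := a_0(\Fr_{\ell_0})$. Meanwhile, the standard identification of $C/\m C$ with the dual of the relevant one-dimensional subspace of $H^1(\Z[1/Np], \F_p(-1))$ sends $\bar c_\gamma$ to evaluation at $\gamma$, so the normalization $c\up1(\gamma_0) = 1$ pins down $\bar\lambda = \mu := c\up1(\Fr_{\ell_0})$. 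Combined with $\kappa(\Fr_{\ell_0}) = \ell_0 \equiv 1 \pmod p$, this yields
\[
D^\red(r) = \mu\,\ep + (1 - \nu\ep) - 1 = (\mu - \nu)\,\ep.
\]

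The main obstacle is checking that $\mu \neq \nu$. For this I would invoke Proposition~\ref{prop: rk2 assumption} and Assumption~\ref{P1: assump: main}(3) to obtain the global nonvanishing $a_0 \cup c_0 \neq 0$ in $H^2(\Z[1/Np], \F_p(-1))$; the vanishing $H^2(\Q_{\ell_1}, \F_p(-1)) = 0$ (because $\ell_1 \not\equiv \pm 1 \pmod p$) together with Lemma~\ref{lem: Hasse} forces the local restriction at $\ell_0$ to also be nonzero. Because the $G_{\ell_0}$-actions on $\F_p$ and $\F_p(-1)$ are trivial, the standard tame cup-product formula evaluates this local class on $(\Fr_{\ell_0}, \gamma_0)$ as
\[
a_0(\Fr_{\ell_0})\,c\up1(\gamma_0) - a_0(\gamma_0)\,c\up1(\Fr_{\ell_0}) = \nu - \mu
\]
in $H^2(\Q_{\ell_0}, \F_p(-1)) \cong \F_p$, so $\nu - \mu \neq 0$. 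Thus $(\mu - \nu)\ep$ generates $(\ep)$ in $\F_p[\ep_1]$, giving $\bar I \supseteq (\ep)$; combined with $\bar I \subseteq (\ep)$ from the first paragraph, this forces $\bar I = (\ep)$ and completes the proof.
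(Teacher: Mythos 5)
Your proof is correct, but it takes a genuinely different route than the paper's. The paper argues by contradiction: assuming $\bar C := C \otimes_{R,D^\red}\F_p[\ep_1]$ is free, it extracts from the reducible quotient GMA an upper-triangular $\F_p[\ep_1]$-valued representation $\sm{\omega(1+\ep a_0)}{0}{\omega(c\up1+\ep c\up2)}{1-\ep a_0}$, whose homomorphism property forces $c\up2$ to cobound a class equal (up to sign) to $2\,a_0\cup c_0$, contradicting Proposition~\ref{prop: rk2 assumption}. You instead write $\bar C \cong \F_p[\ep_1]/\bar I$ via cyclicity and directly exhibit an element of $I=\Ann_R(c_{\gamma_0})$ landing on a generator of $(\ep)$, by reading off the $(2,1)$-entry of the $\US_{\ell_0}$ relation at $(\Fr_{\ell_0},\gamma_0)$; the nonvanishing of its image under $D^\red$ is then the nonvanishing of $a_0|_{\ell_0}\cup c_0|_{\ell_0}$ computed by the explicit tame-duality formula. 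Both arguments ultimately rest on $a_0\cup c_0\ne 0$, but the paper's consumes only the reducibility ideal and the homomorphism property, avoiding any explicit local cup-product computation, whereas yours additionally invokes the $\US_{\ell_0}$ condition and the concrete Demushkin-type formula $a_0(\Fr_{\ell_0})c\up1(\gamma_0)-a_0(\gamma_0)c\up1(\Fr_{\ell_0})$; the trade-off is that your route produces an explicit annihilating element, which is pleasant but not needed later. Two small imprecisions that happen not to matter: the map $\gamma\mapsto\bar c_\gamma$ only becomes a cocycle in $(C/\m C)\otimes\F_p(-1)$ after a twist by $\omega^{-1}$, so the correct reading is $\bar\lambda=\omega(\Fr_{\ell_0})c\up1(\Fr_{\ell_0})$ — equal to $c\up1(\Fr_{\ell_0})$ only because $\omega|_{G_{\ell_0}}=1$; and the antisymmetrized expression you write is the image of the class in $H^2(\Q_{\ell_0},\F_p(-1))\cong\F_p$, not literally the evaluation of a $2$-cocycle on $(\Fr_{\ell_0},\gamma_0)$.
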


\begin{proof} 
Because $BC=J^\red$ is non-zero, $C$ is also non-zero. 

Let $\bar C := C \otimes_{R,D^\red} \F_p[\ep_1]$. We will show that $\bar{C}$ is not a free $\F_p[\ep_1]$-module (in which case it must be isomorphic to $\F_p$), which implies that $C$ is not free as an $R$-module. To set up a contradiction, assume that $\bar{C}$ is a free $\F_p[\ep_1]$-module; we will show that this contradicts the assumption $a_0 \cup c_0 \ne 0$. 

    We know by Nakayama's lemma and Proposition \ref{prop: BC bases} that $\bar C$ is a cyclic $\F_p[\ep_1]$-module with generator $c_{\gamma_0}$. Because $D^\red$ is reducible, the $B$-coordinate $B_{E'}$ of the $\F_p[\ep_1]$-GMA $E'=E \otimes_{R, D^\red} \F_p[\ep_1]$ is a two-sided ideal; indeed, the reducibility implies that $B_{E'} \cdot C_{E'}$ is the reducibility ideal in $\F_p[\ep_1]$, which is the zero ideal. The quotient by $B_{E'}$ has the form 
    \[
    E'' := \ttmat{\F_p[\ep_1]}{}{\bar C}{\F_p[\ep_1]} \isoto \ttmat{\F_p[\ep_1]}{}{\F_p[\ep_1]}{\F_p[\ep_1]} \subseteq M_2(\F_p[\ep_1]) 
    \]
    (where we used $c_{\gamma_0}$ as a generator of $\bar C$ to draw the isomorphism) receiving a homomorphism from $\F_p[\ep_1][G_\Q]$ of the form 
    \[
    \ttmat{\omega(1 + \ep a_0)}{0}{\omega(c\up1 + \ep c\up2)}{1 - \ep a_0}. 
    \]
    
    In the coordinate expression, $c\up1$ appears because we have made a choice of GMA coordinates of $E$ compatible with $E \to E_1$ as in Proposition \ref{prop: GMA structure for E1}, and we use these coordinates under the surjection $E \rsurj E'$. 
    
    We have an equality of 2-cocycles valued in $\F_p(-1)$, 
    \[
    -dc\up2 = a_0 \smile c\up1 + c\up1 \smile (-a_0).
    \]
    The right hand side is in the cohomology class of $2 a_0 \cup c_0$. But our assumption $\rk_{\Z_p} \bT_{\ell_0} = 2$ implies that $a_0 \cup c_0 \neq 0$ in $H^2(\Z[1/Np], \F_p(-1))$ by Proposition \ref{prop: rk2 assumption}. Therefore such a $c\up2$ cannot exist. 
\end{proof}

\subsection{The ring $S:=\bar R/\bar{\mathfrak{m}}^3$} 
\label{subsec: S form}

Let $S := \bar R/\bar \m^3$. Because $S$ surjects onto $\bar R/\bar \m^2$ and Proposition \ref{prop: R-cotangent} describes the 2-dimensional cotangent space $\frt_R^* = \bar \m/\bar\m^2$, there are equivalences
\[
\dim_{\F_p} S = 3 \iff \dim_{\F_p} \bar R = 3 \iff \bar R \cong S \cong \frac{\F_p[x,y]}{(x^2,xy, y^2)},
\]
and Proposition \ref{prop: R-cotangent} characterizes $\bar R$ completely in this case. Otherwise, a priori we know that $\dim_{\F_p} S \leq 6$. Our goal is to refine this bound to $\dim_{\F_p} S \leq 4$ and to show that $(\bar J^\red)^2 = \bar \m^2$.

For an ideal $I$ in $R$ or $\bar R$, let $I_S \subset S$ denote its image in $S$. Note that $\dim_{\F_p} S = 3+ \dim_{\F_p}(\m_S^2)$.

\begin{prop}
\label{prop: y2 span}
The inclusion of ideals $(J^\red_S)^2 \subset \m_S J^\red_S$ is an equality. 
\end{prop}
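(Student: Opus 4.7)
The plan is to establish the strictly stronger equality $\bar\m \bar J^\red = (\bar J^\red)^2$ already in $\bar R$; passing to the quotient $S = \bar R/\bar\m^3$ immediately gives the proposition, and the containment $(J^\red_S)^2 \subseteq \m_S J^\red_S$ is automatic from $J^\red_S \subseteq \m_S$.

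First I would reduce the problem to a single generator-level check. By Proposition \ref{prop: barJ^red is principal}, the ideal $\bar J^\red \subseteq \bar R$ is principal, generated by $b_{\gamma_1} c_{\gamma_0}$. By Lemma \ref{lem: reducible quotient}, the reduction $\bar R/\bar J^\red$ is isomorphic to $\F_p[Y]/(Y^2)$ via $Y \mapsto y$, where $y := a_{\gamma_0}-1$; hence $\bar\m = y\bar R + \bar J^\red$. Multiplying by $\bar J^\red$ yields
\[
\bar\m \bar J^\red \;=\; y\bar J^\red + (\bar J^\red)^2,
\]
so it suffices to prove that $y \cdot b_{\gamma_1} c_{\gamma_0} \in (\bar J^\red)^2$.

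The heart of the argument is an application of Proposition \ref{prop: C is torsion}. That proposition asserts that $\epsilon \in \F_p[\epsilon_1]$ annihilates $C \otimes_{R,D^\red} \F_p[\epsilon_1]$; since $D^\red$ sends $y$ to $\epsilon$ and has kernel $J^\red + pR$, this translates into $y \cdot c_{\gamma_0} \in \bar J^\red \cdot \bar C$. Because $\bar C$ is cyclic over $\bar R$ with generator $c_{\gamma_0}$ (Proposition \ref{prop: BC bases}), I can write $y c_{\gamma_0} = j \cdot c_{\gamma_0}$ for some $j \in \bar J^\red$. Using $R$-bilinearity of the GMA multiplication $B \otimes_R C \to R$, I then compute
\[
y \cdot (b_{\gamma_1} c_{\gamma_0}) \;=\; b_{\gamma_1} \cdot (y c_{\gamma_0}) \;=\; b_{\gamma_1} \cdot (j c_{\gamma_0}) \;=\; j \cdot (b_{\gamma_1} c_{\gamma_0}) \;\in\; \bar J^\red \cdot \bar J^\red \;=\; (\bar J^\red)^2,
\]
completing the argument.

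I don't expect any real obstacle beyond bookkeeping for the GMA module structures involved (keeping straight which factors live in $\bar R$ versus $\bar C$). All of the substantive content is packaged into Proposition \ref{prop: C is torsion}, whose proof in turn relies on the hypothesis $a_0 \cup c_0 \ne 0$ equivalent to Assumption \ref{P1: assump: main}(3); so the present proposition is in essence a module-theoretic amplification of that running assumption.
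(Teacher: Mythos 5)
Your proof is correct and follows essentially the same route as the paper's: both arguments boot up from Proposition \ref{prop: C is torsion} (translated into the containment $y\bar C \subset \bar J^\red\bar C$), then combine cyclicity of $C$ with generator $c_{\gamma_0}$, principality of $\bar J^\red$ with generator $b_{\gamma_1}c_{\gamma_0}$, and linearity of the GMA multiplication to slide $y$ into $\bar J^\red$. The only cosmetic difference is that you carry out the computation in $\bar R$ rather than in $S = \bar R/\bar\m^3$, which yields the slightly stronger equality $\bar\m\bar J^\red = (\bar J^\red)^2$ and then passes to $S$ for free; the mechanism and the key lemma invoked are identical.
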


\begin{proof}
Let $C_S := C \otimes_R S$, and likewise $B_S := B \otimes_R S$. 

We claim that $\m_S C_S \subset J^\red_S C_S$, which we will derive from Proposition \ref{prop: C is torsion}.  Proposition \ref{prop: C is torsion}, translated into our current notation using $S$, states that the maximal ideal of $S^\red$ kills $C_S^\red$. Lifting this result from $S^\red$-modules to $S$-modules, we find that $\m_S C_S \subset J^\red_S C_S$, which is the desired result. 

We derive from the equality $\m_S C_S = J^\red_S C_S$ that, for all $x \in \m_S$, there exists some $z \in J^\red_S$ such that
\begin{equation}
    \label{eq: mS to JredS on C}
    x c_{\gamma_0} = z c_{\gamma_0},
\end{equation}
and that every element of $\m_S C_S$ has this form because $c_{\gamma_0}$ generates $C_S$. We apply this to the surjection of $S$-modules
\[
C_S \otimes_S B_S \rsurj J^\red_S, 
\]
also using that $c_{\gamma_0}b_{\gamma_1}$ is a generator of the principal ideal $J^\red_S$ (Proposition \ref{prop: barJ^red is principal}). Namely, finding that every element of $\m_S J^\red_S = \m_S C_S B_S$ has the form 
\[
x (c_{\gamma_0} b_{\gamma_1} s) = (x c_{\gamma_0})(b_{\gamma_1}s) = z(c_{\gamma_0}b_{\gamma_1}) s \in (J^\red_S)^2,
\]
for some $s \in S$, and with $x$ and $z$ as in \eqref{eq: mS to JredS on C}. 
\end{proof}

\begin{cor}
\label{cor: constraint on S} 
Either $\dim_{\F_p} S = 3$ or $\dim_{\F_p} S = 4$. In general,
\[
\dim_{\F_p} S = \dim_{\F_p} J^\red_S + 2 = \dim_{\F_p} (J^\red_S)^2 + 3.
\]
Consequently, $(\bar J^\red)^2 = \bar \m^2$ in $\bar R$. 
\end{cor}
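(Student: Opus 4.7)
The plan is to filter $S$ by powers of $\m_S$ and reduce the whole statement to computing $\dim_{\F_p}\m_S^2$. Since Proposition \ref{prop: R-cotangent} gives $\dim_{\F_p}\m_S/\m_S^2 = 2$ and $\m_S^3 = 0$ by definition of $S$, we have $\dim_{\F_p} S = 3 + \dim_{\F_p}\m_S^2$. So the corollary will follow once I show $\dim_{\F_p}\m_S^2 \leq 1$ together with the identifications $J^\red_S = \F_p u \oplus \m_S^2$ and $(J^\red_S)^2 = \m_S^2$ for a suitable generator $u$.

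The key choice is to pick generators $u, z$ of $\m_S$ as follows: let $u$ be the image of $b_{\gamma_1}c_{\gamma_0}$ in $S$, and let $z$ be any lift of a generator of $\frt_{R^\red}^*$, for instance the image of $a_{\gamma_0} - 1$ via Lemma \ref{lem: reducible quotient}. By Proposition \ref{prop: barJ^red is principal}, $J^\red_S$ is principal, generated by $u$, and $u \notin \m_S^2$; moreover, by Proposition \ref{prop: R-cotangent}, $\{u, z\}$ descends to a basis of $\m_S/\m_S^2$. Because $\m_S^3 = 0$, every element of $J^\red_S = (u)$ takes the form $\lambda u + \mu u^2 + \nu uz$ with $\lambda, \mu, \nu \in \F_p$.

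The main step is to combine two further inputs. First, Lemma \ref{lem: Jm2 and Jred} (the inclusion $\Jm^2 \subset J^\red$, reduced modulo $p$ and then to $S$) gives $\m_S^2 \subseteq J^\red_S$; then the description of $J^\red_S$ together with $u \notin \m_S^2$ forces $J^\red_S \cap \m_S^2 = \m_S u = \F_p u^2 + \F_p uz$. Second, Proposition \ref{prop: y2 span} asserts the equality $(J^\red_S)^2 = \m_S J^\red_S$; expanding, the left side equals $\F_p u^2$ (since $\m_S^3=0$ kills all triple products), while the right side equals $\F_p u^2 + \F_p uz$, and comparing forces $uz \in \F_p u^2$. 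Combining these observations yields
\[
\m_S^2 \subseteq \m_S u = \F_p u^2 + \F_p uz = \F_p u^2,
\]
so $\dim_{\F_p}\m_S^2 \leq 1$ and $\dim_{\F_p} S \in \{3, 4\}$. Reading off the resulting descriptions, $J^\red_S = \F_p u \oplus \m_S^2$ (direct sum because $u \notin \m_S^2$) and $(J^\red_S)^2 = \F_p u^2 = \m_S^2$, which deliver the two displayed equalities.

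I expect the only subtle point to be the bookkeeping needed to extract $uz \in \F_p u^2$ from Proposition \ref{prop: y2 span}: it uses simultaneously that $u \notin \m_S^2$ (Proposition \ref{prop: barJ^red is principal}) and that $\m_S^3 = 0$ collapses all triple products, so that $\m_S J^\red_S$ and $(J^\red_S)^2$ can both be written down as small explicit $\F_p$-spans. Once $uz$ has been absorbed into $\F_p u^2$, the remaining dimension count is routine linear algebra.
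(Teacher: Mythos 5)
Your proof is correct and relies on the same three ingredients as the paper's: principality of $J^\red_S$ (Proposition \ref{prop: barJ^red is principal}), the equality $(J^\red_S)^2 = \m_S J^\red_S$ (Proposition \ref{prop: y2 span}), and two-dimensionality of the cotangent space. The difference is mostly presentational: the paper filters $J^\red_S$ by $J^\red_S \supset \m_S J^\red_S \supset 0$ and works with graded pieces (obtaining $\dim S = 2 + \dim J^\red_S$ directly from $\dim S^\red = 2$ via Lemma \ref{lem: reducible quotient}), whereas you pick explicit generators $u,z$ of $\m_S$, expand every ideal as an $\F_p$-span, and use Lemma \ref{lem: Jm2 and Jred} to push $\m_S^2$ into $J^\red_S$. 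Your calculation buys a slightly stronger intermediate description ($J^\red_S = \F_p u \oplus \m_S^2$ and $(J^\red_S)^2 = \m_S^2$), which makes the two displayed equalities literal bookkeeping, at the cost of more explicit coordinate work; the paper's argument is more compact but leaves the inequality $\dim \m_S J^\red_S \leq 1$ a little terse (it follows because $(J^\red_S)^2$ is cyclic and killed by $\m_S$). Both are valid.
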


\begin{proof}
Because $S^\red = S/J^\red_S$ is $2$-dimensional, we have $\dim_{\F_p} S = 2 + \dim_{\F_p} J^\red_S$ in general. 

Because $J^\red_S \subset \m_S$ and $\m_S^3 = 0$, we have a filtration
\[
J^\red_S \supset \m_S J^\red_S \supset \m_S^2 J^\red_S = 0. 
\]
The principality of $J^\red_S$ (Proposition \ref{prop: barJ^red is principal}) implies that 
\[
\dim_{\F_p} \frac{J^\red_S}{\m_S J^\red_S} = 1.
\]
The equality $\m_S J^\red_S = (J^\red_S)^2$ of Proposition \ref{prop: y2 span} implies that $\dim_{\F_p} \m_S J^\red_S \leq 1$.  

The final claim follows from Nakayama's lemma. 
\end{proof}

\section{Galois-theoretic implications of $\dim_{\F_p}R/pR \ge 4$}
\label{sec: R and Galois}

Throughout this section, we assume that $\dim_{\F_p}R/pR \ge 4$ (or, equivalently, that $\dim_{\F_p} S =4$, where $S=R/(p,\m^3)$) and derive consequences for Galois cohomology. The main results are Propositions \ref{prop: a1 vanish at ell1 when S large} and \ref{prop: key alpha beta equation}, which together essentially prove one direction of Theorem \ref{P1: thm: main intro} from the introduction. Of note, Proposition \ref{prop: key alpha beta equation} gives an optimal presentation of $S$. 

\subsection{A GMA over $S$ when $\dim_{\mathbb{F}_p} S = 4$}
Henceforth, let $y$ be the image of $b_{\gamma_1}\cdot c_{\gamma_0}$ in $S$, which generates the principal ideal $J^\red_S \subset S$. According to Corollary \ref{cor: constraint on S}, the $\F_p$-dimension of $J^\red_S$ is $2$. Since $J^\red_S$ is principal, its annihilator $\Ann_S(J^\red_S)$ is also 2-dimensional. Consider the ring homomorphism
\begin{equation}
\label{eq: S mod Ann Jred}
S \rsurj \frac{S}{\Ann_S(J^\red_S)} \cong \F_p[\ep_1], 
\end{equation}
where the isomorphism $S/\Ann_S(J^\red_S) \isoto \F_p[\ep_1]$ is determined by $y \mapsto \ep$. This is possible because, using Corollary \ref{cor: constraint on S}, $y^2$ spans $(J^\red_S)^2 = \m_S^2$, which is non-zero in $S$ under the assumption that $\dim_{\F_p} S = 4$. 

\begin{defn}
We set up the following coordinates for objects within $S$. 
\begin{itemize}
    \item We define a $\F_p[\ep_1]$-valued pseudorepresentation $D_y : G_{\Q,Np} \to \F_p[\ep_1]$ by associating it to the local homomorphism 
\[
\varphi_{D_y} : R \rsurj S \rsurj \F_p[\ep_1]
\]
determined by the isomorphism $\frac{S}{\Ann_S(J^\red_S)} \isoto \F_p[\ep_1]$ above. 
\item We also allow ourselves to identify $J^\red_S$ with $\F_p[\ep_1]$, as $S$-modules where $\F_p[\ep_1]$ has structure map $\varphi_{D_y}$, under the isomorphism
\[
\F_p[\ep_1] \buildrel{\eqref{eq: S mod Ann Jred}}\over\isoto \frac{S}{\Ann_S(J^\red_S)} \isoto J^\red_S,
\]
where the rightmost isomorphism is determined by $s \mapsto ys$. 
\item Since the image of $b_{\gamma_0} \cdot c_{\gamma_0}$ in $S$ is in $\m_S J_S^\red=y^2S$ (Proposition \ref{prop: barJ^red is principal}), we see that there is a unique $\eta \in \F_p$ such that
\[
b_{\gamma_0} \cdot c_{\gamma_0} = \eta y^2.
\]
\end{itemize}
\end{defn}

We call the map \eqref{eq: S mod Ann Jred} and the following maps out of $B_S$ and $C_S$, collectively, \emph{coordinate maps}. 
\begin{lem}\label{lem:coord maps}
Assume that $\dim_{\F_p}S=4$. There are surjective $S$-module homomorphisms (which we will call \emph{coordinate maps}) 
\[
B_S \onto \F_p \oplus \F_p[\ep_1], \quad s_0 b_{\gamma_0} + s_1 b_{\gamma_1} \mapsto (\bar{s}_0, \varphi_{D_y}(s_1)),
\]
where $\bar{s}_0 \in \F_p$ is the image of $s_0$ under the augmentation $S \to \F_p$, and
\[
C_S \onto \F_p[\ep_1],\quad s c_{\gamma_0} \mapsto \varphi_{D_y}(s).
\]

Using these surjections $B_S \onto \F_p \oplus \F_p[\ep_1]$ and $C_S \onto \F_p[\ep_1]$ and the identification $J^\red_S=\F_p[\ep_1]$ sending $y$ to $1$, the GMA-multiplication map 
\[
B_S \otimes_S C_S \to J_S^\red
\]
induces the map
\begin{equation}
\label{eq:BC in E'_S}
(\F_p \oplus \F_p[\ep_1]) \otimes_{\F_p[\ep_1]} \F_p[\ep_1] \to  \F_p[\ep_1]
\end{equation}
given by 
\[
(u,v) \otimes z \mapsto \eta \ep uz + vz.
\]
\end{lem}

\begin{proof}
The only coordinate map that does not obviously exist as defined is that of $B_S$: $B_S$ is non-cyclic and generated by $\{b_{\gamma_0}, b_{\gamma_1}\}$, and we must show that any relation between the generators is sent by the coordinate map to $0$. First, observe that any relation $g b_{\gamma_0} + h b_{\gamma_1} = 0 \in B_S$ (for $g,h \in S$) must have $g,h \in \m_S$, since $B_S$ is not cyclic. Therefore, no relation $g b_{\gamma_0} + h b_{\gamma_1}$ can possibly map to something non-zero under the coordinate map for $B_S$, since this would imply that
\[
0 = g b_{\gamma_0}c_{\gamma_0} + h b_{\gamma_1}c_{\gamma_0} = g\eta y^2 + h y = hy \text{ in } J^\red_S,
\]
for some $h$ such that $\varphi_{D_y}(h) \neq 0$, contradicting $\Ann_S(y) = \ker \varphi_{D_y}$. Consequently, the coordinate map for $B_S$ is well defined. 

It remains to verify that the square of surjections
\[
\xymatrix{
B_S \otimes_S C_S \ar[r] \ar[d] & J_S^\red \ar[d] & \\
(\F_p \oplus \F_p[\ep_1]) \otimes_{\F_p[\ep_1]} \F_p[\ep_1] \ar[r] &  \F_p[\ep_1] & 
}
\]
commutes, which we can check on the generating set $\{b_{\gamma_0} \otimes c_{\gamma_0},b_{\gamma_1} \otimes c_{\gamma_0}\}$ of $B_S \otimes_S C_S$. 

The coordinates of $b_{\gamma_0} \otimes c_{\gamma_0}$ are $(1,0) \otimes 1$, which maps to $\eta\ep \in \F_p[\ep_1]$; on the other hand, $b_{\gamma_0}c_{\gamma_0} \in J^\red_S$ has the form $\eta y^2$ by definition of $\eta$, which also maps to $\eta \ep \in \F_p[\ep_1]$ under the coordinate map for $J^\red_S$. 

The coordinates of $b_{\gamma_1} \otimes c_{\gamma_0}$ are $(0,1) \otimes 1$, which maps to $1 \in \F_p[\ep_1]$; on the other hand, $b_{\gamma_1}c_{\gamma_0} \in J^\red_S$ equals $y$, which also maps to $1 \in \F_p[\ep_1]$ under the coordinate map $J^\red_S \isoto \F_p[\ep_1]$. 
\end{proof}

Let $E_S'$ denote the $S$-GMA
\begin{equation}
    \label{eq: ESprime def}
    E_S'=\ttmat{S}{\F_p \oplus \F_p[\ep_1]}{\F_p[\ep_1]}{S}
\end{equation}
where $\F_p[\ep_1]$ is a $S$-module via the map $\varphi_{D_y} :S \to \F_p[\ep_1]$, and where the GMA-multiplication map is given by \eqref{eq:BC in E'_S}. By Lemma \ref{lem:coord maps}, the coordinate maps comprise a surjective morphism of $S$-GMAs
\[
E \otimes_R S \onto E_S'.
\]

\subsection{The coordinates of a $S$-GMA valued Galois representation when $\dim_{\mathbb{F}_p} S = 4$} 

Now consider the Cayley--Hamilton representation $\rho_S':G_{\Q,Np} \to (E_S')^\times$ obtained as composition of the universal Cayley--Hamilton representation $\rho: G_{\Q,Np} \to E^\times$ with $E \rsurj E_S'$. We are interested in endowing it with coordinates and comparing these coordinates to the 1-reducible GMA representation $\rho_1 : G_{\Q,Np} \to E_1^\times$ of \eqref{eq: define d1}. 

To this end, the coordinates of $E_S'$ suffice, modulo the need for complete coordinates for $S$, which we now supply. To introduce this definition, we point out that $\{D^\red,D_y\}$ is a basis of $\frt_R$ according to Proposition \ref{prop: R-cotangent}, because $D^\red$ is reducible, $D_y$ is irreducible, and $\dim_{\F_p} \frt_R = 2$.
\begin{defn}
\label{defn: x}
Let $x \in S$ denote a generator for $\Ann_S(J^\red_S)$ whose image $\bar x \in \frt_R^* = \m_S/\m_S^2$ makes $\{\bar x, \bar y\} \subset \frt_R^*$ a dual basis to $\{D^\red, D_y\}$. 
\end{defn}

Here are the important properties of this choice of $x$; we also justify in this lemma that such a choice of $x$ exists. 
\begin{lem}
\label{lem: S coordinates}
Assume $\dim_{\F_p} S = 4$. A choice of $x \in S$ as in Definition \ref{defn: x} induces a presentation of $S$, 
\[
\frac{\F_p\lb X,Y\rb}{(X^2 - \mu Y^2,XY, Y^3)} \isoto S, \qquad X \mapsto x, Y \mapsto y,
\]
for some unique $\mu \in \F_p$. The possible choices of $x$ are a torsor under the 1-dimensional $\F_p$-vector space $(y^2) = (J^\red_S)^2$. 
\end{lem}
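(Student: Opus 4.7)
The plan is to construct the isomorphism by writing down the obvious surjection $\F_p\lb X, Y\rb \rsurj S$ sending $X \mapsto x$ and $Y \mapsto y$, identifying the three claimed relations, and then using a dimension count to conclude that the induced map from the quotient is an isomorphism.

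First, invoking Nakayama's lemma: by Definition \ref{defn: x} and Proposition \ref{prop: R-cotangent}, the pair $\{\bar x, \bar y\}$ is an $\F_p$-basis of $\m_S/\m_S^2$, so $x$ and $y$ generate $\m_S$. Hence there is a surjection $\phi: \F_p\lb X, Y \rb \rsurj S$ with $\phi(X) = x$, $\phi(Y) = y$. I would then verify the three relations: $\phi(XY) = xy = 0$ since $x \in \Ann_S(J^\red_S) = \Ann_S(y)$; $\phi(Y^3) = y^3 \in \m_S^3 = 0$; and $\phi(X^2) = x^2$ lies in $\m_S^2 = \F_p\cdot y^2$ by Proposition \ref{prop: y2 span} and Corollary \ref{cor: constraint on S}, so $x^2 = \mu y^2$ for a unique $\mu \in \F_p$. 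Thus $\ker(\phi) \supset (X^2-\mu Y^2, XY, Y^3)$, giving a surjection
\[
\bar\phi: \frac{\F_p\lb X, Y\rb}{(X^2 - \mu Y^2, XY, Y^3)} \rsurj S.
\]
The source is spanned as an $\F_p$-vector space by $\{1, X, Y, Y^2\}$, since the three relations kill every other monomial in $X,Y$ of degree $\le 2$ or higher, so the source has dimension at most $4$. Since $\dim_{\F_p} S = 4$ by hypothesis, the surjection $\bar\phi$ is an isomorphism.

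For the torsor statement, I would first observe that $y^2 \in \Ann_S(y)$ (since $y^3 \in \m_S^3 = 0$) and $\overline{y^2} = 0$ in $\m_S/\m_S^2$, so modifying $x$ by any element of $(y^2) = \F_p\cdot y^2$ preserves both defining conditions: that the element annihilates $J^\red_S$, and that its class is dual to $D^\red$ in $\{\bar x, \bar y\}$. Conversely, any two valid choices $x, x'$ have the same image in $\m_S/\m_S^2$, so $x - x' \in \m_S^2 = \F_p\cdot y^2$, and they both lie in $\Ann_S(J^\red_S)$ automatically. This identifies the set of valid choices with a $(y^2)$-torsor.

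I do not expect any real obstacle in this proof; the key inputs (principality of $J^\red_S$, the equality $\m_S J^\red_S = (J^\red_S)^2$, and the structure of $\frt_R$) have all been established in the preceding sections. The only subtle point is the well-definedness of $\mu$: since $(x + c y^2)^2 = x^2 + 2cxy^2 + c^2 y^4 = x^2$ (using $xy = 0$ and $y^4 \in \m_S^4 = 0$), the invariant $\mu$ is in fact independent of the choice of $x$ within its torsor and is an invariant of the pair $(S, y)$.
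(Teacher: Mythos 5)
Your proof is correct and follows essentially the same route as the paper's: construct the surjection $\phi: \F_p\lb X,Y\rb \rsurj S$, verify that $XY$, $Y^3$, and $X^2 - \mu Y^2$ lie in $\ker\phi$ using $x \in \Ann_S(y)$, $\m_S^3 = 0$, and $x^2 \in \m_S^2 = (y^2)$, then finish by a dimension count; the torsor and uniqueness-of-$\mu$ arguments also match. The one thing the paper's proof does that yours omits is a justification that an element $x$ as in Definition \ref{defn: x} actually exists (the paper flags this explicitly just before the lemma), but since the lemma's statement takes $x$ as given, your argument is complete for the statement as written.
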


\begin{proof}
    The ideal $\Ann_S(J^\red_S) \subset S$ is contained in $\m_S$ because $J^\red_S \neq 0$. On the other hand, $\Ann_S(J^\red_S)$ is not contained in $\m_S^2$ because $\dim_{\F_p} \m_S^2 = 1$ while $\dim_{\F_p} \Ann_S(J^\red_S) = 2$. Therefore $\Ann_S(J^\red_S)$ has 1-dimensional image under the projection $\m_S \rsurj \m_S/\m_S^2 = \frt_R^*$. This image is complementary to $(\frt_R^\irr)^* = \langle \bar y\rangle$ because $y^2 \neq 0$, yet every element of $\Ann_S(J^\red_S)$ kills the generator $y$ of $J^\red_S)$. Similarly, $\Ann_S(J^\red) \subset S$ is the kernel of $\varphi_{D_y}$, so there exists a generator $x$ of $\Ann_S(J^\red_S)$ such that 
    \[
    \{\bar x, \bar y\} \text{ is a dual basis to } \{D^\red, D_y\}. 
    \]
    In particular, $x$ and $y$ generate $S$ as an $\F_p$-algebra, and we have a surjection $\phi: \F_p\lb X,Y\rb \rsurj S$ via $X \mapsto x, Y \mapsto y$. 
    
    The next goal is to show the existence of $\mu \in \F_p$ such that $(X^2 - \mu Y^2,XY, Y^3) \subset \ker \phi$. This will suffice to prove the presentation, because the quotient if $\F_p\lb X,Y\rb$ by this ideal is 4-dimensional over $\F_p$, like $S$. 
    
    Clearly $Y^3 \in \ker \phi$, since $\m_S^3 = 0$. Likewise, we know that $XY \in \ker \phi$ because $x \in S$ satisfies $x J^\red_S = 0$ by definition, and $y$ is a generator of $J^\red_S$. Finally, the existence of $\mu \in \F_p$ such that $X^2 - \mu Y^2 \in \ker \phi$ follows from the principality of $J^\red_S = (y)$ and the fact that $\m_S^2 = (J^\red_S)^2 = (y^2)$ is 1-dimensional over $\F_p$ (Propositions \ref{prop: barJ^red is principal} and \ref{prop: y2 span}), since we know from the start that $x^2 \in \m_S^2$. 
    
    The claim that the various choices of $x$ satisfying Definition \ref{defn: x} are a torsor under $(y^2)$ follows from the fact that $\{x,y^2\}$ is an $\F_p$-basis for $\Ann_S(J^\red)$, and that the projection of $gx + hy^2$ ($g,h\in \F_p$) to $\frt_R^*$ equals $g \bar x$. This also makes the uniqueness of $\mu$ clear, since $x^2$ only depends upon $\bar x$. 
\end{proof}

Combining the coordinates of $S$ from Lemma \ref{lem: S coordinates} with the coordinates for the off-diagonal parts of $E_S'$ from \eqref{eq: ESprime def}, we produce a coordinate-wise description of $\rho_S' : G_{\Q,Np} \to (E_S')^\times$.
\begin{equation}
\label{eq: E_S coordinates}
\rho_S' = \ttmat
{\omega(1 + y a\upp1 + y^2 a\upp2 + x a_0)}
{(b_0\up1, b\up1 + y b\upp2)}
{\omega(c\up1 + y c\upp2)}
{1 + yd\upp1 + y^2d\upp2 - x a_0'} 
\end{equation}
for some cochains 
\begin{itemize}
    \item $a\upp1, a\upp2, d\upp1, d\upp2: G_{\Q,Np} \to \F_p$,
    \item $b\upp2 : G_{\Q,Np} \to \F_p(1)$,
    \item $c\upp2 : G_{\Q,Np} \to \F_p(-1)$,
\end{itemize}
and cocycles $a_0$, $b\up1 = b_1\up1$, $b_0\up1$, and $c\up1$ defined in Definition \ref{P1: defn: pinned cocycles}. The reason that we find these previously defined cocycles among these coordinates is 
\begin{itemize}
    \item for $a_0$: the tangent vector $D^\red \in \frt_R$ is dual to $\bar x \in \frt_R^*$, and we observe that the pseudorepresentation induced by $\rho_S \otimes_{S, \varphi_{D^\red}} \F_p[\ep_1]$ is exactly $D^\red$ (keep in mind that $\varphi_{D^\red}(x) =\ep$, $\varphi_{D^\red}(y) = 0$)
    \item for the remaining cocycles: the presence of dual bases of the dual vector spaces of \eqref{eq: BC duals} (see the proof of Proposition \ref{prop: barJ^red is principal}), along with the normalization of both the generators of $B_S, C_S$ and the cocycles $b\up1$, $b_0\up1$, and $c\up1$ in terms of the elements $\gamma_0, \gamma_1$ of inertia groups. 
\end{itemize}

Next, we are interested in identifying $a\upp1$ with the $a\up1$ constructed in Lemma \ref{P1: lem: produce a1}, which implies the similar identification of $d\upp1$ with $d\up1 = b\up1 c\up1 - a\up1$. This will produce a surjection from $\rho_S'$ onto the $\rho_1 : G_{\Q,Np} \to E_1^\times$ constructed in Lemma \ref{lem: D1 construction} and implies that $D_y = D_1 := \psi(\rho_1)$. The key is the comparison of differential equations: the homomorphism property of $\rho_S'$ implies that $a\upp1$ satisfies the differential equation
\begin{equation}
    \label{eq: a1 of rhoS boundary}
    -da\upp1 = b\up1 \smile c\up1, 
\end{equation}
which $a\up1$ also satisfies (Lemma \ref{P1: lem: produce a1}). We note that the fact that $\rho'_S$ has constant determinant $\omega$ implies that $d\upp1 = b\up1c\up1 - a\up1$, just as in the discussion of $d\up1$ in \S\ref{subsec: construct rho1}. 

There are even more differential equations implied by the fact that $\rho_S'$ is a homomorphism, 
\begin{gather}
    \label{eq: c2}
    -dc\upp2 = c\up1 \smile a\upp1 + d\upp1 \smile c\up1  \\ 
    \label{eq: a2}
    -da\up2 = a^{(1)} \smile a^{(1)} + b\up1 \smile c^{(2)} + (b^{(2)} + \eta b_0\up1) \smile c^{(1)} + \mu a_0 \smile a_0  \\
    \label{eq: b2}
    -db\up2 = a\up1 \smile b\up1 + b\up1 \smile d\up1.
\end{gather}
In particular, the 2-cocycles on the right-hand-sides of these equations are coboundaries. 

\begin{lem}
The two 1-cochains $a\up1, a\upp1 : G_{\Q,Np} \to \F_p$ are equal. Consequently, $D_y = D_1 : G_{\Q,Np} \to \F_p[\ep_1]$. 
\end{lem}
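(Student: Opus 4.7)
The approach is to verify that $a\upp1$ satisfies each of the three defining properties of $a\up1$ from Lemma \ref{P1: lem: produce a1}; by the uniqueness asserted there, $a\upp1 = a\up1$ will follow. The consequence $D_y = D_1$ is then immediate: the common determinant $\det = \omega$ forces $d\upp1 = b\up1 c\up1 - a\upp1 = d\up1$, so $\Tr_{D_y} = \Tr_{D_1}$ as functions on $G_{\Q,Np}$, and both have the same determinant.

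Property (1) is precisely equation \eqref{eq: a1 of rhoS boundary}, so $a\up1 - a\upp1 \in Z^1(\Z[1/Np],\F_p) = \Hom(G_{\Q,Np}^{\ab},\F_p)$. This space has basis $\{a_0, a_p\}$: the prime $\ell_1$ contributes no ramified character because $\ell_1 \not\equiv 1 \pmod{p}$, and $\Q$ has no everywhere-unramified $p$-power extensions. Writing $a\up1 - a\upp1 = \lambda_0 a_0 + \lambda_p a_p$, the task reduces to showing $\lambda_p = 0$ and $\lambda_0 = 0$.

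For $\lambda_p = 0$, I will verify Property (2) for $a\upp1$ using the finite-flatness at $p$ of the base change $\tilde\rho := \rho_S' \otimes_{S,\varphi_{D_y}} \F_p[\ep_1]$. Conjugating $\tilde\rho|_p$ by $\sm{1}{0}{-x_{c\up1}}{1}$ as in the proof of Lemma \ref{lem: D1 construction} (together with a further $\ep$-order correction of the same shape that absorbs the residual $\ep c\upp2$ term) produces an upper-triangular form whose upper-left diagonal character is $\omega(1+\ep(a\upp1 - b\up1\smile x_{c\up1}))$. Proposition \ref{prop: finite-flat implies upper tri} forces this character to factor as $\omega\theta_1$ with $\theta_1$ unramified, so $(a\upp1 - b\up1\smile x_{c\up1})|_{I_p} = 0$. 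Subtracting the analogous identity for $a\up1$ gives $\lambda_0 a_0|_{I_p} + \lambda_p a_p|_{I_p} = 0$; since $a_0|_{I_p} = 0$ and $a_p|_{I_p}\ne 0$, this forces $\lambda_p = 0$.

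For $\lambda_0 = 0$, I will extract the cohomological form of Property (3) for $a\upp1$ from $\US_{\ell_0}$. At $\ell_0$ we have $\omega|_{\ell_0} = 1$, $b\up1|_{\ell_0} = 0$ (Lemma \ref{lem: log ell1 is zero}), and $d\upp1|_{\ell_0} = -a\upp1|_{\ell_0}$; the $C$-coordinate of the matrix product $(\tilde\rho|_{\ell_0}(\sigma)-1)(\tilde\rho|_{\ell_0}(\tau)-1)$ then collapses to $\ep(c\up1(\sigma)a\upp1(\tau) - a\upp1(\sigma)c\up1(\tau))$, which must vanish for $(\sigma,\tau) \in G_{\ell_0}\times I_{\ell_0} \cup I_{\ell_0}\times G_{\ell_0}$. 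Specializing $\tau = \gamma_0$ and using $c\up1(\gamma_0) = 1$ shows $a\upp1|_{\ell_0}$ is a scalar multiple of $c\up1|_{\ell_0}$ cochain-wise, hence $[a\upp1|_{\ell_0}] \in \F_p\cdot[\zeta\cup c_0|_{\ell_0}]$. Since $[a\up1|_{\ell_0}]$ lies on the same line, so does $\lambda_0[a_0|_{\ell_0}]$; but $[a_0|_{\ell_0}]$ and $[\zeta\cup c_0|_{\ell_0}]$ are a basis of $H^1(\Q_{\ell_0},\F_p)$ by Proposition \ref{prop: rk2 assumption}(8), so $\lambda_0 = 0$. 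The main obstacle is the careful GMA bookkeeping: the $\F_p$-summand of $\tilde B$ is $\ep$-torsion (because $y$ acts on it through the augmentation of $S$), so the $B$- and $A$-coordinates of the $\US_{\ell_0}$ matrix equation carry no information, and the useful identity must be extracted from the $C$-coordinate.
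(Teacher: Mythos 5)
Your argument is correct and follows the paper's overall strategy: show that $a\upp1$ satisfies the three characterizing properties of $a\up1$ from Lemma \ref{P1: lem: produce a1}, then invoke uniqueness. Properties (1) and (2) are established essentially as in the paper. Where you diverge is property (3). The paper derives it from the \emph{homomorphism} condition on $\rho_S'$ alone: restricting \eqref{eq: c2} to $\ell_0$ and applying the alternating property of the local Tate pairing (Lemma \ref{lem: tate duality at ell0}) shows $[a\upp1|_{\ell_0}]$ and $[c\up1]|_{\ell_0}\cup[\zeta]$ are colinear. You instead invoke the $\US_{\ell_0}$ condition on the base change $\tilde\rho = \rho_S' \otimes_{S,\varphi_{D_y}}\F_p[\ep_1]$, read off the $C$-coordinate of the vanishing matrix product, and specialize $\tau = \gamma_0$; this yields the cochain-level identity $a\upp1(\sigma) = a\upp1(\gamma_0)\,c\up1(\sigma)$ on $G_{\ell_0}$, which is stronger than the paper's cohomology-level colinearity but uses a stronger input ($\US_{\ell_0}$ rather than multiplicativity). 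Your GMA bookkeeping justifying this is also sound: after base change, products $B_\sigma C_\tau$ land in $\ep\cdot(J^\red_S\otimes_S\F_p[\ep_1])$, whose image in the $A$-coordinate $\F_p[\ep_1]$ vanishes, so the $A$- and $B$-coordinates of the $\US_{\ell_0}$ equation are degenerate and the $C$-coordinate is the right place to look. One small caution on property (2): the residual $\ep c\upp2|_p$ term is not obviously absorbable by an $\ep$-order unipotent conjugation ($c\upp2|_p$ is a cocycle but need not be a coboundary), so the literal upper-triangular form you describe may not be achievable; the paper handles this by working modulo the ideal generated by $yC_S$, and the conclusion $(a\upp1 - b\up1\smile x_{c\up1})|_{I_p} = 0$ holds either way since the diagonal characters $\theta_1,\theta_2$ of Proposition \ref{prop: finite-flat implies upper tri} can be read off from the intrinsic pseudorepresentation $\psi(\tilde\rho|_p)$.
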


\begin{proof}
Lemma \ref{P1: lem: produce a1} has listed characterizing properties (1)-(3) of $a\up1$. We will show that $a\upp1$ satisfies them as well. 

Property (1) is satisfied in \eqref{eq: a1 of rhoS boundary}. 

We will deduce property (2) from the finite-flat property that $\rho_S'\vert_p$, which it satisfies because it is a quotient GMA of the universal $\US_N$ GMA over $\Db$. By design, the 0-cochain $x_{c\up1} \in C^0(\Z[1/Np], \F_p(-1))$ conjugates $c\up1$ so that it vanishes on $G_p$, in the sense that the conjugation of $\rho_S'$ by $\ttmat{1}{0}{x}{1}$ is upper-triangular on $G_p$ modulo the ideal generated by the image of $y C_S$ in the $C$-coordinate. Then Proposition \ref{prop: finite-flat implies upper tri} implies the vanishing of $(a\upp1 + b\up1\smile x_{c\up1})\vert_{I_p}$, which is property (2). 

Because of the injection $H^2(\Z[1/Np], \F_p(-1)) \rinj H^2(\Q_{\ell_0}, \F_p(-1))$ of Lemma \ref{lem: Hasse} and the vanishing of $b\up1$ at $\ell_0$, equation \eqref{eq: c2} implies that $a\upp1\vert_{\ell_0}$ is a cocycle and $(2 c\up1 \smile a\upp1)\vert_{\ell_0}$ is a 2-coboundary. Since the cup product on $H^1(\Q_{\ell_0},\F_p) \times H^1(\Q_{\ell_0},\F_p)$ is alternating in the sense of Lemma \ref{lem: tate duality at ell0}, we conclude that $[a\upp1\vert_{\ell_0}]$ and $[c\up1\vert_{\ell_0}] \cup [\zeta]$ are colinear in $H^1(\Q_{\ell_0}, \F_p)$ for any choice of $\zeta \in H^0(\Q_{\ell_0}, \F_p(1))$, which is property (3). 

To deduce that $D_y = D_1$, observe that the equality $a\upp1 = a\up1$ implies that the pseudorepresentation $\phi(\rho_S') : G_{\Q,Np} \to S$ induces $D_1$ via
\[
S \to \F_p[\ep_1], \quad x \mapsto 0, y \mapsto \ep, 
\]
while, on the other hand, this map $S \to \F_p[\ep_1]$ is exactly the same as $\varphi_{D_y}$. 
\end{proof}

There are even more implications of the differential equations implied by the existence of $\rho_S'$. In particular, \eqref{eq: b2} has the following consequence about the restriction $a\up1|_{\ell_1}$ (note that $a\up1|_{\ell_1}$ is a cocycle since $da\up1 = b\up1 \smile c\up1$ and $c\up1|_{\ell_1}=0$).

\begin{prop}
\label{prop: a1 vanish at ell1 when S large}
There exists a cochain $b\up2$ satisfying \eqref{eq: b2} if and only if $a\up1|_{\ell_1}~=~0.$ In particular, if $\dim_{\F_p} S=4$, then $a\up1|_{\ell_1}=0.$
\end{prop}

\begin{proof}
Since $a\up1|_{\ell_1}$ is an element of $H^1(\Q_{\ell_1},\F_p)$, which is Tate-dual to $H^1(\Q_{\ell_1},\F_p(1))$, and since $b\up1|_{\ell_1}$ is a basis for $H^1(\Q_{\ell_1},\F_p(1))$, the cup product $a\up1|_{\ell_1}~\cup~b\up1|_{\ell_1}$ vanishes in $H^2(\Q_{\ell_1},\F_p(1))$ if and only if $a\up1|_{\ell_1}=0.$

The existence of a cochain $b\up2$ satisfying \eqref{eq: b2} is equivalent to
\begin{equation}
\label{eq:b2 in H2}
    a\up1 \cup b\up1 + b\up1 \cup d\up1
\end{equation}
vanishing in $H^2(G_{\Q,Np},\F_p(1))$. 
By Lemma \ref{lem: Hasse}, it is equivalent that the image of \eqref{eq:b2 in H2} vanishes in $H^2(\Q_{\ell_0},\F_p(1))$ and $H^2(\Q_{\ell_1},\F_p(1))$. Since $b\up1|_{\ell_0}=0$ in $H^1(\Q_{\ell_0},\F_p(1))$, it is enough to consider the restriction of \eqref{eq:b2 in H2} to $H^2(\Q_{\ell_1},\F_p(1))$.

Since $d\up1 = b\up1 c\up1 - a\up1$ and $c\up1|_{\ell_1}=0$, it follows that $d\up1|_{\ell_1}=-a\up1|_{\ell_1}$. Restricting \eqref{eq:b2 in H2} to $G_{\ell_1}$ then gives 
\[
a\up1|_{\ell_1} \cup b\up1|_{\ell_1} - b\up1|_{\ell_1} \cup a\up1|_{\ell_1},
\]
which vanishes if and only if $a\up1|_{\ell_1} \cup b\up1|_{\ell_1}=0$ by the skew-symmetry of cup product.
\end{proof}

\subsection{The invariant $\beta' \in \mathbb{F}_p(2)$} 
\label{subsec: beta-prime} 
The assumption $\dim_{\F_p} S=4$ implies the equation \eqref{eq: a2}. We use \eqref{eq: a2} to define an element $\beta' \in \F_p(2)$.

\begin{lem}
\label{lem: beta'}
Assume $\dim_{\F_p} S=4$.
There is a unique element $\beta' \in \F_p(2)$ such that
\begin{equation}
    \label{eq: beta'}
    (b^{(2)} + \eta b_0\up1)\vert_{\ell_0} = \beta' \cup c^{(1)}\vert_{\ell_0}.
\end{equation}
\end{lem}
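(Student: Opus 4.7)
The uniqueness of $\beta'$ is immediate: since $c\up1(\gamma_0) = 1$, the cocycle $c\up1|_{\ell_0}$ is nonzero, and because $G_{\ell_0}$ acts trivially on $\F_p(-1)$ (as $\ell_0 \equiv 1 \pmod{p}$), cup product with $c\up1|_{\ell_0}$ gives an injection $\F_p(2) \hookrightarrow Z^1(\Q_{\ell_0}, \F_p(1))$.

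For existence, first note that $(b\up2 + \eta b_0\up1)|_{\ell_0}$ is a cocycle: $b_0\up1$ is a global cocycle, and by \eqref{eq: b2}, $-db\up2 = a\up1 \smile b\up1 + b\up1 \smile d\up1$ vanishes on $G_{\ell_0}$ since $b\up1|_{\ell_0} = 0$ by Lemma \ref{lem: log ell1 is zero}.

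To produce $\beta'$, the plan is to unpack the $\US_{\ell_0}$ property of $\rho_S'$. Expanding the top-left coordinate of $(\rho_S'(\sigma) - 1)(\rho_S'(\tau) - 1)$ in $E_S'$, using the coordinate description \eqref{eq: E_S coordinates}, the relations $x^2 = \mu y^2$, $xy=0$, $y^3 = 0$ from Lemma \ref{lem: S coordinates}, the GMA multiplication \eqref{eq:BC in E'_S}, and the vanishing $b\up1|_{\ell_0} = 0$, the coefficient of $y^2$ yields (under the pinning identification $\F_p(1) \otimes \F_p(-1) \cong \F_p$) the identity
\[
a\up1(\sigma)a\up1(\tau) + \mu a_0(\sigma)a_0(\tau) + \eta b_0\up1(\sigma)c\up1(\tau) + b\up2(\sigma)c\up1(\tau) = 0
\]
whenever $(\sigma,\tau) \in I_{\ell_0} \times G_{\ell_0} \cup G_{\ell_0} \times I_{\ell_0}$. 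Specializing $\sigma = \gamma_0$ (so $a\up1(\gamma_0) = \alpha$ and $a_0(\gamma_0) = c\up1(\gamma_0) = b_0\up1(\gamma_0) = 1$) and using that $a\up1|_{\ell_0} = \alpha\, c\up1|_{\ell_0}$ as cocycles (by Definition \ref{P1: defn: alpha} together with $B^1(\Q_{\ell_0}, \F_p) = 0$), one obtains for all $\tau \in G_{\ell_0}$
\[
(\alpha^2 + \eta + b\up2(\gamma_0))\,c\up1(\tau) + \mu\, a_0(\tau) = 0.
\]
By Proposition \ref{prop: rk2 assumption}, $a_0|_{\ell_0}$ and $c\up1|_{\ell_0}$ are linearly independent in $H^1(\Q_{\ell_0},\F_p)$, so this forces $\mu = 0$ and $b\up2(\gamma_0) = -(\alpha^2 + \eta)$. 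Taking instead $\tau = \gamma_0$ with $\sigma \in G_{\ell_0}$ arbitrary and substituting $\mu=0$ then yields $(b\up2 + \eta b_0\up1)(\sigma) = -\alpha^2\, c\up1(\sigma)$, so $\beta' = -\alpha^2 \in \F_p(2)$ does the job.

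The main obstacle is the bookkeeping for the GMA matrix multiplication in $E_S'$: the various Galois twists and pinning identifications need to be tracked precisely in order to extract the displayed identity. A pleasant byproduct of the argument is the forced vanishing $\mu = 0$, reflecting that the algebra $S$ is more rigid than its abstract presentation allows.
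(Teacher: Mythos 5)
Your proof is correct, but it takes a genuinely different and stronger route than the paper's.

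The paper's proof of this lemma is purely cohomological and uses only the \emph{homomorphism} property of $\rho_S'$, not the $\US_{\ell_0}$ condition: it invokes Tate duality (Lemma \ref{lem: tate duality at ell0}) to reduce the statement to showing that $(b\up2 + \eta b_0\up1)\vert_{\ell_0} \cup c\up1\vert_{\ell_0}$ vanishes in $H^2(\Q_{\ell_0},\F_p)$, then restricts the differential equation \eqref{eq: a2} to $G_{\ell_0}$; the left side is a coboundary, the term $b\up1 \smile c\up2$ dies because $b\up1\vert_{\ell_0} = 0$, and the two ``square'' terms $a\up1 \smile a\up1$ and $\mu\, a_0 \smile a_0$ vanish in cohomology by the alternating property of the local cup product (part of Lemma \ref{lem: tate duality at ell0}), leaving the desired vanishing. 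In contrast, you unpack the $\US_{\ell_0}$ equation for $\rho_S'$ and read off the coefficient of $y^2$ in the upper-left coordinate — this is exactly the computation the paper reserves for the subsequent Proposition \ref{prop: key alpha beta equation}. Your argument is valid (the specializations $\sigma=\gamma_0$, $\tau \in G_{\ell_0}$, and $\tau=\gamma_0$, $\sigma \in G_{\ell_0}$ both fall inside $G_{\ell_0}\times I_{\ell_0}\cup I_{\ell_0}\times G_{\ell_0}$, and the cocycle-level identity $a\up1\vert_{\ell_0}=\alpha\cup c\up1\vert_{\ell_0}$ holds because $\omega\vert_{\ell_0}=1$ forces $B^1(\Q_{\ell_0},\F_p(n))=0$), and it simultaneously proves the extra facts $\mu=0$ and $\beta'=-\alpha^2$. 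The trade-off: the paper's argument is minimal and does not need the $\US_{\ell_0}$ hypothesis at this stage, keeping a cleaner separation between ``\,$\beta'$ exists'' (Lemma \ref{lem: beta'}) and ``the $\US$ structure forces $\alpha^2+\beta'=0$ and $\mu=0$'' (Proposition \ref{prop: key alpha beta equation}); your version collapses the two results into one, at the cost of leaning on more structure of $\rho_S'$ than the lemma nominally requires.
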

\begin{proof}
By Lemma \ref{lem: tate duality at ell0}, it is enough to show that the cup product
$$(b^{(2)} + \eta b_0\up1)\vert_{\ell_0} \cup c\up1|_{\ell_0}$$ vanishes in $H^2(\Q_{\ell_0},\F_p)$.
This follows from \eqref{eq: a2} by restriction to $H^2(\Q_{\ell_0},\F_p)$. Indeed, recall from Lemma \ref{lem: log ell1 is zero} that $b\up1|_{\ell_0}=0$. Since $-da\up1=b\up1 \smile c\up1$, this implies that $a\up1|_{\ell_0}$ is a cocycle. By the skew-symmetry of cup product on cohomology, \eqref{eq: a2} then implies
\[
(b^{(2)} + \eta b_0\up1)\vert_{\ell_0} \cup c\up1|_{\ell_0}=0.
\]
in $H^2(\Q_{\ell_0},\F_p)$.
\end{proof}

\subsection{Implications of the $\mathrm{US}_N$ property of $\rho_S'$} 
The fact that $\rho_S'$ is unramified-or-Steinberg at $\ell_0$ implies a relationship between the invariants $\alpha \in \F_p(1)$, defined in Definition \ref{P1: defn: alpha}, and $\beta' \in \F_p(2)$, defined in Lemma \ref{lem: beta'}.

\begin{prop}
    \label{prop: key alpha beta equation}
    Assume $\dim_{\F_p} S = 4$. Then 
    \begin{enumerate}
        \item $\alpha^2 + \beta' =0$ in $\F_p(2)$. 
        \item the invariant $\mu \in \F_p$ set up in Lemma \ref{lem: S coordinates} is zero.
    \end{enumerate}
    In particular, the presentation of $S$ from Lemma \ref{lem: S coordinates} takes the form 
    \[
    \frac{\F_p\lb X,Y\rb}{(X^2,XY, Y^3)}.
    \]
\end{prop}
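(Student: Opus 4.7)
The approach is to exploit the $\US_{\ell_0}$ condition that $\rho_S' : G_{\Q,Np} \to (E_S')^\times$ inherits from the universal $\US_N$ Cayley--Hamilton representation. Since $\kappa \equiv \omega \equiv 1$ on $G_{\ell_0}$, this reads $(\rho_S'(\sigma) - 1)(\rho_S'(\tau) - 1) = 0$ in $E_S'$ for every $(\sigma,\tau) \in G_{\ell_0} \times I_{\ell_0} \cup I_{\ell_0} \times G_{\ell_0}$. The plan is to compute this product entry-by-entry in the coordinates \eqref{eq: E_S coordinates}, using three simplifications at $\ell_0$: $\omega = 1$; the cochain $b\up1$ vanishes identically (since Assumption \ref{P1: assump: main}(2) forces $\ell_1 \in (\Q_{\ell_0}^\times)^p$ and $\zeta_p \in \Q_{\ell_0}$, so any choice of $\ell_1^{1/p}$ is fixed by $G_{\ell_0}$); and $d\up1|_{\ell_0} = -a\up1|_{\ell_0}$ (since $d\up1 = b\up1 c\up1 - a\up1$).

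First, the off-diagonal entries of $(\rho_S'(\sigma) - 1)(\rho_S'(\tau) - 1)$ vanish automatically at $\ell_0$: the $(1,2)$ entry because $\ep^2 = 0$ in $\F_p[\ep_1]$ and $\m_S$ is killed by the augmentation $S \to \F_p$; the $(2,1)$ entry because it simplifies to $\ep(a\up1(\tau) c\up1(\sigma) - a\up1(\sigma) c\up1(\tau))$, which is zero by the cocycle identity $a\up1|_{\ell_0} = \alpha \cdot c\up1|_{\ell_0}$ of Definition \ref{P1: defn: alpha} (equality of cocycles, not merely classes, because $B^1(\Q_{\ell_0}, \F_p) = 0$). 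The substantive constraint thus comes from the $(1,1)$ entry $A_\sigma A_\tau + B_\sigma C_\tau$. Using the presentation of $S$ from Lemma \ref{lem: S coordinates} (so $x^2 = \mu y^2$, $xy = 0$, $y^3 = 0$), the GMA multiplication formula \eqref{eq:BC in E'_S}, and the identification $J^\red_S \cong \F_p[\ep_1]$ via $y^2 \leftrightarrow \ep$, this entry evaluates to
\[
y^2 \bigl[a\up1(\sigma) a\up1(\tau) + \mu\, a_0(\sigma) a_0(\tau) + (\eta b_0\up1(\sigma) + b\upp2(\sigma))\, c\up1(\tau)\bigr].
\]
Substituting the cocycle equalities $a\up1|_{\ell_0} = \alpha c\up1|_{\ell_0}$ and $(b\upp2 + \eta b_0\up1)|_{\ell_0} = \beta' c\up1|_{\ell_0}$ from Lemma \ref{lem: beta'}, and using $y^2 \neq 0$ (by Corollary \ref{cor: constraint on S}, since $\dim_{\F_p} S = 4$), the $\US_{\ell_0}$ condition reduces to the scalar identity
\[
(\alpha^2 + \beta')\, c\up1(\sigma) c\up1(\tau) + \mu\, a_0(\sigma) a_0(\tau) = 0 \in \F_p
\]
for every $(\sigma,\tau) \in G_{\ell_0} \times I_{\ell_0} \cup I_{\ell_0} \times G_{\ell_0}$.

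To finish, I specialize this identity. At $(\sigma,\tau) = (\gamma_0, \gamma_0)$ it gives $\alpha^2 + \beta' + \mu = 0$; at $(\gamma_0, \Fr_{\ell_0})$ (for any Frobenius lift) it gives $(\alpha^2 + \beta')\, c\up1(\Fr_{\ell_0}) + \mu\, a_0(\Fr_{\ell_0}) = 0$. Eliminating $\mu$ yields $(\alpha^2 + \beta')(c\up1(\Fr_{\ell_0}) - a_0(\Fr_{\ell_0})) = 0$. By Proposition \ref{prop: rk2 assumption}(8), the pair $\{a_0|_{\ell_0}, c\up1|_{\ell_0}\}$ (with $\F_p(-1)|_{\ell_0}$ identified with $\F_p$ via $\zeta_p$) is a basis of $H^1(\Q_{\ell_0}, \F_p)$; since both cocycles take the value $1$ on $\gamma_0$, linear independence forces $c\up1(\Fr_{\ell_0}) \neq a_0(\Fr_{\ell_0})$, and therefore $\alpha^2 + \beta' = 0$ and $\mu = 0$. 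Plugging $\mu = 0$ into Lemma \ref{lem: S coordinates} gives the claimed presentation of $S$. The main technical hurdle is the GMA bookkeeping: tracking the non-standard $S$-module structure on the off-diagonal entry $\F_p \oplus \F_p[\ep_1]$ (killed by $\m_S$ on the first factor, passing through $\varphi_{D_y}$ on the second) and correctly identifying the image of the multiplication map with $J^\red_S$, so that the matrix product collapses to the clean scalar bilinear form above.
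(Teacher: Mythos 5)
Your proposal is correct and follows essentially the same approach as the paper's own proof: both restrict the universal $\US_{\ell_0}$ condition on $\rho_S'$ to the top-left GMA coordinate, simplify using $\omega\vert_{\ell_0}=1$, $b\up1\vert_{\ell_0}=0$, the presentation $x^2=\mu y^2$, and the cocycle equalities defining $\alpha$ and $\beta'$, and then extract $\alpha^2+\beta'=0$ and $\mu=0$ from the linear independence of $a_0\vert_{\ell_0}$ and $c\up1\vert_{\ell_0}$ (Proposition \ref{prop: rk2 assumption}). The only cosmetic differences are that you also verify vanishing of the off-diagonal entries (which the paper omits since the $(1,1)$-entry suffices) and that you extract the conclusion by evaluating the scalar identity at the two specific pairs $(\gamma_0,\gamma_0)$ and $(\gamma_0,\Fr_{\ell_0})$, whereas the paper fixes $\tau\in I_{\ell_0}$, views the identity as a vanishing linear combination of the independent functionals $c\up1\vert_{\ell_0}$ and $a_0\vert_{\ell_0}$, and then specializes $\tau=\gamma_0$; these are interchangeable.
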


\begin{rem}
Since $\alpha$ depends only on the pinning data of Definition \ref{P1: defn: pinning}, part (1) implies that $\beta'$ depends only on this data as well.
\end{rem}

\begin{proof}
Since $\rho_S'$ is obtained as a quotient of the universal $\US_N$ Cayley--Hamilton representation $E$, it is also $\US_N$. Let $\sigma \in G_{\ell_0}$ and $\tau \in I_{\ell_0}$. By Definition \ref{defn: US local}, the fact that $\rho_S'$ is $\US_N$ implies that
\begin{equation}
\label{eq: rho_S' is US}
    (\rho_S'(\sigma)-\omega(\sigma))(\rho_S'(\tau)-1)
\end{equation}
vanishes in $E_S'$. Consider the top-left coordinate of \eqref{eq: rho_S' is US} in terms of the GMA decomposition \eqref{eq: E_S coordinates} of $\rho_S'$. Using the facts that $\omega|_{\ell_0}=1$ and $b\up1|_{\ell_0}=0$, and the formula for multiplication in $E_S'$ given in \eqref{eq:BC in E'_S}, the top-left coordinate in \eqref{eq: rho_S' is US} equals
\begin{equation}
\label{eq: a coord of rho_S' is US}
    \left(a\up1(\sigma)a\up1(\tau) + (b\up2(\sigma)+\eta b_0\up1(\sigma))c\up1(\tau)\right)y^2 +a_0(\sigma)a_0(\tau)x^2.
\end{equation}
Recall from the presentation given in Lemma \ref{lem: S coordinates} that $x^2=\mu y^2$ in $S$. Using the relations
\[
a\up1|_{\ell_0}=\alpha \smile c\up1|_{\ell_0}, \quad (b\up2 + \eta b_0\up1)|_{\ell_0} = \beta' \smile c\up1|_{\ell_0}
\]
that define $\alpha$ and $\beta'$, \eqref{eq: a coord of rho_S' is US} then simplifies to
\begin{equation}
\label{eq: a coord of rho_S' is US simplified}
    \left((\alpha^2+\beta')c\up1(\tau)c\up1(\sigma) +\mu a_0(\tau)a_0(\sigma)\right) y^2.
\end{equation}
Since \eqref{eq: rho_S' is US} vanishes in $E_S'$, this implies that \eqref{eq: a coord of rho_S' is US simplified} vanishes in $S$.

The vanishing of \eqref{eq: a coord of rho_S' is US simplified} in $S$ for arbitrary $\sigma \in G_{\ell_0}$ and $\tau \in I_{\ell_0}$ implies 
\[
(\alpha^2+\beta')c\up1(\tau)c\up1|_{\ell_0} +\mu a_0(\tau)a_0|_{\ell_0} =0,
\]
for all $\tau \in I_{\ell_0}$.
Since $a_0|_{\ell_0}$ and $c\up1|_{\ell_0}$ are linearly independent in $H^1(\Q_{\ell_0},\F_p)$ by Proposition \ref{prop: rk2 assumption}, this implies \[
(\alpha^2+\beta')c\up1(\tau)=0 \text{ and } \mu a_0(\tau) =0
\]
for all $\tau \in I_{\ell_0}$. Since $c\up1|_{I_{\ell_0}}$ and $a_0|_{I_{\ell_0}}$ are nonzero, this gives the result.
\end{proof}

Since $\mu=0$ in the presentation for $S$ of Lemma \ref{lem: S coordinates}, there is a ring homomorphism
\[
D_2: S \to \F_p[\ep_2], \quad x \mapsto 0, y \mapsto \ep
\]
whose composition with the quotient $\F_p[\ep_2] \to \F_p[\ep_1]$ is $D_1$. There is also a homomorphism of $\F_p[\ep_2]$-GMAs $E_S'\otimes_S \F_p[\ep_2] \to E_2$, where $E_2$ is the $1$-reducible GMA over $\F_p[\ep_2]$ of Definition \ref{defn: 1-reducible GMA}.

\begin{cor} 
\label{cor: rho_2 exists if dimS=4}
Assume $\dim_{\F_p}S=4.$
The map 
\[
\Upsilon_2: \F_p \oplus \F_p[\ep_1] \to \F_p[\ep_1]
\]
given by $\Upsilon_2(u,v)=\eta \epsilon u + v$ induces a map of $\F_p[\ep_2]$-GMAs
\[
E_S' \otimes_S \F_p[\ep_2] \xrightarrow{\sm{D_2}{\Upsilon_2}{\mathrm{Id}}{D_2}} E_2.
\]
In particular, there is an $\US_N$ Cayley--Hamilton representation $\rho_2: G_{\Q,Np} \to E_2^\times$ that deforms $\rho_1$ along the map $r_{2,1} : E_2 \rsurj E_1$ of \eqref{eq: r21}. 
\end{cor}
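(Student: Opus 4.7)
The plan is to define $\rho_2$ as the composition
\[
G_{\Q,Np} \xrightarrow{\rho_N} E^\times \lra (E \otimes_R S)^\times \lra (E_S')^\times \lra (E_S' \otimes_S \F_p[\ep_2])^\times \xrightarrow{\sm{D_2}{\Upsilon_2}{\mathrm{Id}}{D_2}} E_2^\times,
\]
where the first three arrows are those constructed in \S\ref{subsec: S form} and the rightmost arrow is the matrix map of the statement. Since $\rho_N$ is $\US_N$ by Definition~\ref{defn: universal objects}, and $\US_N$ is preserved under homomorphisms of Cayley--Hamilton algebras, the composition $\rho_2$ is automatically $\US_N$. So the substance of the argument lies in constructing the final map as a genuine $\F_p[\ep_2]$-algebra homomorphism of GMAs.

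First I would verify that $D_2 : S \to \F_p[\ep_2]$, given on generators by $x \mapsto 0$ and $y \mapsto \ep$, is a well-defined ring homomorphism: by Proposition~\ref{prop: key alpha beta equation}(2) we have $\mu = 0$, so the presentation of Lemma~\ref{lem: S coordinates} collapses to $S \cong \F_p\lb X, Y \rb/(X^2, XY, Y^3)$, under which the assignments on $X$ and $Y$ obviously respect the relations. Next I would check that $\Upsilon_2$ and $\mathrm{Id}$ are $S$-module homomorphisms when the targets are regarded as $S$-modules via $D_2$; this is a brief calculation using that $S$ acts on the $\F_p$-summand of $\F_p \oplus \F_p[\ep_1]$ by augmentation and on the $\F_p[\ep_1]$-summand via $\varphi_{D_y}$. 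Finally, compatibility of the GMA multiplications reduces to matching
\[
D_2 \circ m_{E_S'} \colon (u,v) \otimes z \longmapsto \eta \ep^2 u z + \ep v z
\]
with the composition $(b \otimes c \mapsto \ep bc) \circ (\Upsilon_2 \otimes \mathrm{Id})$, which gives $\ep(\eta \ep u + v)z = \eta\ep^2 u z + \ep v z$ in $\F_p[\ep_2]$ as well.

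It then remains to verify that $r_{2,1} \circ \rho_2 = \rho_1$. Applying the matrix to the coordinate expression~\eqref{eq: E_S coordinates} for $\rho_S'$, and invoking the identifications $a\upp1 = a\up1$ (established just before Proposition~\ref{prop: key alpha beta equation}) and $d\upp1 = d\up1$ (forced by constant determinant $\omega$), one obtains
\[
\rho_2 = \ttmat{\omega(1 + a\up1 \ep + a\upp2 \ep^2)}{b\up1 + \ep(\eta b_0\up1 + b\upp2)}{\omega(c\up1 + \ep c\upp2)}{1 + d\up1 \ep + d\upp2 \ep^2}.
\]
Reduction along $r_{2,1}$, which kills $\ep^2$ on the diagonal and $\ep$ off-diagonal, recovers exactly $\rho_1$ as in~\eqref{eq: define d1}. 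The corollary is thus a matter of unpacking the preceding construction; the main obstacle is already contained in Proposition~\ref{prop: key alpha beta equation}(2), whose conclusion $\mu = 0$ is precisely what is needed to make $D_2$ a ring homomorphism and thereby to produce the second-order lift of $\rho_1$ compatible with the $\US_N$ condition.
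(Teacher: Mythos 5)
Your proposal is correct and follows essentially the same route as the paper: compose $\rho_S'$ (itself built by pushing $\rho_N$ through $E \to E\otimes_R S \to E_S'$) with the matrix map, using $\mu=0$ from Proposition \ref{prop: key alpha beta equation}(2) to make $D_2$ well-defined and the formula \eqref{eq:BC in E'_S} to check GMA-compatibility. The additional checks you spell out ($S$-linearity of $\Upsilon_2$, persistence of $\US_N$ under quotient maps, and the explicit verification that $r_{2,1}\circ\rho_2 = \rho_1$ via $a\upp1 = a\up1$ and $d\upp1 = d\up1$) are exactly the routine verifications the paper leaves implicit.
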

\begin{proof}
Given that $\mu=0$, the fact that $\sm{D_2}{\Upsilon_2}{\mathrm{Id}}{D_2}$ is ring homomorphism is a simple computation using the formula \eqref{eq:BC in E'_S} for multiplication in $E_S'$. The representation $\rho_2$ is obtained as the composition of $\rho_S'$ with $E_S' \otimes_S \F_p[\ep_2] \to E_2$.
\end{proof}

\section{Constructing a second-order $\mathrm{US}_N$ deformation $\rho_2$}
\label{sec: proof of main theorem}

In this section, we prove the remaining implication of Theorem \ref{P1: thm: main intro}. Throughout the section, we assume $a\up1\vert_{\ell_1} = 0$. Under this assumption, we construct an invariant $\beta \in \F_p(2)$, and show that if $\alpha^2+\beta=0$, then $\dim_{\F_p}R/pR>3$. In particular, if $\alpha^2+\beta=0$, we can apply the constructions of the previous section to obtain another invariant $\beta' \in \F_p(2)$, and we prove that $\beta'=\beta$.

The proof of $\dim_{\F_p}R/pR>3$ involves constructing an explicit $\US_N$ deformation using the 1-reducible GMAs of Definition \ref{defn: 1-reducible GMA}. We do this in steps, first constructing an arbitrary deformation, and then imposing the local conditions one at a time. We show that the assumption $a\up1|_{\ell_1}=0$ implies that a deformation exists. Next, we impose the finite-flat condition, which we show limits the set of deformations enough that there is a well-defined invariant $\beta \in \F_p(2)$. Finally, we show that the unramified-or-Steinberg condition is satisfied if $\alpha^2+\beta=0$.

\subsection{Construction of a second-order 1-reducible GMA representation without local conditions} 
Recall the Cayley--Hamilton representation
\[
 \rho_1 = \ttmat{\omega(1 + \ep a\up1)}{b\up1}{\omega c\up1}{1+\ep d\up1}
: G_{\Q,Np} \to E_1^\times
\]
of Lemma \ref{lem: D1 construction}. Let $\Pi_2$ denote the set of second-order 1-reducible Cayley--Hamilton deformations of $\rho_1$:
\[
\Pi_2 = \{\rho_2:~G_{\Q,Np}~\to~E_2^\times \mid r_{2,1} \circ \rho_2  = \rho_1\},
\]
where $r_{2,1}$ is the reduction map of 1-reducible GMAs $r_{2,1} : E_2 \rsurj E_1$ of \eqref{eq: r21}.

\begin{lem}
    \label{P1: lem: exists 2nd-order 1-reducible} 
    The set $\Pi_2$ is in bijection with the set quadruples of cochains $a\up2, d\up2: G_{\Q,Np}~\to~\F_p$, $b\up2: G_{\Q,Np} \to \F_p(1)$, and $c\up2:~G_{\Q,Np}~\to~\F_p(-1)$ that satisfy
\begin{enumerate}[label=(\roman*)]
\item $-da\up2 =  a\up1 \smile a\up1 + b\up1 \smile c\up2 + b\up2 \smile c\up1$
\item $-db\up2 = a\up1 \smile b\up1 + b\up1 \smile d\up1$
\item $-dc\up2 = c\up1 \smile a\up1 + d\up1 \smile c\up1$
\item $-dd\up2 = d\up1 \smile d\up1 + c\up1 \smile b\up2 + c\up2 \smile b\up1$. 
\end{enumerate}
    This set is non-empty if and only if $a\up1|_{\ell_1}=0$. Moreover, if it is non-empty, 
    \begin{enumerate}
        \item $\Pi_2$ admits the structure of a torsor under the group
    \[
    \mathfrak{Z}_2 := Z^1(\Z[1/Np], \F_p) \times Z^1(\Z[1/Np], \F_p) \times Z^1_b \times Z^1(\Z[1/Np], \F_p(-1)),
    \]
    where
    \[
    Z^1_b := \ker \left(Z^1(\Z[1/Np], \F_p(1)) \to \frac{H^1(\Q_{\ell_0}, \F_p(1))}{\langle \F_p(2)\vert_{\ell_0} \cup [c\up1] \vert_{\ell_0}\rangle} \right)
    \]
    and the action of $(a,d,b,c) \in \mathfrak{Z}_2$ on $(a\up2, d\up2, b\up2,c\up2) \in \Pi_2$ has the form
    \begin{align*}
        &(a,d,0,0) \cdot (a\up2,d\up2,b\up2,c\up2) = 
    (a\up2 + a, d\up2+d, b\up2,c\up2) \\
        &(0,0,b,c) \cdot (a\up2,d\up2,b\up2,c\up2) = \\
        & \qquad \qquad 
    (a\up2 + \sigma(b,c), d\up2-\sigma(b,c)+b\cdot c\up1 + b\up1 \cdot c, b\up2+b,c\up2+c) 
    \end{align*}    
    where $\sigma : Z^1_b \times Z^1(\Z[1/Np], \F_p(-1)) \to C^1(\Z[1/Np], \F_p)$ is a choice of linear map such that $-d\sigma(b,c) = b \smile c\up1 + b\up1 \smile c$. 
    \item For every $(a\up2, d\up2, b\up2,c\up2) \in \Pi_2$, the restriction $b\up2\vert_{\ell_0}$ is a cocycle whose cohomology class is a multiple of $c_0\vert_{\ell_0} = [c\up1]\vert_{\ell_0}$. 
    \end{enumerate}
\end{lem}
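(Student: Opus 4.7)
The plan is to expand the homomorphism identity for $\rho_2$ modulo $\epsilon^3$ and analyze the resulting cochain equations via their $H^2$-obstructions. Writing
\[
\rho_2 = \ttmat{\omega(1+a\up1\epsilon+a\up2\epsilon^2)}{b\up1+b\up2\epsilon}{\omega(c\up1+c\up2\epsilon)}{1+d\up1\epsilon+d\up2\epsilon^2},
\]
and expanding $\rho_2(gh) = \rho_2(g)\rho_2(h)$ using the 1-reducible multiplication in $E_2$ (wherein cross-diagonal products carry a factor of $\epsilon$), the $\epsilon^0$ and $\epsilon^1$ coefficients recover the homomorphism identities for $\rho_1$ from Lemma \ref{lem: D1 construction}, while the $\epsilon^2$ coefficients of the four matrix entries yield exactly equations (i)-(iv). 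This establishes the bijection in the first assertion.

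For non-emptiness, I would use the Hasse principle (Lemma \ref{lem: Hasse}) to reduce each $H^2$-obstruction to localizations at $\ell_0$ and $\ell_1$, applying $b\up1|_{\ell_0}=0$ (Lemma \ref{lem: log ell1 is zero}) and $d\up1|_{\ell_0}=-a\up1|_{\ell_0}$ at $\ell_0$, and $c\up1|_{\ell_1}=0$ and $d\up1|_{\ell_1}=-a\up1|_{\ell_1}$ at $\ell_1$. The $\ell_0$-obstruction to (ii) then vanishes, and the $\ell_1$-obstruction reduces by graded commutativity to $2[a\up1|_{\ell_1}] \cup [b\up1|_{\ell_1}]$; perfect Tate duality together with $B^1(\Q_{\ell_1},\F_p)=0$ shows this vanishes iff $a\up1|_{\ell_1}=0$. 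Equation (iii) is always solvable: its $\ell_0$-obstruction reduces to $2[c\up1|_{\ell_0}] \cup [a\up1|_{\ell_0}]$, which vanishes by the alternating property (Lemma \ref{lem: tate duality at ell0}) since $[a\up1|_{\ell_0}]=\alpha\cup[c\up1|_{\ell_0}]$, and at $\ell_1$ the obstruction is zero. Given $b\up2,c\up2$ solving (ii)-(iii), the obstructions for (i) and (iv) vanish at $\ell_1$ using $H^1(\Q_{\ell_1},\F_p(-1))=0$ (since $\ell_1\not\equiv\pm1$) and $H^2(\Q_{\ell_1},\F_p)=0$, and at $\ell_0$ both reduce to $[b\up2|_{\ell_0}]\cup[c\up1|_{\ell_0}]$. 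By perfect Tate duality and the identity $(\zeta\cup c_0|_{\ell_0})\cup c_0|_{\ell_0}=0$, this vanishes iff $[b\up2|_{\ell_0}]\in\langle\F_p(2)\cup[c\up1|_{\ell_0}]\rangle$. By Proposition \ref{prop: rk2 assumption}(9), the global Kummer class $b_0\up1$ localizes at $\ell_0$ to a complement of this line, so shifting $b\up2$ by a suitable multiple of $b_0\up1$ achieves the required condition and produces a quadruple in $\Pi_2$.

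The final assertion is immediate from the above: for any quadruple in $\Pi_2$, the $\ell_0$-obstruction to (i) being a coboundary forces $[b\up2|_{\ell_0}]$ into this one-dimensional subspace. For the torsor structure, the difference of two quadruples satisfies the linearized equations (i)-(iv), which amount to $a, d, b, c$ being cocycles in the specified modules; the restriction $b \in Z^1_b$ is precisely what preserves the local condition on $b\up2|_{\ell_0}$. Shifting by $(0,0,b,c)$ changes the right-hand sides of (i) and (iv) by $b\smile c\up1+b\up1\smile c$ and $c\up1\smile b+c\smile b\up1$, respectively. The first is cohomologically trivial in $H^2(\Z[1/Np],\F_p)$ by the same Hasse argument (using $b\up1|_{\ell_0}=0$, the alternating cup product condition granted by $b\in Z^1_b$, and $H^2(\Q_{\ell_1},\F_p)=0$), so the linear section $\sigma$ exists; the Leibniz identity $d(b\cdot c\up1)=-(b\smile c\up1+c\up1\smile b)$ (and its analogue for $b\up1\cdot c$) then yields the displayed shift formula for $d\up2$.

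The principal obstacle is the interdependence between the local conditions and the torsor group: the $\ell_0$-local condition on $b\up2$ must be consistent between equations (i) and (iv) (it is, via the same cohomological computation) and must be preserved by the torsor action, which dictates the definition of $Z^1_b$. Verifying uniformly in $(b,c)\in Z^1_b \times Z^1(\Z[1/Np],\F_p(-1))$ that $b\smile c\up1+b\up1\smile c$ is a coboundary—needed for the well-definedness of $\sigma$—follows from the same local vanishings but requires careful bookkeeping.
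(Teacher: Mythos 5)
Your proposal is essentially correct and mirrors the paper's own argument: bijecting $\Pi_2$ with quadruples of cochains via the $\epsilon^2$-coefficients of the homomorphism identity in $E_2$, reducing each $H^2$-obstruction to $\ell_0$ and $\ell_1$ via the Hasse principle (Lemma~\ref{lem: Hasse}), using $b\up1|_{\ell_0}=0$, $d\up1|_{\ell_0}=-a\up1|_{\ell_0}$, $a\up1|_{\ell_0}=\alpha\cup c\up1|_{\ell_0}$, the alternating cup product (Lemma~\ref{lem: tate duality at ell0}), and $H^2(\Q_{\ell_1},\F_p)=0$, and then shifting $b\up2$ by a multiple of $b_0\up1$ to solve (i) and (iv). (The paper outsources the analysis of (ii) to Lemma~\ref{lem: a1 vanish at ell1 when S large}, which you re-derive in-line.)

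One small imprecision worth flagging: you write that ``the difference of two quadruples satisfies the linearized equations (i)--(iv), which amount to $a,d,b,c$ being cocycles.'' This is true only for the $b$ and $c$ coordinates; the linearized (i) reads $-da = b\up1\smile c + b\smile c\up1$, so the coordinate-wise differences in the $a$ and $d$ slots are \emph{not} cocycles --- they become cocycles only after subtracting $\sigma(b,c)$. Since you immediately correct this by producing the $\sigma$-twisted shift formula via the Leibniz identity $d(b\cdot c\up1) = -(b\smile c\up1 + c\up1\smile b)$, the error is only in the sentence, not in the argument, but the parametrization of $\mathfrak{Z}_2$ as a product should not be read as saying the torsor action is coordinate-wise. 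Also, your appeal to $H^1(\Q_{\ell_1},\F_p(-1))=0$ for (i) and (iv) is superfluous here ($H^2(\Q_{\ell_1},\F_p)=0$ already kills those local obstructions); that vanishing is used in the paper only later, in the proof of Proposition~\ref{prop: exists rho_2}, to normalize $c\up2|_{\ell_1}$.
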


\begin{proof}
Every element $\rho_2$ of $\Pi_2$ can be written in the form
\begin{equation}
    \label{eq: purported rho_2}
    \rho_2 = \ttmat{\omega (1+a\up1\epsilon + a\up2\epsilon^2) }{b\up1 + b\up2 \epsilon}{\omega(c\up1 + c\up2 \epsilon)}{1+d\up1 \epsilon + d\up2\epsilon^2} : G_{\Q,Np} \to E_2^\times,
\end{equation}
for some cochains $a\up2$, $b\up2$, $c\up2$, $d\up2$. The fact that $\rho_2$ is a homomorphism implies the equations (i)-(iv). Conversely, given cochains satisfying (i)-(iv), the function $\rho_2$ defined by \eqref{eq: purported rho_2} is an element of $\Pi_2$. This gives the desired bijection. Now we show that there are cochains satisfying (i)-(iv) if and only if $a\up1|_{\ell_1}=0$.

    \noindent
    \textit{Coboundary condition} (ii). Note that (ii) is the same equation as \eqref{eq: b2}. By Proposition \ref{prop: a1 vanish at ell1 when S large}, there is a cochain $b\up2$ satisfying (ii) if and only if $a\up1|_{\ell_1}=0.$ The set of cochains $b\up2$ satisfying (ii) is a torsor for $Z^1(\Z[1/Np],\F_p(1))$; however, we will see that condition (i) can only be satisfied for a subset of the cochains $b\up2$ satisfying (ii).
    
    This shows that $a\up1|_{\ell_1}=0$ is necessary for $\Pi_2$ to be non-empty. Now assume $a\up1|_{\ell_1}=0$, and we will show this is sufficient.
    
    \noindent 
    \textit{Coboundary condition} (iii). 
    There is a cochain satisfying (iii) if 
    \[
    c\up1 \cup a\up1 + d\up1 \cup c\up1 =0
    \]
    in $H^2(G_{\Q,Np},\F_p(-1))$.
    By Lemma \ref{lem: Hasse}, we only have to check this vanishing after restriction to $H^2(\Q_{\ell_1},\F_p(-1))$ and $H^2(\Q_{\ell_0},\F_p(-1))$. The $\ell_1$-local restriction vanishes because $a\up1|_{\ell_1} = d\up1|_{\ell_1}=0$. Since $d\up1|_{\ell_0}=-a\up1|_{\ell_0}$, the $\ell_0$-local restriction is 
    \[
    c\up1|_{\ell_0} \cup a\up1|_{\ell_0} - a\up1|_{\ell_0} \cup c\up1|_{\ell_0},
    \]
    which vanishes because $a\up1|_{\ell_0} = \alpha \cup c\up1|_{\ell_0}$. The set of cochains satisfying (iii) is a torsor for $Z^1(\Z[1/Np],\F_p(-1))$.
    
    \noindent
    \textit{Coboundary condition} (i). Note that condition (i) is similar to \eqref{eq: a2}; this argument follows the same line as in the proof Lemma \ref{lem: beta'}. 
    
    Let $b\up2$ and $c\up2$ be arbitrary cochains satisfying (ii) and (iii), respectively. 
    There is a cochain $a\up2$ satisfying (i) if 
    \begin{equation}
    \label{eq: (i) in H2}
        a\up1 \smile a\up1 + b\up1 \smile c\up2 + b\up2 \smile c\up1
    \end{equation} 
    vanishes in $H^2(G_{\Q,Np},\F_p)$. By Lemma \ref{lem: Hasse} and since $H^2(\Q_{\ell_1},\F_p)=0$, it is enough to check this vanishing after restriction to $H^2(\Q_{\ell_0},\F_p)$. 
    
    Recall from Lemma \ref{lem: log ell1 is zero} that $b\up1|_{\ell_0}=0$. Since $-d a\up1 = b\up1 \smile c\up1$, this implies that $a\up1|_{\ell_0}$ is a cocycle. Likewise, differential equation (ii) implies that $b\up2\vert_{\ell_0}$ is a cocycle. By the skew-symmetry of cup product, \eqref{eq: (i) in H2} vanishes if and only if
    \[
    b\up2|_{\ell_0} \smile c\up1|_{\ell_0}
    \]
    vanishes in $H^2(\Q_{\ell_0},\F_p)$. This happens for some choices of $b\up2$, but not others: recall that the set of choices of $b\up2$ satisfying (ii) is a torsor for $Z^1(\Z[1/Np],\F_p(1))$. Indeed, since $H^2(\Q_{\ell_0},\F_p)$ has $\F_p$-dimension 1 and is spanned by $[b_0\up1]|_{\ell_0} \cup [c\up1]|_{\ell_0}$ by Proposition \ref{prop: rk2 assumption}, there is a constant $\gamma \in \F_p$ such that
    \[
    [b\up2|_{\ell_0}] \cup [c\up1]|_{\ell_0} = \gamma [b_0\up1]|_{\ell_0} \cup [c\up1|_{\ell_0}.
    \]
    This shows that \eqref{eq: (i) in H2} vanishes if  $b\up2$ is replaced by $b\up2-\gamma b_0\up1$. Moreover, the set of choices for $b\up2$ satisfying (ii) and such that \eqref{eq: (i) in H2} vanishes is a torsor for the set of $b \in Z^1(\Z[1/Np],\F_p(1))$ such that $[b]|_{\ell_0} \cup [c\up1]|_{\ell_0}=0$. By Tate duality (Lemma \ref{lem: tate duality at ell0}), this is same as $b$ belonging to the subgroup $Z^1_b$. 
    
    In summary, the set of $b\up2$ that satisfy (ii) and such that (i) has a solution is a torsor for $Z^1_b$; this is holds for any choice of $c\up2$. For any such $b\up2$, the set of $a\up2$ that satisfy (i) is a torsor for $Z^1(\Z[1/Np],\F_p).$
    
    \noindent
    \textit{Coboundary condition} (iv). The same analysis as for (i) applies to (iv). 
    
    Combining these analyses, we deduce that
    \begin{itemize}
        \item $\Pi_2$ is non-empty if and only if $a\up1\vert_{\ell_1} = 0$
        \item There is an action of $Z^1_b \times Z^1(\Z[1/Np], \F_p(-1))$ on $\Pi_2$ that acts by addition on the $b\up2$ and $c\up2$-coordinates 
        \item And there exists a linear choice of $\sigma$, namely 
        \[
        Z^1_b \times Z^1(\Z[1/Np], \F_p(-1)) \ni (b,c) \mapsto -d^{-1}(b \smile c\up1 + b\up1 \smile c)
        \]
        where $d^{-1}$ is an arbitrarily chosen linear section of the boundary map $d : C^1(\Z[1/Np], \F_p) \rsurj B^2(\Z[1/Np], \F_p)$. Under this definition of $\sigma$, one can compute that the differential equations (i) and (iv) are satisfied by $(0,0,b,c) \cdot (a\up2,b\up2,c\up2,d\up2)$. (See the origin of the formula for the $d\up2$-coordinate in the proof of Lemma \ref{lem: rho2 constant determinant}, below.) 
        \item There is an action of $Z^1(\Z[1/Np], \F_p)^{\oplus 2}$ that acts by addition on the $a\up2$ and $d\up2$-coordinates and fixes the $b\up2$ and $c\up2$-coordinates,
    \end{itemize}
    which amounts to claim (1). Claim (2) follows from the analysis of coboundary condition (i) above. 
\end{proof}

We will frequently use the bijection between $\Pi_2$ and the set of quadruples of cochains $(a\up2,d\up2,b\up2,c\up2)$  satisfying (i)-(iv) 
without comment. 
Let $\Pi_2^{\det}$ denote the subset of $\Pi_2$ consisting of elements with constant determinant $\omega$, 
\[
\Pi_2^{\det} := \{\rho_2 \in \Pi_2 \ | \ \det(\rho_2)=\omega\}.
\]

\begin{lem}
    \label{lem: rho2 constant determinant}
    Assume $a\up1|_{\ell_0}=0$. Then $\Pi_2^{\det}$ is non-empty, an element $\rho_2 \in \Pi_2$ is completely determined by its cochains $a\up2$, $b\up2$ and $c\up2$, and $\Pi_2^{\det}$ is a torsor for the subgroup $\mathfrak{Z}_2^{\det} \subset \mathfrak{Z}_2$ under the action of $\mathfrak{Z}_2$ on $\Pi_2$ of Lemma \ref{P1: lem: exists 2nd-order 1-reducible}, where
    \[
    \mathfrak{Z}_2^{\det} := \{(a,d,b,c) \in \mathfrak{Z}_2 \mid a + d = 0\} \subset \mathfrak{Z}_2. 
    \]
\end{lem}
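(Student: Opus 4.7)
The plan is to compute $\det(\rho_2)$ explicitly for a general $\rho_2 \in \Pi_2$ and then extract the constant-determinant condition. Writing $\rho_2$ as in \eqref{eq: purported rho_2}, the GMA-determinant in $E_2$ satisfies $\det\sm{a}{b}{c}{d} = ad - m(b \otimes c) = ad - \epsilon bc$. Expanding modulo $\ep^3$ and using the identity $d\up1 = b\up1 c\up1 - a\up1$ (which makes the coefficient of $\ep$ in $\det(\rho_1)$ vanish, see Lemma \ref{lem: D1 construction}), a direct computation yields $\det(\rho_2) = \omega \cdot (1 + h \ep^2)$, where
\[
h := a\up2 + d\up2 + a\up1 d\up1 - b\up1 c\up2 - b\up2 c\up1.
\]
Since $\det(\rho_2)$ is a character valued in $\F_p[\ep_2]^\times$, the function $h$ is automatically a homomorphism $G_{\Q,Np} \to \F_p$, i.e., an element of $Z^1(\Z[1/Np], \F_p)$.

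The constant-determinant condition $\det(\rho_2) = \omega$ is then equivalent to $h \equiv 0$, which solves uniquely for $d\up2$ in terms of $(a\up2, b\up2, c\up2)$. This establishes the uniqueness claim that an element of $\Pi_2^{\det}$ is determined by its cochains $a\up2$, $b\up2$, $c\up2$. For non-emptiness, observe that the hypothesis (combined with Lemma \ref{P1: lem: exists 2nd-order 1-reducible}, which requires $a\up1\vert_{\ell_1}=0$ for $\Pi_2 \neq \emptyset$) provides some $\rho_2 \in \Pi_2$ with defect cocycle $h$; then acting by $(-h, 0, 0, 0) \in \mathfrak{Z}_2$ shifts $a\up2 \mapsto a\up2 - h$ without affecting the other three cochains, producing an element whose new defect is zero, i.e., an element of $\Pi_2^{\det}$.

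For the torsor structure, the plan is to track how $h$ transforms under the $\mathfrak{Z}_2$-action from Lemma \ref{P1: lem: exists 2nd-order 1-reducible}. Under $(a, d, 0, 0)$, the shifts $a\up2 \mapsto a\up2 + a$ and $d\up2 \mapsto d\up2 + d$ give $h \mapsto h + (a + d)$. The main bookkeeping obstacle is the verification that $(0, 0, b, c)$ preserves $h$: the terms $\pm \sigma(b,c)$ appearing in the shifts of $a\up2$ and $d\up2$ cancel, while the remaining terms $b \cdot c\up1 + b\up1 \cdot c$ in the shift of $d\up2$ precisely offset the contributions $-b\up1 c - b\cdot c\up1$ coming from $b\up2 \mapsto b\up2 + b$ and $c\up2 \mapsto c\up2 + c$ in the formula for $h$. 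Hence the stabilizer of the condition $h \equiv 0$ in $\mathfrak{Z}_2$ is exactly $\{(a,d,b,c) : a + d = 0\} = \mathfrak{Z}_2^{\det}$, and the torsor structure on $\Pi_2$ from Lemma \ref{P1: lem: exists 2nd-order 1-reducible} restricts to the asserted torsor structure on $\Pi_2^{\det}$.
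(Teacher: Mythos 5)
Your proof is correct, and its computational core matches the paper's: expand $\det(\rho_2)$ modulo $\ep^3$, observe that the $\ep$-term vanishes by $d\up1 = b\up1 c\up1 - a\up1$, solve the $\ep^2$-term condition $h=0$ uniquely for $d\up2$, and track the transformation of $h$ under the $\mathfrak{Z}_2$-action to identify the stabilizer as $\mathfrak{Z}_2^{\det}$. The one place you genuinely diverge is the non-emptiness argument. The paper writes the explicit formula $d\up2 = b\up1 c\up2 + b\up2 c\up1 - a\up1 d\up1 - a\up2$ and then asserts, via an unrecorded computation, that this choice satisfies coboundary equation (iv) of Lemma \ref{P1: lem: exists 2nd-order 1-reducible}. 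You avoid that verification entirely by viewing $h$ as a defect cocycle (correctly noting it is a homomorphism since $\det(\rho_2)$ is) and acting by $(-h,0,0,0) \in \mathfrak{Z}_2$ on an arbitrary $\rho_2 \in \Pi_2$; since the $\mathfrak{Z}_2$-action preserves $\Pi_2$, the result lands in $\Pi_2^{\det}$ without any further checking. This buys you a slightly shorter argument at the cost of invoking the torsor structure once more. Two cosmetic points worth flagging: the stated hypothesis $a\up1\vert_{\ell_0}=0$ is evidently a typo for $a\up1\vert_{\ell_1}=0$ (the latter being what guarantees $\Pi_2 \neq \emptyset$, as your parenthetical correctly acknowledges), and the uniqueness clause should read $\rho_2 \in \Pi_2^{\det}$ rather than $\rho_2 \in \Pi_2$, since $d\up2$ is unconstrained within $\Pi_2$.
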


\begin{proof}
Let $a\up2$, $b\up2$, and $c\up2$ be cochains satisfying equations (i), (ii) and (iii), respectively, of Lemma \ref{P1: lem: exists 2nd-order 1-reducible}. A straightforward calculation shows that the only choice of cochain $d\up2$ such that the resulting representation $\rho_2$ satisfies $\det(\rho_2)=\omega$ is  
    \[
    d\up2 = b\up1 c\up2 + b\up2 c\up1 - a\up1 d\up1 - a\up2. 
    \]
Moreover, a computation shows that this choice of $d\up2$ satisfies equation (iv). 
The action of an element $(a,b,c,d) \in \mathfrak{Z}_2$ fixes the determinant if and only if $a+d=0$. This follows from the equations for the action of $\mathfrak{Z}_2$ given in Lemma \ref{P1: lem: exists 2nd-order 1-reducible}.
\end{proof}

\subsection{The finite-flat at $p$ condition on $\rho_2$}
\label{P1: subsec: finite-flat on rho2}
We continue to assume that $a\up1|_{\ell_1}=0$; consequently, $\Pi_2^{\det}$ is non-empty by Lemma \ref{lem: rho2 constant determinant}.
Consider the subset of $\Pi_2$, 
\[
\Pi_2^{\det, p} = \{ \rho_2 \in \Pi_2^{\det} \mid \rho_2|_p\text{ is finite-flat}\} \subset \Pi_2.
\]

\begin{prop}
    \label{P1: prop: exists unique beta}
    Assume $a\up1|_{\ell_1}=0$. Then $\Pi_2^{\det,p}$ is non-empty, and the possibilities for $b\up2$-coordinates of $\rho_2 \in \Pi_2^{\det,p}$ is contained in a torsor under the subgroup of $Z^1(\Z[1/Np], \F_p(1))$ spanned by coboundaries and $b\up1$. In particular, there is a unique $\beta \in \F_p(2)$ such that, for every $\rho_2 \in \Pi_2^{\det,p}$, 
    \[
    [b\up2|_{\ell_0}]=\beta \cup [c\up1]|_{\ell_0} \quad \in H^1(\Q_{\ell_0}, \F_p(1)),
    \]
    where $b\up2$ is the cochain associated to $\rho_2$. 
\end{prop}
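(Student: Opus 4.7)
The plan is to establish the proposition in two phases: first, to prove $\Pi_2^{\det,p}$ is non-empty; and second, to show the $b\up2$-coordinate is uniquely determined modulo coboundaries and $\F_p \cdot b\up1$, from which the existence and uniqueness of $\beta$ follows.

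For non-emptiness, I would start with an arbitrary $\rho_2 \in \Pi_2^{\det}$ (guaranteed by Lemma \ref{lem: rho2 constant determinant}) and modify it by the action of $\mathfrak{Z}_2^{\det}$ until the restriction to $G_p$ becomes finite-flat. Since $\rho_1|_p$ is finite-flat (Lemma \ref{lem: D1 construction}), Proposition \ref{prop: smoothness}(4) provides finite-flat constant-determinant second-order lifts of $\rho_1|_p$ (viewed as $\GL_2$-representations, via the upper-triangularization of Lemma \ref{lem: UT embedding} after conjugating by $\sm{1}{0}{-x_{c\up1}}{1}$). The obstruction to $\rho_2|_p$ being isomorphic to such a local lift is a class in $H^1(\Q_p, \Ad^0(\rho_1|_p))/H^1(\Q_p, \Ad^0(\rho_1|_p))^\mathrm{flat}$. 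Decomposing $\Ad^0(\bro) \cong \F_p \oplus \F_p(1) \oplus \F_p(-1)$, I would kill each component-wise obstruction: the $\F_p$-obstruction via $(a_p, -a_p, 0, 0) \in \mathfrak{Z}_2^{\det}$ since $a_p$ is ramified at $p$; the $\F_p(-1)$-obstruction by adjusting $c\up2$; and the $\F_p(1)$-obstruction using Lemma \ref{lem: global Kummer theory}(2), which identifies the Kummer class of $p$ as a generator of the non-flat quotient.

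I expect the main obstacle to be the $\F_p(1)$-obstruction: since the $b$-component of $\mathfrak{Z}_2^{\det}$ is constrained to $Z^1_b$, one must verify that $Z^1_b$ maps surjectively onto $H^1(\Q_p, \F_p(1))/H^1(\Q_p, \F_p(1))^\mathrm{flat}$. I would handle this by analyzing $Z^1_b$ as a codimension-one subspace of $Z^1(\Z[1/Np], \F_p(1))$, then using the Kummer basis $\{b_0, b\up1, b_p\}$ and the basis of $H^1(\Q_{\ell_0}, \F_p(1))$ from Proposition \ref{prop: rk2 assumption}(9) to exhibit a cocycle in $Z^1_b$ whose restriction to $p$ lies outside $H^1(\Q_p, \F_p(1))^\mathrm{flat}$.

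For the torsor of $b\up2$-coordinates, any two elements of $\Pi_2^{\det,p}$ differ by $(a, -a, b, c) \in \mathfrak{Z}_2^{\det}$ whose action preserves finite-flatness at $p$. The $b$-component then must satisfy $b \in Z^1_b$ and, locally at $p$, have a flat cohomology class. By Lemma \ref{lem: global Kummer theory}(1), the flat global cocycles in $Z^1(\Z[1/Np], \F_p(1))$ modulo coboundaries are spanned by $b_0$ and $b\up1$. But $b_0 \notin Z^1_b$ (by Proposition \ref{prop: rk2 assumption}(9), since $[b_0]|_{\ell_0}$ is linearly independent from $\zeta' \cup [c\up1]|_{\ell_0}$), whereas $b\up1 \in Z^1_b$ automatically because $b\up1|_{\ell_0} = 0$ (Lemma \ref{lem: log ell1 is zero}); hence $b$ must lie in $B^1(\Z[1/Np], \F_p(1)) + \F_p \cdot b\up1$. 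Combined with Lemma \ref{P1: lem: exists 2nd-order 1-reducible}(2), which places $[b\up2|_{\ell_0}]$ in the one-dimensional subspace $\langle \F_p(2)|_{\ell_0} \cup [c\up1]|_{\ell_0}\rangle$, and using that coboundaries and $b\up1$ both restrict trivially to $\ell_0$, the scalar $\beta \in \F_p(2)$ with $[b\up2|_{\ell_0}] = \beta \cup [c\up1]|_{\ell_0}$ is uniquely determined and independent of the choice of $\rho_2 \in \Pi_2^{\det, p}$.
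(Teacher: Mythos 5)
Your proposal follows the paper's own strategy essentially step for step: start from a constant-determinant lift in $\Pi_2^{\det}$ (Lemma \ref{lem: rho2 constant determinant}), modify it via the $\mathfrak{Z}_2^{\det}$-action component by component through the decomposition $\Ad^0(\bro|_p)\cong \F_p\oplus\F_p(1)\oplus\F_p(-1)$ to achieve finite-flatness at $p$ (this is Lemmas \ref{lem: flat rho2 step 1}, \ref{lem: flat rho2 step 3}, \ref{lem: flat rho2 step 4}), and then observe that the residual ambiguity in $b\up2$ is captured by $(Z^1_b)^\mathrm{flat}=\langle dx, b\up1\rangle$ (Proposition \ref{prop: b2 coordinates of flat rho2}). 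You correctly flag the one nonobvious surjectivity — that $Z^1_b$ hits the non-flat quotient $Z^1(\Q_p,\F_p(1))/Z^1(\Q_p,\F_p(1))^\mathrm{flat}$ even though it is a codimension-one subspace — and your proposed resolution via the Kummer basis and Proposition \ref{prop: rk2 assumption}(9) is exactly the paper's. The one technical device you elide is the paper's Lemma \ref{lem: flat rho2 step 2}: because the upper-right entry of the UT-embedded $\eta_2$ is $\epsilon$-divisible, the naive obstruction bookkeeping in $Z^1(\Q_p,\Ad^0(\bro))$ has to be reconciled with finite-flatness of the Cayley--Hamilton representation $\rho_2|_p$ via the ``divide by $\epsilon$'' passage from $\eta_2$ to $\eta_1$, and Sublemma \ref{sublem: eta adjoint} is what lets one read off the $\F_p(1)$-component of the obstruction cleanly; a complete write-up of your proposal would need to make that reduction explicit.
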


\begin{rem}
In fact, the phrase ``contained in'' in the proposition can be replaced by ``equal to,'' but we do not have a use for that result. 
\end{rem}

\begin{rem}
Lemma \ref{P1: lem: exists 2nd-order 1-reducible}(2) already implies that some such $\beta$ exists for any \emph{single} $\rho_2 \in \Pi_2$; our supplemental work will be to show that there is only one $\beta$ that appears among $\rho_2 \in \Pi_2^{\det,p}$. 
\end{rem}

We will prove the first claim of Proposition \ref{P1: prop: exists unique beta}, that $\Pi_2^{\det,p}$ is non-empty, using a series of lemmas to produce an element of $\Pi_2^{\det}$ that is finite-flat at $p$. 

Just as in the proof that $\rho_1$ is finite-flat in Lemma \ref{lem: D1 construction}, it will be convenient to change the basis of $\rho_2 \in \Pi_2^{\det}$ in order to test the finite-flat condition of $\rho_2\vert_p$. Recall the element $x_{c\up1} \in \F_p(1)$ of Definition \ref{P1: defn: pinned cocycles} satisfying $dx_{c\up1}\vert_p = c\up1|_p$. Define $\rho_2' := \ad(\sm{1}{0}{-x_{c\up1}+y\epsilon}{1})\rho_2$ for $y \in \F_p$ to be chosen later, and write $\rho_2'$ as
\begin{equation}
    \label{eq: rho2 prime}
\rho_2'=\ttmat{\omega (1+a\upp1\epsilon + a\upp2\epsilon^2) }{b\upp1 + b\upp2 \epsilon}{\omega(c\upp1 + c\upp2 \epsilon)}{1+d\upp1 \epsilon + d\upp2\epsilon^2}.
\end{equation}
Explicitly:
\begin{itemize}
\item $a\upp1 =  a\up1 + b\up1 \smile x_{c\up1}$
\item $b\upp1= b\up1$
\item $c\upp1= c\up1 - dx_{c\up1}$
\item $d\upp1=  d\up1 - x_{c\up1} \smile b\up1$
\end{itemize}
and
\begin{itemize}
\item $a\upp2 =  a\up2 + b\up2 \smile x_{c\up1} + b\up1 \smile y$
\item $b\upp2= b\up2$
\item $c\upp2= c\up2 - x_{c\up1}\smile a\up1 +d\up1 \smile x_{c\up1} - x_{c\up1} \smile b\up1 \smile x_{c\up1} - dy$
\item $d\upp2=  d\up2 - x_{c\up1} \smile b\up2 - y \smile b\up1$.
\end{itemize}
Just as in the proof of Lemma \ref{lem: D1 construction}, we have
\begin{itemize}
\item $a\upp1|_{p}$ and $d\upp1|_p$ are unramified homomorphisms
\item $c\upp1|_{p} = 0$.
\end{itemize}
Because $\rho_2'$ is also a homomorphism, the primed cochains also satisfy equations (i)-(iv) of Lemma \ref{P1: lem: exists 2nd-order 1-reducible}. 

\begin{lem}
\label{lem: flat rho2 step 1}
Assume $a\up1|_{\ell_1}=0$. There exists $\rho_2 \in \Pi_2^{\det}$ such that $\rho_2|_p$ is upper-triangular (in the sense that $c\upp2\vert_p~=~0$). 
\end{lem}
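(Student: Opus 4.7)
The plan is to start from an arbitrary element of $\Pi_2^{\det}$ and modify it, both within its torsor structure and by the parameter $y$ appearing in the conjugation, so as to arrange $c\upp2|_p = 0$. Since $a\up1|_{\ell_1} = 0$, Lemma \ref{lem: rho2 constant determinant} ensures that $\Pi_2^{\det}$ is non-empty. First I would fix any $\rho_2 \in \Pi_2^{\det}$, form $\rho_2'$ as in \eqref{eq: rho2 prime}, and verify that $c\upp2|_p$ is automatically a cocycle: the primed cochains satisfy the differential equation $-dc\upp2 = c\upp1 \smile a\upp1 + d\upp1 \smile c\upp1$ (the image of condition (iii) under conjugation), and restriction to $G_p$ annihilates the right-hand side because $c\upp1|_p = 0$ by definition of $x_{c\up1}$.

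Next I would analyze the two sources of freedom on $c\upp2|_p$. Varying $y$ modifies the cochain $c\upp2|_p$ only by the coboundary $-dy$, so the class $[c\upp2|_p] \in H^1(\Q_p, \F_p(-1))$ is independent of $y$. Applying the torsor action of $(0,0,0,c) \in \mathfrak{Z}_2^{\det}$ for $c \in Z^1(\Z[1/Np], \F_p(-1))$ leaves $a\up1$, $b\up1$, $d\up1$, and $x_{c\up1}$ intact but replaces $c\up2$ by $c\up2 + c$, changing $c\upp2|_p$ by $c|_p$ and $[c\upp2|_p]$ by the image of $[c]$ under the restriction $H^1(\Z[1/Np], \F_p(-1)) \to H^1(\Q_p, \F_p(-1))$. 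Consequently, achieving $c\upp2|_p = 0$ for some choice of $\rho_2$ and $y$ is equivalent to showing that $[c\upp2|_p]$ lies in the image of this restriction map; once that is known, a suitable $c$ brings the class to zero, and a suitable $y$ eliminates the residual coboundary.

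The main obstacle is precisely this last inclusion: proving that $[c\upp2|_p]$ always lies in the image of restriction. My plan is to use the smoothness of finite-flat deformations. By Proposition \ref{prop: smoothness}(4), there exists a constant-determinant finite-flat lift $\tilde\eta_2 : G_p \to \GL_2(\F_p[\epsilon_2])$ of $\rho_1'|_p$, and by Proposition \ref{prop: finite-flat implies upper tri}, $\tilde\eta_2$ may be taken to be upper-triangular. Comparing $\rho_2'|_p$ (expressed in the GMA $E_2$) with $\tilde\eta_2$ (embedded in $E_2$ via the upper-triangular sub-GMA of Lemma \ref{lem: UT embedding}), the $c$-component of their difference is precisely $c\upp2|_p$, viewed as a class in $Z^1(G_p, \F_p(-1))$. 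The existence of the local finite-flat lift, together with the description of the torsor of constant-determinant lifts, should exhibit $[c\upp2|_p]$ as arising from a global cocycle, hence in the image of restriction. The hard part will be carefully bridging the GMA language in which $\rho_2'|_p$ is naturally expressed and the matrix-algebra language of Proposition \ref{prop: smoothness}, using Lemmas \ref{lem: UT embedding} and \ref{lem: matrix reduction} to translate between them and to extract the requisite global cocycle.
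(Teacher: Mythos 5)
Your setup (paragraphs one and two) tracks the paper's own argument closely: you correctly observe that $c\upp2\vert_p$ is a cocycle because $c\upp1\vert_p = 0$, you correctly identify that varying $y$ only shifts $c\upp2\vert_p$ by a coboundary while the torsor action of $(0,0,0,c)$ shifts it by $c\vert_p$, and you correctly reduce the problem to showing $[c\upp2\vert_p]$ lies in the image of the restriction map $H^1(\Z[1/Np],\F_p(-1)) \to H^1(\Q_p,\F_p(-1))$.

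The gap is in paragraph three. The finite-flat detour does not work and is a red herring at this stage of the argument. A local finite-flat lift $\tilde\eta_2 : G_p \to \GL_2(\F_p[\ep_2])$ provides local data only; there is no mechanism by which it can ``exhibit $[c\upp2\vert_p]$ as arising from a global cocycle.'' The difference of $\rho_2'\vert_p$ and $\tilde\eta_2$ is a \emph{local} element of $Z^1(G_p, \Ad^0)$, and its $C$-component $c\upp2\vert_p$ is a local cocycle with no canonical global origin. Worse, the whole point of the next step in the paper (Lemma \ref{lem: flat rho2 step 3}) is to impose finite-flatness at $p$ \emph{after} this lemma has arranged upper-triangularity, so invoking finite-flat smoothness here puts the cart before the horse. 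The claim you actually need is purely cohomological, and the paper proves the \emph{stronger} statement that the restriction map is surjective, via Euler characteristics: $\dim_{\F_p} H^1(\Q_p, \F_p(-1)) = 1$ by local duality; $\dim_{\F_p} H^1(\Z[1/Np], \F_p(-1)) = 2$ by the global Euler characteristic formula together with $\dim_{\F_p} H^2(\Z[1/Np],\F_p(-1)) = 1$ (Lemma \ref{lem: Hasse}); and $\dim_{\F_p} H^1_{(p)}(\Z[1/Np],\F_p(-1)) = 1$ (spanned by $c_0$), so the image of restriction has dimension $\geq 1$ and therefore equals all of $H^1(\Q_p,\F_p(-1))$. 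Your reduction is the right frame, but you need to replace the finite-flat plan with this direct dimension count.
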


\begin{proof}
Let $\rho_2 \in \Pi_2^{\det}$ be arbitrary. We will find an element $(a,-a,b,c) \in \mathfrak{Z}_2^{\det}$ such that $\rho_{2,\mathrm{new}}:=(a,-a,b,c)\cdot \rho_2$ 
has the desired property.

By equation (iii) of Lemma \ref{P1: lem: exists 2nd-order 1-reducible} applied to $c\upp2$, 
\[
-dc\upp2|_p = c\upp1|_p \smile a\upp1|_p + d\upp1|_p \smile c\upp1|_p  = 0
\]
since $c\upp1|_p = 0$. Hence $c\upp2|_p$ is a cocycle. 

\begin{sublem}
The $\F_p$-dimension of $H^1(\Q_p,\F_p(-1))$ is 1. The localization map $H^1(\Z[1/Np],\F_p(-1)) \to H^1(\Q_p,\F_p(-1))$ is surjective.
\end{sublem}
\begin{proof}
The first claim is a standard consequence of Tate local duality and local Euler characteristics at $p$; in particular, the Euler characteristic of $H^\bullet(\Q_p, \F_p(-1))$ is $-1$.  For the second claim, consider the exact sequence
\[
0 \to H^1_{(p)}(\Z[1/Np],\F_p(-1)) \to H^1(\Z[1/Np],\F_p(-1)) \to H^1(\Q_p,\F_p(-1)) 
\]
coming from the definition of $H^\bullet_{(p)}$ as a cone. The Euler characteristic of global cohomology $H^\bullet(\Z[1/Np], \F_p(-1))$ is $-1$ by the global Euler characteristic formula. We also know from the proof of Lemma \ref{lem: Hasse} that $\dim_{\F_p} H^2(\Z[1/Np],\F_p(-1)) = 1$. Therefore $\dim_{\F_p} H^1(\Z[1/Np], \F_p(-1)) = 2$. The desired surjectivity follow from the fact that $H^1_{(p)}(\Z[1/Np], \F_p(-1))$ has dimension 1. Indeed, $c_0$ is a basis for it, as discussed in Definition \ref{P1: defn: pinned cocycles}. 
\end{proof}

By the sublemma, there exists $z \in Z^1(\Z[1/Np],\F_p(-1))$ such that $z|_p=-c\upp2|_p$. Let $\rho_{2,\mathrm{new}}=(0,0,0,z)\cdot \rho_2$.
It has $c\up2_\mathrm{new}= c\up2 + z$. 
By the formula for $c\upp2$ in terms of $c\up2$ (given after \eqref{eq: rho2 prime}), we also have $c_\mathrm{new}\upp2 = c\upp2+z$. Therefore $c_\mathrm{new}\upp2\vert_p = 0$, as desired. 
\end{proof}

Let $\rho_2 \in \Pi_2^{\det}$ be as in Lemma \ref{lem: flat rho2 step 1}. Then
\begin{equation}
    \label{eq: rho2 prime p}
\rho_2'|_p=\ttmat{\omega \chi_2 }{(b\upp1 + b\upp2 \epsilon)|_p}{0}{\chi_2^{-1}} : G_p \to E_2^\times,
\end{equation}
where
\[
\chi_2= (1+a\upp1\epsilon+a\upp2\epsilon^2)|_p: G_p \to \F_p[\ep_2]^\times.
\]
Indeed, since $\rho_2'|_p$ is upper-triangular, $\chi_2$ is a homomorphism, and since $\det(\rho_2')=\omega$, the lower-right coordinate of $\rho_2'|_p$ must be $\chi_2^{-1}$. Let $\chi_1 : G_p \to \F_p[\ep_1]^\times$ denote the character $\chi_1 := \chi_2 \otimes_{\F_p[\ep_2]} \F_p[\ep_1]$, which equals $1 + a\upp1\ep$.  
We want to characterize the finite-flat at $p$ property of $\rho_2$, bootstrapping from the fact that its reduction $\rho_1$ is finite-flat at $p$.
To this end, we induce two representations $\eta_2$ and $\eta_1$ associated to an element of $\Pi_2^{\det}$

\begin{defn}
\label{defn: etas}
Assume that $\rho_2 \in \Pi_2^{\det}$ has the property that $\rho_2'\vert_p$ is upper-triangular. Then there are two associated representations
  \[
    \eta_2 =\ttmat{\omega\chi_2}{\ep b\upp1 + \ep^2 b\upp2}{0}{\chi_2^{-1}}: G_p \to \GL_2(\F_p[\ep_2])
    \]
given by $\rho_2'\vert_p$ composed with the embedding of Lemma \ref{lem: UT embedding}, and
\begin{equation}
        \label{eq: eta1 coordinates}
     \eta_1=\ttmat{\omega(1 + \ep a\upp1)}{b\upp1 + \ep b\upp2}{\omega \ep c\upp1}{1 + \ep d\upp1}: G_{\Q,Np} \to \GL_2(\F_p[\ep_1])
\end{equation}
given by $\rho_2'$ composed with the map $E_2 \to M_2(\F_p[\ep_1])$ of Lemma \ref{lem: matrix reduction}.
\end{defn}

\begin{rem}
Note that
\[
\eta_1\vert_p = \ttmat{\omega \chi_1}{b\upp1\vert_p + \ep b\upp2\vert_p}{0}{\chi_1^{-1}}. 
\]
Also, be aware that $\eta_1$ does not equal the reduction of $\eta_2$ modulo $\ep^2$. Rather, one obtains $\eta_1$ from $\eta_2$ by ``dividing the extension class $\ep b\upp1 + \ep^2 b\upp2 \in \Ext^1_{\F_p[\ep_2][G_p]}(\chi_2^{-1}, \omega\chi_2)^\mathrm{flat}$ by $\epsilon$,'' which will be made rigorous in the proof of Lemma \ref{lem: flat rho2 step 2}.
\end{rem}

\begin{lem}
\label{lem: flat rho2 step 2}
Assume that $\rho_2 \in \Pi_2^{\det}$ has the property that $\rho_2'\vert_p$ is upper-triangular, and let $\eta_2$ and $\eta_1$ be the associated representations of Definition \ref{defn: etas}. The following are equivalent:
\begin{enumerate}
    \item The Cayley--Hamilton representation $\rho_2\vert_p$ is finite-flat.
    \item The homomorphism $\eta_2$
    is finite-flat.
    \item The homomorphism $\eta_1|_p: G_p \to \GL_2(\F_p[\ep_1])$ is finite-flat and $\chi_2$ is unramified.
\end{enumerate}
\end{lem}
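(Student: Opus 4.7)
The overall strategy is to prove the equivalences via the chain $(1) \iff (2) \iff (3)$. The first equivalence relates finite-flatness of the Cayley--Hamilton representation $\rho_2|_p$ (valued in $E_2$) to finite-flatness of the ordinary representation $\eta_2$ obtained by embedding the upper-triangular sub-GMA $U_2$ into $M_2(\F_p[\ep_2])$. The second equivalence compares $\eta_2$ to $\eta_1|_p$, exploiting the fact that both come from the same underlying cochain datum $b\upp1|_p + \ep b\upp2|_p$, packaged differently in the two pictures.

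For $(1) \iff (2)$: the hypothesis that $\rho_2'|_p$ is upper-triangular means it factors through $U_2 \subset E_2$, and under the embedding $U_2 \hookrightarrow M_2(\F_p[\ep_2])$ of Lemma \ref{lem: UT embedding} this becomes precisely $\eta_2$. For $(2) \Rightarrow (1)$, I would apply Lemma \ref{lem: GMA property} with $S = U_2$ and the faithful $U_2$-module $V = \F_p[\ep_2]^2$ (on which $G_p$ acts via $\eta_2$). For $(1) \Rightarrow (2)$, I would use the stability of finite-flatness under $\Z_p[G_p]$-subquotients recalled in \S\ref{P1: subsubsec: finite-flat}, realizing $V$ with its $\eta_2$-action as a $G_p$-stable subquotient of $E_2$ built from the column structure of the matrix embedding.

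For $(2) \iff (3)$: both $\eta_2$ and $\eta_1|_p$ are upper-triangular, and the key observation is that their off-diagonal entries differ by multiplication by $\ep$: $\eta_2$ has off-diagonal $\ep b\upp1|_p + \ep^2 b\upp2|_p = \ep(b\upp1|_p + \ep b\upp2|_p)$, while $\eta_1|_p$ has off-diagonal $b\upp1|_p + \ep b\upp2|_p$. For $(2) \Rightarrow (3)$, finite-flatness of $\eta_2$ forces its diagonal characters $\omega\chi_2$ and $\chi_2^{-1}$ to be finite-flat as subquotients (Proposition \ref{prop: finite-flat implies upper tri}), hence $\chi_2$ is unramified; reducing modulo $\ep^2$ and invoking the Kummer-theoretic description of the finite-flat subspace (Lemma \ref{P1: lem: local Kummer finite-flat}) then yields finite-flatness of $\eta_1|_p$. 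For $(3) \Rightarrow (2)$, the hypothesis that $\chi_2$ is unramified directly gives that the diagonal of $\eta_2$ is finite-flat; the finite-flat class of $\eta_1|_p$'s extension, when multiplied by $\ep$, lies in the finite-flat subspace of $H^1(G_p, \omega\chi_2^2 \cdot \F_p[\ep_2])$, giving the finite-flatness of $\eta_2$.

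The main obstacle I expect is making precise the interaction between multiplication by $\ep$ and the finite-flat subspaces of $H^1(G_p, \omega\chi_n^2 \cdot \F_p[\ep_n])$ for $n = 1,2$. The key assertion is that, after the unramified twist by $\chi_n^2$, the finite-flat subspace equals $H^1(G_p, \F_p(1))^{\mathrm{flat}} \otimes_{\F_p} \F_p[\ep_n]$; granted this, multiplication by $\ep$ gives an $\F_p$-linear injection of finite-flat subspaces (modulo $\ep^3$) and establishes the biconditional on extension classes. This reduces, via Lemma \ref{P1: lem: local Kummer finite-flat} and the smoothness/functoriality of the finite-flat deformation functor expressed in Proposition \ref{prop: smoothness}, to the statement at $\F_p$-coefficients, but careful bookkeeping with the unramified twist by $\chi_2$ is needed.
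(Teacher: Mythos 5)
Your treatment of $(1) \iff (2)$ matches the paper's in spirit — it also points to Lemmas \ref{lem: UT embedding} and \ref{lem: GMA property}, and the direction $(2) \Rightarrow (1)$ via the faithful $U_2$-module $V = \F_p[\ep_2]^2$ is exactly right.

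The gap is in your treatment of $(2) \Rightarrow (3)$. You write that finite-flatness of $\eta_1|_p$ follows from that of $\eta_2$ by ``reducing modulo $\ep^2$ and invoking the Kummer-theoretic description of the finite-flat subspace.'' But $\eta_1$ is \emph{not} the reduction of $\eta_2$ modulo $\ep^2$ — the paper warns of exactly this in the remark following Definition \ref{defn: etas}. The reduction of $\eta_2$ mod $\ep^2$ has off-diagonal entry $\ep b\upp1|_p$, whereas $\eta_1|_p$ has off-diagonal $b\upp1|_p + \ep b\upp2|_p$; the mod-$\ep^2$ reduction of $\eta_2$ therefore carries no information about $b\upp2|_p$, which is precisely what finite-flatness of $\eta_1|_p$ constrains. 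To pass from $\eta_2$ to $\eta_1|_p$ one must ``divide the extension class by $\ep$,'' which the paper makes rigorous by a diagram chase in $\Ext$ groups over $\F_p[\ep_2][G_p]$: using the filtrations $\ep\omega\chi_2 \subset \omega\chi_2$ and $\ep^2\chi_2^{-1} \subset \chi_2^{-1}$, one exhibits a class $W \in \Ext^1(\chi_1^{-1}, \ep\omega\chi_2)$ mapping to $\eta_2$, which is therefore a subquotient of $\eta_2$ and identified in coordinates with $\eta_1|_p$. Stability of the finite-flat condition then gives the conclusion.

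Your sketch of $(3) \Rightarrow (2)$ has a related issue: ``the finite-flat class of $\eta_1|_p$'s extension, when multiplied by $\ep$'' lives in an $\Ext$ group of $\F_p[\ep_1]$-characters, whereas the class of $\eta_2$ lives in $\Ext^1_{\F_p[\ep_2][G_p]}(\chi_2^{-1}, \omega\chi_2)$, and the diagonal characters $\chi_1 \otimes \F_p[\ep_2]$ and $\chi_2$ do not agree (they differ by an $\ep^2$-term). Your proposed ``key assertion'' about the finite-flat subspace equaling $H^1(G_p, \F_p(1))^{\mathrm{flat}} \otimes_{\F_p} \F_p[\ep_n]$ glosses over this mismatch. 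The paper handles it differently: by formal smoothness (Proposition \ref{prop: smoothness}), it first produces a finite-flat $\F_p[\ep_2]$-lift $\eta_{2,\mathrm{lift}}$ of $\eta_1|_p$, forms $\ep \cdot \eta_{2,\mathrm{lift}}$ (finite-flat by \cite[Rem.\ C.3.2]{WWE3}), and then shows $\eta_2$ differs from this by an unramified diagonal cocycle — using the hypothesis that $\chi_2$ is unramified — so that $\eta_2$ is finite-flat by smoothness again. Your argument needs these lifting and comparison steps spelled out; as written it does not account for the discrepancy between the two sets of diagonal characters.
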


\begin{proof}
The equivalence of (1) and (2) follows from the embedding of Lemma \ref{lem: UT embedding} along with Lemma \ref{lem: GMA property}.

Now we assume (2) and prove (3). By Proposition \ref{prop: finite-flat implies upper tri}, $\chi_2$ is unramified. We will show that $\eta_1|_p$ is isomorphic to a subquotient representation of $\eta_2$. This implies that $\eta_1|_p$ is finite-flat, since the finite-flat property is \emph{stable}, as discussed in \S\ref{P1: subsubsec: finite-flat}.

From the exact sequences 
\[
0 \to \ep \omega \chi_2 \to \omega \chi_2 \to \omega \to 0, \quad 0 \to \ep^2 \chi_2^{-1} \to \chi_2^{-1} \to \chi_1^{-1} \to 0
\]
there is a commutative diagram of $\Ext$ groups over $\F_p[\ep_2][G_p]$ with exact rows and columns 
\[
\xymatrix{
 & \Ext^1(\chi_1^{-1},\ep \omega \chi_2) \ar[d] & \\
0 \ar[r] & \Ext^1(\chi_2^{-1},\ep \omega\chi_2) \ar[r] \ar[d] & \Ext^1(\chi_2^{-1}, \omega \chi_2) \ar[r] \ar[d] & \Ext^1(\chi_2^{-1},\omega) \\
0 \ar[r] & \Ext^1(\ep^2 \chi_2^{-1},\ep \omega \chi_2) \ar[r] & \Ext^1(\ep^2 \chi_2^{-1},\omega\chi_2)
}
\]
The representation $\eta_2$ defines a class in $\Ext^1(\chi_2^{-1}, \omega\chi_2)$, written as $\ep b\upp1 + \ep^2b\upp2$. The fact that this class is a multiple of $\ep$ implies that $\eta_2$ maps to zero under both the horizontal and the vertical map out of $\Ext^1(\chi_2^{-1}, \omega\chi_2)$ in the diagram. By a diagram chase, there is a class $W \in \Ext^1(\chi_1^{-1},\ep \omega \chi_2)$ mapping to $\eta_2$. This $W$ is a subquotient of $\eta_2$, so it is finite-flat. Moreover, a computation of the maps in the diagram in coordinates, as in \cite[Appendix C]{WWE3}, shows that there is an $\F_p[\ep_1]$-basis for $W$ such that the action of $G_p$ on $W$ is given by $\eta_1|_p$. In particular, $\eta_1|_p$ is isomorphic to $W$ as a $\F_p[G_p]$-module, so it is finite-flat.

Finally, we assume (3) and prove (2). By Proposition \ref{prop: smoothness} (the formal smoothness of the finite-flat deformation functor), there is a finite-flat representation $\eta_{2,\mathrm{lift}} : G_p \to \GL_2(\F_p[\ep_2])$ of the form
\begin{align*}
\eta_{2,\mathrm{lift}} &= \ttmat{\omega(1 + \ep a\upp1 +\ep^2 a_\mathrm{lift}\upp2)}
{b\upp1 + \ep b\upp2 + \ep^2 b_\mathrm{lift}\upp3}{0}
{1 + \ep d\upp1 + \ep^2 d_\mathrm{lift}\upp2} \\ 
&= \ttmat{\omega\chi_{2,1,\mathrm{lift}}}{b\upp1 + \ep b\upp2 + \ep^2 b\upp3_\mathrm{lift}}{0}{\chi_{2,2,\mathrm{lift}}}.
\end{align*}
deforming $\eta_1|_p$. Let $\epsilon\cdot\eta_{2,\mathrm{lift}}$ denote the homomorphism
\[
\epsilon\cdot\eta_{2,\mathrm{lift}}=\ttmat{\omega\chi_{2,1,\mathrm{lift}}}{\ep b\upp1 + \ep^2 b\upp2}{0}{\chi_{2,2,\mathrm{lift}}} : G_p \to \GL_2(\F_p[\ep_2]), 
\]
which represents the class in $\Ext^1_{\F_p[\ep_2][G_p]}(\chi_{2,2,\mathrm{lift}}^{-1},\omega\chi_{2,1,\mathrm{lift}})$ that is the $\ep$-multiple of the class of $b\upp1 + \ep b\upp2 + \ep^2 b\upp3$. By \cite[Rem.\ C.3.2]{WWE3}, since $\eta_{2,\mathrm{lift}}$ is finite-flat, $\ep \cdot \eta_{2,\mathrm{lift}}$ is too.

Finally, since $\chi_2$, $\chi_{2,1,\mathrm{lift}}$, and $\chi_{2,2,\mathrm{lift}}$ are all unramified characters, it follows that
\[
a=a\upp2-a\upp2_\mathrm{lift}, \text{ and } d=d\upp2-d\upp2_\mathrm{lift}
\]
are unramified cocycles. Then $\eta_2$ is obtained from the finite-flat representation $\ep \cdot \eta_{2,\mathrm{lift}}$ by adding the cocycle $\sm{a}{0}{0}{d} \in Z^1(G_p, \End_{\F_p}(\omega \oplus 1))$, which is in the finite-flat subspace. By Proposition \ref{prop: smoothness}, this implies that $\eta_2$ is also finite-flat.
\end{proof}

\begin{lem}
\label{lem: flat rho2 step 3}
Assume $a\up1|_{\ell_1}=0$. There exists $\rho_2 \in \Pi_2^{\det}$ such that 
\begin{itemize}
    \item $\rho_2'\vert_p$ is upper-triangular (equivalently, $c\upp2\vert_p = 0$), and 
    \item the associated homomorphism $\eta_1$ as in Definition \ref{defn: etas} is finite-flat.
\end{itemize}
\end{lem}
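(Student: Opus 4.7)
My plan is to start with $\rho_2 \in \Pi_2^{\det}$ satisfying $c\upp2|_p = 0$ provided by Lemma \ref{lem: flat rho2 step 1}, so that the associated representation $\eta_1|_p$ of Definition \ref{defn: etas} is upper triangular with unramified diagonal, lifting its reduction modulo $\ep$, namely $\eta_0 := \ttmat{\omega}{b\upp1|_p}{0}{1}$ over $\F_p$. Since $b\upp1|_p = b_1|_p$ is the Kummer class of the $p$-adic unit $\ell_1$, which lies in the finite-flat subspace by Lemma \ref{lem: global Kummer theory}(1), the representation $\eta_0$ is finite-flat. Formal smoothness (Proposition \ref{prop: smoothness}(4)) then furnishes a constant-determinant finite-flat lift $\eta_1^\mathrm{fl}$ of $\eta_0$ to $\F_p[\ep_1]$; by Proposition \ref{prop: finite-flat implies upper tri} and the torsor structure of finite-flat lifts, I will arrange $\eta_1^\mathrm{fl}$ to be upper triangular with the same (unramified) diagonal as $\eta_1|_p$. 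The difference $\eta_1|_p - \eta_1^\mathrm{fl}$ then lies entirely in the upper-right coordinate, giving a class $\psi \in H^1(G_p, \F_p(1))$; and $\eta_1|_p$ is finite-flat if and only if $\psi \in H^1(G_p, \F_p(1))^\mathrm{fl}$.

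The adjustments to $\rho_2$ available inside $\Pi_2^{\det}$ while preserving $c\upp2|_p = 0$ shift $b\upp2|_p$ by restrictions to $G_p$ of cocycles in $Z^1_b$. By Lemma \ref{lem: global Kummer theory}(1), both $b_0|_p$ and $b_1|_p$ lie in $H^1(G_p, \F_p(1))^\mathrm{fl}$, so the image of $Z^1_b$ at $p$ is contained in the finite-flat subspace; consequently, the class of $[b\upp2|_p]$ modulo $H^1(G_p, \F_p(1))^\mathrm{fl}$ is an invariant of the coset of valid $\rho_2$. To verify that this invariant vanishes I would apply Proposition \ref{prop: smoothness}(4) directly to the Cayley--Hamilton representation $\rho_1|_p$, obtaining a finite-flat second-order GMA extension $\rho_2^\mathrm{fl}|_p$ of $\rho_1|_p$ to $\F_p[\ep_2]$ whose $b$-coordinate $b^\mathrm{fl}_{(2)}$ is a local finite-flat primitive of the differential equation (ii) of Lemma \ref{P1: lem: exists 2nd-order 1-reducible}. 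It would then remain to produce a global primitive $b\up2$ agreeing with $b^\mathrm{fl}_{(2)}$ at $p$ modulo a finite-flat cocycle, completing the match between $\eta_1|_p$ and the finite-flat $\eta_1^\mathrm{fl}$ up to an element of the allowable modification class.

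The main obstacle is precisely this global-to-local matching within the restricted modification class $Z^1_b$, which by construction excludes the Kummer class $b_p$ responsible for the non-flat direction at $p$ in Lemma \ref{lem: global Kummer theory}(2). The argument must therefore exploit the specific structure of the 2-cocycle $a\up1 \smile b\up1 + b\up1 \smile d\up1$ at $p$, together with the finite-flatness of $\rho_1|_p$ itself, to force the restriction of a global primitive of (ii) into the finite-flat Selmer subspace without recourse to $b_p$. Once this matching is established, choosing the resulting $\rho_2$ and applying the freedom in $y$ to preserve $c\upp2|_p = 0$ yields the desired deformation, and Lemmas \ref{lem: flat rho2 step 1} and \ref{lem: flat rho2 step 2} combine to give both bulleted conclusions of the statement.
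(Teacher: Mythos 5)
Your first paragraph is sound and matches the paper's setup: you start from Lemma \ref{lem: flat rho2 step 1}, isolate the class $\psi$ measuring the failure of $\eta_1\vert_p$ to be finite-flat, and note that modifications by $(0,0,b,0)$ with $b \in Z^1_b$ preserve $c\upp2\vert_p = 0$ while shifting $b\upp2$. The error is in the next step. You assert that ``the image of $Z^1_b$ at $p$ is contained in the finite-flat subspace'' because $b_0\vert_p$ and $b_1\vert_p$ are finite-flat. But $Z^1_b$ is not spanned by $b_0\up1$, $b\up1$, and coboundaries; in fact $b_0\up1 \notin Z^1_b$ (its restriction to $\ell_0$ is unramified and nonzero, hence not a multiple of the ramified class $c_0\vert_{\ell_0}$). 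Counting dimensions, $Z^1(\Z[1/Np],\F_p(1))$ has basis $\{dx, b_p, b\up1, b_0\up1\}$, the cutting-out map at $\ell_0$ has one-dimensional image spanned by the image of $b_0\up1$, and $dx, b\up1, b_p$ all vanish at $\ell_0$ mod the $c_0$-line (since $b_p\vert_{\ell_0}$ is unramified, one can absorb it into a multiple of $b_0\up1$). So $Z^1_b$ is three-dimensional with a basis of the form $\{dx, b\up1, b_p - \lambda b_0\up1\}$ for some $\lambda \in \F_p$. The third basis element restricts at $p$ to $b_p\vert_p - \lambda b_0\up1\vert_p$, which has nonzero component in the non-flat direction because $b_p\vert_p$ spans it (Lemma \ref{lem: global Kummer theory}(2)). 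Therefore the image of $Z^1_b$ at $p$ surjects onto $Z^1(\Q_p,\F_p(1))/Z^1(\Q_p,\F_p(1))^\mathrm{flat}$; it is not contained in the flat subspace.

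This reverses the logic of the rest of your argument. The class you call $\psi$ modulo $H^1(\Q_p,\F_p(1))^\mathrm{flat}$ is \emph{not} an invariant of the orbit, and there is no obstruction to prove away: you can simply act by an appropriate $b \in Z^1_b$ to cancel the non-flat part of $b\upp2\vert_p$. This is exactly what the paper's proof does, via Sublemma \ref{sublem: eta adjoint} and a snake-lemma argument showing the defect $\eta_1\vert_p$ lies in the image of $Z^1(\Q_p,\F_p(1))/Z^1(\Q_p,\F_p(1))^\mathrm{flat}$, which is then hit by the image of a global cocycle. Your proposed detour through the structure of the $2$-cocycle $a\up1 \smile b\up1 + b\up1 \smile d\up1$ and a ``global-to-local matching'' is not needed; indeed it would be chasing a quantity that is not well-defined. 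To repair your proof, replace the incorrect containment with the observation that $Z^1_b$ surjects onto the non-flat quotient at $p$, and conclude by choosing $b \in Z^1_b$ killing $\psi$ modulo the flat subspace.
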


\begin{proof}
Let $\rho_2 \in \Pi_2^{\det}$ be such that $\rho_2\vert_p$ is upper-triangular (which exists by Lemma \ref{lem: flat rho2 step 1}). We will find an element $(0,0,b,0) \in \mathfrak{Z}_2^{\det}$ such that $\rho_{2,\mathrm{new}}:=(0,0,b,0)\cdot \rho_2$ has $\eta_{1,\mathrm{new}}$ being finite-flat. This $\rho_{2,\mathrm{new}}|_p$ is still upper-triangular because $c\upp2_{\mathrm{new}}=c\upp2$.

Let $\eta=\sm{\omega}{b\upp1}{0}{1}=(\eta_1 \bmod{\ep})$; it is finite-flat at $p$ by Lemma \ref{P1: lem: local Kummer finite-flat}. The lift $\eta_1$ of $\eta$ over $\F_p[\ep_1] \rsurj \F_p$ can and will be considered to be an element of $Z^1(\Z[1/Np], \Ad^0(\eta))$ by Lemma \ref{lem: basic def theory}. We want to examine its coordinates so we set up the following notions.

The filtration of $\F_p[G_{\Q,Np}]$-modules 
\[
0 \to \F_p(1) \buildrel\iota\over\to \eta \buildrel\pi\over\to \F_p \to 0 
\]
induces a filtration of $\Ad^0(\eta)$, 
\[
0 \subset \Hom_{\F_p}(\F_p, \F_p(1)) \subset U \subset \Ad^0(\eta),
\]
where $U := \{f \in \Ad^0(\eta) \mid \pi \circ f \circ \iota = 0\}$ and
\[
\frac{U}{\Hom_{\F_p}(\F_p, \F_p(1))} \simeq \F_p, \qquad \frac{\Ad^0(\eta)}{U} \cong \Hom_{\F_p}(\F_p(1), \F_p) \cong \F_p(-1). 
\]
For any subgroup $G \subset G_{\Q,Np}$, it is exactly the cochains in $C^1(G, \Ad^0(\eta))$ that lie in $C^1(G, U)$ that are upper-triangular. Therefore we are interested in $Z^1(\Q_p, U)$, and its finite-flat subspace $Z^1(\Q_p, U)^\fl$. We also want to use the subspace of global lifts that are upper-triangular upon restriction to $G_p$, 
\[
Z^1(\Z[1/Np], \Ad^0(\eta))^{p\text{-UT}} = \ker\left(Z^1(\Z[1/Np], \Ad^0(\eta)) \to 
\frac{Z^1(\Q_p, \Ad^0(\eta\vert_p))}{Z^1(\Q_p, U)}
\right).
\]

\begin{sublem}
\label{sublem: eta adjoint} 
There is a commutative diagram induced by the filtrations above with exact rows 
\[
\resizebox{\columnwidth}{!}{
\xymatrix{
0 \ar[r] &  Z^1(\Z[1/Np], \F_p(1))  \ar[r]^(.45){\iota_\Q} \ar[d] & Z^1(\Z[1/Np], \Ad^0(\eta))^{p\text{-UT}}  \ar[d] &  & \\
0 \ar[r] & Z^1(\Q_p, \F_p(1))  \ar[r]^{\iota_p} & Z^1(\Q_p, U) \ar[r]^{\kappa_p}  & Z^1(\Q_p, \F_p)  & \\
 0 \ar[r] & Z^1(\Q_p, \F_p(1))^\fl \ar[r] \ar[u] & Z^1(\Q_p, U)^\fl \ar[r] \ar[u] & Z^1(\Q_p, \F_p)^\mathrm{flat} \ar[r] \ar[u] & 0
}
}
\]
where 
\[
\iota_* : b \mapsto \ttmat{0}{b}{0}{0}, \quad \kappa_* : \ttmat{a_p\upp1}{b_p\upp2}{0}{-a_p\upp1} \mapsto a_p\upp1.
\]
\end{sublem}

\begin{proof}
The commutativity follows directly from the filtrations. The exactness of the top two rows follows from standard long exact sequences in Galois cohomology, for $G = G_{\Q,Np}, G_p$, 
\[
0 \to H^0(G, \F_p) \to H^1(G, \F_p(1)) \to H^1(G, U) \to H^1(G, \F_p),
\]
and the observation that the kernel of $H^1(G, \F_p(1)) \to H^1(G,U)$ arises from cocycles being sent to coboundaries that are non-zero. The exactness of the third row follows from direct calculation of $\iota_p$ and $\kappa_p$. The final claim of the lemma follows from Proposition \ref{prop: smoothness} and the exactness of the rows of the diagram. 
\end{proof}

Since $\eta_1|_p$ is upper-triangular, 
$\eta_1 \in Z^1(\Z[1/Np], \Ad^0(\eta))^{p\text{-UT}}$. 
Moreover $\kappa_p(\eta_1|_p)=a\upp1|_p$
is in $Z^1(\Q_p,\F_p)^\fl=Z^1_{\mathrm{unr}}(\Q_p,\F_p)$ because $a\upp1|_p$ is unramified by construction (see Lemma \ref{P1: lem: produce a1}).

Consider the diagram in the sublemma. Since $\kappa_p(\eta_1|_p) \in Z^1(\Q_p,\F_p)^\fl$, the snake lemma implies that the class of $\eta_1|_p$ is in the image of
\[
\frac{Z^1(\Q_p,\F_p(1))}{Z^1(\Q_p,\F_p(1))^\fl} \xrightarrow{\iota_p} \frac{Z^1(\Q_p,U)}{Z^1(\Q_p,U)^\fl}.
\]
By Lemma \ref{lem: global Kummer theory}, the image of $Z^1(\Z[1/Np],\F_p(1))$ generates $\frac{Z^1(\Q_p,\F_p(1))}{Z^1(\Q_p,\F_p(1))^\fl}$. This implies that there is $b \in Z^1(\Z[1/Np],\F_p(1))$ such that $\iota_p(b|_p)+\eta_1|_p$ is in $Z^1(\Q_p,U)^\fl$. The commutativity of the diagram implies that $\iota_p(b|_p)=\iota_\Q(b)|_p$.

Let $b \in Z^1(\Z[1/Np],\F_p(1))$ be such that $\iota_\Q(b)|_p+\eta_1|_p$ is in $Z^1(\Q_p,U)^\fl$ and let $\rho_{2, \mathrm{new}} := (0,0,b,0) \cdot \rho_2 \in \Pi_2^{\det}$. By construction, $\eta_{1,\mathrm{new}}=\eta_1+\iota_\Q(b)$, and $\eta_{1,\mathrm{new}}|_p=\eta_1|_p+\iota_\Q(b)|_p$ is finite-flat.
\end{proof}

\begin{lem}
\label{lem: flat rho2 step 4}
Assume $a\up1|_{\ell_1}=0$. Then $\Pi_2^{\det,p}$ is non-empty. 
\end{lem}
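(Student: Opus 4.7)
The plan is to start from a $\rho_2 \in \Pi_2^{\det}$ produced by Lemma \ref{lem: flat rho2 step 3}, for which $\rho_2'\vert_p$ is already upper-triangular and $\eta_1\vert_p$ is already finite-flat, and to modify $\rho_2$ by a suitable element of $\mathfrak{Z}_2^{\det}$ so that the diagonal character $\chi_2 : G_p \to \F_p[\ep_2]^\times$ of \eqref{eq: rho2 prime p} becomes unramified. Once this is accomplished, Lemma \ref{lem: flat rho2 step 2} will immediately give that $\rho_2\vert_p$ is finite-flat, so that $\rho_2 \in \Pi_2^{\det,p}$.

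To arrange $\chi_2$ unramified, I would first recall that $a\upp1\vert_{I_p}$ already vanishes by the same computation used for $\rho_1$ in the proof of Lemma \ref{lem: D1 construction}, so the remaining condition is $a\upp2\vert_{I_p} = 0$. Because $\chi_2$ is a character and $a\upp1\vert_{I_p}=0$, the restriction $a\upp2\vert_{I_p}$ is automatically a group homomorphism $I_p\to\F_p$. The key observation is that acting on $\rho_2$ by $(a,-a,0,0)\in\mathfrak{Z}_2^{\det}$, for arbitrary $a\in Z^1(\Z[1/Np],\F_p)$, leaves $a\up1,b\up1,c\up1,d\up1,b\up2,c\up2$ unchanged and sends $a\up2 \mapsto a\up2 + a$; inspecting the change-of-basis formulas in Section \ref{P1: subsec: finite-flat on rho2} relating primed and unprimed cochains shows that the only primed cochain affected is $a\upp2 \mapsto a\upp2 + a$. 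In particular, this modification preserves both the upper-triangularity of $\rho_2'\vert_p$ and the finite-flatness of $\eta_1\vert_p$.

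It then remains to show that the restriction map
\[
H^1(\Z[1/Np],\F_p)\lra H^1(\Q_p,\F_p)/H^1_{\mathrm{un}}(\Q_p,\F_p)
\]
is surjective, whereupon choosing $a$ so that $a\vert_p$ kills the obstruction class of $a\upp2\vert_{I_p}$ finishes the proof. The target is one-dimensional (both $H^1(\Q_p,\F_p)$ and $H^1_{\mathrm{un}}(\Q_p,\F_p)$ are readily computed via local Euler characteristics for $p \geq 3$), and by construction the cocycle $a_p$ of Definition \ref{P1: defn: pinned cocycles} is ramified at $p$, so its image is a generator.

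The main thing I expect to be delicate is the bookkeeping in the middle paragraph: one must verify that the $(a,-a,0,0)$-action truly has no side effects on the primed cochains $a\upp1,d\upp1,b\upp1,b\upp2,c\upp1,c\upp2$ that enter both the $\eta_1\vert_p$ finite-flatness hypothesis and the upper-triangularity condition. This is immediate from the explicit formulas for the conjugation $\rho_2' = \ad\!\bigl(\sm{1}{0}{-x_{c\up1}+y\ep}{1}\bigr)\rho_2$, but it is the substantive content without which the whole strategy---first achieve upper-triangularity and $\eta_1\vert_p$-flatness (Lemma \ref{lem: flat rho2 step 3}), then separately correct the ramification of $\chi_2$---would not work.
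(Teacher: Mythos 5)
Your strategy is the same as the paper's: start from the $\rho_2$ of Lemma~\ref{lem: flat rho2 step 3}, modify the $a\up2$-coordinate by a global cocycle via the $(a,-a,0,0)$-action (which leaves $\eta_1$ and the upper-triangularity of $\rho'_2\vert_p$ intact), and use the surjectivity of the restriction map $H^1(\Z[1/Np],\F_p)\to H^1(\Q_p,\F_p)/H^1_{\mathrm{un}}(\Q_p,\F_p)$ to arrange $\chi_2$ unramified, then invoke Lemma~\ref{lem: flat rho2 step 2}. The bookkeeping you flag in the middle paragraph is fine (though for the record, $d\upp2$ also shifts by $-a$; only $a\upp2$ and $d\upp2$ move, and neither the $\eta_1$ data nor $c\upp2$ are touched).

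There is, however, one step you pass over too quickly. You say ``choose $a$ so that $a\vert_p$ kills the obstruction class of $a\upp2\vert_{I_p}$,'' but $a\upp2\vert_p$ is a cochain, not a cocycle, so it does not a priori define a class in $H^1(\Q_p,\F_p)/H^1_{\mathrm{un}}(\Q_p,\F_p)$. What you have is a homomorphism $a\upp2\vert_{I_p}\colon I_p\to\F_p$, and you need to know it lies in the image of $H^1(\Q_p,\F_p)\to\Hom(I_p,\F_p)$, i.e.\ that it is Frobenius-invariant and extends to a $G_p$-cocycle. Both facts do hold: Frobenius-invariance because $\chi_2$ is valued in an abelian ring hence conjugation-invariant, and extendability because $H^2(G_p/I_p,\F_p)=H^2(\hat\Z,\F_p)=0$. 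But these need to be said. The paper avoids the issue entirely by the explicit algebraic trick $a' := \tfrac12(a\upp1\vert_p)^2 - a\upp2\vert_p$, which is manifestly a cocycle in $Z^1(\Q_p,\F_p)$ with $a'\vert_{I_p}=-a\upp2\vert_{I_p}$ (using $a\upp1\vert_{I_p}=0$), and then adjusts by a global cocycle so that $a'-a\vert_p$ is unramified. You should either supply the Frobenius-invariance/extension argument or adopt the paper's explicit cocycle.
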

\begin{proof}
Let $\rho_2 \in \Pi_2^{\det}$ be as in Lemma \ref{lem: flat rho2 step 3}. We claim that there is a cocycle $a \in \Z^1(\Z[1/Np],\F_p)$ such that $(a\upp2+a)|_p$ is unramified. Assume this claim, and let $\rho_{2,\mathrm{new}}=(a,-a,0,0) \cdot \rho_2$. As this action only changes $a\up2$ and $d\up2$, the representation $\eta_{1,\mathrm{new}}$ for $\rho_{2,\mathrm{new}}$ is identically equal to $\eta_{1}$, so it is finite-flat. The character $\chi_{2,\mathrm{new}}$ for $\rho_{2,\mathrm{new}}$ is given by
\[
\chi_{2,\mathrm{new}} = \left(1+ \ep a\upp1 + \ep^2(a\upp2+a)\right)|_p
\]
and it is unramified because $(a\upp2+a)|_p$ is unramified. By Lemma \ref{lem: flat rho2 step 2}, $\rho_{2,\mathrm{new}}|_p$ is finite-flat.

It remains to prove the claim that  there is a cocycle $a \in \Z^1(\Z[1/Np],\F_p)$ such that $(a\upp2+a)|_p$ is unramified. The fact that $\chi_2$ is a character implies
\[
-d a\upp2|_p = a\upp1 \smile a \upp1.
\]
This is the same as the coboundary of $\frac{1}{2}(a\upp1|_p)^2$, so the difference $a':=\frac{1}{2}(a\upp1|_p)^2-a\upp2|_p$
is a cocycle. Since the map
\[
Z^1(\Z[1/Np],\F_p) \to \frac{Z^1(\Q_p,\F_p)}{Z^1_\mathrm{un}(\Q_p,\F_p)}
\]
is surjective, there is $a \in Z^1(\Z[1/Np],\F_p)$ such that $a'-a|_p$ is unramified. Thus, since $a\upp1|_p$ is unramified, it follows that
\[
(a\upp2+a)|_p = \frac{1}{2}(a\upp1|_p)^2 +(a' -a|_p)
\]
is unramified.
\end{proof}

Let
\[
(Z^1_b)^\mathrm{flat} := Z^1_b \cap Z^1(\Z[1/Np], \F_p(1))^\mathrm{flat} \subset Z^1(\Z[1/Np], \F_p(1)). 
\]

\begin{prop}
\label{prop: b2 coordinates of flat rho2}
For every pair of elements of $\Pi_2^{\det,p}$, the difference between their $b\up2$-entries is contained in $(Z^1_b)^\mathrm{flat}$. Moreover, $(Z^1_b)^\mathrm{flat}$ is the span of $b\up1$ and $B^1(\Z[1/Np], \F_p(1))$. In particular, $b|_{\ell_0}=0$ for all $b \in (Z^1_b)^\mathrm{flat}$.
\end{prop}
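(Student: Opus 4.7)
I would prove the three assertions in reverse order. For the description of $(Z^1_b)^\mathrm{flat}$: by Lemma \ref{lem: global Kummer theory}(1), every class in $H^1(\Z[1/Np],\F_p(1))^\mathrm{flat}$ is an $\F_p$-linear combination of $b_0=[b_0\up1]$ and $b_1=[b\up1]$. Lemma \ref{lem: log ell1 is zero}(3) gives $b_1|_{\ell_0}=0$, while Proposition \ref{prop: rk2 assumption}(9) shows that $b_0|_{\ell_0}$ is linearly independent from the line $\F_p(2)|_{\ell_0}\cup c_0|_{\ell_0}$ defining $Z^1_b$. Hence a flat cocycle lies in $Z^1_b$ precisely when its cohomology class is a multiple of $[b\up1]$, which gives $(Z^1_b)^\mathrm{flat}=\F_p\cdot b\up1 + B^1(\Z[1/Np],\F_p(1))$; the reverse inclusion is immediate since $b\up1$ is flat with $[b\up1]|_{\ell_0}=0$, and every coboundary is both flat and in $Z^1_b$.

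For the $\ell_0$-vanishing claim, write $b=y\,b\up1+db'$ with $b'\in\F_p(1)$. Assumption \ref{P1: assump: main}(2) together with Hensel's lemma makes $\ell_1$ a $p$th power in $\Q_{\ell_0}^\times$; since $\zeta_p\in\Q_{\ell_0}$ (because $\ell_0\equiv 1\pmod p$), every $p$th root of $\ell_1$ in $\overline\Q_{\ell_0}$ lies in $\Q_{\ell_0}$, including the pinned one, so the Kummer cocycle $b\up1$ restricts to the zero cocycle on $G_{\ell_0}$. Meanwhile $(db')(\sigma)=(\omega(\sigma)-1)b'=0$ for $\sigma\in G_{\ell_0}$ because $\omega|_{\ell_0}$ is trivial. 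Hence $b|_{\ell_0}=0$ as a cocycle.

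The substantive claim is (1). Given $\rho_2,\tilde\rho_2\in\Pi_2^{\det,p}$, Lemma \ref{lem: rho2 constant determinant} yields a unique $(a,-a,b,c)\in\mathfrak{Z}_2^{\det}$ with $\tilde\rho_2=(a,-a,b,c)\cdot\rho_2$, and $b\in Z^1_b$ is automatic from the definition of $\mathfrak{Z}_2$. The task is to show $b|_p\in Z^1(\Q_p,\F_p(1))^\fl$. Proposition \ref{prop: finite-flat implies upper tri} lets me select conjugation parameters $y,\tilde y$ so that $c\upp2|_p=\tilde c\upp2|_p=0$, and Lemma \ref{lem: flat rho2 step 2} then certifies that $\eta_1|_p$ and $\tilde\eta_1|_p$ are both finite-flat. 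Inspection of the formulas following \eqref{eq: rho2 prime} shows $\eta_1$ depends on $\rho_2$ only through the first-order primed cochains $a\upp1,b\upp1,c\upp1,d\upp1$ and through $b\upp2=b\up2$, all of which are manifestly independent of $y$; while the action of $(a,-a,b,c)$ changes only $b\up2\mapsto b\up2+b$ among these. Thus $\tilde\eta_1-\eta_1$ is exactly the cocycle $\iota_\Q(b)$ of Sublemma \ref{sublem: eta adjoint}. By Proposition \ref{prop: smoothness}, two finite-flat lifts differ by a flat cocycle, so $\iota_p(b|_p)\in Z^1(\Q_p,U)^\fl$; exactness of the bottom row of the diagram in Sublemma \ref{sublem: eta adjoint}, together with injectivity of $\iota_p$ and the identity $\kappa_p\circ\iota_p=0$, then forces $b|_p\in Z^1(\Q_p,\F_p(1))^\fl$.

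The technically delicate step I anticipate is verifying cleanly that $\eta_1$ is independent of both the auxiliary parameter $y$ and of the $a$- and $c$-coordinates of the torsor element, so that the comparison reduces to the single cocycle $\iota_\Q(b)$; this requires a careful walk through the explicit formulas for the primed cochains, noting especially that the second-order torsor action leaves every first-order quantity fixed. Once that independence is nailed down, the rest of the argument is standard cohomology.
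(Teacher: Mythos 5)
Your proof is correct and follows essentially the same route as the paper's: both compare $\eta_1$ and $\tilde\eta_1$ as elements of $Z^1(\Z[1/Np],\Ad^0(\eta))^{p\text{-UT}}$ differing only in the $b\up2$-coordinate, extract flatness of that difference from Sublemma \ref{sublem: eta adjoint} and Proposition \ref{prop: smoothness}, and combine with Lemma \ref{P1: lem: exists 2nd-order 1-reducible}(2) to land in $(Z^1_b)^\mathrm{flat}$, whose description is then read off from Lemma \ref{lem: global Kummer theory} and Proposition \ref{prop: rk2 assumption}. Parametrizing the pair via the $\mathfrak{Z}_2^{\det}$-torsor rather than comparing coordinates directly, and your explicit attention to arranging the upper-triangularity hypothesis of Lemma \ref{lem: flat rho2 step 2} (which the published proof invokes without comment), are presentational variations rather than a different argument.
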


\begin{rem}
In fact, the set of differences of the Proposition is equal to $(Z^1_b)^\mathrm{flat}$, but we have no need of this result.
\end{rem}

\begin{proof}
Let $\rho_2,\rho_{2,\mathrm{alt}} \in \Pi_2^{\det,p}$. By Lemma \ref{lem: flat rho2 step 2}, both $\eta_1\vert_p$ and $\eta_{1,\mathrm{alt}}\vert_p$ are finite-flat. 

As in the proof of Lemma \ref{lem: flat rho2 step 3}, and with the notation used there, $\eta_1$ and $\eta_{1,\mathrm{alt}}$ are identified with elements of $Z^1(\Z[1/Np],\Ad^0(\eta))^{p-UT}$. Since they have equal coordinates other than $b\upp2$, the difference $b\upp2-b_\mathrm{alt}\upp2$ is in $Z^1(\Z[1/Np], \F_p(1))$.
As $\eta_1\vert_p$ and $\eta_{1,\mathrm{alt}}\vert_p$ are finite-flat, the commutativity of the diagram in Sublemma \ref{sublem: eta adjoint} implies that $b\upp2-b_\mathrm{alt}\upp2 \in Z^1(\Z[1/Np], \F_p(1))^\fl$. On the other hand, Lemma \ref{P1: lem: exists 2nd-order 1-reducible}(2) implies that $b\upp2-b_\mathrm{alt}\upp2 \in Z^1_b$. This proves that $b\up2-b_\mathrm{alt}\up2 \in (Z^1_b)^\mathrm{flat}$, as desired.

When $x \in \F_p(1)$ is a basis, then $\{dx, b_p, b\up1, b_0\up1\}$ is a basis for $Z^1(\Z[1/Np], \F_p(1))$. By Lemma \ref{lem: global Kummer theory}, $\{dx,b\up1,b_0\up1\}$ is a basis for the subspace $Z^1(\Z[1/Np], \F_p(1))^\mathrm{flat}$. 
Since $b\up1\vert_{\ell_0} = 0$ and $dx|_{\ell_0}=0$, both $b\up1$ and $dx$ are in $(Z^1_b)^\fl$. But, by Lemma \ref{lem: log ell1 is zero} and Proposition \ref{prop: rk2 assumption}, $b_0\up1$ is not in $Z^1_b$, so $\{dx,b\up1\}$ is a basis for $(Z_b^1)^\fl$.
\end{proof}

The main result of this section, Proposition \ref{P1: prop: exists unique beta}, follows immediately from Lemma \ref{lem: flat rho2 step 4} and Proposition \ref{prop: b2 coordinates of flat rho2}.

\subsection{The $\US_N$ condition on $\rho_2$}
Finally, consider the subset $\Pi_2^{\US_N}$ of $\Pi_2^{\det,p}$ consisting of those $\rho_2$ which satisfying the $\US_N$ condition. This subset is cut out by local conditions as 
\[
\Pi_2^{\US_N} = \{ \rho_2 \in \Pi_2^{\det,p} \ | \ \rho_2\vert_{\ell_i} \text{ is }\US_{\ell_i} \text{ for } i = 0,1\}. 
\]
Indeed, the $\US_N$ condition is simply the combination of the finite-flat condition at $p$ along with the two $\US_{\ell_i}$ conditions at $\ell_i$, and the constant determinant condition actually follows from the $\US_N$ condition according to \cite[Prop.\ 3.8.3]{WWE5}.

Recall $\alpha \in \F_p(1)$ from Definition \ref{P1: defn: alpha}. If $a\up1|_{\ell_1}=0$, then $\beta \in \F_p(2)$ as in Proposition \ref{P1: prop: exists unique beta} is defined.
\begin{prop}
    \label{prop: exists rho_2}
    If $a\up1|_{\ell_1}=0$ and $\alpha^2+\beta=0$, then $\Pi_2^{\US_N} =  \Pi_2^{\det,p}$, and, in particular, $\Pi_2^{\US_N}$ is non-empty.
\end{prop}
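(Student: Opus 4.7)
The plan is to check that any $\rho_2 \in \Pi_2^{\det,p}$ already satisfies the two local $\US$-conditions at $\ell_0$ and $\ell_1$: non-emptiness of $\Pi_2^{\det,p}$ is Proposition~\ref{P1: prop: exists unique beta}, and the reverse inclusion $\Pi_2^{\US_N} \subseteq \Pi_2^{\det,p}$ is built into the definition of $\US_N$ together with \cite[Prop.~3.8.3]{WWE5}. Since the reduction $\rho_1$ already satisfies $\US_{\ell_0}$ and $\US_{\ell_1}$ by Lemma~\ref{lem: D1 construction}, the product $(\rho_2(\sigma) - \omega(\sigma))(\rho_2(\tau) - 1) \in E_2$ is supported in the kernel of $r_{2,1} : E_2 \rsurj E_1$, which reduces the check to the top-order $\ep$-coefficients.

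For $\US_{\ell_0}$, the first observation is that $\ell_0 \equiv 1 \pmod p$ makes every twist $\F_p(n)\vert_{G_{\ell_0}}$ trivial, so $B^1(G_{\ell_0}, \F_p(n)) = 0$ and cohomology classes coincide with cocycles. The cohomological identities of Definition~\ref{P1: defn: alpha} and Proposition~\ref{P1: prop: exists unique beta} therefore upgrade to genuine cocycle equalities $a\up1\vert_{\ell_0} = \alpha \cdot c\up1\vert_{\ell_0}$ and $b\up2\vert_{\ell_0} = \beta \cdot c\up1\vert_{\ell_0}$. Expanding $(\rho_2(\sigma) - 1)(\rho_2(\tau) - 1)$ coordinate by coordinate in $E_2$, using $\omega\vert_{\ell_0} = 1$, $b\up1\vert_{\ell_0} = 0$, and $d\up1\vert_{\ell_0} = -a\up1\vert_{\ell_0}$, collapses both diagonal entries to $(\alpha^2 + \beta)\,c\up1(\sigma)\,c\up1(\tau)\,\ep^2$, which vanishes by assumption; the off-diagonal entries vanish by degree considerations in the $\F_p[\ep_1]$-coordinate (top-right) and by the antisymmetry of $c\up1 \smile c\up1$ (bottom-left).

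For $\US_{\ell_1}$, the assumption $a\up1\vert_{\ell_1} = 0$ together with $c\up1\vert_{\ell_1} = 0$ forces $d\up1\vert_{\ell_1} = 0$ and makes $c\up2\vert_{\ell_1}$ a cocycle; the local vanishing $H^1(\Q_{\ell_1}, \F_p(-1)) = 0$---from the local Euler characteristic formula together with $\ell_1 \not\equiv \pm 1 \pmod p$---then gives $c\up2\vert_{\ell_1} = dh$ for some $h \in \F_p(-1)$. I would conjugate $\rho_2$ globally by $\sm{1}{0}{h\ep}{1} \in E_2^\times$; the conjugator reduces to the identity in $E_1^\times$, so the resulting $\widetilde\rho_2$ is still a lift of $\rho_1$ and is isomorphic to $\rho_2$ as a GMA representation, hence still in $\Pi_2^{\det,p}$ with the same $\alpha$ and $\beta$, while its new $c\up2$-coordinate vanishes on $G_{\ell_1}$. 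With $\widetilde\rho_2\vert_{\ell_1}$ now upper triangular in $E_2$, the defining equation for its $a\up2$-coordinate makes that restriction a cocycle in $H^1(\Q_{\ell_1}, \F_p)$, a one-dimensional space consisting of unramified classes (since $\ell_1 \not\equiv 1 \pmod p$); thus the new $a\up2\vert_{I_{\ell_1}} = 0$ and, by constant determinant, the new $d\up2\vert_{I_{\ell_1}} = 0$. A direct $E_2$-calculation then verifies $\US_{\ell_1}$ for $\widetilde\rho_2$, and conjugation-invariance of the $\US_{\ell_1}$ condition transfers it back to $\rho_2$.

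The main obstacle will be the conjugation step at $\ell_1$: verifying carefully that conjugation by $\sm{1}{0}{h\ep}{1}$ inside the 1-reducible GMA $E_2$ really does transform the $c\up2$-coordinate globally by exactly $-dh$, accounting for the $\omega$-prefactor in the lower-left entry of the coordinate presentation of $\rho_2$, and confirming that the resulting $\widetilde\rho_2$ retains finite-flatness at $p$ despite this global adjustment. Once this is in hand, the local cohomological inputs at $\ell_1$---namely $H^1(\Q_{\ell_1}, \F_p(-1)) = 0$ and the purely unramified structure of $H^1(\Q_{\ell_1}, \F_p)$---do the rest of the work automatically.
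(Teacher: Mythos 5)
Your argument matches the paper's proof: establish non-emptiness of $\Pi_2^{\det,p}$ via Proposition~\ref{P1: prop: exists unique beta}, verify $\US_{\ell_0}$ by the direct $E_2$-computation that collapses the diagonal entries to $(\alpha^2+\beta)c\up1(\sigma)c\up1(\tau)\ep^2$, and verify $\US_{\ell_1}$ by conjugating so that $c\up2\vert_{\ell_1}=0$ and then exploiting that homomorphisms $G_{\ell_1}\to\F_p$ are unramified. Your supplementary observations --- that $\ell_0\equiv 1\pmod p$ forces $B^1(G_{\ell_0},\F_p(n))=0$, promoting the cohomology-level identities $[a\up1\vert_{\ell_0}]=\alpha\cup c_0\vert_{\ell_0}$ and $[b\up2\vert_{\ell_0}]=\beta\cup c_0\vert_{\ell_0}$ to the cochain level, and that the $\US$-product lies in $\ker r_{2,1}$ --- correctly fill in details the paper leaves implicit. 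One small correction: the lower-left entry $(c\up1(\sigma)a\up1(\tau)+d\up1(\sigma)c\up1(\tau))\ep$ does not vanish because of any antisymmetry of $c\up1\smile c\up1$; it vanishes because $a\up1\vert_{\ell_0}=-d\up1\vert_{\ell_0}=\alpha\,c\up1\vert_{\ell_0}$ makes the two summands literally cancel, $\alpha c\up1(\sigma)c\up1(\tau)-\alpha c\up1(\sigma)c\up1(\tau)=0$. Finally, your worry about preserving finite-flatness of $\widetilde\rho_2$ is already neutralized by the last sentence of your own outline: since the $\US_{\ell_1}$ condition is conjugation-invariant, it suffices to verify it for the conjugate, and you never need $\widetilde\rho_2$ to lie in $\Pi_2^{\det,p}$ (which is in fact how the paper phrases this step).
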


\begin{proof}
By Proposition \ref{P1: prop: exists unique beta}, $\Pi_2^{\det,p}$ is non-empty. 
Let $\rho_2 \in \Pi_2^{\det,p}$. We will first show that, if $\alpha^2+\beta=0$, then $\rho_2\vert_{\ell_0}$ is $\US_{\ell_0}$. Let $\sigma,\tau \in G_{\ell_0}$. It suffices to show that
\[
(\rho_2(\sigma)-\omega(\sigma))(\rho_2(\tau)-1)
\]
is zero in $E_2$. Using the facts that $\omega|_{\ell_0}=1$ and $b\up1|_{\ell_0}=0$, this product, written in coordinates, is 
\begin{equation*}
\begin{split}
\resizebox{\columnwidth}{!}{
 $\displaystyle \ttmat{a\up1(\sigma)\ep + a\up2(\sigma)\ep^2}{b\up2(\sigma) \ep}{c\up1(\sigma) + c\up2(\sigma) \ep}{d\up1(\sigma) \ep + d\up2(\sigma)\ep^2} \cdot \ttmat{a\up1(\tau)\ep + a\up2(\tau)\ep^2}{b\up2(\tau) \ep}{c\up1(\tau) + c\up2(\tau) \ep}{d\up1(\tau) \ep + d\up2(\tau)\ep^2}$
 } 
 \\
=  \ttmat{(a\up1(\sigma)a\up1(\tau) + b\up2(\sigma)c\up1(\tau))\ep^2}
 {0}
 {(c\up1(\sigma)a\up1(\tau) +d\up1(\sigma)c\up1(\tau))\ep}
 {(c\up1(\sigma)b\up2(\tau)+ d\up1(\sigma)d\up1(\tau))\ep^2}.
 \end{split}
\end{equation*}

Using the equations that
\[
a\up1|_{\ell_0}= \alpha \smile c\up1|_{\ell_0}, \ d\up1|_{\ell_0} = - a\up1|_{\ell_0}, \ b\up2|_{\ell_0} = \beta \smile c\up1|_{\ell_0},
\]
all instances of $a\up1$, $d\up1$, and $b\up2$ can be replaced by appropriate multiples of $c\up1$, and the formula simplifies to
\[
(\rho_2(\sigma)-\omega(\sigma))(\rho_2(\tau)-1)= \ttmat{(\alpha^2+\beta)c\up1(\sigma)c\up1(\tau)\ep^2}
 {0}
 {0}
 {(\alpha^2+\beta)c\up1(\sigma)c\up1(\tau)\ep^2} 
\]
which vanishes by the assumption $\alpha^2+\beta=0$. This implies that $\rho_2\vert_{\ell_0}$ is $\US_{\ell_0}$.

It remains to show that $\rho_2\vert_{\ell_1}$ is $\US_{\ell_1}$. To do so, it will be convenient to change $c\up2$ by adding an element of $B^1(\Z[1/Np],\F_p(-1))$ to it. This amounts to conjugating $\rho_2$ by an element of $E_2^\times$, which does not affect whether the $\US_{\ell_1}$-condition holds.

Note that $\omega|_{I_{\ell_1}}=1$ and that  $a\up1|_{{\ell_1}}$, $d\up1|_{{\ell_1}}$, and  $c\up1|_{\ell_1}$ are zero. By equation (iii) in Lemma \ref{P1: lem: exists 2nd-order 1-reducible}, $c\up2|_{\ell_1}$ is a cocycle. Since $H^1(\Q_{\ell_1},\F_p(-1))$ vanishes, $c\up2\vert_{\ell_1}$ is a coboundary. Therefore, by adding an element of $B^1(\Z[1/Np],\F_p(-1))$ to $c\up2$ if necessary, we may and do assume $c\up2\vert_{\ell_1} = 0$. 

With this assumption, $\rho_2|_{\ell_1}$ can be written in coordinates as
\[
\rho_2|_{\ell_1} =  \ttmat{\omega \chi}
  {b\up1|_{\ell_1}+b\up2|_{\ell_1} \epsilon}
  {0}
  {\chi^{-1}} \\
\]
where $\chi = 1+a\up2|_{\ell_1} \ep^2 : G_{\ell_1} \to \F_p[\ep_2]^\times$ is a homomorphism. Since $\chi$ has order dividing $p$ and the pro-$p$-abelian quotient of $G_{\ell_1}$ is generated by Frobenius, $\chi$ is unramified.

Let $\sigma \in G_{\ell_1}$, $\tau \in I_{\ell_1}$. Then $\chi(\tau)=1$ and $\omega(\tau)=1$, so
\begin{align*}
(\rho_2(\sigma)-\omega&(\sigma))(\rho_2(\tau)-1)= \\
& \ttmat{\omega(\sigma)(\chi(\sigma)-1)}
{b\up1(\sigma)+b\up2(\sigma) \epsilon}
{0}
{\chi^{-1}(\sigma)-\omega(\sigma)}
\cdot \ttmat{0}
{b\up1(\tau)+b\up2(\tau) \epsilon}
{0}
{0}    
\end{align*}
This equals zero because $\omega(\sigma)(\chi(\sigma)-1) \in \ep^2\F_p[\ep_2]$, which annihilates the $B$-coordinate $\F_p[\ep_1]$. On the other hand
\begin{align*}
    (\rho_2(\tau)-\omega(\tau))&(\rho_2(\sigma)-1) = \\
& \ttmat{0}
{b\up1(\tau)+b\up2(\tau) \epsilon}
{0}
{0}
\cdot
\ttmat{\omega(\sigma)\chi(\sigma)-1}
{b\up1(\sigma)+b\up2(\sigma) \epsilon}
{0}
{\chi^{-1}(\sigma)-1}.
\end{align*}
This equals zero similarly, because $\chi^{-1}(\sigma)-1 \in \ep^2\F_p[\ep_2]$. Therefore, for all $\sigma, \tau \in G_{\ell_1} \times I_{\ell_1} \cup I_{\ell_1} \times G_{\ell_1}$,
\[
(\rho_2(\sigma)-\omega(\sigma))(\rho_2(\tau)-1)=0
\]
and so $\rho_2\vert_{\ell_1}$ is $\US_{\ell_1}$.
\end{proof}

\begin{cor}
    \label{cor: implying dim S is 4}
    Assume $a\up1|_{\ell_1}=0$ and $\alpha^2+\beta=0$. Then $\dim_{\F_p}R/pR>3.$
\end{cor}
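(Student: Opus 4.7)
The plan is to use the explicit $\US_N$ Cayley--Hamilton representation produced in Proposition \ref{prop: exists rho_2} to exhibit a surjection $\bar R \twoheadrightarrow \F_p[\ep_2]$, and then apply the elementary observation that $\bar R$ surjecting onto a ring with non-trivial squared maximal ideal forces $\bar \m^2 \ne 0$.

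Concretely, first I would apply Proposition \ref{prop: exists rho_2} to obtain $\rho_2 \in \Pi_2^{\US_N}$ under the hypotheses $a\up1|_{\ell_1}=0$ and $\alpha^2+\beta=0$. Its induced pseudorepresentation $D_2 := \psi(\rho_2) : G_{\Q,Np} \to \F_p[\ep_2]$ is $\US_N$ by definition of that property for pseudorepresentations (Definition \ref{defn: US global}). By the universal property of $R$ (Definition \ref{defn: universal objects}), $D_2$ therefore corresponds to a local homomorphism $\varphi_{D_2} : R \to \F_p[\ep_2]$, and since the target has characteristic $p$, this factors uniquely through a homomorphism $\bar\varphi_{D_2} : \bar R \to \F_p[\ep_2]$.

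Next I would check that $\bar\varphi_{D_2}$ is surjective. By construction (the definition of $\Pi_2$), the composition of $\rho_2$ with $r_{2,1} : E_2 \twoheadrightarrow E_1$ equals $\rho_1$, so the reduction of $D_2$ modulo $\ep^2$ equals $D_1 = \psi(\rho_1)$. Thus the composition of $\bar\varphi_{D_2}$ with the quotient map $\F_p[\ep_2] \twoheadrightarrow \F_p[\ep_1]$ equals $\varphi_{D_1}$, which is surjective by Lemma \ref{lem: D1 construction}. Hence the image of $\bar\varphi_{D_2}$ contains an element $u \equiv \ep \pmod{\ep^2}$. Then $u^2 \equiv \ep^2 \pmod{\ep^3}$, and since $\ep^3 = 0$ in $\F_p[\ep_2]$, we have $u^2 = \ep^2$, which is non-zero. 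Therefore $\ep^2$ lies in the image of $\bar\varphi_{D_2}$, proving surjectivity.

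Finally, since $\bar\varphi_{D_2}$ is a surjective local homomorphism, it sends the maximal ideal $\bar\m \subset \bar R$ onto the maximal ideal $(\ep) \subset \F_p[\ep_2]$, whence $\bar\varphi_{D_2}(\bar\m^2) = (\ep^2) \ne 0$. In particular $\bar\m^2 \ne 0$, so $\dim_{\F_p} \bar R \geq \dim_{\F_p}(\bar R/\bar\m^3) = \dim_{\F_p} S \geq 4$ by Corollary \ref{cor: constraint on S}, which gives the desired conclusion $\dim_{\F_p} R/pR > 3$. The main conceptual obstacle has already been overcome in Proposition \ref{prop: exists rho_2}: the existence of a $\US_N$ second-order deformation is precisely the non-trivial input, while its translation into the dimension bound for $\bar R$ is a short formal argument.
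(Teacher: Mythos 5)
Your proof is correct and follows essentially the same approach as the paper's: apply Proposition \ref{prop: exists rho_2} to produce $\rho_2\in\Pi_2^{\US_N}$, pass to the induced surjective local homomorphism $R\onto\F_p[\ep_2]$, and conclude that the square of the maximal ideal is non-zero. One small quibble: the final citation of Corollary \ref{cor: constraint on S} for the inequality $\dim_{\F_p}S\geq 4$ is misplaced, since that corollary gives the \emph{upper} bound $\dim_{\F_p}S\leq 4$; what your argument actually needs at this point is $\dim_{\F_p}(\bar R/\bar\m^2)=3$, i.e.\ Proposition \ref{prop: R-cotangent} (combined with $\bar\m^2\not\subset\bar\m^3$, which your map to $\F_p[\ep_2]$ already witnesses). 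The paper cites Proposition \ref{prop: R-cotangent} for exactly this step.
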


\begin{proof}
By Proposition \ref{prop: exists rho_2}, $\Pi_2^{\US_N}$ is non-empty. Let $\rho_2 \in \Pi_2^{\US_N}$, let $D_2=\psi(\rho_2) : G_{\Q,Np} \to \F_p[\ep_2]$ be the associated pseudorepresentation, and let $\phi_2:R \to \F_p[\ep_2]$ be the local homomorphism induced by $D_2$ using the universal property of $R$. By construction, the composition of $\phi_2$ with the
quotient $\F_p[\ep_2] \onto \F_p[\ep_1]$ is the map $\phi_1$ of Lemma \ref{lem: D1 construction}. Since $\phi_1$ is surjective, this implies that $\phi_2$ is surjective.

Recall the notation $S=R/(p,\m^2)$ of Section \ref{subsec: S form}; since $S$ is a quotient of $R/pR$, it is enough to show that $\dim_{\F_p} S>3$. The map $\phi_2$ induces a surjective local-ring homomorphism $\phi_2: S \onto \F_p[\ep_2]$. Any element $y \in \m_S$ such that $\phi_2(y)=\ep$ has $\phi_2(y^2)=\ep^2 \ne 0$, so $y^2 \in \m_S^2$ is non-zero. Since $\dim_{\F_p}(R/(p,\m^2))=3$ by Proposition \ref{prop: R-cotangent}, this implies
\[
\dim_{\F_p} S = 3+\dim_{\F_p}(\m_S^2) > 3. \qedhere
\]
\end{proof}

Combining this corollary with Proposition \ref{prop: key alpha beta equation}, we can prove the main theorem.

\begin{thm}
    \label{P1: thm: main with a1}
    The $\F_p$-dimension of $R/pR$ is greater than $3$ if and only if
    \begin{enumerate}
        \item $a\up1 \vert_{\ell_1} = 0$ in $H^1(\Q_{\ell_1}, \F_p)$ and
        \item $\alpha^2 + \beta = 0$ in $\F_p(2)\vert_{\ell_0}$. 
    \end{enumerate} 
    Moreover, if $\dim_{\F_p} R/pR = 3$, the surjection $R \to \bT$ of Proposition \ref{prop: R to T} is an isomorphism of reduced finite flat $\Z_p$-algebras of rank 3. 
\end{thm}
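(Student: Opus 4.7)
The statement combines three assertions---the two directions of the biconditional and the $R=\bT$ conclusion---which I will treat in turn. The reverse implication is already established: given hypotheses (1) and (2), Corollary \ref{cor: implying dim S is 4} directly constructs a surjection $R \rsurj \F_p[\ep_2]$, forcing $\dim_{\F_p}(R/pR) > 3$.

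For the forward implication, suppose $\dim_{\F_p}(R/pR) > 3$. Since $\dim_{\F_p} \frt_R = 2$ by Proposition \ref{prop: R-cotangent}, the quotient $\bar R/\bar \m^2$ already has $\F_p$-dimension $3$, so the extra dimension must come from $\bar \m^2/\bar \m^3$; then Corollary \ref{cor: constraint on S} forces $\dim_{\F_p} S = 4$, where $S = \bar R/\bar\m^3$. Lemma \ref{lem: a1 vanish at ell1 when S large} then yields condition (1). For condition (2), I would invoke Corollary \ref{cor: rho_2 exists if dimS=4} to produce an explicit $\rho_2 \in \Pi_2^{\US_N} \subset \Pi_2^{\det,p}$ whose $B$-coordinate, after applying $\Upsilon_2(u,v) = \eta \ep u + v$ to the $B$-coordinate $(b_0\up1, b\up1 + y\, b\upp2)$ of $\rho_S'$, equals $b\up1 + \ep(b\upp2 + \eta\, b_0\up1)$. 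Comparing this with the cup-product relation at $\ell_0$ defining $\beta$ from Proposition \ref{P1: prop: exists unique beta} and with that defining $\beta'$ from Lemma \ref{lem: beta'}, I conclude $\beta = \beta'$; then Proposition \ref{prop: key alpha beta equation}(1) delivers $\alpha^2 + \beta = \alpha^2 + \beta' = 0$.

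For the $R = \bT$ assertion, assume $\dim_{\F_p}(R/pR) = 3$. Since $\bT$ has $\Z_p$-rank at least $3$ (by \S\ref{sssec: intro HA and ES} and Lemma \ref{lem: T are flat and reduced}), the surjection $R/pR \rsurj \bT/p\bT$ combined with $\dim_{\F_p}(\bT/p\bT) \geq 3$ forces both sides to have dimension exactly $3$ and the surjection to be an isomorphism; in particular $\bT$ has $\Z_p$-rank exactly $3$. Letting $K$ denote the kernel of $R \rsurj \bT$ and using that $\bT$ is $\Z_p$-flat, tensoring the exact sequence $0 \to K \to R \to \bT \to 0$ with $\F_p$ over $\Z_p$ preserves exactness, yielding $K/pK = 0$. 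Nakayama's lemma---applicable since $pR$ lies in the Jacobson radical of the local ring $R$, and $K$ is finitely generated over the Noetherian ring $R$---then gives $K = 0$, so $R \cong \bT$. Reducedness, $\Z_p$-freeness, and rank $3$ transport from $\bT$ via Lemma \ref{lem: T are flat and reduced}.

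The main obstacle is the identification $\beta = \beta'$. These two invariants arise in genuinely different settings: $\beta'$ comes from the coordinates of the universal GMA-quotient $\rho_S'$ under the assumption $\dim_{\F_p}S = 4$, while $\beta$ is extracted from any representative $\rho_2 \in \Pi_2^{\det,p}$ via its local $b\up2$-coordinate. Reconciling them requires carefully tracking the GMA structure through the reduction $E_S' \otimes_S \F_p[\ep_2] \rsurj E_2$ furnished by Corollary \ref{cor: rho_2 exists if dimS=4}; the bookkeeping is largely formal, but it is the essential step that fuses the Galois-side second-order analysis of Section \ref{sec: R and Galois} with the hands-on construction of Section \ref{sec: proof of main theorem}.
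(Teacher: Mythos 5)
Your proposal is correct and follows the paper's own proof closely: the reverse implication via Corollary \ref{cor: implying dim S is 4}, the forward implication via the chain Proposition \ref{prop: R-cotangent} $\to$ Corollary \ref{cor: constraint on S} $\to$ Lemma \ref{lem: a1 vanish at ell1 when S large} $\to$ Corollary \ref{cor: rho_2 exists if dimS=4} / Proposition \ref{P1: prop: exists unique beta} / Proposition \ref{prop: key alpha beta equation}, and a Nakayama argument for $R=\bT$. The only cosmetic difference is that the paper applies Nakayama to $R$ directly as a complete $\Z_p$-module to get $\Z_p^3\rsurj R$, whereas you instead tensor the kernel sequence with $\F_p$ using flatness of $\bT$ and apply Nakayama to the kernel; both are standard and equivalent, and your identification $\beta=\beta'$ via the $b\up2$-coordinate $b\upp2+\eta b_0\up1$ of the $\rho_2$ from Corollary \ref{cor: rho_2 exists if dimS=4} is exactly what the paper's proof asserts.
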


\begin{proof}
We prove the final statement first. Suppose $\dim_{\F_p} R/pR = 3$. Since $R$ is complete and separated as a $\Z_p$-module, Nakayama's lemma implies that there is a surjection of $\Z_p$-modules $\Z_p^3 \onto R$. On the other hand, $\bT$ is a free $\Z_p$-module, and Ribet's Theorem \ref{thm: ribet} implies that $\rk_{\Z_p} \bT \geq 3$. The surjectivity of the composition
\[
\Z_p^3 \onto R \onto \bT
\]
then implies that $\rk_{\Z_p} \bT =3$, so the composition is an isomorphism. This implies that $R \onto \bT$ is an isomorphism.

Now we prove the first statement. One implication is immediate from Corollary \ref{cor: implying dim S is 4}. Conversely, assume $\dim_{\F_p}R/pR>3$. 
Since $\dim_{\F_p}R/(p,\m^2)=3$ by Proposition \ref{prop: R-cotangent}, this implies $\dim_{\F_p} S >3$ where $S=R/(p,\m^3)$. By Corollary \ref{cor: constraint on S}, this implies $\dim_{\F_p}S=4$. By Proposition \ref{prop: a1 vanish at ell1 when S large}, this implies (1).

It remains to show that (2) holds when $\dim_{\F_p}S=4$. Let $\rho_2 \in \Pi_2^{\US_N}$ be the element constructed in Corollary \ref{cor: rho_2 exists if dimS=4}. By the construction and by Proposition \ref{P1: prop: exists unique beta}, the element $\beta' \in \F_p(2)$ defined in Lemma \ref{lem: beta'} is equal to $\beta$. Since $\alpha^2+\beta'=0$ by Proposition \ref{prop: key alpha beta equation}, this implies (2).
\end{proof}

\section{The invariant $\alpha^2 + \beta$ is canonical}
\label{sec: invariant is canonical}

In this section, we prove that $\alpha^2 + \beta$ is a canonical element of $\mu_p^{\otimes 2}$, when it exists. That is, we will show that it does not depend on the pinning data of Definition \ref{P1: defn: pinning}. This improves upon Theorem \ref{P1: thm: main with a1}, which only implies that the \emph{vanishing} of $\alpha^2 + \beta$ is independent of the pinning data of Definition \ref{P1: defn: pinning}. 

\subsection{Formulation and outline of the proof}
First, we must make precise what we mean by ``$\alpha^2 + \beta$ is a canonical element of $\mu_p^{\otimes 2}$". Up until this point, we have defined $\alpha^2+\beta$ as an element of $\F_p(2)$, not of $\mu_p^{\otimes 2}$. Note the difference: $\mu_p \subset \barQ$ is the group of $p$th roots of unity, and a choice of primitive $p$th root of unity $\zeta \in \barQ$ defines an isomorphism
\begin{equation}
\label{eq:phi zeta}
    \phi_\zeta: \F_p(2) \to \mu_p^{\otimes 2}, \quad 1 \mapsto \zeta \otimes \zeta
\end{equation}
of $\F_p[G_\Q]$-modules. Since the pinning data includes a choice primitive $p$th root of unity $\zeta \in \barQ$, for any choice of pinning data, we have an element $\phi_\zeta(\alpha^2+\beta) \in \mu_p^{\otimes 2}$. When we say that $\alpha^2 + \beta$ is a canonical, we mean that the $\phi_\zeta(\alpha^2+\beta)$ is independent of the choice of pinning data.

\begin{thm}
    \label{thm: main invariant}
    Make Assumption \ref{P1: assump: main}.
    \begin{enumerate}
        \item The condition $a\up1\vert_{\ell_1} = 0$ does not depend on the pinning data. 
        \item 
        If $a\up1\vert_{\ell_1} = 0$, then there is an element $\delta \in \mu_p^{\otimes 2}$ such that, for each choice of pinning data, $\alpha^2+\beta = \phi_{\zeta}^{-1}(\delta)$, where $\zeta \in \barQ$ is the primitive $p$th root of unity given in the pinning data.
    \end{enumerate}
\end{thm}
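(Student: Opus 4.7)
The proof proceeds by analyzing how the invariants transform under each type of variation in the pinning data (Definition \ref{P1: defn: pinning}), verifying in part (1) that the vanishing of $[a\up1\vert_{\ell_1}]$ is preserved, and in part (2) that $\phi_\zeta(\alpha^2+\beta) \in \mu_p^{\otimes 2}$ is invariant. First I would dispense with the easy variations: a change of embedding $\oQ \hookrightarrow \oQ_q$ only conjugates the decomposition subgroup $G_q \subset G_{\Q,Np}$, and restriction to conjugate subgroups preserves cohomology classes for abelian coefficients. The serious analysis concerns the rescaling $\zeta_p \to \zeta_p^k$ and the shifts $\ell_i^{1/p} \to \zeta_p^{j_i}\ell_i^{1/p}$.

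For part (1), the restriction $a\up1\vert_{\ell_1}$ is a cocycle since $c\up1\vert_{\ell_1}=0$, and the claim is that the vanishing of its class in $H^1(\Q_{\ell_1}, \F_p)$ does not depend on the pinning. A shift in $\ell_i^{1/p}$ changes $b_i\up1$ by an explicit coboundary; through the defining relation $-da\up1 = b\up1 \smile c\up1$ of Lemma \ref{P1: lem: produce a1}, one deduces that $a\up1$ is modified only by a coboundary, so $[a\up1\vert_{\ell_1}]$ is unchanged. Under $\zeta_p \to \zeta_p^k$, the rescaling of the identifications $\F_p(n) \cong \mu_p^{\otimes n}$ sends $b\up1$ and $c\up1$ to $k^{-1}b\up1$ and $k c\up1$, so their product in $\F_p$ is invariant; the three characterizing conditions of Lemma \ref{P1: lem: produce a1} then show that $a\up1$ is invariant up to an $\F_p^\times$-scalar, preserving the vanishing of $[a\up1\vert_{\ell_1}]$.

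For part (2), the main computation is the scaling of $\alpha$ and $\beta$ under $\zeta_p \to \zeta_p^k$. From the defining relation $[a\up1\vert_{\ell_0}] = \alpha \cup c_0\vert_{\ell_0}$, the invariance of $a\up1\vert_{\ell_0}$ and the scaling $c_0\vert_{\ell_0} \mapsto k \cdot c_0\vert_{\ell_0}$ force $\alpha \in \F_p(1)$ to scale by $k^{-1}$, so $\alpha^2 \in \F_p(2)$ scales by $k^{-2}$. A parallel analysis of the defining relation $(b\up2 + \eta b_0\up1)\vert_{\ell_0} = \beta \cup c\up1\vert_{\ell_0}$ of Lemma \ref{lem: beta'} (using that $b\up2$ and $b_0\up1$ scale by $k^{-1}$ and $\eta \in \F_p$ is a scalar) shows that $\beta \in \F_p(2)$ also scales by $k^{-2}$. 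Hence $\alpha^2+\beta$ scales by $k^{-2}$, while $\phi_\zeta$ scales by $k^2$ on the map $\F_p(2) \to \mu_p^{\otimes 2}$; the element $\phi_\zeta(\alpha^2+\beta) \in \mu_p^{\otimes 2}$ is therefore invariant. Under shifts $\ell_i^{1/p} \to \zeta_p^{j_i}\ell_i^{1/p}$, one checks that $\alpha$ depends only on cohomology classes (hence is unchanged), and that although $b\up2$, $\eta$, and $b_0\up1$ may shift individually, the combination $(b\up2 + \eta b_0\up1)\vert_{\ell_0}$ modulo $\F_p(2)\cup c\up1\vert_{\ell_0}$ is invariant.

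The main obstacle will be the careful verification of the transformation of $\beta$, since $\eta$ is defined through universal GMA coordinates (depending on the pinning via Proposition \ref{prop: GMA structure for E1}) and $b\up2$ is constructed indirectly as a coordinate of a second-order deformation. One approach is to re-derive the transformation laws for all pieces of $\rho_2$ in Section \ref{sec: proof of main theorem} under a change of pinning; a more conceptually clean alternative is to reformulate $\beta$ intrinsically as an obstruction class in $H^2$ attached to the Galois-theoretic data, manifestly independent of the GMA coordinates. Either route requires bookkeeping of the Cayley--Hamilton constructions of the preceding sections, but once in place the final verification that $\phi_\zeta(\alpha^2 + \beta)$ is invariant becomes a matter of combining the scaling laws established above.
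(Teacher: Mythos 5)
Your high-level strategy—isolate each piece of pinning data, determine how the defining cocycles rescale, and verify that $\phi_\zeta(\alpha^2+\beta)$ is unaffected—is the same strategy the paper uses. But there are several concrete gaps in the execution.

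First, the scaling law you propose for $c\up1$ under $\zeta_p \mapsto \zeta_p^k$ is wrong. You derive $c\up1 \mapsto k\,c\up1$ from the change in the identification $\F_p(-1)\cong\mu_p^{\otimes -1}$, but $c\up1$ is defined as an $\F_p(-1)$-valued cocycle normalized by $c\up1(\gamma_0)=1$, and that identification plays no direct role; what does play a role is that $\gamma_0$ itself depends on $\zeta$ through the condition $b_0(\gamma_0)=1$. Tracking this shows $\gamma_0 \mapsto \gamma_0^k$, hence $c\up1 \mapsto k^{-1}c\up1$, and then $b\up1\smile c\up1 \mapsto k^{-2}\,b\up1\smile c\up1$, so $a\up1 \mapsto k^{-2}a\up1$—not ``invariant'' as you assert. (You happen to land on the correct transformation of $\alpha$ because the two sign errors cancel in the ratio $a\up1/c\up1$, but the reasoning is unsound and would mislead you elsewhere.) Second, you dispense with ``change of embedding'' by saying conjugate decomposition subgroups give the same cohomology classes; this is too quick. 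The invariants $\alpha$ and $\beta$ are defined using actual cocycles, not classes, and—more importantly—a change of decomposition group at $\ell_0$ is \emph{not} a conjugation of $\rho_1$ by any element of $E_2^\times$: it genuinely rescales $c\up1$ and $a\up1$ by $\omega(\sigma)^{-2}$ (where $\sigma$ conjugates $G_{\ell_0}$ to $G_{\ell_0}'$), and has to be handled by a separate direct calculation. You also appeal to the formulation $(b\up2+\eta b_0\up1)\vert_{\ell_0}=\beta'\cup c\up1\vert_{\ell_0}$ from the $\dim_{\F_p}S=4$ setting, whereas the $\beta$ of the theorem is the one from Proposition \ref{P1: prop: exists unique beta}, defined via $[b\up2\vert_{\ell_0}]=\beta\cup[c\up1]\vert_{\ell_0}$ for any $\rho_2\in\Pi_2^{\det,p}$; conflating these requires justification.

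Most seriously, your final paragraph explicitly punts on the central step: verifying how $b\up2$, and hence $\beta$, transforms. You label this ``the main obstacle'' and propose two possible approaches without carrying either out. But this is precisely where the content of the argument lies. The paper resolves it by proving a general conjugation formula for the coordinates of $\rho_2$ (giving $\alpha'^2+\beta'=\det(M)^{-2}(\alpha^2+\beta)$ whenever the new data yields $\rho_1'=M^{-1}\rho_1 M$), showing that changes of the $\ell_1^{1/p}$-root and of the embeddings at $\ell_1$ and $p$ correspond to $M$ with $\det M = 1$, and then—for the two non-conjugation cases, $G_{\ell_0}$ and $\zeta$—explicitly constructing a second-order deformation with the expected rescaled $b\up2$-coordinate (relying on the fact that the finite-flat condition on lifts cuts out a \emph{subspace}, so a rescaled lift remains finite-flat). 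Without this construction your proof does not close.
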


We explain the main ideas of the proof.
The pinning data is mainly used in the paper in two ways: first, as a normalization factor to choose a particular Galois cohomology class that is only canonical up to scalar, and, second, to select cocycles within those normalized cohomology classes. Since $\alpha$ and $\beta$ and defined in terms of these cocycles, their values could depend on pinning data. However, in a sense, we can think of $\alpha$ and $\beta$ as being ``ratios" of pairs of cocycles, and we show that, for most changes to the pinning data that affect the normalization, both elements in the pair are scalar by the same factor, and, as a result, the ratios $\alpha$ and $\beta$ are unchanged. The only change of the pinning data that affects $\alpha^2+\beta$ is changing the primitive $p$th root of unity $\zeta$, and we show that change behaves as expected for 
an element of $\mu_p^{\otimes 2}$: changing $\zeta$ to $\zeta^a$ multiplies $\alpha^2+\beta$ by $a^{-2}$.

The second kind of effect of the pinning data is to change the choice of cocycle within a cohomology class. This kind of change amounts to conjugating the representations $\rho_1$ and $\rho_2$. By Lemma \ref{P1: lem: exists 2nd-order 1-reducible}, the condition $a\up1|_{\ell_1}=0$ can be interpreted in terms of the existence of a deformation $\rho_2$ of $\rho_1$, and this is unaffected by conjugation. In general, conjugation changes the value of $\alpha$ and $\beta$, but we show that the quantity $\alpha^2+\beta$ is left unchanged.

To prove the theorem, we analyze the effect of changing each datum independently. In Section \ref{subsec: conjugation computation}, we prove a general lemma about how $\alpha$ and $\beta$ change under conjugation. In each of the remaining parts, we focus on a single change to the pinning data, and compute its effect. We will use the following notation scheme:
\begin{itemize}
    \item We maintain the same notation $\alpha$, $\beta$, $\rho_1$, $\rho_2$, $a\up1$, $b\up1$, \dots, as in the earlier parts of the paper, computed with respect the pinning data fixed in Definition \ref{P1: defn: pinning}. Here $\rho_2$ denotes an arbitrary element of $\Pi_2^{\det,p}$, assuming it exists.
    \item We use primed notation $\alpha'$, $\beta'$, $\rho_1'$, $\rho_2'$,  $a\upp1$, $b\upp1$, \dots,  for the same objects computed with respect to the altered pinning data under consideration at the time.
\end{itemize}
In particular, be warned that the meaning of the primed objects is variable (and they also differ from the primed objects considered in Section \ref{P1: subsec: finite-flat on rho2}).

\subsection{Coordinate-wise calculation of conjugation of $\rho_1$ and $\rho_2$}
\label{subsec: conjugation computation} 
In this section, we compute the effect of conjugation on the representations $\rho_1$ and $\rho_2$ and their constituent cochains.

Let $M \in E_2^\times$ be an element of the form
\[
M= \ttmat{A_0 + A_1 \ep + A_2 \ep^2}{B_0 + B_1 \ep}{C_0 + C_1\ep}{1 + D_1\ep + D_2\ep^2}
\]
with $A_i,B_i,C_i,D_i \in \F_p$,
such that $\det(M)=A_0$, and also write $M$ as $M=\sm{A}{B}{C}{D}$. The usual formula for inverting a $2\times2$-matrix is valid in $E_2$:
\[
M^{-1} = A_0^{-1} \ttmat{D}{-B}{-C}{A}
\]
and for $N=\sm{a}{b}{c}{d} \in E_2$, the conjugation $M^{-1}NM$ is given by
\[
M^{-1}NM = A_0^{-1}\ttmat{ADa-\ep ABc + \ep CDb - \ep BCd}
{D^2b+BD(a-d)-\ep B^2c}
{A^2c + AC(d-a)-\ep C^2 b}
{ADd - \ep CDb -\ep BCa + \ep ABc}
\]

Let $M_1 \in E_1^\times$ be the image of $M$ under the map $E_2 \to E_1$, and let $\rho_{1,M}$ and $\rho_{2,M}$ denote the conjugates of $\rho_1$ and $\rho_2$ (if it exists)
\[
\rho_{1,M}(\sigma) = M_1^{-1}\rho_1(\sigma)M_1, \ \rho_{2,M}(\sigma) = M^{-1}\rho_2(\sigma) M.
\]
Write these in coordinates as
\[
\rho_{2,M} =\ttmat{\omega (1+a_M\up1\epsilon + a_M\up2\epsilon^2) }{b_M\up1 + b_M\up2 \epsilon}{\omega(c_M\up1 + c_M\up2 \epsilon)}{1+d_M\up1 \epsilon + d_M\up2\epsilon^2}
\]
and similarly for $\rho_{1,M}$. Using the explicit formula for conjugation above, we can express these new cochains in terms of the original ones.

\begin{lem}
\label{lem: conj formulas}
Let $M \in E_2^\times$ and $\rho_{1,M}$, $\rho_{2,M}$ be as above. Then
\begin{align}
    \label{eq: conj b1}
    b_M\up1 &= A_0^{-1}\left(b\up1 + B_0 (\omega-1)\right) \\
    \label{eq: conj c1}
    c_M\up1 &= A_0(c\up1 + A_0^{-1}C_0(\omega^{-1}-1) \\
    \label{eq: conj a1}
    a_M\up1 &= a\up1 - B_0 c\up1 + A_0^{-1}C_0 \omega^{-1}b\up1 - A_0^{-1}B_0C_0(\omega^{-1}-1) \\
         \label{eq: conj b2} 
     \begin{split}
b_M\up2 &= A_0^{-1}\Big(b\up2 + (D_1 + A_0^{-1}A_1 + B_0C_0)b\up1 \\
    &+ (B_1 - A_0^{-1}A_1B_0 + B_0^2 C_0)(\omega-1) + B_0(\omega a\up1 - d\up1) - B_0^2 \omega c\up1\Big).
    \end{split}
\end{align}
\end{lem}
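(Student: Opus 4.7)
The approach is direct matrix computation using the explicit formula for $M^{-1}NM$ displayed just above the lemma. I would substitute $N = \rho_1(\sigma)$ (respectively $N = \rho_2(\sigma)$) into this formula, expand, and read off the coefficients of the appropriate powers of $\ep$ in the appropriate matrix entries.

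For the first-order formulas \eqref{eq: conj b1}, \eqref{eq: conj c1} and \eqref{eq: conj a1}, the ambient algebra is $E_1$, where the $B$- and $C$-coordinates lie in $\F_p$ and the diagonal $\F_p[\ep_1]$ acts on them through its quotient $\F_p[\ep_1] \onto \F_p$. Consequently, any $\ep$-term in a diagonal entry disappears when it multiplies an off-diagonal coordinate, and the $\ep B^2 c$ and $\ep C^2 b$ contributions in the displayed conjugation formula vanish identically. Formulas \eqref{eq: conj b1} and \eqref{eq: conj c1} are then read directly off the $(1,2)$- and $(2,1)$-entries of $M_1^{-1}\rho_1(\sigma) M_1$, while \eqref{eq: conj a1} is obtained by extracting the coefficient of $\ep$ in the $(1,1)$-entry.

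For \eqref{eq: conj b2}, I would work in $E_2$ and compute the $(1,2)$-entry of $M^{-1}\rho_2(\sigma) M$, which by the displayed formula equals $A_0^{-1}$ times the $\ep$-coefficient of $D^2 b + BD(a-d) - \ep B^2 c$. Each factor is expanded using the explicit coordinates of $\rho_2$ given in Section \ref{sec: explicit first-order}, together with the rule that the GMA product $B \otimes C \to A$ of Definition \ref{defn: 1-reducible GMA} carries a factor of $\ep$. Collecting the coefficient of $\ep$ is routine but requires careful bookkeeping of many terms, including the contribution $-B_0^2 \omega c\up1$ coming from $-\ep B^2 c$.

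Finally, to put the result into the stated form I would invoke the constraint $\det(M) = A_0$. Expanding $\det(M) = AD - \ep BC$ in $\F_p[\ep_2]$ and setting its $\ep$-coefficient to zero yields $A_1 = B_0 C_0 - A_0 D_1$. Substituting this identity into the expanded expression recasts the coefficients of $b\up1$ and of $\omega - 1$ into the combinations $D_1 + A_0^{-1}A_1 + B_0 C_0$ and $B_1 - A_0^{-1}A_1 B_0 + B_0^2 C_0$ respectively, matching \eqref{eq: conj b2}. The main obstacle is purely careful bookkeeping of the $\ep$-expansion; no conceptual difficulty arises beyond the displayed conjugation formula and a single use of the determinant constraint.
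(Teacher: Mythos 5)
Your approach — substituting into the displayed formula for $M^{-1}NM$, expanding in $\ep$, and reading off coordinates — is exactly the computation the paper intends, and the paper gives no separate proof of this lemma, so you are reproducing its implicit argument.

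One small imprecision is worth flagging: you present the determinant constraint $\det(M)=A_0$ as entering only at the final step to recast the coefficients in \eqref{eq: conj b2}. In fact it is already needed for \eqref{eq: conj a1}. Extracting the $\ep$-coefficient of the $(1,1)$-entry of $M^{-1}\rho_1 M$ directly produces the extra diagonal contribution $A_0^{-1}(A_1+A_0D_1)$ (from $ADa$), and it is precisely the $\ep$-coefficient identity $A_1+A_0D_1=B_0C_0$ from $\det(M)=A_0$ that turns this into the $+A_0^{-1}B_0C_0$ appearing inside the last summand $-A_0^{-1}B_0C_0(\omega^{-1}-1)$ of \eqref{eq: conj a1}. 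So the constraint is used at least twice, not once. A related point: if you do carry out the bookkeeping for \eqref{eq: conj b2} carefully, the $b\up1$-coefficient that comes out of $D^2 b$ is $2D_1$; after substituting the constraint you should double-check whether this really reproduces the stated combination $D_1+A_0^{-1}A_1+B_0C_0$ as opposed to $D_1 - A_0^{-1}A_1 + A_0^{-1}B_0C_0$ — a simple diagonal test case (e.g.\ $A=1-\ep+\ep^2$, $D=1+\ep$, $B=C=0$) distinguishes them. This has no bearing on the paper's application, since Lemma \ref{lem:conjugation} only invokes \eqref{eq: conj b2} at $\ell_0$ where $b\up1|_{\ell_0}=0$ and $\omega|_{\ell_0}=1$, but it is the sort of thing your proposed computation should surface if done to the letter.
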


Using this lemma, we can see how changes to the pinning data that cause $\rho_1$ and $\rho_2$ to be replaced by conjugates affect the values of $a\up1|_{\ell_1}$ and $\alpha^2+\beta$.

\begin{lem}
\label{lem:conjugation}
Consider a change to the pinning data that does not alter the decomposition group at $\ell_0$ or the primitive $p$th root of unity $\zeta$, and let $G_{\ell_1}'$ denote the decomposition group at $\ell_1$ that is part of this new data. Suppose that the representation $\rho_1'$ computed with this respect to this new data is of the form $\rho_1'=\rho_{1,M}$ for some $M \in E_2^\times$ as above. Then
\begin{enumerate}
    \item $a\upp1|_{G'_{\ell_1}}=0$ if and only if $a\up1|_{\ell_1}=0$.
    \item if $a\up1|_{\ell_1}=0$, then $\alpha'^2 +\beta' = A_0^{-2}(\alpha^2+\beta).$ 
\end{enumerate}
\end{lem}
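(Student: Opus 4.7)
The plan is to prove (1) via a deformation-theoretic symmetry and (2) via a direct computation at $G_{\ell_0}$. For (1), conjugation by $M$ commutes with the reduction $r_{2,1}:E_2 \twoheadrightarrow E_1$, so the map $\rho_2 \mapsto \rho_{2,M}$ is a bijection between $\Pi_2(\rho_1)$ and $\Pi_2(\rho_{1,M}) = \Pi_2(\rho_1')$. Then Lemma \ref{P1: lem: exists 2nd-order 1-reducible}, applied separately to the old and the new pinning data, characterizes non-emptiness of these two sets as $a\up1|_{\ell_1} = 0$ and $a\upp1|_{G'_{\ell_1}} = 0$ respectively; the bijection yields (1).

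For (2), assume $a\up1|_{\ell_1} = 0$, fix $\rho_2 \in \Pi_2^{\det,p}$ (non-empty by Proposition \ref{P1: prop: exists unique beta}), and set $\rho_2' := \rho_{2,M}$. Since constant determinant and finite-flatness at $p$ are preserved under conjugation, $\rho_2'$ lies in $\Pi_2^{\det,p}$ for the new pinning and so may be used to compute $\alpha'$ and $\beta'$. The hypothesis that $G_{\ell_0}$ is unchanged permits restricting the formulas of Lemma \ref{lem: conj formulas} to $G_{\ell_0}$, where $\omega = 1$ and $b\up1|_{\ell_0} = 0$ kill most terms; the identity $d\up1|_{\ell_0} = -a\up1|_{\ell_0}$ (from $d\up1 = b\up1 c\up1 - a\up1$) then gives a compact expression for $b_M\up2|_{\ell_0}$.

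Reading off the defining relations $[a\up1|_{\ell_0}] = \alpha \cup [c\up1|_{\ell_0}]$ and $[b\up2|_{\ell_0}] = \beta \cup [c\up1|_{\ell_0}]$ for the new pinning data produces $\alpha' = A_0^{-1}(\alpha - B_0)$ and $\beta' = A_0^{-2}(\beta + 2B_0\alpha - B_0^2)$. The identity $(\alpha')^2 + \beta' = A_0^{-2}(\alpha^2 + \beta)$ then follows by expanding $(\alpha - B_0)^2$ and observing that the cross terms $\pm 2B_0\alpha$ and the $\mp B_0^2$ contributions cancel exactly. The only bookkeeping hurdle is tracking the $\F_p(n)$-trivializations at $G_{\ell_0}$, but these remain consistent throughout since the primitive $p$th root of unity $\zeta$ is fixed under the assumed change of pinning.
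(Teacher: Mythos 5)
Your proof is correct and follows essentially the same route as the paper: part (1) via the existence characterization in Lemma \ref{P1: lem: exists 2nd-order 1-reducible} and the fact that conjugation intertwines $\Pi_2(\rho_1)$ with $\Pi_2(\rho_1')$, and part (2) by specializing the conjugation formulas of Lemma \ref{lem: conj formulas} to $G_{\ell_0}$ (using $\omega|_{\ell_0}=1$, $b\up1|_{\ell_0}=0$, $d\up1|_{\ell_0}=-a\up1|_{\ell_0}$) and expanding. Your explicit remark that $\rho_{2,M}$ lies in the new $\Pi_2^{\det,p}$ is a small but welcome clarification that the paper leaves implicit.
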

\begin{proof}
For part (1), note that, by Lemma \ref{P1: lem: exists 2nd-order 1-reducible}, $a\up1|_{\ell_1}=0$ if and only if $\rho_1$ has a deformation $\rho_2$. If $a\up1|_{\ell_1}=0$, then $\rho_2$ exists, and $\rho_{2,M}$ gives a deformation of $\rho_1'=\rho_{1,M}$, so $a\upp1|_{G_{\ell_1}}=0$. This argument is symmetric, so the other implication follows.

Now suppose $a\up1|_{\ell_1}=0$, so $\rho_2$ exists and we can define $\rho_2'=\rho_{2,M}$, and $\beta$ and $\beta'$ are defined. Then $\alpha$ and $\beta$ are defined by the formulas
\[
a\up1|_{\ell_0} = \alpha \cup c\up1|_{\ell_0}, \ b\up2|_{\ell_0} = \beta \cup c\up1|_{\ell_0}
\]
and similarly for $\alpha'$ and $\beta'$. Since both pinning data have the same primitive $p$th root of unity $\zeta$, we can and do use $\zeta$ to identify twists of $\F_p$ with $\F_p$. In this way, we can think of $\alpha$ and $\beta$ as elements of $\F_p$ and think of this cup product as scalar multiplication.

Noting that $\omega|_{\ell_0}=1$, $d\up1|_{\ell_0}=-a\up1|_{\ell_0}$, and $b\up1|_{\ell_0}=0$, 
the formulas of Lemma \ref{lem: conj formulas} give
\begin{align*}
    a\upp1|_{\ell_0} &= a\up1|_{\ell_0} -B_0 c\up1|_{\ell_0} \\
    c\upp1|_{\ell_0} &= A_0 c\up1|_{\ell_0} \\
    b\upp2|_{\ell_0} & =A_0^{-1}( b\up2|_{\ell_0} +2B_0 a\up1|_{\ell_0} - B_0^2 c\up1|_{\ell_0}).
\end{align*}
Rearranging to write everything in terms of $c\upp1|_{\ell_0}$ gives
\begin{align*}
    a\upp1|_{\ell_0} & = A_0^{-1}(\alpha-B_0) c\upp1|_{\ell_0} \\
    b\upp2|_{\ell_0} & = A_0^{-2}(\beta +2 B_0\alpha -B_0^2) c\upp1.
\end{align*}
In other words,
\[
\alpha' = A_0^{-1}(\alpha-B_0),  \quad \beta' =A_0^{-2}(\beta +2 B_0\alpha -B_0^2)
\]
so $\alpha'^2 +\beta' = A_0^{-2}(\alpha^2+\beta)$, as desired.
\end{proof}

In the next section, we will see that, for any such change to the pinning data, we have $\rho_1'=\rho_{1,M}$ for an element $M$ with $A_0=\det(M)=1$. Then the lemma implies Theorem \ref{thm: main invariant} for these types of changes. Finally, we will deal with changes to the decomposition group at $\ell_0$ and changes to $\zeta$ by separate arguments.

\subsection{Changes that affect $\rho_1$ by conjugation}
In this section, we consider the changes to the pinning data of the types allowed in Lemma \ref{lem:conjugation}. We maintain the same notation $\rho_{1,M}$ as in the previous section. We will often rely on Definition \ref{P1: defn: pinned cocycles}, which describes how the cocycles $b_0\up1$, $b\up1$, $c\up1$, and $a_0$ as well as the elements $\gamma_0 \in I_{\ell_0}$ and $\gamma_1 \in I_{\ell_1}$ and the 0-cochain $x_{c\up1}$ are determined by the pinning data. We also frequently use Lemma \ref{P1: lem: produce a1}, which describes the cochain $a\up1$.

\begin{lem}[Change of decomposition group at $\ell_1$]
Let $G_{\ell_1}' \subset G_{\Q,Np}$ be another choice of decomposition group at $\ell_1$, and let $\rho_1'$ be the representation obtained by this change to the pinning data. Then $\rho_1' = \rho_{1,M}$ where
\[
M = \ttmat{1}{0}{C_0}{1}
\]
for some $C_0 \in \F_p$.

In particular, this change does not affect the condition $a\up1|_{\ell_1}=0$ and does not change the value of $\alpha^2+\beta$.
\end{lem}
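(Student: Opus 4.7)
The plan is to identify the change of decomposition group at $\ell_1$ with an explicit conjugation of $\rho_1$ inside $E_2^\times$, and then to invoke Lemma~\ref{lem:conjugation}. Since $G_{\ell_1}$ and $G_{\ell_1}'$ are both decomposition groups for places above $\ell_1$ in $\Q_S$, there exists $g \in G_{\Q,Np}$ such that $G_{\ell_1}' = g G_{\ell_1} g^{-1}$. All the other pinning data (the primitive root $\zeta$, the embeddings at $\ell_0$ and $p$, and the $p$th roots $\ell_i^{1/p}$) are unaltered, so $b\up1$, $a_0$, $a_p$, $x_{c\up1}$, and the cohomology classes $b_0, b_1, c_0$ are all unchanged; only the cocycle representative of $c_0$ pinned by condition (iv) of Definition~\ref{P1: defn: pinned cocycles} can shift.

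Setting $C_0 := c\up1(g) \in \F_p(-1)$, a direct cocycle computation using $c\up1|_{G_{\ell_1}} = 0$ yields, for $\sigma' = g\sigma g^{-1} \in G_{\ell_1}'$,
\[
c\up1(\sigma') = c\up1(g) - \sigma' \cdot c\up1(g) = -(dC_0)(\sigma').
\]
Hence the new normalized representative is $c\upp1 := c\up1 + dC_0$, and the explicit formula $(dC_0)(\sigma) = (\omega^{-1}(\sigma) - 1)C_0$ coincides with the expression $c_M\up1 - c\up1 = C_0(\omega^{-1} - 1)$ of Lemma~\ref{lem: conj formulas} for $M = \sm{1}{0}{C_0}{1}$. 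For this $M$, Lemma~\ref{lem: conj formulas} also gives $b_M\up1 = b\up1 = b\upp1$ and $a_M\up1 = a\up1 + C_0 \omega^{-1} b\up1$.

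It remains to verify $a_M\up1 = a\upp1$, and I plan to do this via the characterization of Lemma~\ref{P1: lem: produce a1}. Condition (1) holds for $a_M\up1$ because $\rho_{1,M}$ is a homomorphism. For condition (2), the identity $c\upp1|_p = d(x_{c\up1} + C_0)$ forces $x_{c\upp1} = x_{c\up1} + C_0$, and the cup-product calculation $b\up1 \smile C_0 = C_0 \omega^{-1} b\up1$ (as $1$-cochains, using the twist by $\omega^{-1}$ on $\F_p(-1)$) gives $a_M\up1 - b\up1 \smile x_{c\upp1} = a\up1 - b\up1 \smile x_{c\up1}$, whose restriction to $I_p$ vanishes by the hypothesis on $a\up1$. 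For condition (3), the facts $\omega|_{\ell_0} = 1$ and $b\up1|_{\ell_0} = 0$ (Lemma~\ref{lem: log ell1 is zero}) force $a_M\up1|_{\ell_0} = a\up1|_{\ell_0}$, which lies on the required line $\langle \zeta \cup c_0|_{\ell_0} \rangle$. By uniqueness, $a_M\up1 = a\upp1$ and so $\rho_1' = \rho_{1,M}$.

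With $\rho_1' = \rho_{1,M}$ and $A_0 = \det M = 1$, both parts of Lemma~\ref{lem:conjugation} apply: the condition $a\up1|_{\ell_1} = 0$ is preserved, and $\alpha'^2 + \beta' = A_0^{-2}(\alpha^2 + \beta) = \alpha^2 + \beta$. The main obstacle is the verification of condition (2) of Lemma~\ref{P1: lem: produce a1} for $a_M\up1$: one must correctly identify the shift $x_{c\up1} \to x_{c\up1} + C_0$ and then observe that the cup product $b\up1 \smile C_0$ equals precisely the cochain $C_0 \omega^{-1} b\up1$ produced by conjugation, so that the two contributions cancel and the gauge condition at $I_p$ is inherited from $a\up1$.
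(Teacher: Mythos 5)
Your proof is correct and follows the same strategy as the paper's: identify the shift $c\upp1 = c\up1 + dC_0$ (and hence $x_{c\upp1} = x_{c\up1}+C_0$), and then verify $a_M\up1 = a\upp1$ via the three characterizing properties of Lemma~\ref{P1: lem: produce a1}, before invoking Lemma~\ref{lem:conjugation} with $A_0 = 1$. Your sign $a_M\up1 = a\up1 + C_0\omega^{-1}b\up1$ is the one consistent with Lemma~\ref{lem: conj formulas}; the paper's displayed formula $a\upp1 = a\up1 - C_0\omega^{-1}b\up1$ appears to have a sign typo, though this does not affect the conclusion.
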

\begin{proof}
The cocycle $b\up1$ and the class $c_0$ of the cocycle $c\up1$ do not depend on the choice of decomposition group at $\ell_1$, so $b\upp1=b\up1$ and $c\upp1-c\up1$ is a coboundary. Therefore 
\[
c\upp1 = c\up1+C_0(\omega^{-1}-1)
\]
for some $C_0 \in \F_p$. This implies
\[
x_{c\upp1}= x_{c\up1} +C_0.
\]
It remains to show that
\[
a\upp1 = a\up1 - C_0\omega^{-1}b\up1.
\]
This follows from Lemma \ref{P1: lem: produce a1}, as the defining properties (1)-(3) are easily checked with these values of $b\upp1$, $c\upp1$, and $x_{c\upp1}$. (Alternatively, properties (1) and (2) correspond to properties of the resulting map $\rho_1'$ (that it be a homomorphism and be finite-flat at $p$, respectively) that are unchanged by conjugation.) The last statement is clear from Lemma \ref{lem:conjugation}.
\end{proof}

Next we consider the choice of decomposition group at $p$ and the choice of root $\ell_1^{1/p}$ of $\ell_1$ (or equivalently, the choice of cocycle $b\up1$ in the class $b_1$). These cannot be considered completely independently because we insist that $b\up1|_p=0$ when $b_1|_p=0$ (note that the condition $b_1|_p=0$ is independent of the choice of decomposition group).

\begin{lem}[Change of decomposition group at $p$ and change of root $\ell_1^{1/p}$ of $\ell_1$]\label{lem: change pth root of ell1}
Let $G_{p}' \subset G_{\Q,Np}$ be a choice of decomposition group at $p$ and let $b\upp1$ be a choice of cocycle in the class $b_1$ of $b\up1$ that satisfies
\[
b\upp1|_{G_p'}=0
\]
if $b_1|_p=0$. Let $\rho_1'$ be the representation obtained by this change to the pinning data. Then $\rho_1' = \rho_{1,M}$ where
\[
M = \ttmat{1}{B_0}{0}{1}
\]
for some $B_0 \in \F_p$.

In particular, this change does not affect the condition $a\up1|_{\ell_1}=0$ and does not change the value of $\alpha^2+\beta$.
\end{lem}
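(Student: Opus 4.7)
The plan is to identify the unique $B_0 \in \F_p$ for which $\rho_1' = M^{-1} \rho_1 M$ with $M = \sm{1}{B_0}{0}{1}$, and then invoke Lemma \ref{lem:conjugation} with $A_0 = \det(M) = 1$. The scalar $B_0$ is read off from the upper-right coordinate: since $b\upp1$ and $b\up1$ are cocycles representing the same class $b_1 \in H^1(\Z[1/Np], \F_p(1))$, and since $H^0(G_{\Q,Np}, \F_p(1)) = 0$, their difference has the unique form $B_0(\omega - 1)$ for some $B_0 \in \F_p(1)$, which we identify with $\F_p$ via $\zeta$. This matches formula \eqref{eq: conj b1} at $A_0 = 1$. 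For the lower-left coordinate, I would observe that $c\up1$ is pinned by the four conditions of Definition \ref{P1: defn: pinned cocycles}, none of which is affected by changing the decomposition group at $p$ or the root $\ell_1^{1/p}$: conditions (i) and (ii) are cohomological, while (iii), (iv) only reference the unchanged $\gamma_0 \in I_{\ell_0}$ and $G_{\ell_1}$. Hence $c\upp1 = c\up1$, matching \eqref{eq: conj c1} at $C_0 = 0$, $A_0 = 1$.

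The heart of the proof is to verify $a\upp1 = a\up1 - B_0 c\up1$, matching \eqref{eq: conj a1}. By the uniqueness clause of Lemma \ref{P1: lem: produce a1}, it suffices to show that $\tilde a := a\up1 - B_0 c\up1$ satisfies the three defining properties relative to the new pinning data. Property (1) is a direct Leibniz-rule computation: $-d\tilde a = -da\up1 + d(B_0 c\up1) = b\up1 \smile c\up1 + (\omega - 1)B_0 \smile c\up1 = b\upp1 \smile c\upp1$. Property (3) is preserved because $B_0 c\up1\vert_{\ell_0}$ is a multiple of $\zeta \cup c_0\vert_{\ell_0}$, so $\tilde a\vert_{\ell_0}$ remains on that line.

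The main obstacle will be Property (2), because it involves $x_{c\upp1}$, which is defined relative to the new decomposition group $G_p'$. Writing $G_p' = g G_p g^{-1}$ for some $g \in G_{\Q,Np}$ and applying the cocycle relation, I would compute that $c\up1(g\sigma g^{-1}) = (g\sigma g^{-1} - 1)(g \cdot x_{c\up1} - c\up1(g))$ for $\sigma \in G_p$, so by uniqueness (using $H^0(G_p', \F_p(-1)) = 0$) one has $x_{c\upp1} = g \cdot x_{c\up1} - c\up1(g)$. The plan is then to substitute this together with $b\upp1 = b\up1 + B_0(\omega - 1)$ into $(\tilde a - b\upp1 \smile x_{c\upp1})\vert_{I_p'}$ and, using the known vanishing of $(a\up1 - b\up1 \smile x_{c\up1})\vert_{I_p}$, reduce the resulting expression on $I_p'$ (or its conjugate on $I_p$) to zero via careful cocycle manipulation. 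Once Property (2) is verified, uniqueness gives $a\upp1 = \tilde a$, so $\rho_1' = \rho_{1,M}$. Lemma \ref{lem:conjugation}(1) then shows the condition $a\up1\vert_{\ell_1} = 0$ is preserved, and Lemma \ref{lem:conjugation}(2) gives $\alpha'^2 + \beta' = A_0^{-2}(\alpha^2 + \beta) = \alpha^2 + \beta$.
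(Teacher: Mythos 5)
Your proposal follows the paper's structure closely: identify $B_0$, observe $c\upp1 = c\up1$, and deduce $a\upp1 = a\up1 - B_0 c\up1$ from the uniqueness clause of Lemma \ref{P1: lem: produce a1}, then apply Lemma \ref{lem:conjugation} with $A_0 = 1$. The verification of properties (1) and (3) is fine, and your formula $x_{c\upp1} = g \cdot x_{c\up1} - c\up1(g)$ for the new 0-cochain is correct.

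However, the verification of property (2) is the only nontrivial step, and you leave it as a plan ("reduce the resulting expression $\dots$ to zero via careful cocycle manipulation") rather than carrying it out. The paper (in the proof of the preceding lemma, to which this proof refers) offers a cleaner route that avoids the cocycle bookkeeping entirely: properties (1) and (2) of $a\up1$ in Lemma \ref{P1: lem: produce a1} are precisely equivalent to $\rho_1$ being a homomorphism and being finite-flat at the chosen decomposition group at $p$, respectively. Both of these are manifestly preserved when $\rho_1$ is replaced by a conjugate $\rho_{1,M}$ and $G_p$ is replaced by a conjugate subgroup $G_p' = gG_pg^{-1}$ (finite-flatness is a property of the abstract $\Z_p[G_p]$-module, transported along the isomorphism $G_p \cong G_p'$). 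Hence $a\upp1 = a\up1 - B_0c\up1$ automatically satisfies (1) and (2), and only the easy normalization (3) needs to be checked directly. Your explicit cocycle computation should also work --- for instance, in the case $G_p' = G_p$ the needed identity reduces to $B_0(\omega-1) \smile x_{c\up1} = -B_0\,c\up1\vert_p$, which holds on the nose --- but it is more cumbersome and, as written, incomplete; I'd recommend adopting the conjugation-invariance argument instead.
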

\begin{proof}
The cocycles $b\upp1$ and $b\up1$ have the same class, so $b\upp1 = b\up1+B_0(\omega-1)$ for some $B_0 \in \F_p$. The cocycle $c\up1$ does not depend on the choice of decomposition group at $p$ or on the choice of root $\ell_1^{1/p}$ of $\ell_1$, so $c\upp1=c\up1$. It remains to show that
\[
a\upp1 = a\up1-B_0c\up1.
\]
This follows from Lemma \ref{P1: lem: produce a1} just as in the last lemma. The last statement is clear from Lemma \ref{lem:conjugation}.
\end{proof}

Changing the root $\ell_0^{1/p}$ of $\ell_0$ only changes the cocycle $b_0\up1$ and does not affect $b\up1$ or $c\up1$, and consequently does not change $a\up1$, $\alpha$, or $\beta$.

\subsection{Change of decomposition group at $\ell_0$}
Changing the decomposition group at $\ell_0$ changes the element $\gamma_0 \in I_{\ell_0}$ that is used to normalize $c\up1$. Hence it will scale $c\up1$ by a factor. However, the following lemma shows that it changes $a\up1$ and $b\up2$ by the same factor.

\begin{lem}\label{lem: change ell0}
Let $G_{\ell_0}' \subset G_{\Q,Np}$ be another choice of decomposition group at $\ell_0$, and let $\rho_1'$ be the representation obtained by this change to the pinning data. Then there is an element $A \in \F_p^\times$ such that
\begin{align*}
    & b\upp1 = b\up1  \\
    & c\upp1 = A c\up1 \\
    & a\upp1 = A a\up1.
\end{align*}
In particular, this change does not affect the condition $a\up1|_{\ell_1}=0$. 

If, moreover, $a\up1|_{\ell_1}=0$, then there is 
deformation $\rho_2' \in {\Pi'}_2^{\det,p}$ such that
\[
b\upp2 = A b\up2.
\]
In particular, this change does not alter the value of $\alpha$ or $\beta$.
\end{lem}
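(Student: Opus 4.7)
The plan splits into a first-order analysis (checking $b\upp1 = b\up1$, $c\upp1 = A c\up1$, $a\upp1 = A a\up1$ for some $A \in \F_p^\times$, together with the invariance of $\alpha$) and a second-order construction (producing a $\rho_2' \in {\Pi'}_2^{\det,p}$ with $b\upp2 = A b\up2$, whence $\beta' = \beta$). The structural observation underlying the argument is that the two decomposition groups $G_{\ell_0}$ and $G'_{\ell_0}$ are conjugate inside $G_{\Q,Np}$, so the inner automorphism of $G_{\Q,Np}$ induces a canonical isomorphism on their cohomology that commutes with restriction from the global Galois group and with cup products; the objects $c_0$, the condition of Lemma \ref{P1: lem: produce a1}(3), and the defining equations for $\alpha$ and $\beta$ are all intrinsic to this canonical data.

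For the first-order part, $b\upp1 = b\up1$ holds by inspection, since $b\up1$ depends only on the pinning at $\ell_1$ and $p$. The class $c_0 \in H^1(\Z[1/Np],\F_p(-1))$ is uniquely determined up to scalar by conditions (i) and (ii) of Definition \ref{P1: defn: pinned cocycles}, neither of which references the decomposition group at $\ell_0$, so $[c\upp1] = A \cdot c_0$ for some $A \in \F_p^\times$. The normalization condition $c\upp1|_{\ell_1} = 0$, combined with $H^0(G_{\ell_1}, \F_p(-1)) = 0$ (which holds because $\ell_1 \not\equiv 1 \pmod p$), then forces the equality $c\upp1 = A c\up1$ exactly rather than merely up to a coboundary. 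Setting $a\upp1 := A a\up1$ and $x_{c\upp1} := A x_{c\up1}$, conditions (1) and (2) of Lemma \ref{P1: lem: produce a1} scale cleanly, while condition (3) transports under the canonical isomorphism $H^1(G_{\ell_0},-) \isoto H^1(G'_{\ell_0},-)$. Uniqueness in Lemma \ref{P1: lem: produce a1} then yields $a\upp1 = A a\up1$. Since the decomposition at $\ell_1$ is unaltered, the condition $a\up1|_{\ell_1} = 0$ is preserved, and matching both sides of the defining equation for $\alpha$ after the scaling gives $\alpha' = \alpha$.

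For the second-order part, assume $a\up1|_{\ell_1} = 0$ and pick any $\rho_2 \in \Pi_2^{\det,p}$ with cochains $(a\up2, b\up2, c\up2, d\up2)$. I would define $\rho_2'$ by using the first-order cochains above and setting
\[
a\upp2 := A^2 a\up2, \qquad b\upp2 := A\, b\up2, \qquad c\upp2 := A^2 c\up2, \qquad d\upp2 := A^2 d\up2.
\]
A direct check shows that each of the four differential equations (i)--(iv) of Lemma \ref{P1: lem: exists 2nd-order 1-reducible} for the primed cochains is the $A^2$-multiple of the corresponding equation for the unprimed cochains, so they hold automatically; the identity $r_{2,1} \circ \rho_2' = \rho_1'$ is built into the construction, constant determinant follows from the formula of Lemma \ref{lem: rho2 constant determinant}, and the finite-flatness proof of Section \ref{P1: subsec: finite-flat on rho2} transports verbatim after replacing $x_{c\up1}$ by $A x_{c\up1}$ and making the analogous scalar substitutions in the upper-triangularization step.

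Finally, the equality $b\upp2 = A b\up2$ holds by construction, and $\beta' = \beta$ follows: in the defining equation $[b\upp2|_{G'_{\ell_0}}] = \beta' \cup [c\upp1|_{G'_{\ell_0}}]$, both sides scale by the common factor $A$ relative to the unprimed equation under the canonical isomorphism, so the factors cancel. The main bookkeeping obstacle will be to make the canonical isomorphism between cohomology groups of conjugate subgroups fully precise and to verify that cup products and the defining equations for $\alpha$ and $\beta$ transport correctly; once that is in place, the remaining differential equations and local conditions scale by pure powers of $A$ and the verifications are routine.
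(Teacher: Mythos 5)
Your approach follows the paper's proof closely: the first-order analysis uses the characterization of $c\up1$ and $a\up1$ to conclude they scale by a common $A \in \F_p^\times$, and the second-order step constructs $\rho_2'$ by scaling the cochains of an arbitrary $\rho_2 \in \Pi_2^{\det,p}$. Two comments are worth making.

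First, a small improvement over the paper's written proof. Your choice of scalings $a\upp2 = A^2 a\up2$, $b\upp2 = A b\up2$, $c\upp2 = A^2 c\up2$, $d\upp2 = A^2 d\up2$ is precisely what the differential equations (i)--(iv) of Lemma~\ref{P1: lem: exists 2nd-order 1-reducible} demand once $a\upp1,c\upp1,d\upp1$ are scaled by $A$ and $b\upp1$ is fixed: equation~(iii), for instance, scales by $A^2$ on both sides only if $c\upp2 = A^2 c\up2$. The displayed formula for $\rho_2'$ in the paper's proof has $A c\up2$ in the $\ep$-coefficient of the $C$-coordinate, which does not satisfy~(iii) for general~$A$; your exponent is the correct one, and the remainder (finite-flatness via $x_{c\upp1} = A x_{c\up1}$ and Lemma~\ref{lem: flat rho2 step 2}) transports as you describe.

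Second, you should be more explicit in the first-order step. You write ``matching both sides of the defining equation for $\alpha$ after the scaling gives $\alpha' = \alpha$,'' and later that in the equation for $\beta$ ``both sides scale by the common factor $A$.'' The scaling by $A$ is not the only thing that changes: the map $(-)\vert_{\ell_0}$ itself changes, because it is restriction along a different embedding. Concretely, writing $G'_{\ell_0} = \sigma^{-1} G_{\ell_0} \sigma$, the relation $-da\up1 = b\up1 \smile c\up1$ gives, for $\tau \in G_{\ell_0}$,
\[
a\up1(\sigma^{-1}\tau\sigma) \;=\; a\up1(\tau) - b\up1(\sigma)\,c\up1(\tau),
\]
so the class $[a\up1\vert_{\ell_0'}]$ picks up a correction proportional to $[c\up1\vert_{\ell_0'}]$ that the scaling by $A$ alone does not capture; a matching shift affects $[b\up2\vert_{\ell_0'}]$. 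Your appeal to ``the canonical isomorphism $H^1(G_{\ell_0},-)\isoto H^1(G'_{\ell_0},-)$'' is not enough here, because $a\up1$ and $b\up2$ are cochains, not cocycles, and conjugation of a cochain produces exactly this kind of shift. The paper's own proof ends with the same one-line assertion, so you have reproduced the argument faithfully; but if you intend a self-contained treatment, you should carry out the conjugation of $a\up1$ and $b\up2$ explicitly (in the spirit of Lemma~\ref{lem: conj formulas}, with $M = \rho_1(\sigma)$ and $\det M = \omega(\sigma)$) and verify that the shifts in $\alpha$ and $\beta$ genuinely cancel in $\alpha^2 + \beta$, or justify why $\alpha$ and $\beta$ are individually unchanged.
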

\begin{proof}
The cocycle $b\up1$ does not depend on the choice 
of decomposition group at $\ell_0$, so $b\upp1=b\up1$. Let $\sigma \in G_\Q$ be such that $G_{\ell_0}' = \sigma^{-1} G_{\ell_0} \sigma$.

A computation with cocycles\footnote{Note that this is the the same as the conjugation formula \eqref{eq: conj b1}, and can also be proven in the same way.}
shows that, for all $\tau \in G_{\ell_0}$:
\[
b_0\up1(\sigma^{-1} \tau \sigma) = \omega(\sigma)^{-1}\left(b_0\up1(\tau) + b_0\up1(\sigma)(\omega(\tau)-1)\right)
\]
In particular, letting $A_0=\omega(\sigma)$ and $\gamma_0' =\sigma^{-1} \gamma_0^{A_0} \sigma$, it follows that $b\up1(\gamma_0')=1$.

The cocycle $c\upp1$ is normalized so that $c\upp1(\gamma_0')=1$. Formula \eqref{eq: conj c1} applied with $M_1=\rho_1(\sigma)$ gives
\[
c\up1(\gamma_0')=A_0^2
\]
so $c\upp1=A_0^{-2}c\up1$. Letting $A=A_0^{-2}$, this gives $c\upp1=A c\up1$.

Now we claim that 
\[
a\upp1 = A a\up1.
\]
This follows from Lemma \ref{P1: lem: produce a1}. Finally, if $a\up1|_{\ell_1}=0$, then we claim that
\[
\rho_2' =  \ttmat{\omega (1+Aa\up1\epsilon + A^2a\up2\epsilon^2) }{b\up1 +A b\up2 \epsilon}{\omega(Ac\up1 + Ac\up2 \epsilon)}{1+Ad\up1 \epsilon + A^2 d\up2\epsilon^2}
\]
is in ${\Pi'}_2^{\det,p}$. In order to prove the claim, we apply the implication $(3) \Rightarrow (1)$ of Lemma \ref{lem: flat rho2 step 2}. The $\eta_1'$ produced from $\rho_2'$ via \eqref{eq: eta1 coordinates}, considered as an element of $Z^1(\Z[1/Np], \Ad^0(\eta'))$ via Lemma \ref{lem: basic def theory} where $\eta' = \eta_1 \pmod{\ep}$, has coordinates $A \cdot \sm{a\up1}{b\up2}{c\up1}{d\up1}$. Since the subset of finite-flat at $p$ lifts of $\eta'$ is a subspace containing $\sm{a\up1}{b\up2}{c\up1}{d\up1}$, it contains $\eta_1'$ as well. 

Finally, since $a\up1$, $c\up1$, and $b\up2$ are all scaled by the same factor, the values of $\alpha$ and $\beta$ are left unchanged.
\end{proof}

\subsection{Changing the root of unity}
Finally, we check that changing the root of unity alters $\alpha^2+\beta$ in the expected way.
\begin{lem}
\label{lem: change zeta}
Let $\zeta' \in \barQ$ denote another choice of primitive root of unity and let $A \in \F_p^\times$ be such that $\zeta = \zeta'^A$. Let $\rho'_1$ be the representation obtained by this change to the pinning data. Then 
\begin{align*}
    & b\upp1 = A b\up1 \\
    & c\upp1 = A c\up1 \\
    & a\upp1 = A^2 a\up1. 
\end{align*}
In particular, this change does not affect the condition $a\up1\vert_{\ell_1} = 0$.

If, moreover, $a\up1|_{\ell_1}=0$, then there is 
deformation $\rho_2' \in {\Pi'}_2^{\det,p}$ such that
\[
b\upp2 = A^3 b\up2.
\]
In particular, $\alpha'=A\alpha$ and $\beta'=A^2\beta$, and
\[
\phi_{\zeta'}(\alpha'^2 + \beta') = \phi_\zeta(\alpha^2+\beta).
\]
where $\phi_\zeta$ is as in \eqref{eq:phi zeta}. 
\end{lem}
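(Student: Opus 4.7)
The plan is to track how each of the cocycles $b\up1$, $c\up1$, $a\up1$, and $b\up2$ transforms under the substitution $\zeta \mapsto \zeta' = \zeta^{1/A}$, then read off the effect on $\alpha$ and $\beta$.

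First, I would analyze the change of identifications. The isomorphism $\iota_\zeta : \F_p(n) \isoto \mu_p^{\otimes n}$ sends $1 \mapsto \zeta^{\otimes n} = A^n (\zeta')^{\otimes n}$, so $\iota_{\zeta'}^{-1} = A^n \iota_\zeta^{-1}$. The intrinsic Kummer cocycle attached to $\ell_1^{1/p}$ is unchanged, so translating through $\iota_{\zeta'}^{-1}$ gives $b\upp1 = A b\up1$. The condition $b\upp1(\gamma_1') = 1$ forces $\gamma_1'$ to be $\gamma_1^{1/A}$ modulo the kernel of $b\up1$ on the pro-$p$ tame inertia, and similarly $\gamma_0' \equiv \gamma_0^{1/A}$. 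Combining the $\F_p(-1)$-identification change with this shift in the normalizing equation $c\upp1(\gamma_0') = 1$ gives $c\upp1 = A c\up1$. Since $H^0(G_p, \F_p(-1)) = 0$, the 0-cochain satisfies $x_{c\upp1} = A x_{c\up1}$. Each of the three defining conditions for $a\up1$ in Lemma \ref{P1: lem: produce a1} becomes the $A^2$-multiple of the corresponding condition for $a\upp1$, giving $a\upp1 = A^2 a\up1$. In particular, $a\upp1|_{\ell_1} = 0$ if and only if $a\up1|_{\ell_1} = 0$, establishing part (1).

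Second, assuming $a\up1|_{\ell_1} = 0$, I would construct $\rho_2'$ from a given $\rho_2 \in \Pi_2^{\det, p}$ using a ring endomorphism $\phi : E_2 \to E_2$ of the 1-reducible GMA, defined as $\ep \mapsto A^2 \ep$ on the diagonal $\F_p[\ep_2]$ entries and as $x_0 + x_1 \ep \mapsto A x_0 + A^3 x_1 \ep$ on both off-diagonal $\F_p[\ep_1]$ entries. A short direct calculation on the diagonal rings, on the module structure of the off-diagonal, and on the multiplication map $m(b \otimes c) = \ep bc$ (both sides giving $A^2 \ep b_0 c_0 + A^4 (b_0 c_1 + b_1 c_0)\ep^2$) verifies that $\phi$ is a ring homomorphism. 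Setting $\rho_2' := \phi \circ \rho_2$, the first-order coordinates match $\rho_1'$ by the previous paragraph and the second-order coordinates are $a\upp2 = A^4 a\up2$, $b\upp2 = A^3 b\up2$, $c\upp2 = A^3 c\up2$, $d\upp2 = A^4 d\up2$. Constant determinant $\omega$ is immediate from $\phi$ fixing scalars in $\F_p$, and $\US_{\ell_0}$, $\US_{\ell_1}$ are preserved since they are cast as vanishing of Cayley--Hamilton expressions in $E_2$ via Definition \ref{defn: US local}. For finite-flatness at $p$, I would apply Lemma \ref{lem: flat rho2 step 2}: this reduces to showing that the associated $\eta_1'|_p$ is finite-flat and $\chi_2'$ is unramified, which both hold because the coordinates of $\eta_1'$ are scalar multiples of those of $\eta_1$ and so land in the finite-flat subspace of $Z^1$ from Proposition \ref{prop: smoothness}. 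Hence $\rho_2' \in {\Pi'}_2^{\US_N}$, and in particular $b\upp2 = A^3 b\up2$.

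Finally, substituting the scalings into the defining relations $a\upp1|_{\ell_0} = \alpha' \cup c\upp1|_{\ell_0}$ and $b\upp2|_{\ell_0} = \beta' \cup c\upp1|_{\ell_0}$ yields $\alpha' = A\alpha$ and $\beta' = A^2 \beta$, hence $\alpha'^2 + \beta' = A^2(\alpha^2 + \beta)$. For the canonical element claim, $\zeta^{\otimes 2} = A^2 (\zeta')^{\otimes 2}$ in $\mu_p^{\otimes 2}$ forces $\phi_\zeta = A^2 \phi_{\zeta'}$, so
\[
\phi_{\zeta'}(\alpha'^2 + \beta') = A^2 \phi_{\zeta'}(\alpha^2 + \beta) = \phi_\zeta(\alpha^2 + \beta),
\]
as required. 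The main obstacle is the finite-flatness verification in the middle paragraph: a direct proof that $\phi$ preserves finite-flat Cayley--Hamilton representations in the sense of Definition \ref{defn: finite-flat} would be somewhat delicate, so the concrete route through Lemma \ref{lem: flat rho2 step 2}---where the scaling of cocycles lies manifestly inside the finite-flat subspace of $Z^1$---is preferable.
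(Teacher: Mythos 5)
Your first paragraph tracks the same effects as the paper's own proof, and the scalings $b\upp1 = Ab\up1$, $c\upp1 = Ac\up1$, $a\upp1 = A^2 a\up1$ are all correct, though the parenthetical appeal to an ``$\F_p(-1)$-identification change'' is spurious: $c\up1$ is normalized purely by $c\up1(\gamma_0) = 1$, and the dependence on $\zeta$ enters only through the shift $\gamma_0 \mapsto \gamma_0^{1/A}$, not through any identification of $\F_p(-1)$ with $\mu_p^{\otimes -1}$. The GMA endomorphism $\phi$ of $E_2$ you introduce is genuinely a nice device that the paper does not use (it simply lists the rescaled cochains, checks differential equation (ii), and asserts finite-flatness ``is clear''), and it is exactly the right tool---but you then decline to exploit it at precisely the step where it would pay off.

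The finite-flatness verification you call ``preferable,'' via Lemma \ref{lem: flat rho2 step 2} and the claim that ``the coordinates of $\eta_1'$ are scalar multiples of those of $\eta_1$ and so land in the finite-flat subspace of $Z^1$,'' does not go through as stated. The coordinates are rescaled by \emph{different} powers of $A$ ($A^2$ on the diagonal, $A$ on the residual $b$-entry, $A^3$ on the second-order $b$-entry), and moreover the residual representation $\eta_1 \bmod \ep$ changes to $\sm{\omega}{Ab\up1}{0}{1}$, so the new lift is neither a single scalar multiple of the old one nor even an element of the same $Z^1(\Z[1/Np], \Ad^0(\eta))$. The subspace argument that works in Lemma \ref{lem: change ell0} depends on all coordinates being rescaled by one and the same $A$ with $\eta$ fixed; it does not transfer. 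The route you set aside as ``somewhat delicate'' is in fact the simplest: you have already checked that $\phi$ is multiplicative (including the GMA cross-term) and it plainly fixes $1$ and is bijective, so $\phi$ is an $\F_p$-algebra automorphism of $E_2$, hence an isomorphism of $\Z_p[G_p]$-modules $(E_2, \rho_2|_p) \to (E_2, \rho_2'|_p)$ for the left-regular action of Definition \ref{defn: finite-flat}, and finite-flatness of $\rho_2'|_p$ is immediate from that of $\rho_2|_p$. If you prefer to stay inside Lemma \ref{lem: flat rho2 step 2}, the correct bookkeeping is to factor the new $\eta_1|_p$ as the conjugate by $\sm{A}{0}{0}{1}$ of the image of the old $\eta_1|_p$ under the coefficient automorphism $\ep \mapsto A^2\ep$; a $\GL_2(\F_p)$-conjugation and a coefficient-ring automorphism each preserve finite-flatness and leave $\chi_2$ unramified.
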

\begin{proof}
Recall that $b\up1$ is defined by the equation
\[
\frac{\sigma \ell_1^{1/p}}{\ell_1^{1/p}} = \zeta^{b\up1(\sigma)}
\]
for all $\sigma \in G_{\Q,Np}$.
Replacing $\zeta$ by $\zeta'^A$, it follows that $b\upp1 = Ab\up1$. Similarly $b_0\upp1 = A b_0\upp1$.

The cocycle $c\upp1$ is a scalar multiple of $c\up1$, normalized such that $c\upp1(\gamma_0')=1$ where $\gamma_0'
 \in I_{\ell_0}$ satisfies $b_0\upp1(\gamma_0')=1$.
Since $b_0\up1(\gamma_0)=1$ and $b_0\upp1=A b_0\up1$, we can choose $\gamma_0'=\gamma_0^{A^{-1}}$. Given that $c\up1(\gamma_0)=1$, this shows that $c\upp1=Ac\up1$.

The fact that $a\upp1=A^2a\up1$ follows immediately from Lemma \ref{P1: lem: produce a1}. Similarly, it is easy to see that $b\upp2 = A^3 b\up2$ satisfies differential equation (ii) in Proposition \ref{prop: exists rho_2}, and the fact that the resulting $\rho_2$ is finite-flat is clear.

The equations $\alpha'=A\alpha$ and $\beta'=A^2\beta$ follow immediately from the definitions and, since $\phi_{\zeta}=A^2\phi_{\zeta'}$, this shows that
\[
\phi_{\zeta'}(\alpha'^2+\beta')=\phi_{\zeta'}(A^2(\alpha^2+\beta))=\phi_{\zeta}(\alpha^2+\beta). \qedhere
\]
\end{proof}

\bibliographystyle{alpha}
\bibliography{CWEbib-2023-RR3}

\end{document}